\numberwithin{equation}{section}
\newtheorem{theorem}{Theorem}[section]
\newtheorem{proposition}[theorem]{Proposition}
\newtheorem{lemma}[theorem]{Lemma}
\newtheorem{corollary}[theorem]{Corollary}
\theoremstyle{definition}
\newtheorem{definition}[theorem]{Definition}
\newtheorem{example}[theorem]{Example}
\newtheorem{examples}[theorem]{Examples}
\newtheorem{problem}[theorem]{Problem}
\newtheorem{remark}[theorem]{Remark}
\newcommand{\af}{\alpha_f}
\newcommand{\aF}{\alpha_F}
\newcommand{\al}{\alpha}
\newcommand{\aM}{\al_M}
\newcommand{\ann}{\operatorname{ann}}
\newcommand{\aut}{\operatorname{aut}}
\newcommand{\bn}{\mathbf{n}}
\newcommand{\bz}{\mathfrak{b}_{\zeta}}
\newcommand{\CC}{\mathbb{C}}
\newcommand{\del}{\delta}
\newcommand{\Del}{\Delta}
\newcommand{\dnd}{\mathrm{det}_{\nd}}
\newcommand{\eps}{\varepsilon}
\newcommand{\F}{\mathrm{F}}
\newcommand{\fb}{\mathfrak{b}}
\newcommand{\Fix}{\operatorname{\mathsf{Fix}}}
\newcommand{\FixL}{\Fix_{\Lam}}
\newcommand{\G}{\Gamma}
\newcommand{\ga}{\mathfrak{a}}
\newcommand{\gb}{\mathfrak{b}}
\newcommand{\gp}{\mathfrak{p}}
\newcommand{\h}{\mathsf{h}}
\newcommand{\hc}{\mathit{\Delta}}
\newcommand{\lam}{\lambda}
\newcommand{\Lam}{\Lambda}
\newcommand{\Lamq}{\Lam_{rq,sq,q}}
\newcommand{\lodr}{\ell^1(\Del,\RR)}
\newcommand{\lidr}{\ell^\infty(\Del,\RR)}
\newcommand{\lidz}{\ell^\infty(\Del,\ZZ)}
\newcommand{\lodc}{\ell^1(\Del,\CC)}
\newcommand{\logc}{\ell^1(\G,\CC)}
\newcommand{\logr}{\ell^1(\G,\RR)}
\newcommand{\lozc}{\ell^1(\Z,\CC)}
\newcommand{\ligc}{\ell^\infty(\G,\CC)}
\newcommand{\ligr}{\ell^\infty(\G,\RR)}
\newcommand{\ligz}{\ell^\infty(\G,\ZZ)}
\newcommand{\lilc}{\ell^\infty(\Lam,\CC)}
\newcommand{\ltdc}{\ell^2(\Del,\CC)}
\newcommand{\mahler}{\mathsf{m}}
\newcommand{\Mahler}{\mathsf{M}}
\newcommand{\mz}{\mathfrak{m}_{\zeta}}
\newcommand{\nd}{\SN\Del}
\newcommand{\nG}{\SN\G}
\newcommand{\Newt}{\mathscr{N}}
\newcommand{\Num}{\mathsf{N}}
\newcommand{\om}{\omega}
\newcommand{\Om}{\Omega}
\newcommand{\phiz}{\phi_{\zeta}}
\newcommand{\psiz}{\psi_{\zeta}}
\newcommand{\Pq}{P_{rq,sq,q}}
\newcommand{\Qq}{Q_{rq,sq,q}}
\newcommand{\QQ}{\mathbb{Q}}
\newcommand{\rf}{\rho_f}
\newcommand{\RR}{\mathbb R}
\newcommand{\setdiff}{\bigtriangleup}
\newcommand{\SB}{\mathscr{B}}
\newcommand{\SH}{\mathscr{H}}
\newcommand{\SN}{\mathscr{N}}
\newcommand{\spr}{\operatorname{spr}}
\renewcommand{\SS}{\mathbb{S}}
\newcommand{\SU}{\mathscr{U}}
\newcommand{\supp}{\operatorname{supp}}
\newcommand{\TD}{\TT^{\Del}}
\newcommand{\tnd}{\tr_{\SN \Del}}
\newcommand{\tr}{\operatorname{tr}}
\newcommand{\ttil}{\tilde{t}}
\newcommand{\TT}{\mathbb{T}}
\newcommand{\ttt}[2]{t^{(#1)}_{#2}}
\newcommand{\tzt}{\TT^{\ZZ^2}}
\newcommand{\U}{\mathsf{U}}
\newcommand{\V}{\mathsf{V}}
\newcommand{\vxyz}{v_{\xi,\eta,\zeta}}
\newcommand{\vxz}{v_{\xi,\zeta}}
\newcommand{\wtri}{w^{\vartriangle}}
\newcommand{\ttri}{t^{\vartriangle}}
\newcommand{\xb}{\bar{x}}
\newcommand{\yb}{\bar{y}}
\newcommand{\fbar}{\overline{f}}
\newcommand{\Z}{\mathrm{Z}}
\newcommand{\ZZ}{\mathbb{Z}}
\newcommand{\zd}{\ZZ^{d}}
\newcommand{\ZD}{\ZZ\Del}
\newcommand{\ZG}{\ZZ\G}
\newcommand{\ZL}{\ZZ\Lam}
\renewcommand{\ge}{\geqslant}
\renewcommand{\le}{\leqslant}
\newcommand{\<}{\langle}
\renewcommand{\>}{\rangle}  
\renewcommand{\emptyset}{\varnothing}
\renewcommand\Re{\operatorname{Re}}
\renewcommand{\setminus}{\smallsetminus}
\begin{document}
\allowdisplaybreaks\frenchspacing


\title[Survey of Heisenberg Actions]
{A Survey of Algebraic Actions \\ of the Discrete Heisenberg Group}

\author{Douglas Lind}

\address{Douglas Lind: Department of Mathematics, University of
  Washington, Seattle, Washington 98195, USA}
  \email{lind@math.washington.edu}

\author{Klaus Schmidt}

\address{Klaus Schmidt: Mathematics Institute, University of Vienna, Oskar-Morgenstern-Platz 1, A-1090 Vienna, Austria \newline\indent \textup{and} \newline\indent Erwin Schr\"odinger Institute for Mathematical Physics, Boltzmanngasse~9, A-1090 Vienna, Austria} \email{klaus.schmidt@univie.ac.at}

\date{\today}

\keywords{Algebraic action, Heisenberg group, expansiveness, entropy}

\subjclass[2000]{Primary: 37A35, 37B40, 54H20; Secondary: 37A45,
  37D20, 13F20}

\dedicatory{Dedicated to Anatoly Vershik on the occasion of his 80th birthday}


\begin{abstract}
   The study of actions of countable groups by automorphisms of compact
   abelian groups has recently undergone intensive development,
   revealing deep connections with operator algebras and other areas.
   The discrete Heisenberg group is the simplest noncommutative example,
   where dynamical phenomena related to its noncommutativity already illustrate
   many of these connections. The explicit structure of this group means
   that these phenomena have concrete descriptions, which are not only
   instances of the general theory but are also testing grounds for
   further work. We survey here what is known about such actions of the
   discrete Heisenberg group, providing numerous examples and
   emphasizing many of the open problems that remain.
\end{abstract}

\maketitle

\section{Introduction}\label{sec:introduction}

Since Halmos's observation \cite{Halmos} in 1943 that automorphisms of
compact groups automatically preserve Haar measure, these maps have provided
a rich class of examples in dynamics. In case the group is abelian, its
dual group is a module over the Laurent polynomial ring $\ZZ[x^{\pm}]$.
Such modules have a well-developed structure theory, which enables a
comprehensive analysis of general automorphisms in terms of basic
building blocks that can be completely understood.

The roots of the study of several commuting algebraic maps can be traced
back to the seminal 1967 paper of Furstenberg \cite{Furstenberg}, where
he considered the joint dynamical properties of multiplication by
different integers on the additive torus. In 1978 Ledrappier
\cite{Ledrappier} gave a simple example of a mixing action of $\ZZ^2$ by
automorphisms of a compact abelian group that was not mixing of higher
orders. For an action by $d$ commuting automorphisms, the dual group is
a module over the Laurent polynomial ring
$\ZZ[u_1^{\pm},\dots,u_d^{\pm}]$, i.e., the integral group ring
$\ZZ\ZZ^d$ of $\ZZ^d$. The commutative algebra of such modules
provides effective machinery for analyzing such actions. This point of
view was initiated in 1989 by Kitchens and the second author
\cite{KitchensSchmidt}, and a fairly complete theory of the dynamical
properties of such actions is now available \cite{SchmidtBook}.

Let $\Del$ denote an arbitrary countable group, and let $\al$ denote an
action of $\Del$ by automorphisms of a compact abelian group, or an
\textit{algebraic $\Del$-action}. The initial steps in analyzing such
actions were  taken in \cite[Chap.\ 1]{SchmidtBook}, and give general
criteria for some basic dynamical properties such as ergodicity and
mixing.

In 2001 Einsiedler and Rindler \cite{EinsiedlerRindler} investigated the
particular case when $\Del=\G$, the discrete Heisenberg group, as a
first step towards algebraic actions of noncommutative groups. Here the
concrete nature of $\G$ suggests that there should be specific answers
to the natural dynamical questions, and they give several instances of
this together with instructive examples. However, the algebraic
complexity of the integral group ring $\ZG$ prevents the comprehensive
analysis available in the commutative case.

A dramatic new development occurred in 2006 with the work of Deninger on
entropy for principal $\Del$-actions. Let $f\in\ZD$, and let $\ZD f$
denote the principal left ideal generated by $f$. Then $\Del$ acts on
the quotient $\ZD/\ZD f$, and there is a dual $\Del$-action $\af$ on the
compact dual group, called a \textit{principal $\Del$-action}. Deninger
showed in \cite{D} that in many cases the entropy of $\af$ equals the
logarithm of the Fuglede-Kadison determinant the linear operator corresponding
to $f$ on the group von Neumann algebra of $\Del$. In case $\Del=\ZZ^d$,
this reduces to the calculation in \cite{LSW} of entropy in terms of the
logarithmic Mahler measure of $f$.  Subsequent work by Deninger, Li,
Schmidt, Thom, and others shows that this and related results hold in
great generality (see for example \cite{DS}, \cite{LiAutomorphisms}, and
\cite{LiThom}). In \cite{LiThom} the authors proved that three different concepts connected with $\Del$-actions,
namely entropy, Fuglede-Kadison determinants, and $L^2$-torsion,
coincide, revealing deep connections that are only partly understood.

These ideas have some interesting consequences. For example, by
computing the entropy of a particular Heisenberg action in two
different ways, we can show that
\begin{equation}
   \label{eqn:random-product}
   \lim_{n\to\infty}\frac1n \log \,\Biggl\| \prod_{k=0}^{n-1}
   \begin{bmatrix}
      0 & 1\\1 & e^{2 \pi i (ka+b)}
   \end{bmatrix}
   \Biggr\|=0
\end{equation}
for almost every pair $(a,b)$ of real numbers. Despite its simplicity,
this fact does not appear to follow from known results on random
matrix products.

Our purpose here is to survey what is known for the Heisenberg case
$\Del=\G$, and to point out many of the remaining open questions. As
$\G$ is the simplest noncommutative example (other than finite extensions of $\mathbb{Z}^d$, which are too close to the abelian case to be interesting), any results will indicate limitations of what a
general theory can accomplish. Also, the special structure of $\G$ should
enable explicit answers to many questions, and yield particular
examples of various dynamical phenomena. It is also quite instructive to
see how a very general machinery, used for algebraic actions of arbitrary
countable groups, can be made quite concrete for the case of ~$\G$. We
hope to inspire further work
by making this special case both accessible and attractive.

\section{Algebraic actions}\label{sec:algebraic-actions}

Let $\Del$ be a countable discrete group. The \textit{integral group ring} $\ZD$
of $\Del$ consists of all finite sums of the form
$g=\sum_{\del}g_{\del}\del$ with $g_{\del}\in\ZZ$, equipped with the
obvious ring operations inherited from multiplication in $\Del$. The
\textit{support} of $g$ is the subset $\supp(g)=\{\del\in\Del:g_{\del}\ne0\}$.

Suppose that $\Del$ acts by automorphisms of a compact abelian group
~$X$. Such actions are called \textit{algebraic $\Del$-actions}. Denote
the action of $\del\in\Del$ on $t\in X$ by $\del\cdot t$. Let $M$ be the
(discrete) dual group of $X$, with additive dual pairing denoted by $\<
t,m\>\in\RR$ for $t\in X$ and $m\in M$. Then $M$ becomes a module over
$\ZD$ by defining $\del\cdot m$ to be the unique element of ~$M$ so that
$\<t,\del\cdot m\>=\<\del^{-1}\cdot t, m\>$ for all $t \in X$, and extending this
to additive integral combinations in ~$\ZD$.

Conversely, if $M$ is a $\ZD$-module, its compact dual group
$X_M=\widehat{M}$ carries a $\Del$-action $\aM$ dual to the
$\Del$-action on $M$. Thus there is a 1-1 correspondence between
algebraic $\Del$-actions and $\ZD$-modules.

Let $\TT=\RR/\ZZ$ be the additive torus. Then the dual group of $M=\ZD$
can be identified with $\TT^{\Del}$ via the pairing $\<t,g\>=\sum_{\del}
t_{\del}g_{\del}$ where $t=(t_{\del})\in \TT^{\Del}$ and $g=\sum
g_{\del}\del\in\ZD$.

For $\theta\in\Del$, the action of $\theta$ on $t\in\TD$ is defined via
duality by $\<\theta\cdot t,g\>=\<t,\theta^{-1}\cdot g\>$ for all $g\in\ZD$. By
taking $g=\del\in\Del$, we obtain that
$(\theta\cdot t)_{\del}=t_{\theta^{-1}\del}$. It is sometimes convenient to
think of elements in $\TD$ as infinite formal sums $t =
\sum_{\del}t_{\del}\del$, and then
$\theta\cdot t=\sum_{\del}t_{\del}\theta\del=\sum_{\del}t_{\theta^{-1}\del}\del$. This
allows a well-defined multiplication of elements in $\TD$ by elements from ~$\ZD$, both
on the left and on the right.

We remark that the shift-action $(\theta\cdot t)_{\del^{}}=t_{\theta^{-1}\del}$ is
opposite to the traditional shift direction when $\Del$ is $\ZZ$ or $\ZZ^d$, but is
forced when $\Del$ is noncommutative. This has sometimes caused
confusion; for example the last displayed equation in
\cite[p.\ 118]{EinsiedlerRindler} is not correct.

Now fix $f\in\ZD$. Let $\ZD f$ be the principal left ideal generated by
$f$. The quotient module $\ZD/\ZD f$ has dual group
$X_f\subset\TD$. An element $t\in\TD$ is in $X_f$ iff $\<t,gf\>=0$ for
all $g\in\ZD$. This is equivalent to the condition that $\<t\cdot f^*,g\>=0$
for all $g\in\ZD$, where $f^*=\sum_{\del}f_{\del}\del^{-1}$. Hence
$t\in X_f$ exactly when $t\cdot f^*=0$, using the conventions above for right
multiplication of elements in $\TD$ by members of ~$\ZD$. In other words,
if we define $\rf(t)=t\cdot f^*$ to be right convolution by $f^*$, then
$X_f$ is the kernel of ~$\rf$.
In terms of coordinates, $t$ is in $X_f$ precisely when
$\sum_{\del}t_{\theta \del}f_{\del}=0$ for all $\theta\in\Del$.

Our focus here is on the discrete Heisenberg group $\G$, generated by
$x$, $y$, and $z$ subject to the relations $xz=zx$, $yz=zy$, and
$yx=xyz$. Alternatively, $\G$ is the subgroup of $SL(3,\ZZ)$ generated
by
\begin{displaymath}
   x \leftrightarrow \begin{bmatrix} 1&0&0\\0&1&1\\0&0&1\end{bmatrix},
   \quad
   y \leftrightarrow \begin{bmatrix} 1&1&0\\0&1&0\\0&0&1\end{bmatrix},
   \quad \text{and}\quad
   z \leftrightarrow \begin{bmatrix} 1&0&1\\0&1&0\\0&0&1\end{bmatrix}
   \quad.
\end{displaymath}

We will sometimes use the notation $R$ for the integral group ring ~$\ZG$
of ~$\G$ when emphasizing its ring-theoretic properties. The center of
$\G$ is $\Z=\{z^k:k\in\ZZ\}$. The center of $R$ is then the Laurent
polynomial ring $\ZZ\Z=\ZZ[z^{\pm}]$. Hence every
element of $R$ can be written as
\begin{displaymath}
   g=\sum_{k,l,m} g_{kl m}x^ky^l z^m = \sum_{k, l}g_{k l}(z)x^ky^l,
\end{displaymath}
where $g_{k l m}\in\ZZ$ and $g_{k l}(z)\in \ZZ\Z$.  For
$g=\sum_{k,l}g_{kl}(z)x^ky^l\in\ZG$, define the \textit{Newton polygon}
$\Newt(g)$ of $g$ to be the convex hull in $\RR^2$ of those points
$(k,l)$ for which $g_{kl}(z)\ne0$. In particular,
$\Newt(0)=\emptyset$. Because $\ZZ\Z$ is an integral domain, it is easy to
verify that $\Newt(gh)$ equals the Minkowski sum $\Newt(g)+\Newt(h)$ for
all $g,h\in\ZG$. This shows that $\ZG$ has no nontrivial zero-divisors.
However, a major difference between the commutative case and $\ZG$ is
that unique factorization into irreducibles fails for ~$\ZG$.

\begin{example}
   It is easy to verify that
   \begin{displaymath}
      (y-1)(y-z)(x+1)=(xyz^2-xz+y-z)(y-1).
   \end{displaymath}
   Each of the linear factors is clearly irreducible by a Newton polygon
   argument. We claim that $f(x,y,z)=xyz^2-xz+y-z$ cannot be factored in
   $\ZG$. Note that $\Newt(f) =[0,1]^2$. Suppose that $f=gh$. Adjusting
   by units and reordering factors if necessary, we may assume that $g$
   and $h$ have the form $g(x,y,z)=g_0(z)+g_1(z)x$ and
   $h(x,y,z)=h_0(z)+h_1(z)y$. Expanding $gh$, we find that
   \begin{displaymath}
      g_0(z)h_0(z)=-z,\ g_0(z)h_1(z)=1,\ g_1(z)h_0(z)=-z,\ \text{and}\
      g_1(z)h_1(z)=z^2.
   \end{displaymath}
   Hence $g_0(z)=h_1(z)^{-1}=g_1(z)z^{-2}$ and
   $h_0(z)=-zg_1(z)^{-1}$. Then we would have
   \begin{displaymath}
      -z=g_0(z)h_0(z)=\bigl(g_1(z)z^{-2}\bigr)\bigl(-zg_1(z)^{-1}\bigr)=-z^{-1}.
   \end{displaymath}
   This proves that $f$ has no nontrivial factorizations in $\ZG$.
\end{example}

Since $\G$ is nilpotent of rank 2, it is polycyclic, and so $R$ is both
left- and right-noetherian, i.e. $R$ satisfies the ascending chain
condition on both left ideals and on right ideals \cite{PassmanBook}.

Suppose now that $M$ is a finitely generated left $R$-module, say
generated by $m_1,\dots,m_l$. The map $R^l\to M$ defined by
$[g_1,\dots,g_l ]\mapsto g_1m_1+\dots g_l m_l$ is
surjective. Its kernel $K$ is a left $R$-submodule of the noetherian
module $R^l$, hence also finitely generated, say by
$[f_{11},\dots,f_{1l}], \dots,[f_{k1},\dots,f_{kl}]$. Let $F
=[f_{ij}]\in R^{k\times l}$ be the rectangular matrix whose rows are
the generators of $K$. Then $K=R^kF$, and $M\cong R^l/R^kF$. We will
denote the corresponding algebraic $\G$-action for this presentation of
$M$ by $\aF$. Notice that when $k=l=1$ we are reduced to the case
$F=[f]$, corresponding to the quotient module $R/Rf$ and the principal
$\G$-action $\af$.

\section{Ergodicity}\label{sec:ergodicity}

Let $X$ be a compact abelian group and let $\mu$ denote Haar measure on $X$,
normalized so that $\mu(X)=1$. If $\phi$ is a continuous automorphism of
$X$, then the measure $\nu$ defined by $\nu(E)=\mu\bigl(\phi(E)\bigr)$
is also a normalized translation-invariant measure. Hence $\nu=\mu$, and
$\mu$ is $\phi$-invariant.

This shows that if $\al$ is an algebraic action of a countable group
$\Del$ on $X$, then $\al$ is $\mu$-measure-preserving. A measurable set
$E\subset X$ is \textit{$\al$-invariant} if $\al^{\del}(E)$ agrees with
$E$ off a null set for every $\del\in\Del$. The action $\al$ is
\textit{ergodic} if the only $\al$-invariant sets have measure 0 or
1. The following, which is a special case of a result due to Kaplansky
\cite{Kaplansky},  gives an algebraic characterization of ergodicity.

\begin{lemma}[{\cite[Lemma 1.2]{SchmidtBook}}]
   Let $\Del$ be a countable discrete group, and $\al$ be an algebraic
   $\Del$-action on a compact abelian group $X$ whose dual group is
   $M$. Then $\al$ is ergodic if and only if the $\Del$-orbit of every
   nonzero element of $M$ is infinite.
\end{lemma}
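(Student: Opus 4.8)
The statement is: an algebraic $\Del$-action $\al$ on $X$ with dual module $M$ is ergodic if and only if every nonzero $m\in M$ has infinite $\Del$-orbit. I would prove both directions via Fourier analysis on $X$, using the fact that the characters of $X$ are exactly the elements of $M$, and that $L^2(X,\mu)$ has orthonormal basis $\{m:m\in M\}$. The action of $\del\in\Del$ on functions pushes forward to the module action $m\mapsto\del\cdot m$ on characters (up to the inversion convention already fixed in Section~\ref{sec:algebraic-actions}), so $\al$-invariant $L^2$ functions correspond to $\ell^2$-summable, $\Del$-invariant functions on the set $M$.

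For the contrapositive of the ``only if'' direction: suppose some nonzero $m_0$ has finite orbit $\{m_0,\dots,m_{r-1}\}$ under $\Del$. Then the trigonometric polynomial $h=\sum_{j=0}^{r-1}m_j\in L^2(X)$ is nonconstant (it has no constant term since $m_0\ne0$) and $\al$-invariant as an element of $L^2$. One then passes from an invariant function to an invariant set of intermediate measure: for instance $\Re h$ is a nonconstant real-valued invariant function, and any of its nontrivial level sets $\{x:\Re h(x)>c\}$ is an $\al$-invariant set of measure strictly between $0$ and $1$ for a suitable $c$, so $\al$ is not ergodic. For the ``if'' direction: suppose $\al$ is not ergodic, so there is an invariant set $E$ with $0<\mu(E)<1$; then $\mathbf 1_E$ is a nonconstant $\al$-invariant element of $L^2(X)$. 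Expand $\mathbf 1_E=\sum_{m\in M}c_m\,m$ in Fourier series. Invariance of $E$ forces $c_{\del\cdot m}=c_m$ for all $\del\in\Del$ and $m\in M$ (here one must be careful that invariance ``off a null set'' still gives exact equality of Fourier coefficients, which it does since changing a function on a null set does not change its $L^2$ class). Since $\sum|c_m|^2<\infty$, the coefficient function $m\mapsto c_m$ is constant on $\Del$-orbits and square-summable; on any infinite orbit a nonzero constant value is impossible, so $c_m=0$ for every $m$ lying in an infinite orbit. As $\mathbf 1_E$ is nonconstant, some $c_{m}\ne0$ with $m\ne0$, and that $m$ must then lie in a finite orbit --- contradicting the hypothesis that all nonzero elements have infinite orbit. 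Hence $\al$ is ergodic.

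The main technical point, and the step I would be most careful about, is the passage between measurable invariance and the exact algebraic statement ``$c_{\del\cdot m}=c_m$.'' One has to check that $\mathbf 1_E\circ\al^{\del}=\mathbf 1_E$ $\mu$-a.e.\ for each $\del$ (the definition of invariance only asserts agreement off a null set), that this identity descends to equality in $L^2$, and that comparing Fourier coefficients on both sides --- using $\widehat{\mathbf 1_E\circ\al^{\del}}(m)=c_{\del^{-1}\cdot m}$ or its inverse, depending on the convention --- yields the orbit-invariance of the coefficients. A second small point is verifying in the ``only if'' direction that a nonconstant invariant $L^2$ function really does produce an invariant set of intermediate measure; this is a standard measure-theoretic argument (pushing forward via a nonconstant real invariant function and choosing a level that splits the mass), so I would state it briefly rather than belabor it. Everything else --- completeness of the character basis, the translation of the $\Del$-action to the module action --- is already set up in the preceding section.
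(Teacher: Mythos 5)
Your proof is correct and follows exactly the route the paper indicates in its one-sentence sketch (and the route of the cited source): the equivalence between bounded measurable $\al$-invariant functions and finite $\Del$-orbits in $M$, implemented via the Fourier expansion of an invariant $L^2$ function and the observation that a square-summable coefficient function constant on orbits must vanish on every infinite orbit. The points you flag as needing care --- passing from a.e.\ invariance to equality of Fourier coefficients, and extracting an invariant set of intermediate measure from a nonconstant invariant function --- are indeed the only technical steps, and you handle both correctly.
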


Roughly speaking, this result follows from the observation that the
existence of a bounded measurable $\al$-invariant function on $X$ is equivalent
to the existence of a nonzero finite $\Del$-orbit in ~$M$.

For actions of the Heisenberg group $\G$, this raises the question of
characterizing those $F\in R^{k\times l}$ for which $\aF$ is
ergodic. The first result in this direction is due to Ben Hayes.

\begin{theorem}[{\cite[Thm.\ 2.3.6]{Hayes}}]\label{thm:hayes}
   For every $f\in\ZG$ the principal algebraic $\G$-action $\af$ is ergodic.
\end{theorem}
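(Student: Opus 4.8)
By Lemma~1.2 (Kaplansky), the principal action $\af$ on $X_f = \widehat{R/Rf}$ fails to be ergodic precisely when $R/Rf$ contains a nonzero element with finite $\G$-orbit. Such an element is represented by some $g + Rf$ with $g \in R \setminus Rf$, and finiteness of its orbit means that the left $R$-submodule $Rg + Rf$ of $R$ has the property that $R(Rg+Rf)/\!\cdots$ — more precisely, that the cyclic module $R\cdot(g+Rf)$ is finite as a set. Equivalently, the left ideal $I = \{h \in R : hg \in Rf\}$ (the annihilator of $g+Rf$) has finite index in $R$ as an abelian group, i.e.\ $R/I$ is finite, while $I \supsetneq Rf$ would be automatic since $g \notin Rf$ forces $1 \notin I$. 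So the plan is to show: \emph{no proper left ideal of $R$ containing a nonzero principal left ideal $Rf$ can have finite additive index}, unless it forces $g \in Rf$.

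\emph{Step 1: Reduce to a statement about finite quotients.} Suppose $R\cdot(g+Rf)$ is finite and nonzero. Then there is $N \ge 1$ with $Ng \in Rf$, so passing to $R/NR \cong (\ZZ/N\ZZ)\G$ we may work over a finite coefficient ring; in fact by the Chinese Remainder Theorem we may assume $N = p$ is prime, and then $\mathbb{F}_p\G$ is the relevant ring. The orbit being finite and contained in $R/Rf$ means the image $\bar g$ of $g$ in $\mathbb{F}_p\G / \mathbb{F}_p\G \bar f$ spans a finite-dimensional $\mathbb{F}_p$-subspace that is $\G$-invariant, hence a finite-dimensional $\mathbb{F}_p\G$-submodule on which $\G$ acts through a finite quotient group.

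\emph{Step 2: Use that $\G$ has no nontrivial finite quotient acting faithfully on the relevant piece.} The central element $z$ acts on $R/Rf$ by an automorphism of infinite order when $f \ne 0$: indeed $z^k - 1$ is not a zero-divisor and does not lie in $Rf$ for $k \ne 0$ unless $f$ itself is, roughly, a unit times a divisor of $z^k-1$ — but $z^k - 1$ factors only into cyclotomic-type pieces in the \emph{central} subring $\ZZ[z^\pm]$, and one checks via the Newton polygon that any $f$ dividing $z^k - 1$ on the left in $R$ must have $\Newt(f)$ a single point, i.e.\ $f \in \ZZ[z^\pm]$. Handling that residual case $f \in \ZZ[z^\pm]$ separately: there $R/Rf \cong (\ZZ[z^\pm]/(f))\langle x,y\rangle$-type object on which $x, y$ act by shifts generating an infinite orbit of any nonzero element, because conjugation by $x$ and $y$ on the $z$-variable is trivial but translation in the $x,y$-lattice is free. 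This is where the noncommutative structure $yx = xyz$ is actually used only mildly; the key is freeness of the $\ZZ^2$-action underlying $\G/\Z$.

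\emph{Step 3: Conclude.} In all cases a nonzero element of $R/Rf$ has either its $\langle z\rangle$-orbit infinite (when $f \notin \ZZ[z^\pm]$) or its $\langle x \rangle$- or $\langle y\rangle$-orbit infinite (when $f \in \ZZ[z^\pm]$, $f\ne 0$), so by Lemma~1.2 the action $\af$ is ergodic.

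\emph{Main obstacle.} The delicate point is Step~2: ruling out finite $\G$-orbits requires knowing that $z - 1$ (and more generally $z^k - 1$) is not a left zero-divisor modulo $Rf$, equivalently that $Rf + R(z^k-1) \ne R$ forces control on $f$. The clean tool is the Newton polygon: since $\Newt(gh) = \Newt(g) + \Newt(h)$ and $z^k - 1$ has Newton polygon the single point $\{(0,0)\}$, any relation expressing membership in $Rf$ is constrained in the $(x,y)$-degrees, and one must carefully track what happens in the center $\ZZ[z^\pm]$, where honest cyclotomic factorization can occur. Making the reduction modulo $p$ interact correctly with this Newton-polygon bookkeeping — ensuring no finite-dimensional invariant subspace sneaks in over $\mathbb{F}_p$ — is the part that needs genuine care rather than routine verification.
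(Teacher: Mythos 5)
Your Step 1 contains a fatal error that the rest of the argument inherits. Finiteness of the $\G$-orbit of $g+Rf$ does \emph{not} mean that the cyclic module $R\cdot(g+Rf)$ is finite as a set, nor that the annihilator ideal $I=\{h\in R: hg\in Rf\}$ has finite additive index, nor that $Ng\in Rf$ for some integer $N\ge 1$. The orbit is the set of translates $\{\gamma\cdot g+Rf:\gamma\in\G\}$; its finiteness says only that the stabilizer subgroup $\{\gamma\in\G:(\gamma-1)g\in Rf\}$ has finite index in $\G$. The $\ZZ$-span of a finite orbit can perfectly well be an infinite torsion-free group: for the left ideal $\gp=R(x-1)+R(y-1)$ one has $R/\gp\cong\ZZ$ and the element $1$ has a one-point orbit, yet generates the infinite module $\ZZ$ and satisfies no relation $N\cdot 1\in\gp$. (This is exactly the paper's Example \ref{exam:nonergodic} of a nonergodic action.) Consequently your reduction modulo a prime $p$ and the passage to $\mathbb{F}_p\G$ never get off the ground, and Steps 2 and 3, which presuppose a finite-dimensional invariant piece over $\mathbb{F}_p$, have nothing to act on.

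What a finite orbit actually yields is the existence of $m,l,n\ge 1$ and $g_1,g_2,g_3\in R$ with $(x^m-1)g=g_1f$, $(y^l-1)g=g_2f$, $(z^n-1)g=g_3f$, and the entire content of the theorem is to upgrade these three relations to $g\in Rf$. Your proposal never engages with this step: the Newton-polygon remarks in Step 2 concern whether $z^k-1$ itself lies in $Rf$, which is not the relevant question (the issue is whether $(z^n-1)h\in Rf$ can happen for $h\notin Rf$), and your closing paragraph concedes that the key point ``needs genuine care rather than routine verification,'' i.e.\ it is left unproved. For comparison, the paper closes this gap either by a Gauss-lemma argument with the content function $c(\cdot)$ valued in the UFD $\ZZ[z^{\pm}]$ (showing $c(f)\mid c(g)$ from the $x^m$-relation and then that $z^n-1$ divides $c(g_3)$, so $g=[g_3/(z^n-1)]f$), or by a bounded-cocycle argument on the finite-index subgroup $\<x^K,y^L,z^{KL}\>$ using amenability and the one-endedness of that subgroup. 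Either route, or a genuinely new one, would need to be supplied; as written the proposal does not prove the theorem.
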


\begin{proof}
   We give a brief sketch of the proof. First recall that $\ZZ\Z$ is a
   unique factorization domain. Define the \textit{content} $c(g)$ of
   $g=\sum_{i,j}g_{ij}(z)x^iy^j\in\ZG\setminus \{0\}$ to be the greatest
   common divisor in $\ZZ\Z$ of the nonzero coefficient polynomials
   $g_{ij}(z)$, and put $c(0)=0$.  A simple variant of the proof of
   Gauss's lemma shows that $c(gh)=c(g)c(h)$ for all $g,h\in\ZG$.

   Now fix $f\in\ZG = R$. The case $f=0$ is trivial, so assume that
   $f\ne0$. Suppose that $h+Rf$ has finite $\G$-orbit in $R/Rf$. Then
   there are $m,n\ge1$ such that $(x^m-1)h=g_1 f$ and $(z^n-1)h=g_2f$
   for some $g_1,g_2\in R$. Then
   \begin{displaymath}
      c\bigl((x^m-1)h\bigr)=c(x^m-1)c(h)=c(h)=c(g_1)c(f),
   \end{displaymath}
   so that $c(f)$ divides $c(h)$ in $\ZZ\Z$. Also,
   \begin{displaymath}
      c\bigl((z^n-1)h\bigr)=(z^n-1)c(h)=c(g_2)c(f),
   \end{displaymath}
   so that $(z^n-1)[c(h)/c(f)]=c(g_2)$, and hence $z^n-1$ divides
   $c(g_2)$. Thus $g_2/(z^n-1)\in R$, and so $h=[g_2/(z^n-1)]f\in Rf$,
   showing that $h+Rf=0$ in $R/Rf$.
\end{proof}

Hayes called a group $\Del$ \textit{principally ergodic} if every
principal algebraic $\Del$-action is ergodic. He extended Theorem
\ref{thm:hayes} to show that the following classes of groups are
principally ergodic: torsion-free nilpotent groups that are not
virtually cyclic (i.e., do not contain a cyclic subgroup of finite
index), free groups on more than one generator, and groups that are not
finitely generated. Clearly $\ZZ$ is not principally ergodic, since for
example the action of $x$ on the module $\ZZ[x^{\pm}]/\<x^k -1\>$
dualizes to a $k\times k$ permutation matrix on ~$\TT^k$, which is not
ergodic.

Recently Li, Peterson, and the second author used a very
different approach to proving principal ergodicity, based on
cohomology \cite{LiPetersonSchmidt}. These methods greatly increased the collection
of countable discrete groups known to be principally ergodic, including
all such groups that contain a finitely generated amenable subgroup that
is not virtually cyclic.

We will describe now how their ideas work in the case of $\G$. We begin
by describing two important properties of finite-index subgroups of
~$\G$, namely that they are amenable and have only one end.

For an arbitrary discrete group $\Del$ let
$\lodr=\{w\in\RR^\Del:\|w\|_1:=\sum_\del |w_{\del}|<\infty\}$ and
$\lidr=\{w\in\RR^\Del:\|w\|_\infty:=\sup_{\del}|w_{\del}|<\infty\}$, so that $\lidr$
is the dual space to $\lodr$.

Fix $K, L\ge1$, and put $M=KL$. Let $\Lam=\Lam_{KLM}=\< x^K,y^L,z^M\>$,
the finite-index subgroup of $\G$ generated by $x^K, y^L$, and $z^M$.

\begin{lemma}\label{lem:amenability}
   Let $\Lam=\Lam_{KLM}$ and suppose that $\{T_\lam :\lam\in\Lam\}$ is
   an action of $\Lam$ by continuous affine operators on $\ligr$. If $C$
   is a weak*-compact, convex subset of $\ligr$ such that
   $T_\lam(C)\subset C$ for every $\lam\in\Lam$, then there is a common
   fixed point $b\in C$ for all the $T_\lam$.
\end{lemma}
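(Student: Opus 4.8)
The statement is exactly the fixed-point property that characterizes amenability (the Day fixed-point theorem), so the plan is to reduce it to the known amenability of $\Lam=\Lam_{KLM}$. The key observation is that $\Lam$, being a finite-index subgroup of the finitely generated nilpotent group $\G$, is itself finitely generated and nilpotent, hence amenable (it is even polycyclic). Amenability of a discrete group $\Lam$ is equivalent to the statement that every affine action of $\Lam$ by continuous maps on a weak*-compact convex subset of the dual of a Banach space has a fixed point; here the relevant Banach space is $\lodr$ with $\Lam$ acting on its dual $\ligr$. So once we invoke amenability, the conclusion is immediate from the general Day–Glicksberg result.

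First I would recall the definition of amenability via the existence of a left-invariant mean on $\ell^\infty(\Lam,\RR)$, or equivalently via a F\o lner sequence. Then I would observe that $\Lam$ is finitely generated (finite-index subgroups of finitely generated groups are finitely generated, e.g.\ by Reidemeister–Schreier) and nilpotent of class $\le 2$, hence amenable, since all nilpotent — indeed all solvable — groups are amenable. Alternatively one can exhibit an explicit F\o lner sequence: the boxes $\{x^{Ki}y^{Lj}z^{Mk}: 0\le i,j< N,\ 0\le k< N^2\}$ work, using the commutation relations to check that translating by a generator changes such a box only in a lower-order layer, so the symmetric difference has size $O(N^3)$ while the box has size $N^4$.

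Next, with amenability in hand, I would apply the standard fixed-point theorem: if $\Lam$ is amenable and acts by continuous affine transformations on a nonempty weak*-compact convex subset $C$ of the dual of a separable Banach space, leaving $C$ invariant, then $C$ contains a $\Lam$-fixed point. In our setting $C\subset\ligr=(\lodr)^*$ is weak*-compact and convex, the operators $T_\lam$ are affine and continuous with $T_\lam(C)\subset C$, and $C$ is nonempty (take any point and average, or just note compactness of a nonempty set is assumed implicitly — if one wants $C\ne\emptyset$ explicitly it should be part of the hypothesis). Fix any $b_0\in C$; for a F\o lner sequence $(F_n)$ consider the averages $b_n=\frac1{|F_n|}\sum_{\lam\in F_n}T_\lam b_0\in C$, pass to a weak*-limit point $b\in C$ by compactness, and use the near-invariance of $F_n$ together with affineness and weak*-continuity of each $T_\mu$ to conclude $T_\mu b=b$ for all $\mu\in\Lam$.

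The only real subtlety — and the step I would flag as the main point to get right — is the interchange of limits in the averaging argument: one must check that $T_\mu$ applied to the F\o lner average differs from the average by a term that is weak*-small, which uses continuity (in fact it suffices that each $T_\mu$ be weak*-weak*-continuous, and affine continuous operators on $\ligr$ of the relevant form are; this is why the lemma restricts to such $T_\lam$). Since affine maps need not be weak*-continuous in general, one either builds this into the meaning of ``continuous'' here or notes that the $T_\lam$ arising in the intended application are weak*-continuous by construction. Apart from that, the proof is a direct citation of amenability of polycyclic groups plus the Day fixed-point theorem, so I would keep the write-up short and refer to a standard source.
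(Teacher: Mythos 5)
Your proposal is correct and follows essentially the same route as the paper: the explicit F{\o}lner boxes $\{x^{Ki}y^{Lj}z^{Mk}:0\le i,j<N,\ 0\le k<N^2\}$ you suggest are exactly the sets $F_n$ used there, and the averaging of $T_\lam(b_0)$ over them followed by extraction of a weak*-limit point is the paper's argument verbatim. Your remark that the continuity of the $T_\lam$ must be understood as weak*-continuity for the limit interchange to work is a fair point that the paper's write-up passes over quickly.
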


\begin{proof}
   Put
   \begin{displaymath}
      F_n=\{x^{pK}, y^{qL}, z^{rM}: 0\le p<n, 0\le q<n, 0\le r < n^2\}.
   \end{displaymath}
   The condition of the powers of $z$ is imposed so that any distortion
   caused by left multiplication of $F_n$ by a given element
   $\lam\in\Lam$ is eventually small. More precisely, for every
   $\lam\in\Lam$ we have that
   \begin{displaymath}
      \frac{|\lam F_n\setdiff F_n|}{|F_n|} \to 0 \text{\quad as $n\to\infty$},
   \end{displaymath}
   where $\setdiff$ denotes symmetric difference and $|\cdot |$ denotes
   cardinality.

   Now fix $b_0\in C$, and let $b_n=\frac1{|F_n|} \sum_{\lam\in
   F_n}T_\lam(b_0)$. Then $b_n\in C$ since $C$ is convex. Since $C$
   is weak*-compact, there is a subsequence $b_{n_j}$ converging weak*
   to some $b\in C$. Note that $\sup _{c\in C}\|c\|_\infty<\infty$ by
   compactness of $C$. Then since each $T_{\theta}$ is continuous, we have
   that $T_\theta(b_{n_j})\to T_\theta(b)$ for every $\theta
   \in\Lambda$. Furthermore,
   \begin{displaymath}
      \| T_\theta(b_{n_j})-b_{n_j}\|_\infty \le
      \frac{|\theta F_{n_j}\setdiff F_{n_j}|}{|F_n|} \cdot
      \sup_{c\in C}\|c\|_\infty \to 0 \text{\quad as \quad $j\to\infty$}.
   \end{displaymath}
   It follows that $T_\theta(b)=b$ for all $\theta\in\Lam$.
\end{proof}

The essential point in the previous proof is that $\Lam$ is amenable, and
that $\{F_n\}$ forms a F{\o}lner sequence.

\smallskip We call a set $A\subset\Lam=\Lam_{KLM}$ \textit{almost invariant} if $|A\lam\setdiff A|$ is finite for every $\lam\in\Lam$. Clearly, if $A$ is almost invariant, then so is $A\lambda '$ for every $\lambda '\in \Lambda $.

\begin{lemma}\label{lem:ends}
   Let $A\subset\Lam=\Lam_{KLM}$ be an infinite almost invariant subset. Then
   $\Lam\smallsetminus A$ is finite.
\end{lemma}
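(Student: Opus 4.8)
The plan is to show that an infinite almost invariant set $A$ has finite complement by exploiting the fact that $\Lam=\Lam_{KLM}$ is isomorphic to a copy of the discrete Heisenberg group, hence is a finitely generated group of polynomial growth of degree $4$, and in particular is one-ended. Concretely, I would first reduce to working with the word metric on $\Lam$ coming from its three generators $x^K, y^L, z^M$, and recall that the boundary $\partial A := \{\lam\in A: \lam\lam'\notin A \text{ for some generator }\lam'\}$ is finite: indeed almost invariance under each of the finitely many generators forces $A\lam'\setdiff A$ to be finite for each generator, and $\partial A$ is contained in the union of these finite sets together with their translates. So $A$ is an infinite set with finite boundary, and symmetrically $\Lam\setminus A$, if infinite, would also have finite boundary (the boundary of the complement is comparable to $\partial A$).

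Next I would invoke the structure of $\Lam$ to rule out this from happening. The cleanest route is a direct isoperimetric argument: in a group of polynomial growth of degree $D\ge 2$, any finite set $S$ satisfies $|\partial S|\gtrsim |S|^{(D-1)/D}$ (a Coulhon--Saloff-Coste / Loomis--Whitney type inequality), and here $D=4$. If both $A$ and $\Lam\setminus A$ were infinite with finite boundary, then for large metric balls $B_n$ the set $S_n := A\cap B_n$ would have $|S_n|\to\infty$ while $|\partial S_n|$ stays bounded by $|\partial A| + |\partial B_n|$ — but $\partial B_n$ cannot be controlled that way, so instead I would argue more carefully: fix $R$ so that $\partial A\subset B_R$, and then for $\lam\in\Lam\setminus B_R$, membership of $\lam$ in $A$ is "locally constant" in the sense that $\lam$ and its generator-neighbors all lie on the same side. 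Since $\Lam\setminus B_R$ is connected in the Cayley graph (this is exactly one-endedness, which holds because $\Lam$ is an infinite, finitely generated group that is not virtually cyclic — its growth is degree $4$), it follows that $\Lam\setminus B_R$ lies entirely inside $A$ or entirely inside $\Lam\setminus A$. Since $A$ is infinite, it must be the former, so $\Lam\setminus A\subset B_R$ is finite.

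The main obstacle — really the only nontrivial ingredient — is establishing that $\Lam$ has exactly one end, i.e.\ that $\Lam\setminus B_R$ is connected in the Cayley graph for every $R$. I would handle this either by citing the general fact that a finitely generated group has one end iff it is infinite and not virtually cyclic and not a nontrivial amalgam/HNN over a finite group (Stallings' theorem), noting $\Lam\cong\G$ is infinite, torsion-free, and not virtually cyclic; or, to keep things self-contained and concrete in the spirit of this survey, by giving an explicit path-connectedness argument: any two elements $z^{aM}x^{bK}y^{cL}$ of large word-length can be joined by first moving out along the central direction $z^{M}$ to very large height (the center alone already shows the complement of any ball in the $z$-direction is infinite and one can route through it), since conjugation relations let one reach arbitrarily large $z$-powers while staying outside $B_R$, and then descending to the target. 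Once one-endedness is in hand, the rest of the argument is the short topological dichotomy above, and I would present it in a couple of lines.
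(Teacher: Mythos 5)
Your proposal is correct and follows essentially the same route as the paper: both arguments observe that almost invariance makes the edge boundary of $A$ finite, enclose it in a finite box/ball, and then use connectivity of the complement of that box in the Cayley graph (one-endedness of $\Lam\cong\G$) to conclude that $A$ contains everything outside it. The paper simply asserts the existence of the avoiding path where you flag it as the one nontrivial ingredient and offer Stallings or an explicit routing through the $z$-direction; your abandoned isoperimetric detour is unnecessary but harmless.
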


\begin{proof}
   Let $S=\{x^K,x^{-K},y^L,y^{-L},z^M,z^{-M}\}$ be a set of generators
   for $\Lam$. The \textit{Cayley graph} $\mathscr{G}$ of $\Lam$ with
   respect to $S$ has as vertices the elements of $\Lam$, and for every
   vertex $\lam$ and $s\in S$ there is a directed edge from $\lam$ to $\lam
   s$. Let $E$ be the union of $As\setdiff A$ over $s\in S$, so $E$ is
   finite. We can therefore enclose $E$ in a box of the form
   $B=\{x^{iK}y^{kL}z^{lM}:|i|,|j|,|k|\le n\}$. Since $A$ is infinite,
   choose $a\in A\setminus B$. Then for every $b\in \Lambda \setminus B$ there
   is a finite directed path in $\mathscr{G}$ from $a$ to $b$ that
   avoids $B$, say with vertices  $a$, $as_1$,
   $as_1s_2$, $\ldots$, $as_1s_2\dots s_r=b$, and by definition of $E$
   each of these is in $A$. Hence $\Lam\setminus A\subset B$, and so is
   finite.
\end{proof}

\begin{proof}[Second proof of Theorem \ref{thm:hayes}, adapted from
d\cite{LiPetersonSchmidt}] Suppose that
$h\in\ZG$ with $h+\ZG f$ having finite $\G$-orbit in $\ZG/\ZG f$.  We
will prove that $h\in\ZG$.

There are $K,L\ge1$ such that $(x^K-1)h\in\ZG f$ and $(y^L -1)h\in\ZG
f$. Then $x^{-K}y^Lx^K y^{-L}=z^{KL}$ also stabilizes $h+\ZG f$. Let
$M=KL$. Then there are $g_1,g_2,g_3\in\ZG$ such that $(x^K-1)h=g_1f$,
$(y^L-1)h=g_2f$, and $(z^M-1)h=g_3f$.

Consider the finite-index subgroup $\Lam=\Lam_{KLM}$ of $\G$ as
above. For every $\lam\in\Lam$ there is a $c(\lam)\in\ZG$ such that
$(\lam-1)h=c(\lam)f$, and this is unique since $\ZG$ has no
zero-divisors. Then $c\colon\Lam\to\ZG$ is a \textit{cocycle}, that is,
it obeys $c(\lam\lam')=c(\lam)+\lam c(\lam')$ for all
$\lam,\lam'\in\Lam$.

Consider $\ZG$ as a subset of $\ligr$. We claim that $c$ is a uniformly
bounded cocycle, i.e., that
$\sup_{\lam\in\Lam}\|c(\lam)\|_\infty<\infty$. The reason for this is
that we can calculate the value of $c(\lam)$ for arbitrary $\lam$ using
left shifts of the generators $g_i$ that are sufficiently spread out to
prevent large accumulations of coefficients. For example, if
$p,q,r\ge1$, then applying the cocycle equation first for powers of
$x^K$, then powers of $y^L$, and then powers of $z^M$, we get that
\begin{multline}\label{eqn:cocycle}
c(x^{pK}y^{qL}z^{rM})=g_1+x^Kg_1+\dots+x^{(p-1)K}g_1
+x^{pK}g_2+x^{pK}y^Lg_2+\cdots \\+ x^{pK}y^{(q-1)L}g_2+
x^{pK}y^{qL}g_3+x^{pK}y^{qL}z^Mg_3+\dots +x^{pK}y^{qL}z^{(r-1)M}g_3 .
\end{multline} Since the supports of the $g_i$ are finite, there is a
uniform bound $P<\infty$ so that for every $\gamma\in\G$ and every
$\lam\in\Lam$, there are at most $P$ summands in the expression
\eqref{eqn:cocycle} for $c(\lam)$ whose support contains $\gamma$. Hence
$\|c(\lam)\|_\infty\le P \sup_{1\le i\le 3}\|g_i\|_\infty =B<\infty$,
establishing our claim.

Now let $C$ be the closed, convex hull of $\{c(\lam):\lam\in\Lam\}$ in
$\ligr$, which is weak*-compact since the $c(\lam)$ are uniformly
bounded. Consider the continuous affine maps $T_\lam\colon
\ligr\to\ligr$ defined by $T_\lam(v)=\lam\cdot v+c(\lam)$. Then
$T_\lam\circ T_{\lam'}=T_{\lam\lam'}$ by the cocycle property of $c$,
and $T_\lam(c(\lam'))=c(\lam\lam')$, so that $T_\lam(C)\subset C$ for
all $\lam\in\Lam$. By Lemma \ref{lem:amenability} there is a common
fixed point $v=(v_\gamma)\in C$ for the $T_\lam$, so that $v-\lam\cdot
v=c(\lam)\in\ZG$ for all $\lam\in\Lam$.  Write each
$v_\gamma=w_\gamma+u_\gamma$ with $w_\gamma\in\ZZ$ and $u_\gamma\in[0,1)$. Then
\begin{displaymath}
u_\gamma-u_{\lam^{-1}\gamma}=v_{\gamma}-v_{\lam^{-1}\gamma}+
w_{\gamma}-w_{\lam^{-1}\gamma}\in (-1,1) \cap \ZZ=\{0\},
\end{displaymath} so that $w$ also satisfies that $w-\lam\cdot
w=c(\lam)$, where now $w\in\ell^\infty(\G,\ZZ)$ has integer
coordinates. Replacing $w$ with $-w$, we have found a $w\in
\ell^\infty(\G,\ZZ)$ with $\|w\|_\infty\le B$ and $\lam\cdot
w-w=c(\lam)$ for all $\lam\in\Lam$.

Next, we use Lemma \ref{lem:ends} to show that we can replace $w$ with
an element of $\ligz$ having finite support, and so is an element of
$\ZG$. Fix $\gamma\in\G$. For $-B\le k\le B$ consider the ``level set''
for the restriction of $w$ to the right coset $\Lam\gamma$,
$A_{\gamma,k}=\{\lam\in\Lam:w_{\lam^{-1}\gamma}=k\}$. We claim that for
each $\gamma$ there is exactly one $k$ for which $A_{\gamma,k}$ is
infinite. For suppose that $A_{\gamma,k}$ is infinite. Let
$\theta\in\Lam$. Since $\theta\cdot w-w=c(\theta)$ has finite support,
$w_{\theta^{-1}\lam^{-1}\gamma} = w_{\lam^{-1}\gamma}$ for all but
finitely many $\lam\in\Lam$. Hence $|A_{\gamma,k}\theta\setdiff
A_{\gamma,k}|<\infty$ for every $\theta\in\Lam$. By Lemma
\ref{lem:ends}, we see that $\Lam\setminus A_{\gamma,k}$ is
finite. Hence for every $\gamma\in\G$ we can adjust the value of $w$ on
the coset $\Lam\gamma$ so that the restriction of $w$ to $\Lam\gamma$
has finite support. Doing this for each of the finitely many right
cosets $\Lam\gamma$ results in a $w$ with finite support on $\G$, so
that $w\in\ZG$.

Thus $c(\lam)=(\lam-1)w$ for every $\lam\in\Lam$. Since
$(\lam-1)h=c(\lam)f=(\lam-1)wf$, we obtain $h=wf\in\ZG f$, as required.
\end{proof}

\medskip

Theorem \ref{thm:hayes} answers the $1\times1$ case of the following
natural question.

\begin{problem}\label{prob:ergodicity}
   Describe or characterize those $F\in R^{k\times l}$ for which
   $\aF$ is ergodic, or, equivalently, those noetherian $R$-modules $M$
   for which $\aM$ is ergodic. Is there a finite algorithm that will
   decide whether or not a given $\aF$ is ergodic? Are there easily
   checked sufficient conditions on $F$ for ergodicity of ~$\aF$?
\end{problem}

Einsiedler and Rindler provided one answer to Problem
\ref{prob:ergodicity}, which involves the notion of prime ideals in
$R$. A two-sided ideal $\gp$ in $R$ is \textit{prime} if whenever $\ga$ and
$\gb$ are two-sided ideals in $R$ with $\ga \gb\subseteq \gp$, then
either $\ga\subseteq\gp$ or $\gb\subseteq\gp$. If $N$ is an
$R$-submodule of an $R$-module $M$, then the annihilator $\ann_R(N)$ of $N$ is
defined as $\{f\in R: fn=0 \text{ for all $n\in N$}\}$, which is a
two-sided ideal in $R$. A prime ideal $\gp$ is \textit{associated to}
$M$ if there is a submodule $N\subseteq M$ such that for every nonzero
submodule $N'\subseteq N$ we have that $\ann_R(N')=\gp$. Every
noetherian $R$-module has associated prime ideals, and there are only
finitely many of them.

Call a prime ideal $\gp$ of $R$ \textit{ergodic} if the subgroup
$\{\gamma\in\G:\gamma-1\in\gp\}$ of $\G$ has infinite index in $\G$. For
instance, the ideal $\gp$ in Example \ref{exam:nonergodic} is prime
(being the kernel of a ring homomorphism onto an commutative integral domain), but
is not ergodic. It is easy to verify that a prime ideal is ergodic if
and only if $\al_{R/\gp}$ is ergodic, and that the only ideal associated
with $R/\gp$ is $\gp$.

\begin{theorem}[{\cite[Thm.\ 3.3]{EinsiedlerRindler}}]
   Let $M$ be a noetherian $R$-module. Then $\aM$ is ergodic if and only
   if every prime ideal associated with $M$ is ergodic.
\end{theorem}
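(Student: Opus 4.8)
The plan is to reduce the general statement to the structure theory of noetherian modules and then apply the ergodicity criterion Lemma~1.2 together with Theorem~\ref{thm:hayes}-type arguments. Recall that a noetherian $R$-module $M$ admits a finite filtration $0=M_0\subset M_1\subset\dots\subset M_n=M$ in which each successive quotient $M_i/M_{i-1}$ is isomorphic to $R/\gp_i$ for some prime ideal $\gp_i$ of $R$, and moreover the set of primes arising this way contains all primes associated with $M$; conversely every associated prime appears among the $\gp_i$. Since $\al_M$ is ergodic precisely when, by Lemma~1.2, the $\G$-orbit of every nonzero $m\in M$ is infinite, I would first prove the two implications separately, exploiting that finite $\G$-orbits in a submodule or a quotient module pull back and push forward in controlled ways.

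First I would do the easy direction: if some associated prime $\gp$ of $M$ is \emph{not} ergodic, then $\al_M$ is not ergodic. By definition of associated prime there is a submodule $N\subseteq M$ with $\ann_R(N')=\gp$ for every nonzero $N'\subseteq N$; in particular $N$ contains a copy of $R/\gp$ (take $N'=Rm$ for $0\ne m\in N$, so $Rm\cong R/\ann_R(m)=R/\gp$). If $\gp$ is not ergodic, the subgroup $\G_\gp=\{\gamma\in\G:\gamma-1\in\gp\}$ has infinite index, hence — using that $\G$ is finitely generated — it actually contains a finite-index subgroup's worth of relations forcing $1+\gp$ to have finite $\G$-orbit in $R/\gp$; concretely, the image of $1$ in $R/\gp$ is fixed by every $\gamma\in\G_\gp$, and since $\G_\gp\cap\Z$ is nontrivial (any infinite-index subgroup missing the center would have to be finite by the structure of $\G$, which is false here — one checks $z^m-1\in\gp$ for suitable $m$ whenever the action on $R/\gp$ is to have a chance of being non-ergodic), the $\G$-orbit of $1+\gp$ is finite. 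This exhibits a nonzero element of $M$ with finite orbit, so $\al_M$ is non-ergodic by Lemma~1.2. (The cleaner route is simply: $\gp$ not ergodic $\iff$ $\al_{R/\gp}$ not ergodic, which is asserted in the excerpt, and $R/\gp$ embeds in $M$.)

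The harder direction is the converse: if every associated prime of $M$ is ergodic, then $\al_M$ is ergodic. Suppose not; then by Lemma~1.2 there is $0\ne m\in M$ whose $\G$-orbit is finite, so the submodule $L=\ZG\cdot m$ is a finitely generated $R$-module on which $\G$ acts through a finite quotient — equivalently, $\ann_R(m)$ contains $\gamma-1$ for all $\gamma$ in some finite-index subgroup of $\G$, hence contains $x^K-1$, $y^L-1$, and (by the commutator relation, as in the second proof of Theorem~\ref{thm:hayes}) $z^M-1$ for suitable $K,L,M\ge1$. Now $L\cong R/\fa$ where $\fa=\ann_R(m)$, and $L$ is itself noetherian, so it has an associated prime $\gq$; since $\gq\supseteq\fa$ contains $x^K-1,y^L-1,z^M-1$, the subgroup $\G_\gq$ has finite index in $\G$, so $\gq$ is not ergodic. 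But any prime associated with the submodule $L$ is also associated with $M$ (associated primes of a submodule are a subset of those of the module), contradicting the hypothesis.

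The main obstacle I expect is the bookkeeping around associated primes for \emph{noncommutative} noetherian rings: one must be careful that the prime-filtration and associated-prime facts used above (existence and finiteness of associated primes, their behaviour under sub- and quotient modules, the fact that $Rm\cong R/\ann_R(m)$ forces $\ann_R(m)$ to be prime when $Rm$ sits inside an $N$ as in the definition) are all valid for two-sided prime ideals in the group ring $R=\ZG$, which is polycyclic hence noetherian but not commutative. The relevant theory — affiliated/associated primes of noetherian modules over polycyclic group rings — is standard (see Passman's book, already cited), but I would need to state precisely which version is being invoked, in particular that every nonzero noetherian module has an \emph{affiliated} prime $\gp$ realized as $\ann_R(N')$ for all nonzero $N'$ in a suitable submodule $N$, matching the excerpt's definition. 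Once that dictionary is pinned down, both implications are short applications of Lemma~1.2 exactly as above.
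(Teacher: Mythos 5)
The survey itself does not prove this theorem --- it only cites \cite[Thm.\ 3.3]{EinsiedlerRindler} --- so your proposal has to stand on its own. Your skeleton (reduce both directions, via the orbit criterion for ergodicity, to comparing finite $\G$-orbits with the finite-index condition on $\{\gamma\in\G:\gamma-1\in\gp\}$) is the right one, and the easy direction is essentially sound once two slips are repaired: ``$\gp$ not ergodic'' means that subgroup has \emph{finite} index (you wrote infinite at one point), and you neither need nor have justified the embedding $R/\gp\hookrightarrow M$. From $\ann_R(Rm)=\gp$ you only get $\gp\subseteq\{f\in R:fm=0\}$, since the element-annihilator is a possibly strictly larger left ideal; but that weaker containment already gives $(\gamma-1)m=0$ for every $\gamma$ with $\gamma-1\in\gp$, so $m$ is a nonzero element with finite orbit and you are done.

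The genuine gap is in the converse, at the step ``since $\fq\supseteq\fa$ contains $x^K-1,\dots$''. Here $\fa=\ann_R(m)$ is the \emph{left-ideal} annihilator of the element $m$, whereas an associated prime $\fq$ of $L=Rm$ is only guaranteed to contain the two-sided annihilator $\ann_R(L)=\{f:f\ell=0 \text{ for all }\ell\in L\}$, which sits \emph{inside} $\fa$, not the other way around; and knowing $(x^K-1)m=0$ does not give $(x^K-1)gm=0$ for all $g\in R$, because $x^K-1$ and $g$ do not commute. This is exactly where the noncommutativity of $R$ bites. The missing idea is short: since $m$ has finite orbit, $L=Rm$ is the $\ZZ$-span of the finite set $\{\gamma\cdot m:\gamma\in\G\}$, which $\G$ permutes, so the kernel of the $\G$-action on $L$ (equivalently, the normal core of the stabilizer of $m$) is a finite-index subgroup $\Lam^*$ with $\lam-1\in\ann_R(L)\subseteq\fq$ for all $\lam\in\Lam^*$; hence $\fq$ is a non-ergodic prime associated with $L$, and the same witnessing submodule $N\subseteq L\subseteq M$ shows it is associated with $M$. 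With that substitution your argument closes. (The prime filtration you invoke at the outset is never used, and in the noncommutative setting its successive quotients need not be of the form $R/\gp_i$, so it is best dropped; likewise the parenthetical about infinite-index subgroups necessarily meeting the center is false --- $\langle x\rangle$ is an infinite, infinite-index subgroup meeting $\Z$ trivially --- but nothing depends on it.)
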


\begin{example}\label{exam:nonergodic}
   Let $\gp$ be the left ideal in $R$ generated by $x-1$ and $y-1$.
   Then
   \begin{displaymath}
      z-1=(y-z)(x-1)+(1-xz)(y-1)\in\gp,
   \end{displaymath}
   and so the map $\phi\colon R\to\ZZ$ defined by $\phi(f)=f(1,1,1)$ is
   a well-defined surjective ring homomorphism with kernel $\gp$. Thus
   $\gp$ is a prime ideal with
   $R/\gp$  isomorphic to $\ZZ$. The dual $\G$-action is simply
   the identity map on $\TT$, which is nonergodic. Hence $\gp$ is a left
   ideal generated by two elements with $\al_{R/\gp}$ nonergodic,
   showing that Theorem ~\ref{thm:hayes} does not extend to nonprincipal
   actions.

   We remark that if we consider $\ZZ^3$ instead of $\G$, then the
   characterization of ergodic prime ideals in
   \cite[Thm. 6.5]{SchmidtBook} shows that their complex variety is
   finite, and in particular by elementary dimension theory they must
   have at least three generators.
\end{example}

A relatively explicit description of all prime ideals in $R$ is given in
\cite{MacKenzie}.

\begin{problem}
   Characterize the ergodic prime ideals in $R$.
\end{problem}

An answer to this problem would reduce Problem \ref{prob:ergodicity}
to computing the prime ideals associated to a given noetherian
$R$-module. However, this appears to be difficult, even for modules of
the form $R/Rf$, although it follows from Theorem
\ref{thm:hayes} that all prime ideals associated to $R/Rf$ must be
ergodic.

\section{Mixing}\label{sec:mixing}
Let $\Del$ be a countable discrete group, and let $M$ be a left
$\ZD$-module. Denote by $\mu$ Haar measure on $X_M$. The associated
algebraic $\Del$-action $\aM$ is called \textit{mixing} if, for every pair of
measurable sets $E,F\subset X_M$, we have that
$\lim_{\del\to\infty}\mu(\aM^\del(E)\cap F)=\mu(E)\mu(F)$, where
$\del\to\infty$ refers to the one-point compactification of $\Del$.
For $m\in M$, the \textit{stabilizer} of $m$ is the subgroup
$\{\del\in\Del: \del\cdot m=m\}$.

\begin{proposition}[{\cite[Thm.\ 1.6]{SchmidtBook}}]\label{prop:general-mixing}
   The algebraic $\Del$-action $\aM$ is mixing if and only if the
   stabilizer of every nonzero $m\in M$ is finite. In the case $\Del=\G$,
   this is equivalent to requiring that for every nonzero $m\in M$ the
   map $\gamma\mapsto \gamma\cdot m$ is injective on $\G$.
\end{proposition}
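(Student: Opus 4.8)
\section*{Proof proposal}

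The plan is to obtain the general equivalence from Pontryagin duality and orthogonality of characters, exactly in the spirit of the remark following Lemma~3.x on ergodicity, and then to read off the Heisenberg refinement from the fact that $\G$ is torsion-free. Recall that the characters of $X_M=\widehat M$ are precisely the elements of $M$: to $m\in M$ corresponds $\chi_m(t)=e^{2\pi i\langle t,m\rangle}$. With the ``opposite'' shift convention fixed in Section~\ref{sec:algebraic-actions} one has $\chi_m\circ\aM^\gamma=\chi_{\gamma^{-1}\cdot m}$, and keeping track of this inverse (as opposed to $\gamma\cdot m$) is the only bookkeeping point that requires a moment's care; since $\gamma\mapsto\gamma^{-1}$ is a bijection of $\G$ it is harmless in the end.

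For the first assertion I would argue as follows. The action $\aM$ is mixing iff $\int_{X_M}\xi(\aM^\gamma t)\,\overline{\eta(t)}\,d\mu(t)\to\bigl(\int\xi\,d\mu\bigr)\bigl(\overline{\int\eta\,d\mu}\bigr)$ as $\gamma\to\infty$ for all $\xi,\eta\in L^2(X_M,\mu)$, and by density of trigonometric polynomials it suffices to check this on characters. For nonzero $m,m'\in M$, orthogonality of characters gives that $\int_{X_M}\chi_{\gamma^{-1}\cdot m}(t)\,\overline{\chi_{m'}(t)}\,d\mu(t)$ equals $1$ if $\gamma^{-1}\cdot m=m'$ and $0$ otherwise, while $\int\chi_m\,d\mu=0$ for $m\ne0$. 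Hence mixing is equivalent to the statement that for every nonzero $m\in M$ and every $m'\in M$ the set $\{\gamma\in\Del:\gamma\cdot m=m'\}$ is finite. Taking $m'=m$ shows that mixing forces every nonzero stabilizer to be finite, since a nonempty such set is a coset of the stabilizer of $m$. Conversely, if every nonzero stabilizer is finite, then every set $\{\gamma:\gamma\cdot m=m'\}$ is either empty or a coset of the finite group $\mathrm{Stab}(m)$, hence finite, and mixing follows. This proves the equivalence for an arbitrary countable group $\Del$.

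For the second assertion, note first that the orbit map $\gamma\mapsto\gamma\cdot m$ is injective if and only if $\mathrm{Stab}(m)=\{e\}$, because $\gamma\cdot m=\gamma'\cdot m$ iff $\gamma^{-1}\gamma'\in\mathrm{Stab}(m)$. Thus it suffices to show that for $\Del=\G$ the conditions ``every nonzero stabilizer is finite'' and ``every nonzero stabilizer is trivial'' coincide; one direction is vacuous. The other follows from the fact that $\G$, realized as the group of upper unitriangular matrices in $SL(3,\ZZ)$, is torsion-free (if $A=I+N$ with $N$ strictly upper triangular and $A^n=I$ for some $n\ge1$, inspecting superdiagonal entries forces $N=0$); since every finite subgroup of a torsion-free group is trivial, any finite stabilizer in $\G$ must equal $\{e\}$. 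I do not anticipate a genuine obstacle here: the content is entirely the reduction of mixing to characters together with the structural fact that $\G$ has no torsion, and the only thing to be careful about is the direction of the $\G$-action under duality noted above.
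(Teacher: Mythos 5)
Your argument is correct, and it is the standard proof of this statement: the paper itself gives no proof (it simply cites \cite[Thm.\ 1.6]{SchmidtBook}), and the character-orthogonality reduction you use, with the finiteness of $\{\gamma:\gamma\cdot m=m'\}$ as a coset of the stabilizer, is exactly how the cited result is established. Your handling of the dual action's inverse and the torsion-freeness of $\G$ for the second assertion are both accurate, so there is nothing to add.
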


Using this together with some of the ideas from the previous section, we
can give a simple criterion for $g(z)\in \ZZ\Z=\ZZ[z^{\pm}]$ so that $\al_g$ is
mixing.

\begin{proposition}
   Let $g=g(z)\in \ZZ\Z$. Then the principal $\G$-action $\al_g$ is mixing
   if and only if $g(z)$ has no roots that are roots of unity.
\end{proposition}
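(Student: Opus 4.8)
The plan is to apply Proposition~\ref{prop:general-mixing}. Since the module here is $M=R/Rg$ with $R=\ZG$, the action $\al_g$ is mixing exactly when, for every $h\in R$ with $h\notin Rg$ and every $\del\in\G\setminus\{1\}$, one has $(\del-1)h\notin Rg$. The key structural remark is that $g=g(z)$ lies in the center $\ZZ\Z$ of $R$: writing $h=\sum_{i,j}h_{ij}(z)x^iy^j$, one has $h\in Rg$ if and only if $g(z)$ divides every coefficient polynomial $h_{ij}(z)$ in $\ZZ\Z$, equivalently if and only if $g\mid c(h)$, where $c$ denotes the content function from the proof of Theorem~\ref{thm:hayes}, and I use that $\ZZ\Z$ is a unique factorization domain and that $c$ is multiplicative up to units. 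We may assume $g\ne 0$, the case $g=0$ giving the (evidently mixing) shift on $\TT^\G$.

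For the \emph{if} direction, assume $g$ has no root that is a root of unity, and suppose toward a contradiction that $(\del-1)h=rg$ for some $r\in R$, some $h\notin Rg$, and some $\del\ne 1$; write $\del=z^cx^ay^b$. Taking contents and using multiplicativity, $c(\del-1)\,c(h)$ is an associate of $c(r)\,g$. If $(a,b)\ne(0,0)$, then the only nonzero coefficient polynomials of $\del-1$ are $z^c$ and $-1$, so $c(\del-1)$ is a unit, whence $g\mid c(h)$ and therefore $h\in Rg$, a contradiction. If $(a,b)=(0,0)$, then $\del=z^m$ for some $m\ne 0$, and $c(\del-1)$ is an associate of $z^m-1$, so $g\mid(z^m-1)\,c(h)$ in $\ZZ\Z$. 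Now every irreducible factor of $z^m-1$ in $\ZZ\Z$ is an associate of a cyclotomic polynomial and hence has a root of unity among its roots; since no root of unity is a root of $g$, no irreducible factor of $g$ divides $z^m-1$, so $\gcd(g,z^m-1)=1$, and the UFD property forces $g\mid c(h)$, so again $h\in Rg$, a contradiction. Hence $\al_g$ is mixing.

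For the \emph{only if} direction I argue the contrapositive. Suppose $g(\zeta)=0$ for some primitive $n$-th root of unity $\zeta$, and let $\Phi_n$ be the $n$-th cyclotomic polynomial. Since $\Phi_n$ is the minimal polynomial of $\zeta$ over $\QQ$ and is primitive, Gauss's lemma yields a factorization $g=\Phi_n g_1$ in $\ZZ\Z$ with $g_1\ne 0$; write also $z^n-1=\Phi_n\psi$. Put $h=\psi g_1\in\ZZ\Z\subset R$. Then $(z^n-1)h=\psi^2 g\in Rg$, so $z^n$ (which is $\ne 1$ in $\G$) lies in the stabilizer of $h+Rg$. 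On the other hand $h\notin Rg$: otherwise $\psi g_1=r\Phi_n g_1$ for some $r\in R$, and cancelling $g_1$ (legitimate since $R$ has no zero-divisors) gives $\psi=r\Phi_n$; comparing coefficient polynomials at $x^0y^0$ shows $\Phi_n\mid\psi$ in $\ZZ\Z$, forcing $\Phi_n^2\mid z^n-1$, which contradicts the fact that $z^n-1$ is squarefree. Thus $\gamma\mapsto\gamma\cdot(h+Rg)$ is not injective on $\G$, and $\al_g$ is not mixing by Proposition~\ref{prop:general-mixing}.

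The argument is largely mechanical once the content function and the UFD structure of $\ZZ\Z$ are in hand; the one point that needs care is the passage between the noncommutative ring $R$ and its central subring $\ZZ\Z$, namely observing that, because $g$ is central, divisibility by $g$ in $R$ is controlled entirely by divisibility of the content in $\ZZ\Z$, and then separating the two shapes of $\del-1$ according to whether $\del$ is central. A minor side issue is that the degenerate value $g=0$ must be set aside for the stated equivalence to hold verbatim.
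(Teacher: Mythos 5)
Your proof is correct and follows essentially the same route as the paper: both directions rest on the content function, its multiplicativity (the Gauss's lemma variant), and the observation that for central $g$ one has $h\in Rg$ iff $g\mid c(h)$, with the same case split according to whether $\delta$ is central. The only cosmetic difference is your witness $h=\psi g_1$ in the non-mixing direction, where the paper uses the simpler $h=g/g_0$ for a factor $g_0$ of $g$ dividing $z^n-1$; both work for the same reason.
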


\begin{proof}
   Suppose first that $g(z)$ has a root that is a root of unity, so that
   $g(z)$ has a factor $g_0(z)\in \ZZ\Z$ dividing $z^n-1$ for some
   $n\ge1$. Then $h=g/g_0\notin \ZG g$, but
   $(z^n-1)h=[(z^n-1)/g_0]g\in\ZG g$, so that $\al_g$ is not mixing by
   Prop.\ \ref{prop:general-mixing}.

   Conversely, suppose that $g$ has no root that is a root of
   unity. Recall that the content $c(h)$ of
   $h=\sum_{i,j}h_{ij}(z)x^iy^j$ is the greatest common divisor in $\ZZ\Z$
   of the $h_{ij}(z)$. Then $h\in\ZG g$ if and only if $g\mid c(h)$.

   Suppose that $(x^py^qz^r-1)h\in \ZG g$ with $(p,q)\neq (0,0)$. Then
   $g$ divides $c((x^py^qz^r-1)h)=c(h)$, showing that $h\in\ZG
   g$. Similarly, if $(z^r-1)h\in\ZG g$, then $g \mid (z^r-1)c(h)$, and
   by assumption $g$ is relatively prime to $z^r-1$. Hence again $g\mid
   c(h)$, and so $h\in\ZG g$. Then Prop.\ \ref{prop:general-mixing}
   shows that $\al_g$ is mixing.
\end{proof}

Using more elaborate algebra, Hayes found several sufficient conditions
on $f\in\ZG$ for $\af$ to be mixing. To make the cyclotomic nature of
these conditions clear, recall that the $n$-th \textit{cyclotomic polynomial}
$\Phi_n(u)$ is given by $\Phi_n(u)=\prod (u-\omega)$, where the product
is over all primitive $n$-th roots of unity. Each $\Phi_n(u)$ is
irreducible in $\QQ[u]$, and $u^n-1=\prod_{d\mid n}\Phi_d(u)$ is the
irreducible factorization of $u^n-1$ in $\QQ[u]$.

Let $u_1,\dots,u_r$ be $r$ commuting variables. Then a
\textit{generalized cyclotomic polynomial in} $\ZZ[u_1^{\pm}, \dots,
u_r^{\pm}]$ is one of the form $\Phi_k(u_1^{n_1}\dots u_r^{n_r})$ for
some $k\ge1$ and choice of integers $n_1,\dots,n_r$, not all $0$.

There is a well-defined ring homomorphism
$\pi\colon\ZG\to\ZZ[\xb^{\pm},\yb^{\pm}]$, where $\xb$ and $\yb$ are
commuting variables, given by $x\mapsto\xb$, $y\mapsto\yb$, and
$z\mapsto1$. For $f=\sum_{ij}f_{ij}(z)x^iy^j\in\ZG$, its image under
$\pi$ is $\fbar(\xb,\yb)=\sum_{ij}f_{ij}(1)\xb^i\yb^j$.

\begin{proposition}[\cite{Hayes2}]
   Each of the following conditions on $f\in\ZG$ is sufficient for
   $\al_f$ to be mixing:
   \begin{enumerate}
     \item $f\in\ZZ[x^{\pm},z^{\pm}]$ and $f$ is not divisible by a
      generalized cyclotomic polynomial in $x$ and $z$.
     \item $f\in \ZZ[y^{\pm},z^{\pm}]$ and $f$ is not divisible by a
      generalized cyclotomic polynomial in $y$ and $z$.
     \item $f=\sum_{ij}f_{ij}(z)x^iy^j$ with the content $c(f)$ not
      divisible by a cyclotomic polynomial in $z$, and $\fbar(\xb,\yb)$
      not divisible by a cyclotomic polynomial in $\xb$ and ~$\yb$.
   \end{enumerate}
\end{proposition}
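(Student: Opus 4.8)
The plan is to apply Proposition~\ref{prop:general-mixing}. Since $\G$ is torsion-free, $\al_f$ is mixing exactly when no nonzero element of $\ZG/\ZG f$ has nontrivial stabilizer; equivalently, in each of the three cases I must show that if $(\gamma-1)h\in\ZG f$ for some $h\in\ZG$ and some $\gamma\in\G\setminus\{1\}$, then already $h\in\ZG f$. (We may assume $f\neq0$, as $0$ is divisible by every cyclotomic polynomial.)

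Cases (1) and (2) are interchanged by the automorphism of $\G$ sending $x\mapsto y$, $y\mapsto x$, $z\mapsto z^{-1}$, which carries $\ZZ[y^\pm,z^\pm]$ onto $\ZZ[x^\pm,z^\pm]$, carries generalized cyclotomic polynomials to generalized cyclotomic polynomials, and intertwines $\al_f$ with $\al_{\theta(f)}$; so it suffices to treat (1). Put $P=\ZZ[x^\pm,z^\pm]$, a unique factorization domain containing $f$. From the identity $x^iy^j=y^jx^iz^{-ij}$ one obtains the decomposition $\ZG=\bigoplus_{j\in\ZZ}y^jP$, and since $f\in P$ this gives $\ZG f=\bigoplus_{j}y^j(Pf)$. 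Writing $\gamma=x^py^qz^r$ and $h=\sum_j y^jh_j$ (a finite sum, $h_j\in P$) and moving elements of $P$ past powers of $y$, one computes
\[
   (\gamma-1)h=\sum_{k}y^k\bigl(x^pz^{r-pk}h_{k-q}-h_k\bigr),
\]
so that $(\gamma-1)h\in\ZG f$ is equivalent to $x^pz^{r-pk}h_{k-q}-h_k\in Pf$ for all $k$. If $q\neq0$ (replacing $\gamma$ by $\gamma^{-1}$ we may take $q>0$), a descending induction on $k$, begun above the finite support of $h$ and using that $x^pz^{r-pk}$ is a unit of $P$, yields $h_k\in Pf$ for every $k$ --- no cyclotomic hypothesis is needed here. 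If $q=0$ the relations read $(x^pz^{r-pk}-1)h_k\in Pf$; since $(p,r-pk)\neq(0,0)$, the element $x^pz^{r-pk}-1$ is a product of generalized cyclotomic polynomials in $x,z$, each irreducible in $P$, and hypothesis (1) makes $f$ coprime to all of them, whence $f\mid h_k$. In either case $h\in\ZG f$.

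For case (3) I first reduce to $f$ primitive. Writing $f=c(f)f_1$ with $f_1$ primitive, the content $c(f)\in\ZZ[z^\pm]$ is central with no cyclotomic factor, so $\al_{c(f)}$ is mixing by the proposition already proved for elements of $\ZZ[z^\pm]$; also $\pi(f_1)$ inherits from $\fbar$ the property of having no generalized cyclotomic factor. The short exact sequence of $\G$-modules $0\to\ZG/\ZG c(f)\to\ZG/\ZG f\to\ZG/\ZG f_1\to0$ (the submodule $\ZG f_1/\ZG f$ being identified with $\ZG/\ZG c(f)$ via right multiplication by $f_1$), together with the observation --- immediate from the stabilizer criterion --- that an algebraic action is mixing whenever its restriction to a submodule and the induced action on the quotient both are, reduces the problem to showing $\al_{f_1}$ is mixing. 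So assume $f$ is primitive with $\fbar$ having no generalized cyclotomic factor in $\xb,\yb$, and suppose $h+\ZG f\neq0$ has a nontrivial stabilizer $S\le\G$. If $S\cap\Z\neq\{1\}$, pick $z^r\in S$ with $r\ge1$ and write $(z^r-1)h=gf$; multiplicativity of content (with $c(z^r-1)=z^r-1$ up to units and $c(f)=1$) gives $z^r-1\mid c(g)$ in $\ZZ[z^\pm]$, so $g/(z^r-1)\in\ZG$, and cancelling the central non-zero-divisor $z^r-1$ yields $h\in\ZG f$, a contradiction. Hence $S$ injects into $\G/\Z\cong\ZZ^2$; being abelian, $S$ is then cyclic, since the commutator of lifts of linearly independent elements of $\ZZ^2$ is a nontrivial power of $z$. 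Write $S=\langle\gamma\rangle$ with $\gamma=x^py^qz^r$, $(p,q)\neq(0,0)$.

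Now apply $\pi$ to $(\gamma-1)h=gf$: in the UFD $\ZZ[\xb^\pm,\yb^\pm]$ this reads $(\xb^p\yb^q-1)\pi(h)=\pi(g)\fbar$, and since $\xb^p\yb^q-1$ is a product of generalized cyclotomic polynomials in $\xb,\yb$ and $\fbar$ is coprime to each, $\fbar\mid\pi(h)$, i.e.\ $\pi(h)\in\pi(\ZG f)$. Thus $h\in\ZG f+\ker\pi=\ZG f+(z-1)\ZG$; writing $h=af+(z-1)b$ and replacing $h$ by $h-af$ (still stabilized by $\gamma$, as $\ZG f$ is a left ideal) we may assume $h=(z-1)b$, and then a content computation ($c((z-1)(\gamma-1)b)=(z-1)c(b)$, using $c(\gamma-1)=1$) lets us cancel $z-1$ and conclude that $\gamma$ also stabilizes $b+\ZG f$. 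Iterating, $h+\ZG f$ lies in $(z-1)^k(\ZG/\ZG f)$ for every $k\ge0$. \textbf{The main obstacle is to pass from this to $h\in\ZG f$.} One natural attempt is to invoke the Artin--Rees property of the ideal generated by the central element $z-1$, obtaining $(1-(z-1)u)(h+\ZG f)=0$ for some $u\in\ZG$ and then arguing $h\in\ZG f$ --- but the last step is itself of mixing type and only shifts the difficulty. The substantive route is to localize at the central set $\ZZ[z^\pm]\setminus\{0\}$, turning $\ZG$ into the two--variable quantum torus $Q$ over $\QQ(z)$ at the parameter $z$, a simple Noetherian domain whose one--sided ideals (the parameter not being a root of unity) are principal; one shows $\gamma-1$ and $f$ generate coprime one--sided ideals of $Q$, so that $h\in Qf$, and then $h\in Qf\cap\ZG=\ZG f$ by the content argument used in the central case. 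The delicate point throughout is that the cyclotomic hypothesis on $\fbar$ lives at $z=1$, precisely where localizing at the center discards information, so establishing this coprimality in $Q$ --- the part carried out in Hayes's work --- is where the real effort lies; the remainder is bookkeeping with contents and with the homomorphism $\pi$.
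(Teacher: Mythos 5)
The paper does not actually prove this proposition; it is quoted from Hayes's preprint \cite{Hayes2}, so there is no in-text argument to compare yours against. Judged on its own terms, your treatment of cases (1) and (2) is complete and correct: the decomposition $\ZG=\bigoplus_j y^jP$ with $P=\ZZ[x^{\pm},z^{\pm}]$, the coefficientwise computation of $(\gamma-1)h$, the unit trick for $q\ne0$ (where no cyclotomic hypothesis is needed), and the observation that $x^az^b-1$ factors into generalized cyclotomic polynomials $\Phi_e(x^{a'}z^{b'})$ with primitive exponent vector (hence irreducible), so that the hypothesis forces $\gcd(f,x^az^b-1)=1$ in the UFD $P$ --- all of this is sound, as is the reduction of (2) to (1) via the automorphism $x\mapsto y$, $y\mapsto x$, $z\mapsto z^{-1}$.

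Case (3), however, contains a genuine gap, which you flag yourself. The reductions are fine: splitting off $c(f)$ through the exact sequence, the content argument disposing of central stabilizers, the passage to a stabilizing $\gamma=x^py^qz^r$ with $(p,q)\ne(0,0)$, and the use of $\pi$ to conclude $h+\ZG f\in(z-1)^k(\ZG/\ZG f)$ for every $k\ge0$. But the proof then needs $\bigcap_k(z-1)^k(\ZG/\ZG f)=\{0\}$, or some substitute, and this is exactly what is not established. The Krull/Artin--Rees observation only yields $(1-(z-1)u)h\in\ZG f$, which, as you note, does not close the loop; and the proposed localization at $\ZZ[z^{\pm}]\setminus\{0\}$ inverts $z-1$ and thereby discards precisely the information carried by the hypothesis on $\fbar$, which lives at $z=1$. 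The asserted coprimality of $Q(\gamma-1)$ and $Qf$ in the quantum torus $Q$ is stated rather than proved (and, as the example $f=x-1$, $\gamma=x$ shows, such coprimality genuinely depends on the hypothesis, so it cannot come for free from simplicity of $Q$), and the auxiliary claim that one-sided ideals of $Q$ are principal is itself nontrivial --- compare the Weyl algebra, which is simple, Noetherian and hereditary but not a principal ideal domain. So as written your argument establishes (1) and (2) but only reduces (3) to an unproven coprimality statement; that statement is the heart of Hayes's ``more elaborate algebra'' and would need to be supplied to complete the proof.
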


\begin{examples}
   (a) If $f=1+x+y$, then $\al_f$ is mixing by part (3).

   (b) If $f=x+z-2$, then $\al_f$ is mixing by part (1), yet
   $\fbar(\xb,\yb)=\xb-1$ is cyclotomic, showing that part (3) is not
   always necessary.

   (c) A generalized cyclotomic polynomial in $u_1$ and $u_2$ has a root
   both of whose coordinates have absolute value 1. It follows that, for
   example, $4u_1+3u_2+8u_1u_2$ cannot be divisible by any generalized
   cyclotomic polynomial. Then part (3) above implies that for every
   choice of nonzero integers $p, q, r$, the polynomial
   $f=(z^p+3)x+(z^q+2)y+(z^r+7)xy$ yields a mixing $\al_f$.

   More generally, if $\sum_{ij}b_{ij}u_1^iu_2^j$ is not divisible by a
   generalized cyclotomic polynomial, and $p_{ij}(z)\in \ZZ\Z$ all satisfy
   $p_{ij}(1)=b_{ij}$ and have no common root that is a root of unity,
   then $f=\sum_{ij}p_{ij}(z)x^iy^j$ results in a mixing $\af$.
\end{examples}

\begin{problem}
   Does there exist a finite algorithm that decides, given $f\in\ZG$,
   whether or not $\af$ is mixing? More generally, is there such an
   algorithm that decides mixing for $\G$-actions of the form $\al_F$,
   where $F\in\ZG^{k\times l}$?
\end{problem}

There is another, simply stated, sufficient condition for $\af$ to be
mixing. Recall that $\logr$ is a Banach algebra under convolution, with
identity element 1.

\begin{proposition}
   If $f\in\ZG$ is invertible in $\logr$, then $\af$ is mixing.
\end{proposition}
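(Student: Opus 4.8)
The plan is to use the mixing criterion of Proposition~\ref{prop:general-mixing}: $\af$ is mixing if and only if no nonzero $m\in R/Rf$ has infinite stabilizer in the sense that $\gamma\mapsto\gamma\cdot m$ fails to be injective on $\G$. So suppose $\af$ is not mixing. Then there is $h\in R$ with $h+Rf\ne 0$ in $R/Rf$, yet $\gamma\cdot(h+Rf)=\gamma'\cdot(h+Rf)$ for some distinct $\gamma,\gamma'\in\G$; equivalently, setting $\delta=(\gamma')^{-1}\gamma\ne 1$, we have $(\delta-1)h\in Rf$, say $(\delta-1)h=gf$ for some $g\in R$. Write $u=\delta-1\in R$, a nonzero element, so that $uh=gf$ in $R=\ZG$.

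First I would pass to $\logr$, where $f$ is invertible by hypothesis. Since $R=\ZG\subset\logr$ and $\logr$ is a Banach algebra under convolution, the equation $uh=gf$ holds in $\logr$, and multiplying on the right by $f^{-1}\in\logr$ gives $uhf^{-1}=g$. Hence $g=uhf^{-1}$. On the other hand $g\in R$ is supported on finitely many elements of $\G$, and I want to derive a contradiction with $h+Rf\ne 0$. The key observation is that $uh=gf$ forces $h=u^{-1}gf$ provided $u$ is also invertible in $\logr$; but $u=\delta-1$ need \emph{not} be invertible in $\logr$ (for instance $\delta=z$ gives $z-1$, which is a zero divisor in the group von Neumann algebra and not invertible in $\logr$), so this naive route fails. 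Instead I would argue as follows: from $g=uhf^{-1}$ we get $gf^{-1}f=uhf^{-1}$, i.e.\ nothing new, so the real content must come from controlling $hf^{-1}$ directly.

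Here is the approach I would actually carry out. Consider the element $h f^{-1}\in\logr$ (this makes sense since $f^{-1}\in\logr$ and $\logr$ is an algebra). The identity $uh=gf$ gives, after right-multiplying by $f^{-1}$, that $u\cdot(hf^{-1})=g\in R\subset\logr$. I claim that in fact $hf^{-1}\in R$: indeed, set $w=hf^{-1}\in\logr$, so $uw=g\in\ZG$ and $wf=h\in\ZG$. Now I would use the structure of $\G$ much as in the second proof of Theorem~\ref{thm:hayes}. The element $u=\delta-1$ with $\delta\in\G\setminus\{1\}$ generates, together with its conjugates, enough of $\G$ that the equations $(\gamma\delta\gamma^{-1}-1)\cdot(h+Rf)=0$ for all $\gamma$ — which follow because $h+Rf$ has a nontrivial stabilizer relation, hence by Proposition~\ref{prop:general-mixing} the map $\gamma\mapsto\gamma\cdot(h+Rf)$ is already non-injective, but I need more — actually the cleanest path is: the stabilizer of $h+Rf$ is nontrivial, and in $\G$ every nontrivial subgroup intersects the center $\Z$ nontrivially (since $\G/\Z\cong\ZZ^2$ is torsion-free, a nontrivial subgroup either meets $\Z$ or maps to a nontrivial subgroup of $\ZZ^2$ which then contains commutators lying in $\Z$), so some $z^n-1$ with $n\ge 1$ stabilizes $h+Rf$: $(z^n-1)h=g'f$ for some $g'\in R$. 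Then $g'=(z^n-1)hf^{-1}=(z^n-1)w$ in $\logr$. Since $z$ is central, $z^n-1$ acts on $\logr$ by a convolution whose "Fourier transform'' over the center vanishes only on the finite set of $n$-th roots of unity; writing $w=\sum_{k}w_k(z)$-blocks just as elements of $\logr$ are organized by the central variable, I would invert $z^n-1$ away from roots of unity and use that $g'\in\ZG$ to conclude, via a Gauss-lemma/content argument analogous to the first proof of Theorem~\ref{thm:hayes}, that $w\in\ZG$. Then $h=wf\in\ZG f=Rf$, contradicting $h+Rf\ne 0$. This proves $\af$ is mixing.

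The main obstacle is the middle step: showing that a nontrivial stabilizer of $h+Rf$ can be taken inside the center $\Z$, and then deducing $w=hf^{-1}\in\ZG$ from $(z^n-1)w\in\ZG$ together with the $\logr$-invertibility of $f$. The first part is a clean group-theoretic fact about $\G$. The second part is where invertibility of $f$ in $\logr$ is genuinely used: without it one only knows $w\in\logr$, and one needs that $(z^n-1)w\in\ZG$ plus $wf=h\in\ZG$ pins $w$ down to $\ZG$ — I expect this to follow by decomposing over the center $\ZZ[z^{\pm}]\subset\ZG$ and arguing that the $\ell^1$-coefficients, being fixed by a contracting/averaging relation coming from $(z^n-1)$ with no cyclotomic obstruction after localizing, must actually be finitely supported integer-valued, in the spirit of the $\ligz$-reduction in the second proof of Theorem~\ref{thm:hayes}. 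If that reduction proves delicate, an alternative is to invoke directly that $f$ invertible in $\logr$ implies $\af$ has a bounded homoclinic group that separates points, forcing finiteness of all stabilizers; but the self-contained route above via Proposition~\ref{prop:general-mixing} and the content argument is preferable.
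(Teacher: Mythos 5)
Your reduction is set up correctly: non-mixing gives, via Proposition~\ref{prop:general-mixing}, an $h\notin\ZG f$ and a $\delta\ne 1$ with $(\delta-1)h=gf$, and right-multiplying by $f^{-1}\in\logr$ to study $w=hf^{-1}$ is exactly the right move. But the way you try to finish has two genuine gaps. First, the group-theoretic claim you lean on is false: not every nontrivial subgroup of $\G$ meets the center $\Z$ nontrivially. The subgroup $\langle x\rangle$ is a counterexample -- it is abelian, so it produces no nontrivial commutators, and it contains no nontrivial power of $z$. (The finite-index situation in the second proof of Theorem~\ref{thm:hayes} is special: there the stabilizer contains some $x^K$ \emph{and} some $y^L$, and only then does it contain $z^{KL}$.) So you cannot assume some $z^n-1$ annihilates $h+\ZG f$. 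Second, even granting that, the concluding ``Gauss-lemma/content'' step is not a proof: the content is defined for elements of $\ZG$, whereas $w=hf^{-1}$ is a priori only in $\logr$, and nothing in your sketch actually pins its coordinates down to integers.

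The correct finish is much simpler and uses only what you already have, namely that $\delta$ has infinite order (automatic, since $\G$ is torsion-free) and that $w=hf^{-1}$ is \emph{summable}. From $\delta\cdot w-w=g\in\ZG$, telescoping gives $\delta^n\cdot w-w=\sum_{k=0}^{n-1}\delta^k\cdot g\in\ZG$ for all $n$. Since $w\in\logr$, its coordinates vanish at infinity, so $(\delta^n\cdot w)_\gamma=w_{\delta^{-n}\gamma}\to 0$ for each fixed $\gamma$ as $n\to\infty$. Hence $w_\gamma=-\lim_n\bigl(\sum_{k=0}^{n-1}\delta^k\cdot g\bigr)_\gamma$ is a limit of integers that converges, so $w_\gamma\in\ZZ$ for every $\gamma$; an integer-valued element of $\ell^1$ is finitely supported, so $w\in\ZG$ and $h=wf\in\ZG f$, contradicting $h\notin\ZG f$. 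This is precisely the paper's proof (phrased there as ``letting $\lam\to\infty$'' in the infinite stabilizer $\Lam$), and it needs neither a central stabilizing element nor any content computation.
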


\begin{proof}
   If $\af$ were not mixing, there would be an $h\notin\ZG f$ and an
   infinite subgroup
   $\Lam\subset\G$ such that $\lam\cdot h-h\in\ZG f$ for all
   $\lam\in\Lam$, say $\lam\cdot h-h=c(\lam)f$. Let $w\in\logr$ be the
   inverse of $f$. Then $\lam\cdot h\cdot w-h\cdot
   w=c(\lam)\in\ZG$. Letting $\lam\to\infty$ in $\Lam$, we see that
   $g=h\cdot w \in\ZG$. Hence $h=h\cdot w\cdot f=gf\in\ZG f$, showing
   that $\af$ is mixing.
\end{proof}

We will see in Theorem \ref{thm:expansive} that invertibility of $f$
in $\logr$ corresponds to an important dynamical property of ~$\al_f$.

\section{Expansiveness}\label{sec:expansiveness}

Let $\Del$ be a countable discrete group and $\al$ be an algebraic
$\Del$-action on a compact abelian group $X$. Then $\al$ is called
\textit{expansive} if there is a neighborhood $U$ of $0_X$ in $X$ such
that $\bigcap_{\del\in\Del}\al^{\del}(U)=\{0_X\}$. All groups $X$ we
consider are metrizable, so let $d$ be a metric on $X$ compatible with
its topology. By averaging $d$ over $X$, we may assume that $d$ is
translation-invariant. Then $\al$ is expansive provided there is a
$\kappa>0$ such that if $d \bigl(\al^{\del}(t),\al^{\del}(u)\bigr)\le
\kappa$ for all $\del\in\Del$, then $t=u$.

Expansiveness is an important and useful property, with many
implications. It is therefore crucial to know when algebraic actions are
expansive. For principal actions there is a simple criterion.

\begin{theorem}[\protect{\cite[Theorem 3.2]{DS}}]\label{thm:expansive}
   Let $\Del$ be a countable discrete group and let $f\in\ZD$. Then the
   following are equivalent:
   \begin{enumerate}
     \item The principal algebraic action $\af$ on $X_f$ is expansive,
     \item The principal algebraic action $\al_{f^*}$ on $X_{f^*}$ is
      expansive,
     \item f is invertible in $\lodr$,
     \item $\rho_f$ is injective on $\lidr$.
   \end{enumerate}
\end{theorem}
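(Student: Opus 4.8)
The plan is to prove the cycle of implications $(3)\Rightarrow(4)\Rightarrow(1)$, the symmetric statement $(3)\Leftrightarrow(2)$ via the identity $(f^*)^* = f$, and then close the loop with $(1)\Rightarrow(3)$. The underlying dictionary is that $X_f\subset\TD$ is the kernel of right convolution $\rf(t)=t\cdot f^*$ on $\TD$, lifted to $\lidr$ this is the operator $t\mapsto t\cdot f^*$, and expansiveness of $\af$ is — after unwinding the definition of the translation-invariant metric on $\TD$ — exactly the statement that the only $t\in\lidr$ with $\|t\|_\infty<1/2$ (say) and $t\cdot f^*\in\lidz$ is $t=0$; a standard compactness/rescaling argument upgrades this to injectivity of $\rho_f$ on all of $\lidr$, giving $(1)\Leftrightarrow(4)$ directly. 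So the real content is the equivalence of $(3)$ with the dynamical/analytic conditions.

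For $(3)\Rightarrow(4)$: if $fg=1$ in $\lodr$, and $t\in\lidr$ satisfies $\rho_f(t)=t\cdot f^*=0$, then applying the (bounded, since $g^*\in\lodr$) operator of right convolution by $g^*$ gives $t\cdot f^*\cdot g^* = t\cdot(g f)^* = t\cdot 1 = t$, hence $t=0$. The only point to check is that right convolution by an $\ell^1$ element is a bounded operator on $\lidr$ and that convolution is associative on the relevant pairs, which is Young's inequality for the group. For $(4)\Rightarrow(3)$ — equivalently, going from injectivity of $\rho_f$ on $\lidr$ to invertibility of $f$ in $\lodr$ — I would argue by contraposition using the structure of the Banach algebra $\lodr$: if $f$ is not invertible in $\lodr$ then it lies in some maximal left ideal (or, cleaner for a survey, one invokes that $f$ is not left-invertible, so $\lodr f$ is a proper closed... no: $\lodr f$ need not be closed). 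The honest route is: $f$ invertible in $\lodr$ iff both $f$ and $f^*$ are left-invertible (then left inverses are two-sided), and left-invertibility of $f^*$ in $\lodr$ is equivalent to $\rho_f$ being bounded below on $\lodr$, hence (by a duality/weak-* argument, $\lidr=(\lodr)^*$) to $\rho_f$ being injective with closed range on $\lidr$ — this is where one uses that $\Del$ is a general group and leans on the operator-algebraic input from \cite{DS}.

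The main obstacle is precisely this last passage, $(4)\Rightarrow(3)$: deducing honest invertibility in the convolution algebra $\lodr$ from mere injectivity of $\rho_f$ on $\lidr$. Injectivity by itself does not give a bounded inverse; one needs that the range is all of $\lidr$, and surjectivity is not obvious from the dynamics alone. The resolution in \cite{DS} is to combine the weak-* compactness of the unit ball of $\lidr$ with an explicit construction of the inverse as a limit of partial sums — i.e., one shows directly that if $\rho_f$ is injective on $\lidr$ then the formal Neumann-type series for $f^{-1}$, suitably normalized, converges in $\lodr$ — and this convergence is where the real estimates live. I would present $(3)\Leftrightarrow(4)$ in this form, cite \cite{DS} for the hard direction, derive $(1)\Leftrightarrow(4)$ from the metric description of $X_f$, and note that $(1)\Leftrightarrow(2)$ follows because invertibility of $f$ in $\lodr$ is equivalent to invertibility of $f^*$ (apply the anti-automorphism $g\mapsto g^*$, which is an isometry of $\lodr$), so condition $(3)$ is visibly symmetric in $f$ and $f^*$.
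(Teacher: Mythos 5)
Your reduction of expansiveness to the statement about $\lidr$ (the equivalence $(1)\Leftrightarrow(4)$ via lifting small points of $X_f$ and rescaling elements of $\ker\rho_f$), your proof of $(3)\Rightarrow(4)$ by convolving with the inverse, and your observation that $(3)$ is symmetric under $f\mapsto f^*$ all match the paper's argument. The genuine gap is in $(4)\Rightarrow(3)$, which you do not actually prove: you first propose that injectivity of $\rho_f$ on $\lidr$ should give that $\rho_f$ is \emph{bounded below} on $\lodr$, but that is the wrong conclusion of duality --- injectivity of an adjoint operator yields \emph{dense range} of the pre-adjoint (Hahn--Banach), not a lower bound, and bounded below would anyway only give closed range, not surjectivity. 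You then guess that the cited source constructs the inverse by a ``Neumann-type series,'' but there is no smallness parameter here to make such a series converge, and that is not how the argument goes.

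The actual proof is short and uses three ingredients you are missing. First, injectivity of $\rho_f$ on $\lidr=(\lodr)^*$ implies, by Hahn--Banach, that $\rho_{f^*}(\lodr)=\lodr\cdot f^*$ is dense in $\lodr$. Second, since the group of invertible elements of the Banach algebra $\lodr$ is open and contains $1$, density produces some $w$ with $w\cdot f^*$ invertible, hence a \emph{one-sided} inverse of $f^*$. Third --- and this is the step your plan omits entirely --- one invokes \emph{direct finiteness} of $\lodr$ (Kaplansky's theorem, proved via the faithful trace $\tr(v)=v_{1_\Del}$ and the idempotent argument the paper recalls just before the proof): a one-sided inverse in $\lodr$ is automatically two-sided. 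Without direct finiteness your remark that ``left inverses are two-sided'' once both $f$ and $f^*$ are left-invertible is a workaround, but you have no route to left-invertibility of both elements from the hypothesis $(4)$ alone; the trace argument is what closes the loop.
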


Before sketching the proof, we isolate a crucial property of $\lodr$
called \textit{direct finiteness}: if $v,w\in\lodr$ with $v\cdot w=1$,
then $w\cdot v=1$, where $v\cdot w$ denotes the usual convolution
product in $\lodr$.  This was originally proved by Kaplansky \cite[p.\
122]{Kaplansky2} using von Neumann algebra techniques. Later Montgomery
\cite{Montgomery} gave a short proof using $C^* $-algebra methods. A
more self-contained argument using only elementary ideas was given by
Passman \cite{Passman2}. All these arguments use a key feature of
$\lodr$, that it has a faithful trace function $\tr\colon\lodr\to\RR$
given by $\tr(w)=w_{1_\Delta}$. This function has the properties that it
is linear, $\tr(1)=1$, $\tr(v\cdot w)=\tr(w\cdot v)$, and $\tr(v\cdot
v)\ge0$, with equality iff $v=0$. The key argument is that if
$e\cdot e=e$, then $0\le\tr(e)\le1$, and $\tr(e)=1$ implies that
$e=1$. If $v\cdot w=1$, then $e=w\cdot v$ satisfies that $e\cdot
e=(w\cdot v)\cdot (w\cdot v)=w\cdot (v\cdot w)\cdot v=w\cdot v=e$, and
$\tr(w\cdot v)=\tr(v\cdot w)=1$, hence $w\cdot v=1$.

\begin{proof}[Proof of Theorem \ref{thm:expansive}]
   Let $d_{\TT}$ be the usual metric on $\TT=\RR/\ZZ$ defined by
   $d_{\TT}(t+\ZZ,u+\ZZ)=\min_{n\in\ZZ}|t-u+n|$. It is straightforward
   to check that we may use the pseudometric $d_1$ on $X_f$ defined by
   $d_1(t,u)=d_{\TT}(t_{1_\Del},u_{1_\Del})$ to determine expansiveness
   (see \cite[Prop. 2.3]{D} for details).

   First suppose that
   there is a $w\in\lidr$ such that $\rf(w)=w\cdot f^*=0$. Let
   $\beta\colon\RR\to\TT$ be the usual projection map, and extend $\beta$
   to $\lidr$ coordinatewise. For every $\epsilon>0$ we have that
   $\rf(\epsilon w)=0$, so that $\beta(\epsilon w)\in X_f$. Since
   $\|\epsilon w\|_{\infty}=\epsilon\|w\|_{\infty}$ can be made
   arbitrarily small, it follows that $\af$ is not
   expansive. Conversely, if $\af$ is not expansive, there is a point
   $t\in X_f$ with $d_{\TT}(t_\del,0)<(3\|f^*\|_1)^{-1}$ for all
   $\del\in\Del$. Pick $\tilde{t}_\del\in[0,1)$ with
   $\beta(\tilde{t}_\del)=t_\del$ for all $\del\in\Del$, and let
   $\tilde{t}=(\tilde{t}_\del)\in\lidr$. Then $\rf(\tilde{t})\in\lidz$),
   and $\|\rf(\tilde{t})\|_\infty<1$, hence $\rf(\tilde{t})=0$. This
   shows that (1) $\Leftrightarrow$ (4).

   Now suppose that $\rf$ is injective on $\lidr$. Then
   $\rho_{f^*}\bigl(\lodr\bigr)$ is dense in $\lodr$ by the Hahn-Banach
   Theorem. Since the set of invertible elements in $\lodr$ is open,
   there is a $w\in\lodr$ with $w\cdot f^*=1$. By direct finiteness, $f^*$ is
   invertible in $\lodr$ with inverse $w$. Hence
   $f^{-1}=(w^*)^{-1}$. This shows that (4) $\Rightarrow$ (3), and
   the implication (3) $\Rightarrow$ (4) is obvious. Since $f$ is
   invertible if and only if $f^*$ is invertible, (2)
   $\Leftrightarrow$ (1).
\end{proof}

Let us call $f\in\ZD$ \textit{expansive} if $\af$ is expansive. For the
case $\Del=\ZZ$, Wiener's theorem on invertibility in the convolution
algebra $\ell^1(\ZZ,\CC)$ shows that $f(u)\in\ZZ[u^{\pm}]$ is expansive
if and only if $f$ does not vanish on $\SS$.  The usual proof of
Wiener's Theorem via Banach algebras is nonconstructive since it uses
Zorn's lemma to create maximal ideals. Paul Cohen \cite{Cohen} has given
a constructive treatment of this and similar results.  We are grateful
to David Boyd for showing us a simple algorithm for deciding
expansiveness in this case.

\begin{proposition}
   There is a finite algorithm, using only operations in $\QQ[u]$, that
   decides, given $f(u)\in\ZZ[u^{\pm}]$, whether or not $f$ is expansive.
\end{proposition}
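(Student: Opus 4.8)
The plan is to reduce to deciding whether a polynomial has a zero on the unit circle, and then to reduce that to deciding whether an auxiliary polynomial has a real zero in a bounded interval --- a question settled by Sturm's theorem. By Wiener's theorem, as recorded just above, $f$ is expansive if and only if $f$ has no zero on $\SS$, so it suffices to detect zeros on $\SS$ using only arithmetic in $\QQ[u]$.

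First I would normalize. Multiplying $f$ by a power of $u$ (a unit of $\ZZ[u^{\pm}]$, nonvanishing on $\SS$) changes neither expansiveness nor the zero set on $\SS$, so, after also disposing of the trivial cases ($f=0$, which is not expansive, and $\deg f=0$ after normalization, which is), I may assume $f\in\ZZ[u]$ with $f(0)\ne0$ and $n:=\deg f\ge1$. Now form $P(u)=f(u)\,u^nf(1/u)\in\ZZ[u]$. It is palindromic of degree $2n$ with $P(0)\ne0$, and for $u=e^{i\theta}$ one has $u^{-n}P(u)=f(e^{i\theta})f(e^{-i\theta})=|f(e^{i\theta})|^2\ge0$, since $f$ has real coefficients. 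Being palindromic, $u^{-n}P(u)$ rewrites as $Q(u+u^{-1})$ for an explicit $Q\in\QQ[u]$ of degree $n$: using $u^j+u^{-j}=s_j(u+u^{-1})$ with $s_0=2$, $s_1=u+u^{-1}$, and $s_j=(u+u^{-1})s_{j-1}-s_{j-2}$, one expands $u^{-n}P(u)$ in these and collects terms, and this computation stays inside $\QQ[u]$.

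The decisive observation is that, as $u$ runs over $\SS$, the quantity $u+u^{-1}=2\cos\theta$ runs over $[-2,2]$, whereas for real $v$ with $|v|>2$ the two solutions of $u+u^{-1}=v$ are real and lie off $\SS$. Hence $Q$ has a zero $v_0\in[-2,2]$ exactly when $|f(e^{i\theta_0})|^2=0$ for some $\theta_0$ with $2\cos\theta_0=v_0$, i.e.\ exactly when $f$ has a zero on $\SS$. To keep the endpoints harmless I would first test whether $f(1)=0$ or $f(-1)=0$ --- equivalently, $Q(\pm2)=f(\pm1)^2=0$ --- and report ``not expansive'' if so; otherwise it remains to decide whether $Q$ has a real zero in the open interval $(-2,2)$. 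That is exactly Sturm's theorem: build the Sturm sequence of $Q$ by successive divisions with remainder in $\QQ[u]$, and compare the numbers of sign variations of this sequence evaluated at the rational points $-2$ and $2$; the count is positive if and only if $f$ is not expansive. Since every step --- clearing the pole at $0$, forming $P$, the substitution $u\mapsto u+u^{-1}$, the evaluations at $\pm1,\pm2$, and the Sturm sequence --- is a finite computation with rational polynomials, this is an algorithm of the required kind.

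I expect the one genuinely delicate point to be the middle reduction, from ``has a zero on $\SS$'' to ``has a real zero in $[-2,2]$''. What makes it go through is the substitution $w=u+u^{-1}$, applicable because $P(u)=f(u)u^nf(1/u)$ is automatically palindromic, so that $u^{-n}P$ descends to a polynomial in $w$ --- and moreover equals the nonnegative function $|f(e^{i\theta})|^2$ on $w\in[-2,2]$. Passing to $P$ rather than substituting directly into $f$ (which need not be reciprocal), together with this nonnegativity, is the crux; the remaining ingredients --- the elementary correspondence between $\SS$ and $[-2,2]$ under $u\mapsto u+u^{-1}$, the bookkeeping at $w=\pm2$, and the invocation of Sturm's theorem --- are standard.
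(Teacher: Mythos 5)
Your proof is correct and follows essentially the same route as the paper's: symmetrize $f$ under $u\mapsto 1/u$, use the substitution $w=u+u^{-1}$ to transfer the question from $\SS$ to $[-2,2]$, and count roots there with Sturm's algorithm in $\QQ[u]$. The only real difference is that you work with the product $f(u)\,u^nf(1/u)$ where the paper takes the gcd of $f(u)$ and $u^df(1/u)$; your variant always has even degree and so avoids the paper's parity case analysis, at the cost of a larger auxiliary polynomial, and your explicit handling of the endpoints $w=\pm2$ is a detail the paper glosses over.
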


\begin{proof}
   We may assume that $f(u)\in\ZZ[u]$ with $f(0)\ne0$, say of degree
   $d$. We can compute the greatest common divisor $g(u)$ of $f(u)$ and
   $u^df(1/u)$ using only finitely many operations in $\QQ[u]$. Observe
   that any root of $f(u)$ on $\SS$ must also be a root of $g(u)$. Let
   the degree of $g$ be $e$. Then $g(u)=u^eg(1/u)$, so that the
   coefficients of $g(u)$ are symmetric. If $e$ is odd, then $-1$ is a
   root of $g$ since all other possible roots come in distinct pairs. If $e$ is
   even, it is simple to compute $h(u)\in\QQ[u]$ so that
   $g(u)=u^{e/2}h(u+1/u)$. Any root of $g$ on $\SS$ corresponds to a
   root of $h$ on $[-2,2]$. We can then apply Sturm's algorithm, which
   uses a finite sequence of calculations in $\QQ[u]$ and sign changes
   of rationals, to compute the number of roots of $h$ in ~$[-2,2]$.
\end{proof}

Decidability of expansiveness for other groups $\Del$, even just $\G$,
 is a fascinating open question.

\begin{problem}\label{prob:expansiveness}
   Is there a finite algorithm that decides, given $f\in\ZG$, whether or
   not $f$ is expansive?
\end{problem}

There is one type of polynomial in $\ZD$ that is easily seen to be
expansive. Call $f\in\ZD$ \textit{lopsided} if there is a $\del_0\in\Del$
such that $|f_{\del_0}|>\sum_{\del\ne\del_0}|f_\del|$. This terminology
is due to Purbhoo \cite{Purbhoo}.

If $f$ is lopsided with dominant coefficient $f_{\del_0}$, adjust $f$ by
multiplying by $\pm \del_0^{-1}$ so that $f_1>
\sum_{\del\ne1}|f_\del|$. Then $f=f_1(1-g)$, where $\|g\|_1<1$. We can
then invert $f$ in $\lodr$ by geometric series:
\begin{displaymath}
   f^{-1}=\frac{1}{f_1}(1+g+g*g+\dots)\in\lodr .
\end{displaymath}
Thus lopsided polynomials are expansive.

The product of lopsided polynomials need not be lopsided (expand
$(3+u+u^{-1})^2$). Surprisingly, if $f\in\ZD$ is expansive, then there is
always a $g\in\ZD$ such that $fg$ is lopsided. This was first proved by
Purbhoo \cite{Purbhoo} for $\Del=\ZZ^d$ using a rather complicated
induction from the case $\Del=\ZZ$, but his methods also provided
quantitative information he needed to approximate algorithmically the
complex amoeba of a Laurent polynomial in several variables. The
following short proof is due to Hanfeng Li.

\begin{proposition}
   \label{prop:lopsided-is-invertible}
   Let $\Del$ be a countable discrete group and let $f\in\ZD$ be
   expansive. Then there is  $g\in\ZD$ such that $fg$ is lopsided.
\end{proposition}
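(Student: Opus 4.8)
The plan is to use the equivalence from Theorem~\ref{thm:expansive}: since $f$ is expansive, $f$ is invertible in $\lodr$, so there is $w\in\lodr$ with $w\cdot f = f\cdot w = 1$ (using direct finiteness). The idea is that although $w$ is not finitely supported, it is well approximated in $\ell^1$ by finitely supported elements, and multiplying $f$ by a sufficiently good finitely supported approximation $g$ to $w$ should make $fg$ close to $1$ in $\ell^1$, hence lopsided.

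\begin{proof}
By Theorem~\ref{thm:expansive}, $f$ is invertible in $\lodr$; let $w\in\lodr$ be its inverse, so $f\cdot w=1$. Choose a finitely supported $g\in\ZD$ (indeed $g$ with rational, hence after clearing denominators integer, coefficients; but in fact we only need $g\in\ZD$, which we obtain by rounding as explained below) with $\|w-g\|_1$ small. More precisely, first pick $g'\in\ell^1(\Del,\QQ)$ finitely supported with $\|w-g'\|_1<\tfrac{1}{4\|f\|_1}$, and then pick an integer $N\ge 1$ and $g\in\ZD$ with $\|Ng'-g\|_1$ as small as we like divided by $N$; replacing the target element $w$ by $Nw$ (whose inverse is $\tfrac1N f$, still giving $f\cdot(Nw)=N$) we may simply assume there is $g\in\ZD$ with $\|Nw-g\|_1<\tfrac{N}{4\|f\|_1}$. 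Set $h=f\cdot g\in\ZD$. Then
\begin{displaymath}
   \|h-N\|_1=\|f\cdot g-f\cdot(Nw)\|_1\le\|f\|_1\,\|g-Nw\|_1<\tfrac{N}{4}.
\end{displaymath}
Writing $h=\sum_\del h_\del\,\del$, this says $|h_1-N|<\tfrac N4$ and $\sum_{\del\ne 1}|h_\del|<\tfrac N4$, so $|h_1|>\tfrac{3N}{4}>\tfrac{N}{4}>\sum_{\del\ne 1}|h_\del|$. Hence $h=fg$ is lopsided.
\end{proof}

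The only point requiring care is the passage from an $\ell^1$-approximation by a \emph{rational} finitely supported element to one by an \emph{integral} one; this is handled by scaling, since $fg$ is lopsided if and only if $f\cdot(mg)$ is lopsided for any nonzero integer $m$, and lopsidedness of $h$ only depends on $h/\|h\|_1$. I expect this bookkeeping to be the main (mild) obstacle; the conceptual content — that invertibility in $\lodr$ plus finite approximation yields lopsidedness — is immediate once Theorem~\ref{thm:expansive} is in hand. Note also that direct finiteness of $\lodr$, emphasized before the proof of Theorem~\ref{thm:expansive}, guarantees $w$ is a genuine two-sided inverse, though for this argument the one-sided identity $f\cdot w=1$ already suffices.
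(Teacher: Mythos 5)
Your proof is correct and takes essentially the same route as the paper's (due to Hanfeng Li): invert $f$ in $\lodr$, approximate the inverse by a finitely supported rational element, and clear denominators by an integer scaling. Your explicit $N/4$ estimate simply makes quantitative the paper's appeal to lopsidedness being an open condition in $\lodr$, so there is nothing further to add.
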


\begin{proof}
   Since $f$ is expansive, by Theorem \ref{thm:expansive} there is a
   $w\in\lodr$ such that $f\cdot w=1$. There is an obvious extension of the
   definition of lopsidedness to $\lodr$. Note that
   lopsidedness is an open condition in $\lodr$. First perturb $w$
   slightly to $w'$ having finite support, and then again slightly to
   $w''$ having finite support and rational coordinates. This results in
   $w''\in\QQ\Del$ such that $fw''$ is lopsided. Choose an integer $n$
   so that $g=nw''\in\ZD$. Then $fg$ is lopsided.
\end{proof}

One consequence of the previous result is that if $f\in\ZD$ is
expansive, then the coefficients of $f^{-1}$ must decay exponentially
fast. Recall that if $\Del$ is finitely generated, then a choice of
finite symmetric generating set $S$ induces the \textit{word norm}
$|\cdot|_S$, where $|\del|_S$ is the length of the shortest word in
generators in $S$ whose product is $\del$. Clearly
$|\del_1\del_2|_S\le|\del_1|_S|\del_2|_S$. A different choice $S'$ for
symmetric generating set gives an equivalent word norm $|\cdot|_{S'}$ in
the sense that there are two constants $c_1,c_2>0$ such that
$c_1|\del|_{S}\le |\del|_{S'}\le c_2|\del|_S$ for all $\del\in\Del$.

\begin{proposition}
   \label{prop:exponential-decay}
   Let $\Del$ be a finitely generated group, and fix a finite symmetric
   generating set $S$. Suppose that $f\in\ZD$ is invertible in
   $\lodr$. Then there are constants $C>0$ and $0<r<1$ such that
   $|(f^{-1})_{\del}|\le C\,r^{|\del|_S}$ for all $\del\in\Del$.
\end{proposition}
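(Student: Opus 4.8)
The plan is to exploit the previous proposition, which tells us that if $f$ is invertible in $\lodr$ then some integral multiple $g=nw''$ makes $fg$ lopsided; equivalently, after dividing through, $f^{-1}$ itself can be written as $g/n$ times something, so it suffices to bound the coefficients of $(fg)^{-1}$ and then of $g$. More directly: by Proposition~\ref{prop:lopsided-is-invertible} there is $g\in\ZD$ with $h:=fg$ lopsided, so that $f^{-1}=g\cdot h^{-1}$ in $\lodr$. Since $g$ has finite support and $\Del$ is finitely generated, multiplication by $g$ changes the word-length of the support of any element by at most a fixed constant $m_0=\max_{\del\in\supp(g)}|\del|_S$ and multiplies $\ell^1$-mass by at most $\|g\|_1$. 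Therefore it is enough to prove the exponential decay estimate for $h^{-1}$ where $h$ is lopsided, and then transfer it to $f^{-1}$ by a routine convolution estimate using $|(g\cdot h^{-1})_\del|\le\sum_{\eta\in\supp(g)}|g_\eta|\,|(h^{-1})_{\eta^{-1}\del}|$ together with the triangle inequality $|\eta^{-1}\del|_S\ge |\del|_S-m_0$.

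So the core step is: if $h\in\ZD$ is lopsided, then $(h^{-1})_\del$ decays exponentially in $|\del|_S$. Normalizing as in the discussion after Problem~\ref{prob:expansiveness}, write $h=h_1(1-q)$ with $\|q\|_1=:\theta<1$ and $q$ of finite support, say supported in the ball $\{|\del|_S\le m_1\}$. Then $h^{-1}=h_1^{-1}\sum_{k\ge0}q^{*k}$. The key observation is that $q^{*k}$ is supported in $\{|\del|_S\le k m_1\}$ and has $\|q^{*k}\|_1\le\theta^k$. Hence for a fixed $\del$ with $|\del|_S=N$, only the terms with $k\ge N/m_1$ can contribute to the coefficient at $\del$, giving
\begin{displaymath}
   |(h^{-1})_\del|\le \frac{1}{|h_1|}\sum_{k\ge N/m_1}\|q^{*k}\|_1
   \le \frac{1}{|h_1|}\sum_{k\ge N/m_1}\theta^k
   = \frac{1}{|h_1|}\cdot\frac{\theta^{\lceil N/m_1\rceil}}{1-\theta}.
\end{displaymath}
Setting $r=\theta^{1/m_1}\in(0,1)$ and $C=\bigl(|h_1|(1-\theta)\bigr)^{-1}$ yields $|(h^{-1})_\del|\le C\,r^{|\del|_S}$, which is the desired bound for $h^{-1}$.

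Assembling the two halves, we get constants $C'>0$ and $0<r<1$ with $|(f^{-1})_\del|\le C'\,r^{|\del|_S}$: the decay rate $r$ is inherited from the lopsided factor $h=fg$, and the constant $C'$ absorbs $\|g\|_1$ and the shift $r^{-m_0}$ coming from the finite support of $g$. Finally, the statement is independent of the choice of symmetric generating set $S$, since any two word norms are equivalent in the sense recalled before the proposition (replacing $r$ by a suitable power). I do not anticipate a serious obstacle here: the only point requiring minor care is the bookkeeping in the support-size estimate for $q^{*k}$, which follows from the submultiplicativity $|\del_1\del_2|_S\le|\del_1|_S+|\del_2|_S$ of the word norm, and the transfer step, which is a standard Young-type inequality for convolution on a discrete group.
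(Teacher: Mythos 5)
Your proposal is correct and follows essentially the same route as the paper's proof: factor $f^{-1}=g\,h^{-1}$ with $h=fg$ lopsided via Proposition \ref{prop:lopsided-is-invertible}, expand $h^{-1}$ as a geometric series, note that the $k$-th power of the small part is supported in a ball of radius $km_1$ so only the tail of the series contributes at a given $\del$, and absorb the finitely supported factor $g$ into the constants. The bookkeeping you flag (submultiplicativity of the word norm and the Young-type transfer through $g$) is exactly what the paper does as well.
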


\begin{proof}
   By the previous proposition, there is a $g\in\ZD$ such that $h=fg$ is
   lopsided. We may assume that the dominant coefficient of $h$ occurs
   at $1_{\Del}$, so that $h=q(1-b)$, where $q\in\ZZ$ and $b\in\QQ\Del$
   has $\|b\|_1=s<1$. Let $F=\supp(h)$ and put
   $\tau=\max\{|\del|_S:\del\in F\}$. Now $h^{-1}=q^{-1}(1+b+b^2+\dots)$
   and $\|b^k\|_1\le \|b\|_1^k\le s^k$ for all $k\ge1$. Furthermore,
   if $(b^n)_\del\ne0$, then $|\del|_S\le n\tau$. Hence if
   $|\del|_S>n\tau$, then $(1+b+b^2+\dots+b^n)_\del=0$. Thus
   \begin{displaymath}
      |(h^{-1})_\del|=|q^{-1}(b^{n+1}+b^{n+2}+\dots)_\del|\le
      q^{-1}\sum_{k=n+1}^\infty \|b^k\|_1\le \frac{q^{-1}}{1-r}s^{n+1}
   \end{displaymath}
   whenever $|\del|_S>n\tau$.
   This shows that $(h^{-1})_\del\le C r^{|\del|_S}$ with $r=s^{1/\tau}$
   and suitable $C>0$. Since $f^{-1}=g h^{-1}$, we obtain the result
   with the same $r$ and different $C$.
\end{proof}

We remark that a different proof of this proposition, using functional
analysis and under stricter hypotheses on $\Del$, was given in
\cite[Prop.\ 4.7]{DS}.
If $\Del=\ZZ$ and $f(u)\in\ZZ[u^{\pm}]$ is invertible in
$\ell^1(\ZZ,\RR)$, then $f$ does not vanish on $\SS$. Then $1/f$ is
holomorphic in an annular region around $\SS$ in $\CC$, and so its
Laurent expansion decays exponentially fast, giving a direct proof of
the proposition in this case.

Next we focus on the case $\Del=\G$ and obstructions of invertibility
coming from representations of ~$\G$.

Let $\SH$ be a complex Hilbert space, and let $\SB(\SH)$ be the algebra
of bounded linear operators on $\SH$. Denote by $\SU(\SH)$ the group of unitary
operators on $\SH$. An \textit{irreducible unitary representation} of
$\G$ is a homomorphism $\pi\colon\G\to\SU(\SH)$ for some complex
Hilbert space $\SH$ such that there is no nontrivial closed subspace
of $\SH$ invariant under all the $\pi(\gamma)$.
 Then $\pi$ extends to an algebraic homomorphism
$\pi\colon\logc\to\SB(\SH)$ by $\pi(\sum_{\gamma} f_\gamma
\gamma)=\sum_\gamma f_\gamma\pi(\gamma)$. If there is a nonzero $v\in\SH$
with $\pi(f)v=0$, then clearly $f$ cannot be invertible in $\logr$. The
converse is also true.

\begin{theorem}\label{thm:representations}
   Let $f\in\ZG$. Then $f$ is not invertible in $\logr$ if and only if
   there is an irreducible unitary representation
   $\pi\colon\G\to\SU(\SH)$ on some complex Hilbert space $\SH$ and a
   nonzero $v\in\SH$ such that $\pi(f)v=0$.
\end{theorem}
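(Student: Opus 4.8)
The implication ``$\Leftarrow$'' is essentially the observation already made before the statement, which I would expand in one line: any unitary representation $\pi\colon\G\to\SU(\SH)$ extends to a unital $*$-homomorphism $\pi\colon\logc\to\SB(\SH)$ of norm at most $1$, so if $f$ had a two-sided inverse $g\in\logr\subseteq\logc$ then $\pi(g)\pi(f)=\pi(f)\pi(g)=I$, forcing $\pi(f)$ to be injective and ruling out $\pi(f)v=0$ for nonzero $v$. For the forward implication my plan is to transport the problem to the full group $C^*$-algebra $C^*(\G)$ and then extract an irreducible representation by a pure-state argument.

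First I would replace $f$ by the self-adjoint element $h=f^*f\in\ZG$. Since $\logr$ is directly finite (the property underlying the proof of Theorem~\ref{thm:expansive}), $f$ is invertible in $\logr$ iff $h$ is; and as $h$ has real coefficients, conjugating a hypothetical inverse coordinatewise shows $h$ is invertible in $\logr$ iff it is invertible in $\logc$. Moreover, for any unitary $\pi$ we have $\|\pi(f)v\|^2=\langle\pi(h)v,v\rangle$, so it suffices to produce an irreducible unitary $\pi$ and a nonzero $v$ with $\langle\pi(h)v,v\rangle=0$. Thus assume $h$ is not invertible in $\logc$, i.e.\ $0\in\sigma_{\logc}(h)$. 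The one substantial input now enters: because $\G$ is finitely generated nilpotent, hence of polynomial growth, $\logc$ is a symmetric Banach $*$-algebra which is spectrally invariant in its enveloping $C^*$-algebra $C^*(\G)$, so in particular $\sigma_{\logc}(h)=\sigma_{C^*(\G)}(h)$ for the self-adjoint $h$. (This is Hulanicki's theorem on convolution operators on groups of polynomial growth.) Hence $0\in\sigma_{C^*(\G)}(h)$, and $h$ fails to be invertible in $A:=C^*(\G)$, a unital separable $C^*$-algebra.

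Now I would manufacture the representation. Since $h=f^*f\ge0$ and $0\in\sigma_A(h)$, a standard argument produces a state $\phi$ of $A$ with $\phi(h)=0$ (for instance, apply states to $h\le\|h\|\,\bigl(1-g_n(h)\bigr)+\tfrac1n\,1_A$, where $g_n(t)=(1-nt)_+$, and pass to a weak*-limit). The set $S=\{\psi\in\mathcal S(A):\psi(h)=0\}$ is therefore nonempty, weak*-compact, and a face of the state space (a face precisely because $h\ge0$); by Krein--Milman it has an extreme point $\phi_0$, which is then an extreme point of $\mathcal S(A)$, i.e.\ a pure state. Let $(\pi_0,\SH_0,v_0)$ be the GNS triple of $\phi_0$: then $\pi_0$ is irreducible because $\phi_0$ is pure, $v_0\ne0$ is cyclic, and $\|\pi_0(f)v_0\|^2=\langle\pi_0(h)v_0,v_0\rangle=\phi_0(h)=0$, so $\pi_0(f)v_0=0$. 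Restricting $\pi_0$ to $\G\subseteq A$ gives an irreducible unitary representation of $\G$ (a closed subspace is $\pi_0(\G)$-invariant iff it is $\pi_0(A)$-invariant, since the linear span of $\G$ is dense in $A$), which is the desired $\pi$.

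The step I expect to be the real obstacle is the spectral-invariance input: a priori $f$ could become invertible in the $C^*$-completion while remaining non-invertible in $\logr$, and in fact the analogue of the theorem does fail for some amenable groups that are not of polynomial growth, so this is exactly where the special structure of $\G$ is needed and where I would have to be careful about citing and perhaps reproving the relevant facts. Everything downstream of it is standard operator-algebra technology; the one mildly clever point is the use of a \emph{pure} state annihilating $f^*f$, which converts ``$0$ lies in the spectrum of $\pi(f^*f)$'' into ``$0$ is an eigenvalue of $\pi(f)$ in an irreducible representation.'' It is also worth noting that this abstract route appears cleaner than attempting to test non-invertibility of $f$ directly against the explicit list of irreducible unitary representations of $\G$.
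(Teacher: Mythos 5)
Your argument is correct, and it follows essentially the route the paper points to: the backward direction is the elementary observation made before the statement, and the forward direction rests on the symmetry of $\ell^1(\G,\CC)$ (via Hulanicki/Raikov--Pt\'ak spectral invariance of the hermitian element $f^*f$ in $C^*(\G)$), followed by the standard pure-state/GNS extraction of an irreducible representation --- exactly the key ingredient the paper attributes to the proof in G\"oll--Schmidt--Verbitskiy. The only point worth double-checking in a written version is the reduction from $\logr$ to $\logc$ and from $f$ to $f^*f$, which you correctly handle via direct finiteness and coordinatewise conjugation.
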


This result is stated in \cite[Thm.\ 8.2]{EinsiedlerRindler}, and a
detailed proof is given in \cite[Thm.\ 3.2]{GollSchmidtVerbitskiy2}. A
key element of the proof is that $\logc$ is \textit{symmetric}, i.e.,
that $1+g^*g$ is invertible for every $g\in\logc$, and in particular for
every $g\in\ZG$. There are examples of countable amenable groups that
are not symmetric \cite{Jenkins}.

We remark that Theorem \ref{thm:representations} remains valid when $\G$
is replaced by any nilpotent group, in particular by $\ZZ$ or
$\ZZ^d$. In the latter cases all irreducible unitary representations are
1-dimensional, and obtained by evaluation at a point in $\SS^d$. This is
exactly the Wiener criterion for invertibility in $\ell^1(\ZZ^d)$.

The usefulness of Theorem \ref{thm:representations} is at best limited
since $\G$ is not of Type I and so its representation theory is murky.
However, there is an extension of Gelfand theory, called Allan's local
principle, that detects invertibility
in the noncommutative Banach algebra $\logc$. The use of this principle
for algebraic actions was initiated in \cite{GollSchmidtVerbitskiy2}. In
the case of $\G$-actions this principle has an explicit and easily verified
form, which we now describe.

To simplify notation, let $B=\logc$ be the complex convolution Banach
algebra of $\G$, and $C=\lozc$ be its center. The maximal ideals of $C$
all have the form $\mz=\{v=\sum_{i=-\infty}^\infty v_jz^j:v(\zeta)=0\}$,
where $\zeta\in\SS$. For $v\in C$ the quotient norm of $v+\mz\in C/\mz$
is easily seen to be $\|v+\mz\|_{C/\mz}=|v(\zeta)|$. Let $\bz$ denote
the two-sided ideal in $B$ generated by $\mz$, so that
$$\bz=\Bigl\{w=\sum_{i,j} w_{ij}(z)x^iy^j\in B:w_{ij}(z)\in \mz \text{\ for all
$i,j\in \ZZ$}\Bigr\}.$$ Then for $w=\sum_{i,j}w_{ij}(z)x^iy^j\in B$, the
quotient norm of $w+\bz\in B/\bz$ is
\begin{displaymath}
   \|w+\bz\|_{B/\bz}=\sum_{i,j=-\infty}^\infty |w_{ij}(\zeta)|.
\end{displaymath}

To give a concrete realization of $B/\bz$, introduce variables $U$, $V$,
subject to the skew commutativity relation $VU=\zeta UV$. Then the skew
convolution Banach algebra $\ell_{\zeta}^1(U,V)$ consists of all sums
$\sum_{i,j=-\infty}^\infty c_{ij}U^iV^j$ with $c_{ij}\in \CC$ and
$\sum_{i,j}|c_{ij}|<\infty$, and with multiplication the usual
convolution modified by skew commutativity. Then the map
$B/\bz\to\ell_{\zeta}^1(U,V)$ given by $\sum_{i,j}w_{ij}(z)x^iy^j+\bz\to
\sum_{i,j}w_{ij}(\zeta)U^iV^j$ is an isometric isomorphism of complex
Banach algebras. Identifying these algebras, the quotient map
$\pi_\zeta\colon B\to B/\bz\cong \ell_{\zeta}^1(U,V)$  takes the
concrete form
$\pi_{\zeta}\bigl(\sum_{i,j}w_{ij}(z)x^iy^j\bigr)=
\sum_{i,j}w_{ij}(\zeta)U^iV^j$.

In \cite[Section 3]{GollSchmidtVerbitskiy2}, \textit{Allan's local
principle} was introduced as a convenient device for checking
expansiveness of principal actions of $\Gamma $ and applied to a number
of examples.

\begin{theorem}[Allan's local principle, \cite{Allan}]
   \label{thm:allan-principle}
   An element $w=\logc$ is invertible if and only if $\pi_{\zeta}(w)$ is
   invertible in $\ell_{\zeta}^1(U,V)$ for every $\zeta\in\SS$.
\end{theorem}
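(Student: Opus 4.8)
The plan is to deduce Allan's local principle for the specific algebra $B=\logc$ with center $C=\lozc$ from the general functional-analytic structure at hand, rather than quoting \cite{Allan} as a black box. The key observation is that $C$ is a commutative Banach algebra whose maximal ideal space is exactly $\SS$ (via the evaluation characters $v\mapsto v(\zeta)$), that $C$ lies in the center of $B$, and that $B$ is a Banach $C$-module of "finite type" in the appropriate sense. One direction is immediate: if $w$ is invertible in $B$, then since $\pi_\zeta\colon B\to\ell_\zeta^1(U,V)$ is a unital Banach algebra homomorphism, $\pi_\zeta(w)$ is invertible in $\ell_\zeta^1(U,V)$ for every $\zeta\in\SS$. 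So the whole content is the converse.

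For the converse, suppose $\pi_\zeta(w)$ is invertible in $\ell_\zeta^1(U,V)\cong B/\bz$ for every $\zeta\in\SS$. First I would show that $w+\bz$ being invertible in $B/\bz$ means there exist $u_\zeta,v_\zeta\in B$ with $u_\zeta w-1\in\bz$ and $wv_\zeta-1\in\bz$; by direct finiteness considerations (or simply by working in the quotient, which is unital) one gets a genuine two-sided inverse modulo $\bz$. The next step is a localization/patching argument: using compactness of $\SS$, cover it by finitely many open arcs $\{N_k\}$ such that on each arc there is a uniform approximate inverse $u_k\in B$ with $\sup_{\zeta\in N_k}\|\pi_\zeta(u_k w-1)\|<1/2$ (say); this uses that $\zeta\mapsto\|\pi_\zeta(a)\|$ is continuous for fixed $a\in B$, which follows from the explicit formula $\|\pi_\zeta(a)\|_{B/\bz}=\sum_{ij}|a_{ij}(\zeta)|$ together with uniform summability. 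Then I would choose a partition of unity $\{\chi_k\}$ in $C$ subordinate to $\{N_k\}$ — concretely, $\chi_k\in\lozc$ with $\chi_k(\zeta)\ge0$, $\sum_k\chi_k(\zeta)\equiv 1$, and $\chi_k$ supported (as a function on $\SS$) near $N_k$ — and form the candidate left quasi-inverse $u=\sum_k \chi_k u_k\in B$, which makes sense because the $\chi_k$ are central. One computes that $uw-1$ is small in a suitable sense; more precisely, its image under every $\pi_\zeta$ has norm $<1$, and because $C$ separates the fibers and controls norms (the key Banach-module estimate: $\|a\|_B\le\sup_\zeta\|\pi_\zeta(a)\|\cdot(\text{const})$ fails in general, so instead one shows $uw-1$ lies in the Jacobson radical-like set of elements that are quasi-invertible fiberwise and uses the fiber-norm control on the center), one concludes $uw$ is invertible in $B$. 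Symmetrically one gets a right quasi-inverse, and hence $w$ is invertible.

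The hard part will be making the patching estimate rigorous: unlike the commutative Gelfand situation, the fiber norms $\|\pi_\zeta(a)\|$ do not recover $\|a\|_B$, so one cannot simply sum up local inverses and bound the error by $\sup_\zeta$ of the fiber errors. The standard remedy — and the technical heart of Allan's theorem — is to show that the family $\{\bz:\zeta\in\SS\}$ has trivial intersection and that an element of $B$ whose image in every $B/\bz$ is invertible differs from an invertible element by something in $\bigcap_\zeta \mathfrak{r}_\zeta$ where $\mathfrak{r}_\zeta$ is small; this requires the local smoothness of $\zeta\mapsto\pi_\zeta(w)^{-1}$ (continuity of the inverse in each fiber algebra, which holds since $\ell_\zeta^1(U,V)$ is a Banach algebra) plus a careful use of the central partition of unity to glue local inverses into a global approximate inverse whose error is a \emph{norm-small} element of $B$, not merely fiberwise small. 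Since a full development of this would essentially reprove \cite{Allan}, for the survey I would instead cite \cite{Allan} for the general principle — valid for any unital Banach algebra $B$ with a central closed subalgebra $C$ — and merely verify the two hypotheses in our setting: that $C=\lozc$ is central in $B=\logc$ (clear, since $z$ is central in $\G$), and that the maximal ideals of $C$ are precisely the $\mz$ with quotients $B/\bz\cong\ell_\zeta^1(U,V)$ as described above (which we have already established via the explicit isometric isomorphism). That reduces Theorem \ref{thm:allan-principle} to the cited general result together with the concrete identifications made in the preceding paragraphs.
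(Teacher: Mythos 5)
Your proposed direct argument has a genuine gap, and you identify it yourself: after forming $u=\sum_k\chi_k u_k$ with a central partition of unity, you only control $\|\pi_\zeta(uw-1)\|$ for each $\zeta$, and since the fiber norms do not dominate the norm of $\logc$ (the map $a\mapsto\sup_\zeta\|\pi_\zeta(a)\|$ is in general a strictly weaker norm), "fiberwise quasi-invertible" does not yield an invertible element of $B$. Nothing in your sketch closes this; the appeal to "the Jacobson radical-like set" is exactly the missing step, not a proof of it. Your fallback of quoting \cite{Allan} and verifying the hypotheses is legitimate for a survey, but it is a citation, not an argument.

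The paper avoids the patching problem entirely with a short soft argument that you should compare with yours. Suppose every $\pi_\zeta(w)$ is invertible but $w$ is not; then $Bw$ is a proper left ideal, so it is contained in a maximal left ideal $\fb$ of $B=\logc$. The intersection $\fb\cap C$ with the center $C=\lozc$ is a maximal ideal of $C$ (this is the only point needing care: $B/\fb$ is a simple Banach $B$-module, each central element acts on it as a module endomorphism, the commutant of a simple Banach module is $\CC$ by the Gelfand--Mazur/Schur argument, and the resulting character of $C$ has kernel $\fb\cap C$), hence $\fb\cap C=\mz$ for some $\zeta\in\SS$, and therefore $\bz=B\mz B\subseteq\fb$. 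But then $\fb/\bz$ is a proper left ideal of $B/\bz$ containing the invertible element $w+\bz$, which forces $\fb=B$, a contradiction. This replaces your global approximate inverse by a single well-chosen localization, and it is the argument you should adopt: no partition of unity, no norm estimates on glued inverses, only the identification of the maximal ideals of the center and of the quotients $B/\bz\cong\ell^1_\zeta(U,V)$, which you have already set up correctly.
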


\begin{proof}
   Using the notations introduced above, clearly if $w$ is invertible in
   $B$, then its homomorphic images $\pi_{\zeta}(w)$ must all be
   invertible.

   Conversely, suppose $w\in B$ is not invertible, but
   $\pi_{\zeta}(w)$ is invertible in $\ell_{\zeta}^1(U,V)$ for every
   $\zeta\in \SS$. The left ideal $Bw$ is proper since $w$ is not
   invertible, so let $\fb$ be a maximal left ideal in $B$ containing
   $Bw$. It is easy to see that $\fb\cap C$ must be a maximal ideal in
   $C$, hence $\fb\cap C=\mz$ for some $\zeta\in\SS$. Then
   $\bz\subseteq\fb$. By assumption, $\pi_{\zeta}(w)=w+\bz$ is
   invertible in $B/\bz$, and $w\in \fb$, so that $\fb=B$, a
   contradiction.
\end{proof}

We now turn to some concrete examples where expansiveness can be
analyzed geometrically. Before doing so, let us introduce a quantity
that we will use extensively.

\begin{definition}
   Let $f(u)\in\ZZ[u^{\pm}]$. The \textit{logarithmic Mahler measure} of
   $f$ is defined as $\mahler(f)=\int_\SS \log|f(\xi)|\,d\xi$. The
   \textit{Mahler measure} of $f$ is defined as $\Mahler(f)=\exp(\mahler(f))$.
\end{definition}

Suppose that $f(u)=c_nu^n+\dots+c_u+c_0$ with $c_nc_0\ne0$. Factor
$f(u)$ over $\CC$ as $c_n\prod_{j=1}^n (u-\lam_j)$. Then Jensen's
formula shows that $\mahler(f)$ has the alternative expression as
\begin{equation}\label{eqn:mahler}
   \mahler(f)=\log|c_n|+\sum_{|\lam_j|>1}\log|\lam_j|=\log|c_n|+\sum_{j=0}^n\log^+|\lam_j|,
\end{equation}
where $\log^+ r = \max\{\log r, 0\}$ for $r\ge0$.

Mahler's motivation was to derive important inequalities in transcendence
theory. Using that $\Mahler(fg)=\Mahler(f)\Mahler(g)$, he showed that if
$f,g\in\CC[u]$, then $\|fg\|_1\ge 2^{-\deg f-\deg g}\|f\|_1\|g\|_1$, and
that the constant is best possible.

Let us consider the case of $f\in\ZG$ that is linear in $y$, so that
$f(x,y,z)=h(x,z)y-g(x,z)$. We will find 1-dimensional $\af$-invariant
subspaces of $\ligc$ as follows. Let $\Lam=\<x,z\>$ be the subgroup of $\G$
generated by $x$ and $z$. To make calculations in $\ligc$ and $\lilc$
more transparent, we will write elements as formal sums $w=\sum_{\gamma\in\G}
w(\gamma)\gamma$ and $v=\sum_{\lam\in\Lam}v(\lam)\lam$.

Fix $(\xi,\zeta)\in\SS^2$ and define
\begin{equation}
   \label{eqn:vxz}
   \vxz = \sum_{k,m=-\infty}^\infty \xi^k\zeta^m\,x^kz^m\in\lilc.
\end{equation}
Observe that
\begin{displaymath}
   \rho_x(\vxz)=\Bigl(\sum_{k,m}\xi^k\zeta^m\,x^kz^m\Bigr)x^{-1}=\xi
   \,\sum_{k,m}\xi^k\zeta^m \,x^kz^m=\xi \,\vxz,
\end{displaymath}
and similarly $\rho_z(\vxz)=\zeta\,\vxz$. Hence the 1-dimensional space
$\CC\vxz$ is a common eigenspace for $\rho_x$ and $\rho_z$. It follows
that $\rho_q(\vxz)=\vxz\,q^*(x,z)=q(\xi,\zeta)\,\vxz$ for every
$q\in\ZL$.

Let $\{c_n\}$ be a sequence of complex constants to be determined, and consider
the point $w=\sum_{n=-\infty}^\infty c_n\,\vxz\,y^n$. Using the
relations $y^kq(x,z)=q(xz^k,z)y^k$ for all $k\in\ZZ$ and all $q\in\ZL$,
the condition
$\rf(w)=w\cdot f^*=0$ becomes
\begin{align*}
   0&=\Bigl(\sum_{n=-\infty}^\infty
   c_n\,\vxz\,y^n\Bigr)\Bigl(y^{-1}h^*(x,z)-g^*(x,z)\Bigr)\\
   &=\sum_{n=-\infty}^\infty
   \Bigl\{c_n\,\vxz\,h^*(xz^{n-1},z)y^{n-1}-c_n\,\vxz\,g^*(xz^{n},z)y^n\Bigr\}\\
   &=\sum_{n=-\infty}^\infty
   \Bigl\{c_{n+1}h(\xi\zeta^{n},\zeta)-c_ng(\xi\zeta^{n},\zeta)\Bigr\}\vxz\,y^n.
\end{align*}
This calculation shows that $\rf(w)=0$ if and only if
the $c_n$ satisfy
\begin{equation}\label{eqn:recurrence}
   c_{n+1}h(\xi\zeta^{n},\zeta)=c_ng(\xi\zeta^{n},\zeta)\quad\text{for all $n\in\ZZ$}.
\end{equation}
Since $\|w\|_\infty=\sup_{n\in\ZZ}|c_n|$, one way to create nonexpansive
$f$'s of this form is to find conditions on $g$ and $h$ that guarantee
the existence of a nonzero bounded solution $\{c_n\}$ to \eqref{eqn:recurrence}
for some choice of $\xi$ and $\zeta$.

Suppose that both $g$ and $h$ do not vanish on $\SS^2$.
Fix a nonzero value of $c_0$. Then by \eqref{eqn:recurrence}, the other values
of $c_n$ are determined:
\begin{equation}
   \label{eqn:coefficients}
   c_n=
   \begin{cases}
      c_0\prod_{j=0}^{n-1}\displaystyle\frac{g(\xi\zeta^{j},\zeta)}
      {h(\xi\zeta^{j},\zeta)}
      &\text{for $n\ge1$}, \\
      c_0\prod_{j=1}^{-n}\Bigl[\displaystyle\frac{g(\xi\zeta^{-j},\zeta)}
      {h(\xi\zeta^{-j},\zeta)}\Bigr]^{-1}
      &\text{for $n\le-1$}.
   \end{cases}
\end{equation}
Let
\begin{equation}
   \label{eqn:cocycle-gh}
   \phiz(\xi)=\log|g(\xi,\zeta)/h(\xi,\zeta)|,
\end{equation}
and consider the map $\psiz(n,\xi)\colon\ZZ\times\RR\to\RR$ given by
\begin{equation}
   \label{eqn:additive-cocycle}
   \psiz(n,\xi)=
   \begin{cases}
      \sum_{j=0}^{n-1}\phiz(\xi\zeta^{j})&\text{for $n\ge1$},\\
      0 &\text{for $n=0$},\\
      -\sum_{j=1}^{-n}\phiz(\xi\zeta^{-j}) &\text{for $n\le -1$}.
   \end{cases}
\end{equation}
Then $\psiz$ satisfies the cocycle equation
\begin{displaymath}
   \psiz(m+n,\xi)=\psiz(m,\xi)+\psiz(n,\xi\zeta^{m})
\end{displaymath}
for all $m,n\in\ZZ$ and $\xi\in\SS$. Furthermore, there is a nonzero
bounded solution $\{c_n\}$ to \eqref{eqn:recurrence} if and only if
$\{\psiz(n,\xi):-\infty< n < \infty\}$ is bounded above.

Suppose that $\zeta\in\SS$ is irrational, and that there is a
$\xi\in\SS$ for which \eqref{eqn:recurrence} has a nonzero bounded
solution. Observe that $\phiz$ is continuous on $\SS^2$ since neither
$g$ nor $h$ vanish there by assumption. By the ergodic theorem,
   \begin{gather*}
      \int_{\SS}\phiz(\xi)\,d\xi=\lim_{n\to\infty}\frac1{n}
      \sum_{j=0}^{n-1}\phiz(\xi\zeta^{j})=\lim_{n\to\infty}\frac1n \psiz(n,\xi)\le0,
      \text{\quad and}\\
      -\int_{\SS}\phiz(\xi)\,d\xi=\lim_{n\to\infty}-\frac{1}{n}
      \sum_{j=1}^n\phiz(\xi\zeta^{-j})=\lim_{n\to\infty}\frac{1}{n}\psiz(-n,\xi)\le0.
   \end{gather*}
Thus $\int_{\SS}\phiz(\xi)\,
d\xi=\mahler(g(\cdot,\zeta))-\mahler(h(\cdot,\zeta))=0$.
There is a similar necessary condition when $\zeta$ is ration\-al, but
here the integral is replaced by a finite sum over a coset of the finite
orbit of $\zeta$ in $\SS$. It turns out that these necessary conditions are also
sufficient.

\begin{theorem}\label{thm:linear-expansive}
   Let $f(x,y,z)=h(x,z)y-g(x,z)\in\ZG$, and suppose that both $g$ and
   $h$ do not vanish anywhere on $\SS^2$. Let
   $\phiz(\xi)=\log|g(\xi,\zeta)/h(\xi,\zeta)|$. Then $\af$ is expansive
   if and only if
   \begin{enumerate}
     \item $\int_{\SS}\phiz(\xi)\,d\xi\ne0$ for every irrational
      $\zeta\in\SS$, and
     \item for every $n$-th root of unity $\zeta\in\SS$ and
      $\xi\in\SS$ we have that $\sum_{j=0}^{n-1}\phiz(\xi\zeta^j)\ne~0$.
   \end{enumerate}
\end{theorem}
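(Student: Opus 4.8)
The plan is to prove both directions through Allan's local principle (Theorem~\ref{thm:allan-principle}) combined with the explicit eigenvector computation leading to the recurrence \eqref{eqn:recurrence}. By Theorem~\ref{thm:expansive}, expansiveness of $\af$ is equivalent to invertibility of $f$ in $\lodr$, equivalently of $f^*$; and by Theorem~\ref{thm:allan-principle} the latter holds if and only if $\pi_\zeta(f^*)$ is invertible in $\ell^1_\zeta(U,V)$ for every $\zeta\in\SS$. Thus the theorem reduces to showing that, for fixed $\zeta\in\SS$, the image $\pi_\zeta(f^*)=\overline h(U,V)\,V^{-1}-\overline g(U,V)$ (where $\overline g(U,V)=g(U,V)$ with the skew relation $VU=\zeta UV$) is invertible in $\ell^1_\zeta(U,V)$ precisely when condition (1) holds at that $\zeta$ (if $\zeta$ is irrational) or condition (2) holds (if $\zeta$ is a root of unity). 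Since $g,h$ do not vanish on $\SS^2$, the elements $\overline g(U,V)$ and $\overline h(U,V)$ are already invertible in $\ell^1_\zeta(U,V)$: when $\zeta$ is irrational this is the standard Wiener-type criterion for the irrational rotation algebra realized in $\ell^1$, and when $\zeta$ is a root of unity it follows by passing to the faithful family of finite-dimensional representations of $\ell^1_\zeta(U,V)$. So after multiplying by $\overline h(U,V)^{-1}$ on the left, invertibility of $\pi_\zeta(f^*)$ is equivalent to invertibility of $V^{-1}-\overline h(U,V)^{-1}\overline g(U,V)$, i.e.\ of an element of the form $V^{-1}-a(U)$ after conjugating into the subalgebra generated by the $\rho_x$-eigenvectors.

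The second step is to make that reduction precise via the eigenvector $\vxz$ of \eqref{eqn:vxz}. Working inside $\ell^1_\zeta(U,V)$ with $U\leftrightarrow x$, $V\leftrightarrow y$, one checks that a formal series $w=\sum_n c_n\,\vxz\,y^n$ (or its $\ell^1$-analogue) lies in the kernel of right convolution by $f^*$ exactly when \eqref{eqn:recurrence} holds; this is the computation already displayed before the theorem statement. For $\zeta$ irrational we must decide, for every $\xi\in\SS$, whether the recurrence \eqref{eqn:recurrence} admits a nonzero solution that is either summable or merely bounded, and translate the existence of a kernel/cokernel element of $\pi_\zeta(f^*)$ into such solutions. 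The crux is the standard dichotomy for cocycles over an irrational rotation: if $\int_\SS\phiz\,d\xi=0$ then by the ergodic theorem $\psiz(n,\xi)$ is bounded above for a full-measure (hence, by continuity of $\phiz$ and minimality of the rotation, in fact for every) $\xi$, yielding a nonzero bounded solution $\{c_n\}$ and so a nonzero element of $\lidr$ killed by $\rho_{f^*}$ — non-expansiveness by Theorem~\ref{thm:expansive}(4); conversely if $\int_\SS\phiz\,d\xi\neq0$, say it is negative, then $\psiz(n,\xi)\to-\infty$ and $\psiz(-n,\xi)\to-\infty$ forces every solution to blow up in one direction, so no bounded kernel element exists at this $\zeta$, and a separate argument (perturbing $V^{-1}-a(U)$ and using the spectral radius formula in $\ell^1_\zeta$) shows $\pi_\zeta(f^*)$ is then invertible. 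For $\zeta$ a primitive $n$-th root of unity the orbit of $\xi$ under $\xi\mapsto\xi\zeta$ is finite, $\psiz(\cdot,\xi)$ is periodic up to the additive constant $\sum_{j=0}^{n-1}\phiz(\xi\zeta^j)$, and boundedness of $\{\psiz(n,\xi):n\in\ZZ\}$ holds iff that constant vanishes; so condition (2) is exactly the obstruction, handled by the same correspondence between bounded solutions of \eqref{eqn:recurrence} and non-expansiveness.

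The main obstacle will be the \emph{converse} (sufficiency) direction at a fixed $\zeta$: showing that when the relevant integral (or finite sum) is nonzero, $\pi_\zeta(f^*)$ is genuinely invertible in $\ell^1_\zeta(U,V)$, not merely injective on some sequence space. The natural route is to exhibit the inverse as an explicit series: formally $(V^{-1}-a(U))^{-1}=-\sum_{k\ge0}V^k a(\zeta^{-\,\cdot}U)\cdots$, i.e.\ a one-sided Neumann-type expansion in powers of $V$ whose coefficients are the partial products $\prod_{j}g(\xi\zeta^j,\zeta)/h(\xi\zeta^j,\zeta)$ appearing in \eqref{eqn:coefficients}; one must check this series converges in $\ell^1_\zeta(U,V)$, which is where the sign of $\int_\SS\phiz\,d\xi$ enters through an exponential-decay estimate on those partial products (uniformly in $\xi$, by continuity and compactness). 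When the integral has the "wrong" sign one expands in powers of $V^{-1}$ instead; the case distinction mirrors whether one resums to the right or to the left, exactly as in the $\Del=\ZZ$ theory of $f(u)$ nonvanishing on $\SS$. Assembling the local inverses over all $\zeta\in\SS$ and invoking Theorem~\ref{thm:allan-principle} then yields invertibility of $f^*$ in $\lodr$, hence expansiveness of $\af$, completing the proof.
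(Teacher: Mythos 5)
Your overall strategy (Allan's local principle plus the eigenvector recurrence) is genuinely different from the paper's, which never produces an $\ell^1$-inverse at all: it uses the equivalence ``expansive $\Leftrightarrow$ $\rf$ injective on $\lidr$'' from Theorem \ref{thm:expansive} and reduces a nontrivial kernel to a common $\rho_x,\rho_z$-eigenvector via \cite[Lemma 6.8]{SchmidtBook}, so that everything comes down to the existence of a bounded solution of \eqref{eqn:recurrence}. Your route could in principle work, but as written it has a genuine gap at the crux of the converse direction.

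The false step is the claim that when $\int_{\SS}\phiz(\xi)\,d\xi=0$ the ergodic theorem gives $\psiz(n,\xi)$ bounded above for a.e.\ (hence, by minimality, every) $\xi$. The ergodic theorem only gives $\psiz(n,\xi)=o(n)$; it says nothing about boundedness. For a zero-mean continuous cocycle over an irrational rotation that is not a coboundary, the partial sums are typically unbounded at almost every point --- this is exactly the phenomenon behind the Fr\c{a}czek--Lema\'nczyk result quoted in the paper. Producing even a \emph{single} $\xi$ with $\{\psiz(n,\xi):n\in\ZZ\}$ bounded above is the hard point, and the paper handles it by a dichotomy: if $\phiz$ is a continuous coboundary one gets boundedness for every $\xi$; if not, the skew product $S(\xi,r)=(\xi\zeta,r+\phiz(\xi))$ is topologically transitive (Gottschalk--Hedlund, Atkinson), and Besicovitch's construction then yields one $\xi$ whose orbit has bounded-above second coordinate. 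Without this (or the diophantine argument of Remark \ref{rem:algebraic} showing $\zeta$ is algebraic and $\phiz$ is in fact a coboundary), your proof of ``(1) fails $\Rightarrow$ nonexpansive'' does not go through.

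A secondary gap sits in your sufficiency direction. Exponential decay of the partial products $\prod_j g(\xi\zeta^j,\zeta)/h(\xi\zeta^j,\zeta)$, uniformly in $\xi$, does follow from unique ergodicity when $\int_{\SS}\phiz\,d\xi<0$; but that controls only the sup norm of these functions on $\SS$, whereas convergence of your Neumann series in $\ell_{\zeta}^1(U,V)$ requires decay of their \emph{Wiener norms} (the $\ell^1$ norm of Fourier coefficients), and submultiplicativity only gives $\|a\|_W^n$, which does not decay. This is repairable (e.g.\ by Bernstein-type interpolation between $\|\cdot\|_\infty$ and derivative bounds), but it is precisely the analytic work your sketch omits and the paper's kernel-based argument avoids entirely.
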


\begin{proof}
   Suppose first that $\af$ is nonexpansive. By Theorem
   \ref{thm:expansive}, the $\G$-invariant subspace
   $K=\{w\in\ligc:\rf(w)=0\}$ is nontrivial. By restricting the
   $\G$-action on $K$ to the commutative subgroup $\Lam$, we can apply
   the argument in \cite[Lemma 6.8]{SchmidtBook} to find a 1-dimensional
   $\Lam$-invariant subspace $W\subset K$. In other words, there are a
   nonzero $w\in K$ and $\xi,\zeta\in\SS$ such that $\rho_x(w)=\xi w$
   and $\rho_z(w)=\zeta w$. It follows from this and the above
   discussion that $w$ must have the form
   $w=\sum_{n=-\infty}^{\infty}c_n\vxz y^n$ with the $c_n\in\CC$ given
   by \eqref{eqn:recurrence}, and with $\{c_n\}$ a bounded sequence. If
   $\zeta$ is an $n$-th root of unity, then clearly
   $\sum_{j=0}^{n-1}\phiz(\xi\zeta^j)=0$ by boundedness of the $c_n$,
   while if
   $\zeta$ is irrational, then the discussion above shows that
   $\int_{\SS}\phiz(\xi)\,d\xi=0$.

   For the converse, suppose first that $\zeta$ is rational, say
   $\zeta^k=1$. If there is a $\xi\in\SS$ with $\xi\in\SS$ with
   $\sum_{j=0}^{k-1}\phiz(\xi\zeta^{-j})=0$, then \eqref{eqn:coefficients}
   and \eqref{eqn:cocycle-gh} show that there is a bounded sequence
   $\{c_n\}$, in this case even periodic with period $k$, so that $w=\sum
   c_n\vxz\,y^n\in\ligc$ with $\rf(w)=0$, showing that $\af$ is
   nonexpansive.

   Finally, suppose that $\zeta$ is irrational and
   $\int_{\SS}\phiz(\xi)\,d\xi=0$. If there were a continuous coboundary
   $b$ on $\SS$ such that $\phiz(\xi)=b(\xi\zeta)-b(\xi)$, then the
   cocycle $\psiz(\cdot,\xi)$ would be bounded for every value of $\xi$,
   and as before this means we can form an nonzero point $w\in\ligc$
   with $\rf(w)=0$, so $\af$ is nonexpansive. So suppose no such
   coboundary exists. Let $Y=\SS\times\RR$, and define a skew product
   transformation $S\colon Y\to Y$ by
   $S(\xi,r)=(\xi\zeta,r+\phiz(\xi))$. Since $\phiz$ is not a coboundary,
   but $\int_{\SS}\phiz(\xi)\,d\xi=0$, the homeomorphism $S$ is
   topologically transitive (see \cite{GottschalkHedlund} or
   \cite[Thms.\ 1 and 2]{Atkinson}). By \cite[p.\ 38f]{Bes}, there exists
   a point $\xi\in\SS$ such that the entire $S$ orbit of $(\xi,0)$ has
   its second coordinate bounded above. Since
   $S^n(\xi,0)=(\xi\zeta^{n},\psiz(n,\xi))$, it follows that for this
   choice of $\xi$, the sequence $\{c_n\}$ from \eqref{eqn:coefficients}
   and \eqref{eqn:cocycle-gh} is also bounded, and therefore
   $w=\sum_{n=-\infty}^\infty c_n\vxz y^n\in\ligc$ with $\rf(w)=0$, and so
   $\af$ is nonexpansive.
\end{proof}

\begin{remark}
   With the assumptions of Theorem \ref{thm:linear-expansive} that
   neither $g$ nor $h$ vanish on $\SS^2$, the functions
   $\mahler\bigl(g(\cdot,\zeta)\bigr)$ and
   $\mahler\bigl(h(\cdot,\zeta)\bigr)$ are both continuous functions of
   $\zeta$. Hence the conditions (1) and (2) in Theorem
   \ref{thm:linear-expansive} combine to say that the graphs of these
   functions never cross (for rational $\zeta$ use the Mean Value
   Theorem). In particular, if $h(x,z)\equiv 1$, then since
   $\int_{\SS}\mahler\bigl(g(\cdot,\zeta)\bigr)\,d\zeta=\mahler(g)\ge0$,
   the condition for expansiveness of $f(x,y,z)=y-g(x,z)$ becomes simply
   that $\mahler\bigl(g(\cdot,\zeta)\bigr)>0$ for all $\zeta$.
\end{remark}

\begin{example}
   The polynomial $f(x,y,z)=3+x+y+z$, although not lopsided, was shown
   to be expansive in \cite[Example\ 7.4]{EinsiedlerRindler}. In
   \cite[Example 3.6]{GollSchmidtVerbitskiy2} four different ways to
   verify its expansiveness are given: (1) using irreducible unitary
   representations, (2) direct computation of its inverse in $\logr$,
   (3) using Allan's local principle, and (4) using the geometric
   argument in Theorem \ref{thm:linear-expansive}.

   Many more examples illustrating various aspects of expansiveness (or
   its lack) for polynomials in $\ZG$ are contained in
   \cite{GollSchmidtVerbitskiy2}.
\end{example}

\begin{example}[\protect{\cite[Example 5.11]{GollSchmidtVerbitskiy2}}]
   \label{exam:x+y+z+2}
   Let $f(x,y,z)=x+y+z+2=y-g(x,z)\in\ZG$, where $g(x,z)=-x-z-2$. Since
   $g(-1,-1)=0$, it does not quite satisfy the hypothesis of Theorem~
   \ref{thm:linear-expansive}. However, we can directly use the
   violation of (2) there to show that $\af$ is nonexpansive.

   Let $\zeta_0=-1$. We want to find $\xi\in\SS$ such that
   $|g(\xi,-1)g(-\xi,-1)|=1$. This amounts to solving $|\xi^2-1|=1$, and
   this has the four solutions $\pm e^{\pm\pi i/6}$. Let $\xi_0=e^{\pi
   i/6}$, and consider the point $v_{\xi_0,-1}$ as defined in
   \eqref{eqn:vxz}. Then the coefficients $c_n$ of the point
   $\sum_{n=-\infty}^\infty c_nv_{\xi_0,-1}y^n\in\ker \rf$ satisfy
   \eqref{eqn:recurrence}, and are hence alternately multiplied by
   $g(\xi_0,-1)$ and by $g(-\xi_0,-1)$, where
   $|g(\xi_0,-1)|=\sqrt{2-\sqrt{3}}\cong 0.51764$ and
   $|g(-\xi_0,-1)|=1/|g(\xi_0,-1)|\cong 1.93185$. Thus
   $\{c_n\}_{n=-\infty}^\infty$ is bounded, and hence $\af$ is
   nonexpansive.

   Note that here $\mahler(g(\cdot,\zeta))=\log|\zeta+2|$, which vanishes
   only at $\zeta=-1$. Hence for all irrational $\zeta$, condition (1)
   of Theorem \ref{thm:linear-expansive} is satisfied. It is an easy
   exercise to show that if $\zeta\ne-1$ is rational, then condition
   (2) is also satisfied. So here the only values of $(\xi,\zeta)$
   leading to a bounded solution $\{c_n\}$ of \eqref{eqn:recurrence}
   are $(\pm e^{\pm\pi i/6},-1)$. This example appears as \cite[Example
   10.6]{EinsiedlerRindler}, but was claimed there to be expansive.
\end{example}

\begin{example}
   \label{exam:y^2-xy-1}
   Let $f(x,y,z)=y^2-xy-1\in\ZG$. For $(\xi,\zeta)\in\SS^2$ let $\vxz$
   be defined by \eqref{eqn:vxz}, and let $w=\sum_{n=-\infty}^\infty
   c_n\vxz y^n$. Although $f$ is now quadratic in $y$, we can still
   calculate $\rf(w)$ as before, finding that $\rf(w)=0$ if and only if
   \begin{equation}
      \label{eqn:quadratic-recurrence}
      c_{n+2}=\xi\,\zeta^{n} c_{n+1}+c_n  \text{\quad for all $n\in\ZZ$}.
   \end{equation}
   Thus we need conditions on $(\xi,\zeta)$ for which
   \eqref{eqn:quadratic-recurrence} has a bounded solution.

   For $n\ge1$ put
   \begin{equation}
      \label{eqn:quadratic-cocycle}
      A_n(\xi,\zeta)=\begin{bmatrix} 0 & 1\\
         1&\xi\zeta^{n-1}\end{bmatrix}
         \begin{bmatrix} 0 & 1\\
            1&\xi\zeta^{n-2}\end{bmatrix}\dots
         \begin{bmatrix} 0 & 1\\
         1&\xi\end{bmatrix}.
   \end{equation}
   Then the recurrence \eqref{eqn:quadratic-recurrence} shows that
   \begin{displaymath}
      A_n(\xi,\zeta)
     \begin{bmatrix} c_0 \\ c_1 \end{bmatrix} =
     \begin{bmatrix} c_{n} \\ c_{n+1}\end{bmatrix}
      \text{\quad for all $n\ge1$},
   \end{displaymath}
   and there is a similar formula for $n\le-1$.

   We are therefore reduced to finding $(\xi,\zeta)$ such that the
   matrix-valued cocycle $\{A_n(\xi,\zeta)\}$ is bounded. The easiest
   place to look is at $\zeta=1$, since $A_n(\xi,1)=A_1(\xi,1)^n$. Now
   $A_1(\xi,1)$ has eigenvalues on $\SS$ provided that the roots of
   $u^2-\xi u-1=0$ are there. This happens exactly when $\xi=\pm
   i$. Hence if we define $c_n$ by $$\begin{bmatrix}
      c_{n}\\c_{n+1}\end{bmatrix}=
   \begin{bmatrix}0 & 1\\1 &
      i\end{bmatrix}^n\begin{bmatrix}0\\1\end{bmatrix} \text{\quad for
   all $n\in\ZZ$,}$$
   then $w=\sum_{k,n,m=-\infty}^\infty
   c_ni^k\,x^ky^nz^m\in\ligc\cap\ker(\rf)$, and so $\af$ is
   nonexpansive.

   For some values of $(\xi,\zeta)$ the growth rate of $A_n(\xi,\zeta)$ can be
   strictly positive, for example $(1,1)$. However,
   as we will see later,
   the growth rate of $A_n(\xi,\zeta)$ as
   $n\to\pm\infty$ is zero for almost every $(\xi,\zeta)\in\SS^2$,
   although this is not at all obvious.
   \end{example}

   If $g$ or $h$ are allowed to vanish on $\SS^2$,
   there is a completely different source of nonexpansive behavior.
   \begin{example}
      Let $f(x,y,z)=h(x,z)y-g(x,z)\in\ZG$, and suppose that $g(x,z)$ and
      $h(xz^{-1},z)$ have a common zero $(\xi,\zeta)\in\SS^2$. Consider
      $\vxz$ as a point in $\ligc$. Then
      \begin{align*}
         \rf(\vxz)&=\vxz\cdot\bigl(y^{-1}h^*(x,z)-g^*(x,z)\bigr)
         =\vxz\cdot\bigl(h^*(xz^{-1},z)y^{-1}-g^*(x,z)\bigr)\\
         &=h(\xi\zeta^{-1},\zeta)\vxz\cdot y^{-1}-g(\xi,\zeta)\vxz = 0,
      \end{align*}
      so that $\rf$ is not expansive.

      There is a simple geometric method to create examples of this
      situation. Start with two polynomials $g(x,z)$ and $h_0(x,z)$ whose unitary varieties in $\SS^2$ do not intersect, but which have the property that the unitary variety of the skewed
      polynomial $h_0(x z^m,z)$ intersects that of $g$ for sufficiently large $m$. An
      example of this form are described in \cite[Example
      8.5]{GollSchmidtVerbitskiy1}, which also contains other results
      about expansiveness in cases when at least one unitary variety is
      nonempty. However, this analysis leaves open one very interesting
      case. The simplest version of this question is the following.

      \begin{problem}
         Let $g(x,z)\in\ZZ[x^{\pm},z^{\pm}]$, and suppose there is a
         $\zeta\in\SS$ for which $\mahler\bigl(g(\cdot,\zeta)\bigr)=0$
         and such that $g(\xi,\zeta)$ vanishes for at least one value of
         $\xi\in\SS$. Is there a value of $\xi$ for which the partial
         sums $\sum_{j=0}^n \log|g(\xi\zeta^j,\zeta)|$ are bounded above
         for all $n$?
      \end{problem}

       Fr\c{a}czek and Lema\'{n}czyk showed in \cite{FraczekLemanczyk} that
       these sums are unbounded for almost every $\xi\in\SS$. The
       argument of Besicovitch \cite{Bes} to prove the existence of bounded sums
       makes essential use of continuity, and does not apply in this
       case where there are logarithmic singularities.
   \end{example}

\begin{remark}\label{rem:algebraic}
   By invoking some deep results in diophantine approximation theory, we
   can show that the second alternative in the last paragraph of the
   proof of Theorem \ref{thm:linear-expansive} never occurs. For we claim that if
   $\int_{\SS}\phiz(\xi)\,d\xi=0$, then $\zeta$ must be an algebraic number
   in $\SS$. Assuming this for the moment, then a result of Gelfond
   \cite{Gelfond} shows that for every $\epsilon>0$ there is a $C>0$
   such that $|\zeta^n-1|>C e^{-n\epsilon}$ for every $n\ge1$. Since $\phiz(\xi)$ is
   smooth, its Fourier coefficients $\widehat{\phiz}(n)$ decay rapidly
   as $|n|\to\infty$, and so the formal solution $b(\xi)=\sum
   _{-\infty}^\infty b_n\xi^n$ to $\phiz(\xi)=b(\xi\zeta)-b(\xi)$, with
   $b_n=\widehat{\phiz}(n)/(\zeta^n-1)$ for $n\neq0$, also decays rapidly, giving a
   continuous solution $b$.

   To justify our claim, write
   \begin{displaymath}
      g(x,\zeta)=A(\zeta)\prod_{j=1}^m(x-\lam_j(\zeta)) \text{\quad
      and\quad}
      h(x,\zeta)=B(\zeta)\prod_{k=1}^n(x-\mu_k(\zeta)),
   \end{displaymath}
   where $A(z)$ and)
   $B(z)$ are Laurent polynomials with integer coefficients and the
   $\lam_j$ and $\mu_k$ are algebraic functions. The condition
   $\int_{\SS}\phiz(\xi)\,d\xi=0$ becomes, via Jensen's formula,
   \begin{displaymath}
      \Bigl| A(\zeta) \prod_{|\lam_j(\zeta)|>1}\lam_j(\zeta)\Bigr|=
      \Bigl|B(\zeta)\prod_{|\mu_k(\zeta)|>1}\mu_k(\zeta)\Bigr|,
   \end{displaymath}
   which is an algebraic equation in $\zeta$, so $\zeta$ is algebraic.

   From this, and the preceding proof, under our assumptions that
   neither $g$ nor $h$ vanish anywhere on $\SS^2$, we conclude that if
   $\int_{\SS}\phiz(\xi)\,d\xi=0$ and $\zeta$ is irrational, then
   \textit{every} choice of $\xi$ will yield a nonzero point
   $w=\sum c_n\vxz\,y^n\in\ligc$ with $\rf(w)=0$.

   This idea was observed independently by Evgeny Verbitskiy (oral
   communication).
\end{remark}

To illustrate some of the preceding ideas, we provide an informative
example. This was chosen so that the diophantine estimates mentioned in
Remark \ref{rem:algebraic} can be given an elementary and self-contained
proof, rather than appealing to difficult general diophantine
results. In addition, the constants in our analysis are effective enough
to rule out nonexpansive behavior at all rational $\zeta$. One
consequence is that for this algebraic $\G$-action, nonexpansiveness
cannot be detected by looking at only finite-dimensional representations
of $\G$.

\begin{example}\label{exam:nonexpansive}
   Let $g(x,z)=a(x)c(z)$, where $a(x)=x^2-x-1$ and
   $c(z)=z^{12}+z^2+1$. It is easy to check that neither $a$ nor $c$
   vanishes on $\SS$, so that $g$ does not vanish anywhere on
   $\SS^2$. Also, $a(x)$ has roots $\tau=(1+\sqrt{5})/2$ and
   $\sigma=(1-\sqrt{5})/2=-\tau^{-1}$.

   Consider $f(x,y,z)=y-g(x,z)$, so that here $h(x,z)\equiv1$. Then
   $\phiz(\xi)=\log|g(\xi,\zeta)|\linebreak[0]=\log|a(\xi)|+\log|c(\zeta)|$, and
   define $\psiz(n,\xi)$ as in \eqref{eqn:additive-cocycle}. Call
   $(\xi,\zeta)\in\SS^2$ \textit{nonexpansive for} $g$ if the sequence
   $\{\psiz(n,\xi):n\in\ZZ\}$ is bounded.

   We claim there are eight values $\zeta_1,\dots,\zeta_8\in\SS$, which
   are algebraically conjugate algebraic integers of degree 48, such
   that the nonexpansive points for $g$ are exactly those of the form
   $(\xi,\zeta_k)$, where $\xi$ is any element of $\SS$ and $1\le k\le
   8$.

   We start with two simple results.

   \begin{lemma}\label{lem:coboundary}
      Suppose that $\zeta\in\SS$ is irrational, and that there are
      constants $C>0$ and $0<r<1$ such that $|\zeta^n-1|\ge Cr^n$ for
      all $n\ge1$. If $\kappa\in\CC$ with $|\kappa|<r$, then the
      function $\xi\mapsto\log|1-\xi\kappa|$ on $\SS$ is a continuous
      coboundary for $\zeta$, i.e., there is a continuous
      $b\colon\SS\to\RR$ such that $\log|1-\xi\kappa|=b(\xi\zeta)-b(\xi)$.\
   \end{lemma}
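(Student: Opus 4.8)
The plan is to produce $b$ explicitly as a Fourier series and then verify its continuity using the Diophantine hypothesis. Since $|\kappa|<r<1$, the circle $\SS$ avoids the zero of $1-\xi\kappa$, so $\phi(\xi):=\log|1-\xi\kappa|$ is real-analytic on $\SS$. Expanding the principal branch $\log(1-w)=-\sum_{m\ge1}w^{m}/m$ (uniformly convergent for $|w|\le|\kappa|$) with $w=\xi\kappa$, taking real parts, and using $\overline{\xi}=\xi^{-1}$ for $\xi\in\SS$, one reads off the Fourier coefficients
\[
   \widehat{\phi}(n)=
   \begin{cases}
      -\kappa^{n}/(2n), & n\ge1,\\
      0, & n=0,\\
      -\overline{\kappa}^{\,|n|}/(2|n|), & n\le-1,
   \end{cases}
\]
so that $|\widehat{\phi}(n)|\le|\kappa|^{|n|}/(2|n|)$ for $n\ne0$; in particular the Fourier coefficients of $\phi$ decay geometrically.

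Next I would write down the formal coboundary. If $b(\xi)=\sum_{n}\widehat{b}(n)\xi^{n}$, then $b(\xi\zeta)-b(\xi)=\sum_{n}\widehat{b}(n)(\zeta^{n}-1)\xi^{n}$, so matching Fourier coefficients with $\phi$ forces $\widehat{b}(0)=0$ and $\widehat{b}(n)=\widehat{\phi}(n)/(\zeta^{n}-1)$ for $n\ne0$; this is well defined because $\zeta$ irrational gives $\zeta^{n}\ne1$ for all $n\ne0$.

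The one genuine point of the proof — though it is short — is to check that this $b$ is continuous, i.e.\ that $\sum_{n}|\widehat{b}(n)|<\infty$. For $n\ge1$ the hypothesis $|\zeta^{n}-1|\ge Cr^{n}$ gives
\[
   |\widehat{b}(n)|\le\frac{|\kappa|^{n}/(2n)}{Cr^{n}}=\frac{1}{2Cn}\Bigl(\frac{|\kappa|}{r}\Bigr)^{n},
\]
and for $n\le-1$ one uses $|\zeta^{n}-1|=|\zeta^{|n|}-1|\ge Cr^{|n|}$ to obtain the same bound with $n$ replaced by $|n|$. Since $|\kappa|/r<1$, these bounds are summable, so $b(\xi):=\sum_{n}\widehat{b}(n)\xi^{n}$ converges absolutely and uniformly on $\SS$ and defines a continuous function. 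It is real-valued because $\overline{\widehat{\phi}(n)}=\widehat{\phi}(-n)$ together with $\overline{\zeta^{n}-1}=\zeta^{-n}-1$ gives $\overline{\widehat{b}(n)}=\widehat{b}(-n)$. Finally $b(\xi\zeta)-b(\xi)$ and $\phi(\xi)$ are continuous functions with identical Fourier coefficients, hence equal.

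The whole difficulty is the tension between the exponential loss of size $r^{-n}$ permitted by the Diophantine lower bound $|\zeta^{n}-1|\ge Cr^{n}$ and the geometric decay $|\kappa|^{n}$ of $\widehat{\phi}(n)$; the assumption $|\kappa|<r$ is precisely what makes the quotient $\widehat{\phi}(n)/(\zeta^{n}-1)$ summable, so once this bookkeeping is set up there is no real obstacle. (This is exactly the elementary, effective counterpart of the Gelfond-type argument used in Remark \ref{rem:algebraic}.)
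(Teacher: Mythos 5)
Your proof is correct and is essentially the paper's argument: expand $\log(1-\xi\kappa)$ in a Taylor series, solve the cohomological equation formally coefficient by coefficient, and use the Diophantine bound $|\zeta^n-1|\ge Cr^n$ together with $|\kappa|<r$ to get geometric summability and hence a continuous $b$. The only cosmetic difference is that the paper solves the equation for the holomorphic function $B(\xi)$ using positive frequencies only and takes $\Re B$ at the end, while you carry the full two-sided real Fourier series; both come to the same thing.
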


   \begin{proof}
      Since $|\xi\kappa|<1$, the Taylor series for $\log(1-\xi\kappa)$
      converges, so that
      \begin{displaymath}
         \log(1-\xi\kappa)=-\sum_{n=1}^\infty\frac{\xi^n\kappa^n}{n}.
      \end{displaymath}
      Let $B(\xi)=\sum_{n=1}^\infty b_n\xi^n$. Then
      $B(\xi\zeta)-B(\xi)=\log(1-\xi\kappa)$ provided that
      $b_n=-\kappa^n/[(\zeta^n-1)n]$ for all $n\ge1$. The assumption on $\zeta$ implies
      that $|b_n|\le C^{-1}(|\kappa|/r)^n$, so that the series for $B$
      converges uniformly on $\SS$. Then $b(\xi)=\Re\{B(\xi)\}$ gives
      the required coboundary.
   \end{proof}

   \begin{lemma}\label{lem:diophantine}
      Let $p(u)\in\ZZ[u]$ be monic and irreducible. Suppose that $p$ has
      a root $\zeta\in\SS$ that is irrational. Then there is a constant
      $C>0$ such that
      \begin{equation}
         \label{eqn:mahler-estimate}
         |\zeta^n-1|\ge C \Mahler(p)^{-n/2} \text{\quad for all $n\ge1$},
      \end{equation}
      where $\Mahler(p)>1$ is the Mahler measure of $p$.
   \end{lemma}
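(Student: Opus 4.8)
The plan is to bound $|\zeta^n-1|$ from below by an integrality argument, exploiting that $\zeta$ and $\bar\zeta$ are two \emph{distinct} Galois conjugates of $\zeta$ that carry the \emph{same} factor $|\zeta^n-1|$. It is precisely this pairing that yields the exponent $-n/2$ in \eqref{eqn:mahler-estimate}: the naive resultant estimate only produces $\Mahler(p)^{-n}$, and peeling off both $\zeta$ and $\bar\zeta$ at once is what halves the exponent.

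I would first fix notation and record two preliminary facts. Let $d=\deg p$ and let $\zeta=\zeta_1,\dots,\zeta_d\in\CC$ be the roots of $p$ (distinct, since $p$ is irreducible in characteristic $0$). One: since $\zeta$ is an algebraic integer lying on $\SS$ but is not a root of unity, Kronecker's theorem forces at least one conjugate $\zeta_j$ to lie off $\SS$ (otherwise $\zeta$, an algebraic integer all of whose conjugates have modulus $1$, would be a root of unity); as $p$ is monic this gives $\Mahler(p)=\prod_{j=1}^d\max(1,|\zeta_j|)>1$. Two: since $p$ has integer coefficients, $\bar\zeta$ is also a root of $p$, and $\bar\zeta\ne\zeta$ because $|\zeta|=1$ and $\zeta\ne\pm1$ (the latter again from the hypothesis that $\zeta$ is irrational). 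Relabel so that $\zeta_2=\bar\zeta$.

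Next I would invoke integrality: for each $n\ge1$ the quantity $\prod_{j=1}^d(\zeta_j^n-1)=\pm\operatorname{Res}\bigl(p(u),u^n-1\bigr)$ is an integer, and it is nonzero because no conjugate of $\zeta$ — hence not $\zeta$ itself — can be an $n$-th root of unity. Therefore $\prod_{j=1}^d|\zeta_j^n-1|\ge1$. I would then bound the unwanted factors trivially: $|\zeta_j^n-1|\le|\zeta_j|^n+1\le 2\max(1,|\zeta_j|)^n$ for every $j$, so $\prod_{j=3}^d|\zeta_j^n-1|\le 2^{\,d-2}\Mahler(p)^n$, where I use $\max(1,|\zeta_1|)=\max(1,|\zeta_2|)=1$ to replace the partial product $\prod_{j=3}^d\max(1,|\zeta_j|)$ by the full product $\Mahler(p)$. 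Combining this with $|\zeta_2^n-1|=|\overline{\zeta^n-1}|=|\zeta^n-1|$ gives
\[
|\zeta^n-1|^2=|\zeta_1^n-1|\,|\zeta_2^n-1|=\frac{\prod_{j=1}^d|\zeta_j^n-1|}{\prod_{j=3}^d|\zeta_j^n-1|}\ \ge\ \frac{1}{2^{\,d-2}\Mahler(p)^n},
\]
and taking square roots yields \eqref{eqn:mahler-estimate} with the $n$-independent constant $C=2^{-(d-2)/2}$.

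I do not expect a genuine obstacle. The only external ingredient is Kronecker's theorem (equivalently: a monic integer polynomial all of whose roots lie on $\SS$ has only roots of unity as roots), used solely to ensure $\Mahler(p)>1$. The one point that deserves care when writing this out is that the entire argument hinges on producing a \emph{second} conjugate of $\zeta$ of modulus one and distinct from $\zeta$, which is exactly why the standing assumption "$\zeta$ irrational" (so $\zeta\ne\pm1$, so $\bar\zeta\ne\zeta$) must be invoked explicitly; everything else is elementary and the constant $C$ is manifestly uniform in $n$.
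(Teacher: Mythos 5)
Your proof is correct and follows essentially the same route as the paper's: both exploit that $\prod_{j}(\zeta_j^n-1)$ is a nonzero integer, peel off the conjugate pair $\zeta,\bar\zeta$ to produce $|\zeta^n-1|^2$ (whence the exponent $-n/2$), and bound the remaining factors by a constant times $\Mahler(p)^n$. Your trivial estimate $|\lambda^n-1|\le 2\max(1,|\lambda|)^n$ is in fact the cleaner (and correctly stated) version of the paper's corresponding step, so no changes are needed.
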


   \begin{proof}
      Factor $p$ over $\CC$ as
      $p(u)=(u-\zeta)(u-\bar{\zeta})\prod_{j=1}^r(u-\lam_j)$, where
      $\bar{\zeta}$ is the complex conjugate of $\zeta$. The polynomial
      $p^{(n)}(u)=(u-\zeta^n)(u-\bar{\zeta}^n)\prod_{j=1}^r(u-\lam_j^n)$
      has integer coefficients, and $p^{(n)}(1)\ne0$,  hence
      $|p^{(n)}(1)|\ge1$. Then using the trivial estimates that
      $|\lam^n-1|\le2$ if $|\lam|\le1$ and
      $|\lam^n-1|\le(1-1/|\lam|)|\lam^n|$ if $|\lam|>1$, we obtain that
      \begin{align*}
         \label{eqn:lower-estimate}
         1\le|p^{(n)}(1)| &=
         |\zeta^n-1||\bar{\zeta}^n-1|\prod_{j=1}^r|\lam_j^n-1| \\
         &\le |\zeta^n-1|^2 2^r\prod_{|\lam_j|>1}|\lam_j^n-1| \\
         &\le |\zeta^n-1|^2
         2^r\Bigl\{\prod_{|\lam_j|>1}\Bigl(1-\frac{1}{|\lam_j|}\Bigr)\Bigr\}
         \Mahler(p)^n.
      \end{align*}
      Hence \eqref{eqn:mahler-estimate} is valid with
      \begin{displaymath}
         C=2^{-r/2}\prod_{|\lam_j|>1}\Bigl(1-\frac{1}{|\lam_j|}\Bigr)^{-1/2}.
         \qedhere
      \end{displaymath}
   \end{proof}

   Recall that $a(x)=(x-\tau)(x-\sigma)$, so that
   $\mahler(a)=\log\tau$. By Theorem \ref{thm:linear-expansive}, we need
   to find just those $\zeta\in\SS$ for which
   \begin{displaymath}
      0=\int_{\SS}\log|g(\xi,\zeta)|d\,\xi=\int_{\SS}(\log|a(\xi)|+\log|c(\zeta)|)\,d\xi
      =\log \tau + \log|c(\zeta)|,
   \end{displaymath}
   that is, we must solve the equation $|c(\zeta)|=\tau^{-1}$. Any such
   solution  $\zeta\in\SS$ would also satisfy $c(z)c(1/z)=\tau^{-2}$,
   or, equivalently, satisfy
   \begin{displaymath}
      F(z)=z^{12}\bigl(c(z)c(1/z)-\tau^{-2}\bigr)\in\QQ(\tau)[z].
   \end{displaymath}
   Here $F(z)$ has degree 24, and has eight roots
   $\zeta_1,\dots,\zeta_8\in\SS$. To show these are algebraic integers,
   multiply $F(z)$ by its Galois conjugate over $\QQ(\tau)$, yielding
   the following polynomial
   \begin{align*}\label{eqn:degree48}
      G(z)=z^{48}+2 &z^{46}+z^{44}+2 z^{38}+5 z^{36}+5
   z^{34}+2 z^{32}+z^{28}+5 z^{26}\\&+7 z^{24}+5
   z^{22}
   +z^{20}+2 z^{16}+5 z^{14}+5 z^{12}+2
   z^{10}+z^4+2 z^2+1,
  \end{align*}
  whose irreducibility is confirmed by \textit{Mathematica}. Hence the
  $\zeta_k$ are conjugate algebraic integers of degree 48 as claimed.

  Now $G$ has 10 roots outside the unit circle, whose product is
  $\Mahler(G)\cong1.90296$. Then$\sqrt{\Mahler(G)}\cong1.37948<\tau$,
  which plays a crucial role in dealing with rational $\zeta$.

  By Theorem \ref{thm:expansive}, the only irrational $\zeta$ we need to
  consider are the eight $\zeta_k$. By Lemma \ref{lem:diophantine}, there
  is a constant $C>0$ such that $|\zeta_k^n-1|>Cr^n$, where
  $r=\Mahler(G)^{-1/2}<1$. Since $\tau^{-1}=|\sigma|<r$, by Lemma
  \ref{lem:coboundary}, we can find continuous coboundaries $b_1$ and
  $b_2$ so that
  \begin{displaymath}
    \log|1-\tau^{-1}\xi|=b_1(\xi\zeta_k)-b_1(\xi)\text{\quad and\quad}
    \log|\xi-\sigma|=b_2(\xi\zeta_k)-b_2(\xi).
  \end{displaymath}
  Hence
  \begin{displaymath}
     \psi_{\zeta_k}(\xi)=\log|(\xi-\tau)(\xi-\sigma)c(\zeta_k)|
     =\log|1-\tau^{-1}\xi|+\log|\xi-\sigma|
  \end{displaymath}
  is also a coboundary. Thus $(\xi,\zeta_k)$ is nonexpansive for every
  $\xi\in\SS$. This is an example of the algebraic phenomenon discussed
  in Remark \ref{rem:algebraic}.

  To complete our analysis, we turn to the rational case, say
  $\zeta=\om$, a primitive $n$-th root of unity. The idea of the
  following argument is that is to show that there is a large enough $N_0$
  such that for all $n\ge N_0$, the variation of the $n$-periodic function
  $\prod_{j=0}^{n-1}a(\xi\om^j)$ is small compared to $|c(\om)|$. The
  estimates are sharp enough to obtain the bound $N_0=143$, and the
  remaining cases $n<143$ can be checked by hand.

  First observe that
  \begin{align*}
     \prod_{j=0}^{n-1}\bigl\{a(\xi\om^j)\tau^{-1}\bigr\}&=\prod_{j=0}^{n-1}
     \Bigl(\frac{\xi\om^j-\tau}{\tau}\Bigr) (\xi\om^j-\sigma)
     =(-1)^n\prod_{j=0}^{n-1}(1-\tau^{-1}\xi\om^j)(\xi\om^j-\sigma)\\
     &=(-\xi^n)(1-\tau^{-n}\xi^n)(1-\sigma^n\xi^{-n}).
  \end{align*}
  Since $\bigl|\log|1-\kappa|\bigr|\le2|\kappa|$ for all $\kappa\in\CC$
  with $|\kappa|\le\tau^{-1}$, it follows that
  \begin{equation}
     \label{eqn:a-variation}
     \Bigl|\log |\prod_{j=0}^{n-1} a(\xi\om^j)\tau^{-1}|\Bigr|\le 4\tau^{-n}
  \end{equation}
  for all $\xi\in\SS$ and all $n\ge1$. Recalling that
  $c(\zeta_k)=\tau^{-1}$, we obtain that
  \begin{displaymath}
     \log\Bigl|\prod_{j=0}^{n-1}a(\xi\om^j)c(\om)\Bigr|
     =\log\Bigl|\prod_{j=0}^{n-1}a(\xi\om^j)\tau^{-1}\Bigr|
     +\log|[c(\om)/c(\zeta_k)]^n|.
  \end{displaymath}
  Thus, if
  \begin{equation}
     \label{eqn:estimate}
     \bigl|n\log|c(\om)\tau|\bigr|>5\tau^{-n}
  \end{equation}
  then we must have $\sum_{j=0}^{n-1}\phi_{\zeta_k}(\xi \om^j)\ne0$ for
  all $\xi\in\SS$.

  To obtain a reasonable bound for $N_0$ so that \eqref{eqn:estimate}
  holds for all $n\ge N_0$, we need to make a  more careful estimate
  in the proof of Lemma \ref{lem:diophantine} for the polynomial
  $G(z)$. Here $G(z)$ has ten roots $\lam_1,\dots,\lam_{10}$ outside the
  unit circle. Thus besides these and $\zeta$ and $\bar{\zeta}$, there
  are 36 roots of $G$ on or inside the unit circle. The estimate in the
  proof of Lemma \ref{lem:diophantine} can be refined as
  \begin{displaymath}
     1\le|\zeta_k^n-1|^2\cdot 2^{36}\prod_{j=1}^{10}|\lam_j^n-1|
     =|\zeta_k^{n}-1|^2\cdot 2^{36}\Mahler(G)^n\prod_{j=1}^{10}
     \Bigl|1-\frac{1}{\lam_j^n}\Bigr|.
  \end{displaymath}
  It is easy to check that $\prod_{j=1}^{10}|1-\lam_j^{-n}|$ has a
  maximum of about 37.94 at $n=6$. Hence
  \begin{displaymath}
     |\zeta_k^n-1|\ge\frac{1}{2^{18}\sqrt{40}}\Mahler(G)^{-n/2}
     \text{\quad for all $n\ge1$}
  \end{displaymath}
  Since
  $\zeta_k^n-1=\zeta_k^n-\om^n=(\zeta_k-\om)(\zeta_k^{n-1}+\zeta_k^{n-2}\om+
  \dots+\om^{n-1})$, it follows that $|\zeta_k-\om|\ge\frac{1}{n}|\zeta_k^n-1|$.

  Verifying \eqref{eqn:estimate} breaks into two cases, depending on
  whether or not $\om$ is close to some $\zeta_k$. Let
  $\epsilon_0=0.01$. It is an exercise in calculus to show that if
  $|\om-\zeta_k|>\epsilon_0$ for every $k$, then
  $\bigl|\log|c(\om)\tau|\bigr|\ge\epsilon_0$, while if $|e^{2 \pi i
  s}-\zeta_k|<\epsilon_0$ for some $k$, then the derivative of
  $\log|c(e^{2\pi i s})\tau|$ has absolute value $\ge1$. A glance at
  Figure \ref{fig:graph}
  should make clear the
  meaning of these statements.

  \begin{figure}[h!]
     \centering
     \includegraphics[scale=.35]{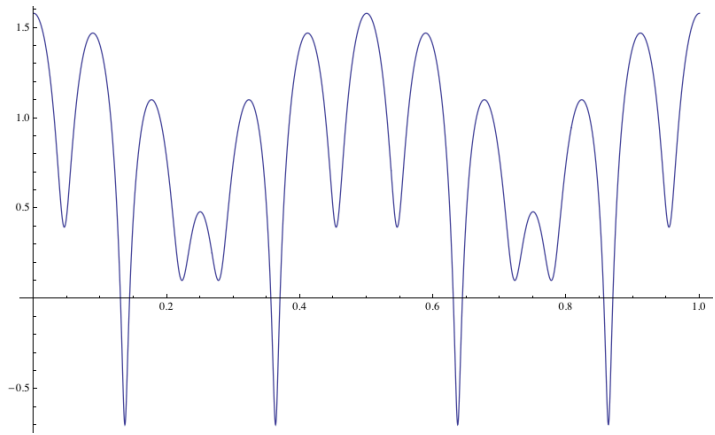}
     \caption{Graph of $\log|c(e^{2\pi i s})\tau|$  \label{fig:graph}}
  \end{figure}

  In the first case, the inequality \eqref{eqn:estimate} is satisfied if
  $n\epsilon_0>5\tau^{-n}$, which is true for all $n\ge8$.

  In the second case, using the lower bound on the absolute value of the
  derivative,
  \begin{align*}
     \bigl|\log|c(\om)\tau|\bigr|&=\bigl|\log|c(\om)|-\log|c(\zeta_k)|\bigr|\\
     &\ge|\om-\zeta_k|\ge \frac{1}{n}|\zeta_k^n-1|\ge
     \frac{1/n}{2^{18}\sqrt{40}}
     \Mahler(G)^{-n/2}.
  \end{align*}
  Since $\Mahler(G)^{1/2}<\tau$, the last term is eventually greater than
  $5\tau^{-n}$,
  and in fact this holds for all $n\ge 143$. One can then check by hand that
  that \eqref{eqn:estimate} holds for all $n<143$, completing our
  analysis of this example.
\end{example}

It is sometimes useful to make a change of variables to transform a
polynomial into a form that is easier to analyze (see for example
\cite[(3--6)]{LSW}). Let $\Del$ be a countable discrete group, and
$\aut(\Del)$ denote the group of automorphisms of $\Del$. If
$\Phi\in\aut(\Del)$, then $\Phi$ will act on various objects associated
with $\Del$. For example, if $f=\sum_{\del}f_\del \del\in\ZD$, then
$\Phi f=\sum_{\del} f_{\del}\Phi(\del)$, and so
$\Phi(fg)=\Phi(f)\Phi(g)$ and $\Phi(f^*)=\Phi(f)^*$. Analogous formulae
hold for $\TT^\Del$, $\lodr$, and $\lidr$. If $\al$ is an action of
$\Del$, there is a new $\Del$-action $\al^{\Phi}$ defined by
$(\al^\Phi)^\del=\al^{\Phi(\del)}$.

\begin{lemma}
   \label{lem:change-of-variables}
   Let $\Phi\in\aut(\Del)$ and $f\in\ZD$. Then $\Phi$ induces a
   continuous group isomorphism $\Phi\colon X_f\to X_{\Phi f}$
   intertwining the $\Del$-actions $\al_f^{}$ and $\af^\Phi$, so that
   $(X_f,\af)$ and $(X_{\Phi f},\af^\Phi)$ are topologically conjugate
   $\Del$-actions.
\end{lemma}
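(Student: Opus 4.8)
The plan is to work directly with the shift picture on $\TD$. The automorphism $\Phi\in\aut(\Del)$ acts on $\TD$ by the coordinate permutation $\Phi\colon t\mapsto\sum_\del t_\del\Phi(\del)$, equivalently $(\Phi t)_\del=t_{\Phi^{-1}(\del)}$; this is manifestly a homeomorphism and a group automorphism of $\TD$, with inverse the action of $\Phi^{-1}\in\aut(\Del)$, and it is this map that ``$\Phi$ induces''. The first task is to record three compatibility identities, all verified directly from the formal-sum conventions of Section~\ref{sec:algebraic-actions}: $\Phi(t\cdot g)=\Phi(t)\cdot\Phi(g)$ and $\Phi(g^{*})=\Phi(g)^{*}$ for $t\in\TD$, $g\in\ZD$, and $\Phi(\theta\cdot t)=\Phi(\theta)\cdot\Phi(t)$ for $\theta\in\Del$. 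Each reduces to the fact that $\Phi$ is multiplicative on $\Del$ (hence on ``monomials'') and additive in the coefficients.

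The first two identities give $\rho_{\Phi f}\circ\Phi=\Phi\circ\rf$ as maps $\TD\to\TD$: indeed $\rho_{\Phi f}(\Phi t)=\Phi(t)\cdot(\Phi f)^{*}=\Phi(t)\cdot\Phi(f^{*})=\Phi(t\cdot f^{*})=\Phi(\rf(t))$. Since $\Phi$ is a bijection of $\TD$, it carries $X_f=\ker\rf$ bijectively onto $X_{\Phi f}=\ker\rho_{\Phi f}$, with inverse the restriction of $\Phi^{-1}$; being the restriction of the homeomorphism $\Phi$ of $\TD$, it is a homeomorphism of $X_f$ onto $X_{\Phi f}$, hence a topological group isomorphism.

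For the intertwining, restrict the third identity to $X_f$: for $t\in X_f$ and $\del\in\Del$ we get $\Phi\bigl(\af^{\del}(t)\bigr)=\Phi(\del)\cdot\Phi(t)=\al_{\Phi f}^{\Phi(\del)}(\Phi t)=(\al_{\Phi f}^{\Phi})^{\del}(\Phi t)$, i.e. $\Phi\circ\af^{\del}=(\al_{\Phi f}^{\Phi})^{\del}\circ\Phi$. Hence $\Phi$ is a topological conjugacy of $(X_f,\af)$ with $(X_{\Phi f},\al_{\Phi f}^{\Phi})$, which is the pair written $(X_{\Phi f},\af^{\Phi})$ in the statement. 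Equivalently, one can phrase everything on the dual side: $\Phi$ is a ring automorphism of $\ZD$ with $\Phi(\ZD f)=\ZD(\Phi f)$, hence induces a $\Phi$-semilinear group isomorphism $\ZD/\ZD f\to\ZD/\ZD(\Phi f)$ whose Pontryagin dual is exactly $\Phi^{-1}\colon X_{\Phi f}\to X_f$; this is a useful cross-check.

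The proof involves no serious obstacle; the one thing demanding care is bookkeeping with the two reversals built into the formalism — the shift convention $(\theta\cdot t)_\del=t_{\theta^{-1}\del}$ and the $f^{*}$ occurring in $\rf$ — together with keeping straight $\Phi$ versus $\Phi^{-1}$ when moving among $\Del$, $\ZD$, $\TD$ and the dual. Doing this consistently is precisely what makes $\Phi$, rather than $\Phi^{-1}$, appear on the action in the final relation $\Phi\circ\af^{\del}=(\al_{\Phi f}^{\Phi})^{\del}\circ\Phi$.
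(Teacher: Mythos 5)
Your proof is correct and follows essentially the same route as the paper's: both verify that $\Phi(t\cdot f^{*})=\Phi(t)\cdot\Phi(f)^{*}$ to identify $\Phi(X_f)$ with $X_{\Phi f}$, and then use $\Phi(\del\cdot t)=\Phi(\del)\cdot\Phi(t)$ for the intertwining. Your version simply spells out the coordinate formula, the continuity, and the dual-module cross-check in more detail than the paper's two-line argument.
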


\begin{proof}
   If $t\in\TT^\Del$, then $t\in X_f$ iff $t\cdot f^*=0$ iff $0=\Phi(t
   \cdot f^*)=\Phi(t)\cdot \Phi(f)^*$ iff $\Phi(t)\in X_{\Phi f}$, so that
   $\Phi$ induces the required isomorphism. Since $\Phi(\del\cdot
   t)=\Phi(\del)\cdot t$, we see that $\Phi$ intertwines $\al_f^{}$ and ~$\af^\Phi$.
\end{proof}

\begin{remark}
   \label{rem:automorphism-effects}
   It is important to emphasize what Lemma \ref{lem:change-of-variables}
   does \textit{not} say. Although  $\af^\Phi$ and $\al_{\Phi f}$ are both
   $\Del$-actions on $X_{\Phi f}$, there is no obvious relation between
   them, even if $\Del$ is commutative. For example, if $\Del=\ZZ\cong
   \<u\>$, $f(u)=u^2-u-1$, and $\Phi(u)=u^{-1}$, then $(X_{\Phi
   f},\al_{\Phi f})$ is conjugate to the $\ZZ$-action of
   $A=\begin{bmatrix} 0&1\\1&-1\end{bmatrix}$ on $\TT^2$, while
   $(X_{\Phi f},\af^\Phi)$ is conjugate to the $\ZZ$ action of $A^{-1}$
   on $\TT^2$. But $A$ and $A^{-1}$ do not even have the same
   eigenvalues, so cannot give algebraically conjugate $\ZZ$-actions.

   However, certain dynamical properties are clearly shared between
   $\al_f$ and $\al_{\Phi f}$, for example ergodicity, mixing, and
   expansiveness.
\end{remark}

Automorphisms of $\G$ have an explicit description.

\begin{lemma}
   \label{lem:automorphism-heisenberg}
   Every automorphism of $\G$ is uniquely determined by integers
   $a$, $b$, $c$, $d$, $r$, and $s$ with $ad-bc=\pm1$, and
   is given by $\Phi(x)=x^a
   y^b z^r$, $\Phi(y)=x^cy^dz^s$, and $\Phi(z)=z^{ad-bc}$.
\end{lemma}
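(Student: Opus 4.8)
The plan is to determine the possible images of the generators $x,y,z$ under an automorphism $\Phi$ of $\G$ by exploiting the rigid internal structure of $\G$: its center, its commutator subgroup, and its abelianization. The center $\Z=\langle z\rangle$ and the commutator subgroup $[\G,\G]$ both equal $\langle z\rangle$ (since $[y,x]=z$ and $z$ is central), so any automorphism must send $z$ to a generator of this infinite cyclic group, forcing $\Phi(z)=z^{\pm1}$. Next, $\Phi$ descends to an automorphism $\bar\Phi$ of the abelianization $\G/[\G,\G]\cong\ZZ^2$, which is given by some matrix $\begin{bmatrix}a&c\\b&d\end{bmatrix}\in GL(2,\ZZ)$, i.e.\ with $ad-bc=\pm1$. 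This means $\Phi(x)$ and $\Phi(y)$, modulo $\langle z\rangle$, must be $x^ay^b$ and $x^cy^d$ respectively. Lifting back, we can write $\Phi(x)=x^ay^bz^r$ and $\Phi(y)=x^cy^dz^s$ for some integers $r,s$, since cosets of $\langle z\rangle$ in $\G$ are exactly sets of the form $\{x^ky^lz^m:m\in\ZZ\}$ for fixed $k,l$.

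**Next I would** pin down the relationship between the sign $ad-bc$ and the exponent on $\Phi(z)$. The defining relation $yx=xyz$, equivalently $[y,x]=z$ where $[y,x]=y^{-1}x^{-1}yx$, must be preserved: $[\Phi(y),\Phi(x)]=\Phi(z)$. A direct commutator computation in $\G$ shows that for $u=x^{k_1}y^{l_1}z^{m_1}$ and $v=x^{k_2}y^{l_2}z^{m_2}$ one has $[u,v]=z^{k_1l_2-l_1k_2}$ (the $z$-exponents drop out of the commutator since $z$ is central). Applying this with $u=\Phi(y)=x^cy^dz^s$ and $v=\Phi(x)=x^ay^bz^r$ gives $[\Phi(y),\Phi(x)]=z^{cb-da}=z^{-(ad-bc)}$. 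Comparing with $\Phi(z)=z^\epsilon$ for $\epsilon=\pm1$, we need $z^{-(ad-bc)}=z^\epsilon$, so $\epsilon=-(ad-bc)$. Hmm — this forces the stated formula $\Phi(z)=z^{ad-bc}$ only if one is careful about the commutator convention; I would recheck whether the paper's convention is $[a,b]=aba^{-1}b^{-1}$ or $a^{-1}b^{-1}ab$, and adjust the sign bookkeeping accordingly so that the relation $[\Phi(y),\Phi(x)]=\Phi(z)$ yields exactly $\Phi(z)=z^{ad-bc}$. In either case the upshot is that $\epsilon$ is determined by the four integers $a,b,c,d$, namely $\epsilon=ad-bc$.

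**Then I would** verify the converse: given any integers $a,b,c,d,r,s$ with $ad-bc=\pm1$, the assignment $x\mapsto x^ay^bz^r$, $y\mapsto x^cy^dz^s$, $z\mapsto z^{ad-bc}$ extends to a well-defined endomorphism of $\G$. Since $\G$ is presented by the relations $xz=zx$, $yz=zy$, $yx=xyz$, it suffices to check these three relations hold for the images. The first two are immediate because $z^{\pm1}$ is central. The third, $\Phi(y)\Phi(x)=\Phi(x)\Phi(y)\Phi(z)$, is precisely the commutator computation of the previous paragraph run in reverse. This gives an endomorphism; it is surjective because the matrix $\begin{bmatrix}a&c\\b&d\end{bmatrix}$ being invertible over $\ZZ$ lets us recover $x,y$ modulo $\langle z\rangle$ from the images, and then $z^{\pm1}$, hence $z$, is in the image, hence so are $x$ and $y$. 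A surjective endomorphism of $\G$ is an automorphism because $\G$ is finitely generated and residually finite (indeed Hopfian), or more concretely because one can exhibit the inverse explicitly using the inverse matrix. Uniqueness is clear since $x,y,z$ generate $\G$.

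**The main obstacle** is really just careful sign bookkeeping — making the commutator convention consistent so that the exponent on $\Phi(z)$ comes out as $ad-bc$ rather than its negative, and ensuring the lifts $\Phi(x)=x^ay^bz^r$, $\Phi(y)=x^cy^dz^s$ use a consistent normal form $x^ky^lz^m$ for elements of $\G$. None of the steps are deep: the argument is entirely a matter of (i) identifying characteristic subgroups ($\Z$ and $[\G,\G]$, both $\langle z\rangle$), (ii) reading off the induced map on $\G/[\G,\G]\cong\ZZ^2$, and (iii) checking the single nontrivial group relation survives, which is the one commutator identity $[x^cy^dz^s,\,x^ay^bz^r]=z^{bc-ad}$ (up to the convention-dependent sign) in the Heisenberg group.
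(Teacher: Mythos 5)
Your proof is correct and follows essentially the same route as the paper: pass to $\G/\Z\cong\ZZ^2$ to get the $GL(2,\ZZ)$ matrix, and use the preserved relation $yx=xyz$ to pin down $\Phi(z)$ (the paper's proof is just a terser version of this, and your sufficiency check is the ``easily checked'' part it omits). On the sign question you flag: with your own convention $[u,v]=u^{-1}v^{-1}uv$ normalized so that $[y,x]=z$, the general formula is $[x^{k_1}y^{l_1}z^{m_1},\,x^{k_2}y^{l_2}z^{m_2}]=z^{l_1k_2-l_2k_1}$ (not $z^{k_1l_2-l_1k_2}$, which would give $[y,x]=z^{-1}$), and then $[\Phi(y),\Phi(x)]=z^{da-bc}=z^{ad-bc}$ comes out exactly as the lemma states, with no discrepancy left to reconcile.
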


\begin{proof}
   Clearly $\Phi$ induces an automorphism of $\G/\Z\cong\ZZ^2$, hence
   $ad-bc=\pm1$. The condition $\Phi(yx)=\Phi(xyz)$ shows that
   $\Phi(z)=z^{ad-bc}$. These necessary conditions are easily checked to
   also be sufficient.
\end{proof}

An analysis of expansiveness for nonprincipal actions has been carried out
by Chung and Li \cite{ChungLi}. Let $F\in\ZG^{k\times k}$ be a square
matrix over $\ZG$. Then $\ZG^k/\ZG^k F$ is a left $\ZG$-module, whose
dual gives an algebraic action $\al_F$ on $X_F$. An argument similar to
the proof of (3) implies (1) in Theorem \ref{thm:expansive} shows
that if $F$ has an inverse in
$\logr^{k\times k}$, then $\al_F$ is expansive on $X_F$. The converse is
also true, and leads to a description of all expansive $\G$-actions.

\begin{theorem}[{\cite[Thm.\ 3.1]{ChungLi}}]
   Let $F\in\ZG^{k\times k}$ and $\al_F$ be the associated algebraic
   $\G$-action on $X_F$. Then $\al_F$ is expansive if and only if $F$ is
   invertible in $\logr^{k \times k}$. Moreover, every expansive
   $\G$-action is isomorphic to the restriction of such an $\al_F$ to a
   closed $\al_F$-invariant subgroup of ~$X_F$.
\end{theorem}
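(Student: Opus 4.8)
The plan is to carry out the argument behind Theorem~\ref{thm:expansive} in matricial form, and then to feed it back in order to put an arbitrary expansive action into the stated normal form.

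The first step is the matrix analogue of the equivalence $(1)\Leftrightarrow(4)$ of Theorem~\ref{thm:expansive}: for any rectangular $G\in\ZG^{p\times q}$, the action $\al_G$ on $X_G=\{t\in(\TD)^q:t\cdot G^*=0\}$ is expansive if and only if the only $w\in(\ligr)^q$ with $w\cdot G^*=0$ is $w=0$. I would prove this exactly as in \cite{D}: the pseudometric $d_1(s,t)=\max_{1\le i\le q}d_\TT(s_{i,1_\G},t_{i,1_\G})$ determines expansiveness of $\al_G$, a nonzero bounded solution of $w\cdot G^*=0$ scales to points of $X_G$ arbitrarily near $0$, and conversely a point of $X_G$ whose coordinates are all $d_\TT$-close to $0$ lifts to $\tilde t\in(\ligr)^q$ with $\tilde t\cdot G^*\in(\ligz)^p$ of small sup-norm, hence $\tilde t\cdot G^*=0$. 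For a square $F\in\ZG^{k\times k}$ invertible in $\logr^{k\times k}$ the adjoint $F^*$ is also invertible (with inverse $(F^{-1})^*$), so $w\cdot F^*=0$ forces $w=0$; this is the implication ``invertible $\Rightarrow$ expansive'' already recorded before the statement. For its converse I would note that, under the pairing $\langle w,v\rangle=\sum_i\langle w_i,v_i\rangle$, the operator $w\mapsto w\cdot F^*$ on $(\ligr)^k$ is the Banach-space adjoint of $v\mapsto v\cdot F$ on $(\logr)^k$; hence injectivity of the former means density of the range of the latter, which yields $W\in\logr^{k\times k}$ with $\|I_k-WF\|<1$. Then $WF$ is invertible, so $F$ has a left inverse in $\logr^{k\times k}$, and since $\logc^{k\times k}\subseteq M_k(\nG)$ with $M_k(\nG)$ a finite von Neumann algebra, hence directly finite, $F$ is invertible in $\logr^{k\times k}$. (Alternatively, rerun the trace argument recalled in connection with Theorem~\ref{thm:expansive} using the trace $\tfrac1k\sum_i\tr$ on $M_k(\logr)$.)

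For the structural ``moreover'', let $\al$ be an expansive algebraic $\G$-action with dual module $M$. First, $M$ is finitely generated: expansiveness supplies a finite $\Phi\subset M$ and $\eps>0$ with $\bigcap_{\gamma\in\G}\al^{\gamma}(U)=\{0\}$ for $U=\{t\in X:d_\TT(\langle t,m\rangle,0)<\eps\text{ for all }m\in\Phi\}$; if $\ZG\Phi\ne M$, a nonzero character of $M/\ZG\Phi$ pulls back to a nonzero $t\in X$ vanishing on $\ZG\Phi$, hence on $\G\cdot\Phi$, hence lying in that intersection, a contradiction. Since $R$ is noetherian, $M$ has a finite presentation $M\cong\ZG^{k}/\ZG^{j}F_0$ with $F_0\in\ZG^{j\times k}$, so $\al\cong\al_{F_0}$; by the matrix form above, $w\mapsto w\cdot F_0^*$ is injective on $(\ligr)^k$, hence $v\mapsto v\cdot F_0$ has dense range in $(\logr)^k$. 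Picking $v^{(1)},\dots,v^{(k)}\in(\logr)^{j}$ with $\|v^{(i)}\cdot F_0-e_i\|$ small and taking them as the rows of $V\in\logr^{k\times j}$ gives $\|I_k-VF_0\|$ small; approximating $V$ by a matrix over $\QQ\G$ and clearing a common denominator $N$ produces $W\in\ZG^{k\times j}$ with $F:=WF_0\in\ZG^{k\times k}$ and $\|NI_k-F\|<N$, so $F$ is invertible in $\logr^{k\times k}$, and by the first part $\al_F$ is expansive. Finally $\ZG^{k}F=\ZG^{k}WF_0\subseteq\ZG^{j}F_0$, so $M=\ZG^{k}/\ZG^{j}F_0$ is a quotient $\ZG$-module of $M_F=\ZG^{k}/\ZG^{k}F$; dualising, $X=X_{F_0}$ is a closed $\G$-invariant subgroup of $X_F$ carrying the restricted action, as required. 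The converse inclusion is immediate, since restricting an expansive action to a closed invariant subgroup preserves expansiveness.

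I expect the one genuinely non-elementary ingredient to be the direct finiteness of $\logc^{k\times k}$ — that a one-sided matrix inverse over $\logc$ is automatically two-sided — which is precisely the point where the group von Neumann algebra (or the faithful tracial state) has to enter. The remaining difficulties are bookkeeping rather than conceptual: pinning down the equivalence between expansiveness of $\al_G$ and injectivity of $\rho_{G^*}$ on $(\ligr)^q$, its adjoint reformulation in $\logr$, and the passage from a possibly non-square presentation $F_0$ to a square invertible $F=WF_0$ without disturbing the equivariant embedding $X_{F_0}\hookrightarrow X_F$.
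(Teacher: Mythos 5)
Your proposal is correct. Note that the survey itself does not prove this theorem: it cites Chung--Li and only remarks that the implication ``$F$ invertible in $\logr^{k\times k}$ $\Rightarrow$ $\al_F$ expansive'' follows as in the proof of (3)$\Rightarrow$(1) of Theorem \ref{thm:expansive}. Your reconstruction of the remaining parts is the expected one and is sound: the matricial version of (1)$\Leftrightarrow$(4), the Hahn--Banach duality between injectivity of $\rho_{F^*}$ on $(\lidr)^k$ and density of the range of right multiplication by $F$ on $(\lodr)^k$, the passage from a left inverse to a two-sided inverse via direct finiteness of $M_k(\nG)$ (you correctly flag this as the one non-elementary ingredient), and the structural step — finite generation of the dual module from expansiveness, a finite presentation $\ZG^k/\ZG^jF_0$ from noetherianity of $\ZG$, replacement of $F_0$ by the square invertible $F=WF_0$, and the dual of the module surjection $\ZG^k/\ZG^kF\twoheadrightarrow\ZG^k/\ZG^jF_0$ giving the closed equivariant embedding — all match the argument in the cited source.
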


\begin{example}
   Let $F=\begin{bmatrix} 2 & x\\y &
      2\end{bmatrix}\in\ZG^{2\times2}$. By simply formally solving the
   equations for the inverse, one arrives at
   \begin{displaymath}
      F^{-1}=\begin{bmatrix} 2(4-xy)^{-1} & -x(4-yx)^{-1} \\
      -y(4-xy)^{-1}&2(4-yx)^{-1}\end{bmatrix} ,
  \end{displaymath}
  where the inverses appearing in the entries are all in $\logr$ by lopsidedness.
\end{example}

Obviously an algorithm for invertibility of square matrices would
immediately answer Problem \ref{prob:expansiveness}. But if the answer to
the latter is affirmative, would this provide an algorithm to decide
invertibility of square matrices?

\begin{problem}
   Suppose there is an algorithm which decides whether or not an element in
   $\ZG$ has an inverse in $\logr$. Is there then an algorithm that
   decides, given $F\in \ZG^{k\times k}$, whether or not $F$ has a
   inverse with entries in ~$\logr$?
\end{problem}

If $\al$ is an expansive algebraic $\Del$-action on $X$, and $Y$ is a
closed $\al$-invariant subgroup, then clearly the restriction of $\al$
to $Y$ is also expansive. However, whether the quotient action
$\al_{X/Y}$ on $X/Y$ is expansive is much more difficult. When
$\Del=\ZZ^d$, expansiveness of the quotient is always true (\cite[Thm.\
3.11]{SchmidtAutomorphisms}), but the proof uses commutative algebra and
is not dynamical. Chung and Li conjecture \cite{ChungLi} that for
nilpotent groups $\Del$, quotients of expansive actions are always
expansive. Even for the Heisenberg group this is not known.

\begin{problem}
   If $\al$ is an expansive action of $\G$ on a compact abelian group $X$,
   and if $Y$ is a closed $\al$-invariant subgroup of $X$, then must the
   quotient action of $\al$ on $X/Y$ be expansive?
\end{problem}

\section{Homoclinic points}\label{sec:homoclinic}
Let $\Del$ be a countable discrete group and $\al$ an algebraic
$\Del$-action on a compact abelian group $X$. An element $t\in X$ is
called \textit{homoclinic for $\al$}, or simply \textit{homoclinic}, if
$\al^{\del}(t)\to 0_X$ as $\del\to\infty$. The set of homoclinic points
forms a subgroup of $X$ called the \textit{homoclinic group of $\al$}. The
established notation in the literature for this group is
$\hc_{\al}(X)$. It will always be clear from context (and a slight font
change) what $\Del$ (or $\hc$) refers to.

Homoclinic points are an important technical device for localizing the
behavior of points in the group. For example, they are used to construct
periodic points, to prove a strong orbit tracing property called
specification, and to estimate entropy. They are also a natural starting
point for constructing symbolic covers of algebraic actions.

For $\Del=\ZZ^d$, many properties of  homoclinic groups were studied
in detail in \cite{LS}, especially for principal actions. Let us briefly
describe some of the main results there, with a view to extensions to
$\G$.

For $f\in\ZZ \ZZ^d=\ZZ[u_1^{\pm},\dots,u_d^{\pm}]$, define the
\textit{complex variety} of $f$ to be
\begin{displaymath}
   \V(f)=\{(z_1,\dots,z_d)\in(\CC^\times)^d:f(z_1,\dots,z_d)=0\},
\end{displaymath}
where $\CC^\times=\CC\setminus\{0\}$, and the \textit{unitary variety} of
$f$ to be
\begin{displaymath}
   \U(f)=\{(z_1,\dots,z_d)\in\V(f):|z_1|=\dots=|z_d|=1\}.
\end{displaymath}
Then by Theorem \ref{thm:expansive} and Wiener's theorem, $\af$ is
expansive if and only if $\U(f)=\emptyset$. In this case, let
$\wtri=(f^*)^{-1}\in\ell^1(\ZZ^d)$. As before, let  $\beta\colon
\lidr\to\TT^\Del$ be given by $(\beta w)_{\del}= w_{\del} \pmod{1}$,
which clearly commutes with the left $\Del$-actions.
Put $\ttri=\beta(\wtri)$. Since
$\rf(\ttri)=\beta(\rf(\wtri))=\beta(w\cdot f^*)=\beta(1)=0$, we see that
$\ttri\in X_f$, and is also homoclinic. Furthermore, $\ttri$ is
\textit{fundamental}, in the sense that every homoclinic point is a
finite integral combination of translates of $\ttri$ (cf.\ \cite[Lemma
4.5]{LS}). In this case all homoclinic points decay rapidly enough to
have summable coordinates.

In order to describe homoclinic points of principal $\Del$-actions
$\af$, we first ``linearize'' $X_f$ as follows. Put
\begin{displaymath}
   W_f=\beta^{-1}(X_f)=\{w\in\lidr\colon\rf(w)\in\lidz\}.
\end{displaymath}

Suppose now that $f\in\ZD$ is expansive, and define
$\wtri=(f^*)^{-1}\in\lodr$. Then $\rf$ is invertible on $\lidr$, and
$W^{}_f=\rf^{-1}\bigl(\lidz)\bigr)$, where $\rf^{-1}(u)=u\cdot\wtri$ for
every $u\in\lidz$.

\begin{proposition}[{\cite[Props.\ 4.2, 4.3]{DS}}]
   \label{prop:cover}
   Let $\Del$ be a countable discrete group, and $f\in\ZD$ be expansive,
   so that $f$ is invertible in $\lodr$. Put $\wtri=(f^*)^{-1}$ and let
   $\pi\colon \lidz\to X_f$ be defined as $\pi(u)=\beta(u\cdot\wtri)$,
   where $\beta$ is reduction of coordinates $\pmod1$. Then:
   \begin{enumerate}
     \item $\pi\colon\lidz\to X_f$ is surjective, and in fact the
      restriction of $\pi$ to the set of those $u$ with
      $\|u\|_\infty\le\|f\|_1$ is also surjective;
     \item $\ker \pi=\rf\bigl(\lidz\bigr)$;
     \item $\pi$  commutes with the relevant left $\Del$-actions; and
     \item $\pi$ is continuous in the weak* topology on closed, bounded
      subsets of $\lidz$.
   \end{enumerate}
\end{proposition}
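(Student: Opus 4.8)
The plan is to exploit the fact that $f$ is invertible in $\lodr$, so that $\rf$ is a bijection of $\lidr$ onto itself with inverse $u\mapsto u\cdot\wtri$ (this was established in the proof of Theorem~\ref{thm:expansive}, using direct finiteness of $\lodr$). The key structural identity to record first is that $\beta\circ\rf=\rho_f\circ\beta$ on $\lidr$ — reduction mod $1$ commutes with right convolution by $f^*$ because $f^*\in\ZD$ has integer coefficients — and that both $\beta$ and $\rf$ commute with the left $\Del$-action on $\lidr$. With these in hand, part (3) is immediate: $\pi=\beta\circ(\,\cdot\,\wtri)$ is a composition of maps commuting with the left $\Del$-action (convolution by $\wtri$ on the right commutes with left translation), so $\pi$ intertwines the shift on $\lidz$ with $\af$ on $X_f$.

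For (1), surjectivity, I would start with an arbitrary $t\in X_f$, lift it to some $\ttil\in\lidr$ with $\beta(\ttil)=t$ and $\|\ttil\|_\infty\le 1$, and set $u=\rf(\ttil)=\ttil\cdot f^*\in\lidr$. Since $t\in X_f$ we have $\beta(u)=\rho_f(t)=0$, so $u\in\lidz$; moreover $\|u\|_\infty\le\|\ttil\|_\infty\|f^*\|_1\le\|f\|_1$. Then $u\cdot\wtri=\ttil$ by direct finiteness, so $\pi(u)=\beta(\ttil)=t$, which simultaneously proves surjectivity and the sharper claim that one may restrict to $\|u\|_\infty\le\|f\|_1$. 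For (2), the description of the kernel: if $u\in\lidz$ then $\pi(u)=0$ iff $u\cdot\wtri\in\lidz$; applying the bijection $\rf$ and using $\rf(u\cdot\wtri)=u$ one sees $u\cdot\wtri\in\lidz$ iff $u\cdot\wtri\in W_f$, i.e.\ iff $u\cdot\wtri=\rf^{-1}(u)$ lies in $\rf^{-1}(\lidz)\cap(\text{integer sequences})$. Unwinding, $u\in\ker\pi$ exactly when $u=\rf(v)$ for some $v\in\lidz$, which is the assertion $\ker\pi=\rf(\lidz)$. Part (4) is a soft point: on a bounded subset of $\lidz$, right convolution by the fixed element $\wtri\in\lodr$ is weak*-continuous (approximate $\wtri$ by finitely supported elements and use the uniform bound), and $\beta$ is continuous into $\TD$, so $\pi$ restricted to such a set is weak*-to-topology continuous.

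I expect the main obstacle to be the bookkeeping in part (2): one must be careful that the identity $\rf(u\cdot\wtri)=u$ genuinely holds on all of $\lidr$ (not merely on $\lodr$), which again rests on direct finiteness of $\lodr$ together with the fact that $\wtri\in\lodr$ acts boundedly on $\lidr$ by right convolution; and one must check that the two candidate descriptions of $\ker\pi$ — ``$u\cdot\wtri$ has integer coordinates'' versus ``$u\in\rf(\lidz)$'' — really coincide, which is where the bijectivity of $\rf$ on $\lidr$ carrying $\lidz$ into $\lidz$ (since $f^*\in\ZD$) is used. Everything else is routine manipulation with the conventions for left and right multiplication of elements of $\TD$ and $\lidr$ by elements of $\ZD$ set up in Section~\ref{sec:algebraic-actions}.
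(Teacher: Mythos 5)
Your proposal is correct and follows essentially the same route as the paper: for (1) you lift $t\in X_f$ to $\ttil\in\lidr$ with coordinates in $[0,1)$, observe that $u=\rf(\ttil)\in\lidz$ with $\|u\|_\infty\le\|f\|_1$ because $\beta\circ\rf=\rf\circ\beta$ and $t\in X_f$, and then use $u\cdot\wtri=\ttil\cdot f^*\cdot(f^*)^{-1}=\ttil$, which is exactly the paper's argument. The paper dismisses (2)--(4) as routine; your verifications of them (using that $\rf$ is a bijection of $\lidr$ carrying $\lidz$ into $\lidz$, that right convolution commutes with left translation, and that convolution by $\wtri\in\lodr$ is weak*-continuous on bounded sets) are the intended ones and are sound.
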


\begin{proof}
   Suppose that $t\in X_f$. There is a unique lift $\ttil\in\lidr$ with
   $\beta(\ttil)=t$ and $\ttil_{\del}\in[0,1)$ for all
   $\del\in\Del$. Then
   $\beta\bigl(\rf(\ttil)\bigr)=\rf\bigl(\beta(\ttil)\bigr)=\rf(t)=0$ in
   $X_f$, hence $\rf(\ttil)\in\lidz$, and in fact
   $\|\rf(\ttil)\|_\infty\le\|f\|_1$. Furthermore
   \begin{displaymath}
      \pi\bigl(\rf(\ttil)\bigr)=\beta\bigl(\ttil\cdot f^*\cdot \wtri)
      =\beta\bigl(\ttil\cdot f^*\cdot (f^*)^{-1}\bigr)=t.
   \end{displaymath}
   This proves (1), and the remaining parts are routine verifications.
\end{proof}

If $f\in\ZD$ is expansive,  let $\ttri=\beta(\wtri)$ and call $\ttri$
the \textit{fundamental homoclinic point} of $\af$. This name is
justified by the following.

\begin{proposition}
   \label{prop:fundamental-homoclinic-point}
   Let $\Del$ be a countable discrete group, and $f\in\ZD$ be
   expansive. Put $\wtri=(f^*)^{-1}\in\lodr$ and $\ttri=\beta(\wtri)\in
   \hc_{\af}(X_f)$. Then every element of $\hc_{\af}(X_f)$ is a finite
   integral combination of left translates of $\ttri$.
\end{proposition}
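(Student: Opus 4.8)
The plan is to realize every homoclinic point as $\pi(g)$ for a suitable $g\in\ZD$, where $\pi$ is (essentially) the covering map of Proposition \ref{prop:cover}. First I would record that $\ttri$ is indeed homoclinic and, more importantly, unpack what we must prove: since $\beta$ commutes with left convolution by elements of $\ZD$, a finite integral combination $\sum_\del g_\del(\del\cdot\ttri)$ of left translates of $\ttri$ is exactly $g\cdot\ttri=\beta(g\cdot\wtri)$ with $g=\sum_\del g_\del\del\in\ZD$, which by definition of $\pi$ is $\pi(g)$. So the statement reduces to showing that every homoclinic $t\in X_f$ lies in $\pi(\ZD)$.

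The key step is the choice of lift. Given a homoclinic $t\in X_f$, I would lift it to $\ttil\in\lidr$ using the \emph{symmetric} representatives $\ttil_\del\in[-\tfrac12,\tfrac12)$, so that $|\ttil_\del|=d_{\TT}(t_\del,0)$ for every $\del\in\Del$. Homoclinicity of $t$ means precisely that $d_{\TT}(t_\del,0)\to0$ as $\del\to\infty$, so $\ttil$ vanishes at infinity. Now set $g:=\rf(\ttil)=\ttil\cdot f^*$. On the one hand, since $t\in X_f$ satisfies $\rf(t)=0$ and $\beta$ commutes with right convolution by $f^*\in\ZD$, we get $\beta(g)=\rf(\beta\ttil)=\rf(t)=0$, so $g$ has integer coordinates. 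On the other hand, $g$ is a finite $\ZZ$-combination of right translates of $\ttil$, hence also vanishes at infinity. An integer-valued function on $\Del$ that vanishes at infinity has finite support, so $g\in\ZD$.

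Finally I would check that $\pi(g)=t$. Using associativity of convolution (legitimate since $\ttil$ is bounded and $f^*,\wtri\in\lodr$) together with $f^*\cdot\wtri=1$,
\[
   g\cdot\wtri=(\ttil\cdot f^*)\cdot\wtri=\ttil\cdot(f^*\cdot\wtri)=\ttil,
\]
so $\pi(g)=\beta(g\cdot\wtri)=\beta(\ttil)=t$. Hence $t=\pi(g)=g\cdot\ttri=\sum_\del g_\del(\del\cdot\ttri)$ is a finite integral combination of left translates of $\ttri$, as required.

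I expect the only delicate point to be the bookkeeping around the lift: one must use the representatives in $[-\tfrac12,\tfrac12)$ rather than the usual $[0,1)$ lift (the latter need not vanish at infinity, only its reduction does), so that convergence of $t_\del$ to $0$ in $\TT$ genuinely yields convergence of $\ttil_\del$ to $0$ in $\RR$. Once that is in place, the passage ``integer-valued and vanishing at infinity $\Rightarrow$ finitely supported'' is immediate and the rest is routine convolution algebra. This argument is the $\Del$-general counterpart of the $\ZZ^d$ case treated in \cite[Lemma 4.5]{LS}.
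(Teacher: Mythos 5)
Your proof is correct and follows essentially the same route as the paper's: lift the homoclinic point $t$ to $\ttil\in\lidr$, observe that $g=\rf(\ttil)$ is integer-valued and vanishes at infinity, hence lies in $\ZD$, and then recover $t=\pi(g)=g\cdot\ttri$ via $f^*\cdot\wtri=1$. Your insistence on the symmetric representatives in $[-\tfrac12,\tfrac12)$ is in fact a small improvement: the paper lifts with coordinates in $[0,1)$ and then asserts $\ttil_\del\to0$, which that lift does not literally guarantee, whereas your choice makes the decay of $\ttil$ (and hence the finite support of $g$) immediate.
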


\begin{proof}
   Suppose that $t\in \hc_{\af}(X_f)$, and lift $t$ to $\ttil\in \lidr$
   as in the proof of Proposition \ref{prop:cover}. Then
   $\rf(\ttil)\in\lidz$, and since $\ttil_{\del}\to0$ as
   $\del\to\infty$, the coordinates of $\rf(\ttil)$ must vanish outside
   of a finite subset of $\Del$, i.e., $\rf(\ttil)=g\in\ZD$. Then
   $t=\pi\bigl(\rf(\ttil)\bigr)=\pi(g)=\beta(g\cdot\wtri)=g\cdot\ttri$
   has the required form.
\end{proof}

Next we show that expansive principal actions have a very useful orbit
tracing property called specification.

\begin{proposition}[{\cite[Prop.\ 4.4]{DS}}]
   \label{prop:specification}
   Let $\Del$ be a countable discrete group, and $f\in\ZD$ be
   expansive. Then for every $\epsilon>0$ there is a finite subset
   $K_\epsilon$ of $\Del$ such that if $F_1$ and $F_2$ are arbitrary
   subsets of $\Del$ with $K_\epsilon F_1\cap K_\epsilon F_2=\emptyset$
   and if $t^{(1)}$ and $t^{(2)}$ are arbitrary points in $X_f$, then we
   can find $t\in X_f$ such that
   $d_{\TT}(t^{}_{\del},t_{\del}^{(i)})<\epsilon$ for every $\del\in
   F_i$, for $i=1,2$.
\end{proposition}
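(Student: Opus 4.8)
The plan is to get specification as a soft consequence of the covering map $\pi\colon\lidz\to X_f$ of Proposition~\ref{prop:cover}, using that the fundamental homoclinic point decays. Since $f$ is expansive, $\wtri=(f^*)^{-1}$ lies in $\lodr$; its coefficients in fact decay exponentially by Proposition~\ref{prop:exponential-decay}, although only summability is needed here. The first step is to fix the finite set: choose a finite symmetric $K_\epsilon\subseteq\Del$ containing $1_\Del$ with
\[
   \sum_{\eta\in\Del\setminus K_\epsilon}|\wtri_\eta|\;<\;\frac{\epsilon}{2\,\|f\|_1}\,.
\]
This inequality is the only place the value of $\epsilon$ intervenes, and it carries all the quantitative content of the statement.

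Now suppose $F_1,F_2\subseteq\Del$ satisfy the separation hypothesis $K_\epsilon F_1\cap K_\epsilon F_2=\emptyset$, and let $t^{(1)},t^{(2)}\in X_f$ be arbitrary. By Proposition~\ref{prop:cover}(1) we may write $t^{(i)}=\pi(u^{(i)})$ with $u^{(i)}\in\lidz$ and $\|u^{(i)}\|_\infty\le\|f\|_1$ for $i=1,2$. I would then splice these lifts: define $u\in\lidz$ by putting $u_\del=u^{(i)}_\del$ whenever $\del$ lies in the $K_\epsilon$-enlargement of $F_i$, and $u_\del=0$ for all remaining $\del$. The separation hypothesis ensures the two prescriptions never conflict, and clearly $\|u\|_\infty\le\|f\|_1$. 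Set $t=\pi(u)$, which belongs to $X_f$ by Proposition~\ref{prop:cover}; this is the desired point.

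It remains to check that $t$ is $\epsilon$-close to $t^{(i)}$ on $F_i$. For $\del\in F_i$, since $\pi(v)=\beta(v\cdot\wtri)$ and reduction modulo $1$ is distance-nonincreasing, $d_{\TT}(t_\del,t^{(i)}_\del)\le|((u-u^{(i)})\cdot\wtri)_\del|=|\sum_\gamma(u_\gamma-u^{(i)}_\gamma)\,\wtri_{\gamma^{-1}\del}|$. Because $u$ agrees with $u^{(i)}$ on the $K_\epsilon$-enlargement of $\del$, the only surviving terms in this convolution are those with $\gamma^{-1}\del\notin K_\epsilon$; using $\|u-u^{(i)}\|_\infty\le 2\|f\|_1$ and the choice of $K_\epsilon$ this is bounded by $2\|f\|_1\sum_{\eta\notin K_\epsilon}|\wtri_\eta|<\epsilon$, as required.

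I do not expect a conceptual obstacle: the substantive work was already packaged into Proposition~\ref{prop:cover}, and specification is then just a cut-and-paste in the cover $\lidz$, the paste introducing only local errors that the $\ell^1$-tail of $\wtri$ keeps below $\epsilon$. The one point needing genuine care is the bookkeeping in the last step — precisely which coordinates of $u$ a given coordinate of $\pi(u)=\beta(u\cdot\wtri)$ depends on — since $\Del$ is noncommutative; this is exactly what forces the one-sided form of the separation hypothesis, and depending on which side the convolution conventions are fixed one should expect the estimate to come out either for $K_\epsilon F_1\cap K_\epsilon F_2=\emptyset$ or for the mirror condition $F_1K_\epsilon\cap F_2K_\epsilon=\emptyset$.
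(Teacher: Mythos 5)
Your argument is correct and is essentially the paper's own proof: choose $K_\epsilon$ from the $\ell^1$-tail of $\wtri=(f^*)^{-1}$, lift each $t^{(i)}$ to $u^{(i)}\in\lidz$ via Proposition~\ref{prop:cover}, splice the lifts over the disjoint enlargements (the paper phrases this as truncating each lift to the enlargement of $F_i$ and adding), and bound the error at each $\del\in F_i$ by $\|u-u^{(i)}\|_\infty\sum_{\eta\notin K_\epsilon}|\wtri_\eta|<\epsilon$. Your closing caveat is also on target: with the convention $\pi(u)=\beta(u\cdot\wtri)$ the coordinate at $\del$ is governed by the values of $u$ on $\del K_\epsilon^{-1}$, so the disjointness actually used is $F_1K_\epsilon\cap F_2K_\epsilon=\emptyset$ (for symmetric $K_\epsilon$), the mirror of the condition as printed, which is a harmless left/right convention slip rather than a gap.
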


\begin{proof}[Sketch of proof]
   Let $\epsilon>0$. The set $K_\epsilon$ is chosen so that
   $\sum_{\del\notin K_\epsilon} |\wtri_{\del}|<\epsilon/\|f\|_1$. Lift
   each $t^{(i)}$ to $\ttil^{(i)}$, and then truncate each
   $\rf(\ttil^{(i)})$ to a $u^{(i)}$ having support in $K_\epsilon F_i$. It is then
   easy to verify that $t=\pi\bigl(\rf(u^{(1)})+\rf(u^{(2)})\bigr)$ as
   the required properties (see \cite[Prop.\ 4.4]{DS} for details).
\end{proof}

A point $t\in X_f$ with $\sum_{\del\in\Del}d_{\TT}(t_{\del},0)<\infty$
is called \textit{summable}. Let $\hc_{\af}^1(X_f)$ denote the group of
all summable homoclinic points for $\af$. Summability is crucial in
using homoclinic points for dynamical purposes.

\begin{example}
   Let
   $f(u_1,u_2)=3-u_1^{}-u_1^{-1}-u_2^{}-u_2^{-1}\in\ZZ[u_1^{\pm},u_2^{\pm}]$. It is
   shown in \cite[Example 7.3]{LS} that $\hc_{\af}(X_f)$ is uncountable
   (indeed, the Fourier series of every smooth density on $\U(f)$ decays
   to 0 at infinity, and so gives a homoclinic point), but
   $\hc_{\af}^1(X_f)=\{0\}$. Despite their large number, the nonsummable
   homoclinic points here are essentially useless.
\end{example}

Summable homoclinic points may still exist for nonexpansive actions. For example,
consider $f(u_1,u_2)=2-u_1-u_2$. The formal inverse $w$ of $f^*$ via
geometric series is well-defined and has coordinates decaying to 0 at
infinity, so that $\beta(w)$ is homoclinic, but the decay is so
slow that $w$ is not summable (see \cite[Example 7.2]{LS}). Define a
function $F\colon\TT^2\to\CC$ by putting $F(s_1,s_2)=f^*(e^{2\pi i s_1},
e^{2\pi i s_2})$. Then $1/F$ is integrable on $\TT^2$, and $w$ is just
the Fourier transform of $1/F$. Now $1/F$ has a singularity at $(0,0)$,
and we can try to cancel this by multiplying it by a sufficiently high
power $N$ of another polynomial $G(s_1,s_2)=g(e^{2\pi i s_1},
e^{2\pi i s_2})$ that also vanishes at $(0,0)$ so that $G^N/F$ has
absolutely convergent Fourier series, resulting in a summable homoclinic
point $g^N\cdot w=g^N/f^*$. For this example, taking $g(u_1,u_2)=u_1-1$,
a detailed analysis in \cite[\S5]{LSV1} shows that $N=3$ is the smallest
power such that $G^N/F$ has absolutely convergent Fourier series,
providing a summable homoclinic point for ~$\af$.

This ``multiplier method'' can be generalized to all
$f\in\ZZ[u_1^{\pm},\dots, u_d^{\pm}]$ provided that the dimension of
$\U(f)\subset\SS^d$ is at most $d-2$. More precisely, with this
condition, there is another polynomial $g\in \ZZ[u_1^{\pm},\dots,
u_d^{\pm}]$, not a multiple of $f$, such that $\U(f)\subset\U(g)$. The
corresponding quotient $G^N/F$ has absolutely convergent Fourier series
for sufficiently large $N$
\cite{LSV2}, and hence $\af$ has summable homoclinic points. However, if
$\text{dim\ }\U(f)=d-1$, this method fails, and in fact there are no
nonzero summable homoclinic points \cite[Thm.\ 3.2]{LSV2}.

Let us turn to considering homoclinic points for principal actions of
$\G$. If $f\in\ZG$ is expansive, we have already seen how to describe
$\hc_{\af}(X_f)$, and that this agrees with $\hc_{\af}^1(X_f)$.

Consider $f(x,y,z)=2-x-y\in\ZG$. If $\omega=e^{2\pi i/3}$, then $f$ is in the
kernel of the algebra homomorphism $\logc\to\CC$ given by
$x\mapsto\omega$, $y\mapsto \omega^2$, and $z\mapsto1$. Hence $f$ is not
expansive.

However, it is shown in \cite{GollVerbitskiy} that the formal
inverse of $f$ can be smoothed by using the multiplier $(z-1)^2$ to
create a summable homoclinic point for $\af$. The proof uses highly
nontrivial combinatorial arguments, starting with the noncommutative
expansion
\begin{displaymath}
   (x+y)^n=\sum_{k=0}^n \begin{bmatrix}n\\k\end{bmatrix}_z x^ky^{n-k},
   \text{\quad where \quad}\begin{bmatrix}n\\k\end{bmatrix}_z=\prod_{j=0}^k
   \frac{z^{n-j}-1}{z^{j+1}-1} .
\end{displaymath}
If $f(x,y,z)=4-x-x^{-1}-y-y^{-1}$ the authors of
\cite{GollSchmidtVerbitskiy2} state that they shown that the multiplier
$(z-1)^{12}$ results in a summable homoclinic point, and conjecture that
$(z-1)^2$ actually suffices.

For more complicated nonexpansive polynomials in $\ZG$, it is not at all
clear what could substitute for the dimension condition on the unitary
variety in the commutative case.

\begin{problem}
   For $f\in\ZG$, determine explicitly both $\hc_{\af}(X_f)$ and
   $\hc_{\af}^1(X_f)$.
\end{problem}

Anticipating the entropy material from the next section, we remark that
Chung and Li \cite[Thm.\ 1.1]{ChungLi}, generalizing earlier work for
$\ZZ^d$ in \cite{LS}, showed that $\hc_{\af}(X_f)\ne\{0\}$ if and only if
$\af$ has positive entropy, and that $\hc_{\af}(X_f)$ is dense in $X_f$
if and only if $\af$ has completely positive entropy.

For expansive $\Del$-actions $\af$, Proposition \ref{prop:cover}(1)
gives a continuous, equivariant, and surjective map $\pi$ from the full $\Del$-shift with symbols
$\{-\|f\|_1,-\|f\|_1+1,\dots,\|f\|_1\}$ to $X_f$, which allows us to view this shift space as a symbolic cover of $X_f$. In 1992 Anatoly
Vershik showed \cite{Vershik} that for certain hyperbolic toral
automorphisms, this symbolic cover could be pruned to a shift of finite
type for which the covering map $\pi$ is one-to-one almost
everywhere. This provided an arithmetic approach to constructing Markov
partitions, which were originally found geometrically by Adler and
Weiss, and are one of the main motivations for symbolic dynamics. Vershik's
arithmetic construction was further investigated in
\cite{SidorovVershik}, \cite{Sidorov},
\cite{KenyonVershik}, and \cite{SchmidtBeta}. Einsiedler and the second author \cite{EinsiedlerSchmidt}
considered the problem of extending this idea to obtain symbolic
representations of algebraic $\ZZ^d$-actions, and gave an example of an
algebraic $\ZZ^2$-action for which the symbolic cover could be pruned to
a shift of finite type to obtain a map that is one-to-one almost
everywhere, but the proof involved a complicated percolation argument.
Even for Heisenberg actions virtually nothing is known about the
existence of good symbolic covers.

\begin{problem}
   Find general sufficient conditions on an expansive $f\in\ZG$ so that
   the symbolic cover from Proposition \ref{prop:cover} (1) can be pruned
   to one that is (a) of finite type or at least sofic, (b) of equal entropy, or (c) one-to-one almost everywhere.
\end{problem}

\section{Entropy of Algebraic Actions}\label{sec:entropy-actions}

We give here several equivalent definitions of the (topological) entropy
of an algebraic action, sketch some background material on von Neumann
algebras, and then describe recent results relating entropy to
Fuglede-Kadison determinants.

Let $\Del$ be a countable discrete group. For finite subsets
$F,K\subset\Del$, define $FK=\{\del\theta:\del\in F, \theta\in K\}$. For
what follows to make sense, we require that $\Del$ be \textit{amenable},
namely that there is a sequence $\{F_n:n\ge1\}$ of finite subsets of
$\Del$ such that for every finite subset $K$ of $\Del$ we have that
\begin{displaymath}
   \frac{|F_n\setdiff F_nK|}{|F_n|} \to 0\text{\quad as $n\to\infty$}.
\end{displaymath}
Such a sequence is called a \textit{right-F{\o}lner sequence}.

Suppose that $\al$ is an algebraic $\Del$-action on a compact abelian
group $X$. We assume there is a translation-invariant metric $d$ on $X$,
and let $\mu$ denote normalized Haar measure on ~$X$.

As before, we abbreviate the (left) $\al$-action of $\Del$ on $X$ by
using $\del\cdot t$ for $\al^{\del}(t)$. Then $\Del$ acts on subsets
$E\subset X$ by $\del\cdot E=\{\del\cdot t:t\in E\}$. Although this
differs from the traditional action of transformations on subsets using
inverse images, this seems better suited to our purposes, since all
$\al^{\del}$ are invertible, and its use is consistent with the action
of $\Del$ on functions on $X$: if $\chi_E$ is the indicator function of
$E$, then $\del\cdot \chi_E=\chi_E\circ \del^{-1}=\chi_{\del\cdot E}$.

To define topological entropy, we consider open covers $\SU$ of $X$. If
$\SU_1,\dots,\SU_n$ are open covers, define their \textit{span} as
$\bigvee_{j=1}^n\SU_j=\{U_1\cap\dots\cap U_n:U_j\in\SU_j \text{ for
$1\le j\le n$}\}$. If $\SU$ is an open cover and $F$ is a finite subset
of $\Del$, let $\SU^F=\bigvee_{\del\in F}\del\cdot\SU$, where
$\del\cdot\SU=\{\del \cdot U:U\in\SU\}$. For an open cover $\SU$ let
$\Num(\SU)$ denote the cardinality of the open subcover with fewest
elements, which is finite by compactness. It is easy to check that
$\Num(\SU\vee\mathscr{V}) \le \Num(\SU)\Num(\mathscr{V})$. We define the
\textit{open cover entropy} of $\al$ to be
\begin{equation}
   \label{eqn:open-cover}
   \h_{\text{cov}}(\al)=\sup_{\SU}\,\limsup_{n\to\infty}\frac{1}{|F_n|}\log \Num(\SU^{F_n})
\end{equation}
where the supremum is taken over all open covers of $X$.

Recall the elementary fact that if $\{a_n:n\ge1\}$ is a sequence of
nonnegative reals with $a_{m+n}\le a_m+a_n$, then $a_n/n$ converges to a
limit as $n\to\infty$, and this limit equals $\inf_{1\le
n<\infty}a_n/n$. Hence for $\Del=\ZZ$ and $F_n=\{0,1,\dots,n-1\}$, it
follows that the $\limsup$ in \eqref{eqn:open-cover} is actually a
limit. There is a general version of this argument valid for amenable
groups, due to Lindenstrauss and Weiss.

\begin{proposition}[{\cite[Thm.\ 6.1]{LindenstraussWeiss}}]
   \label{prop:lindenstrauss-weiss}
   Suppose that $\phi(F)$ is a real-valued function defined for all
   nonempty finite subsets $F$ of $\Del$, satisfying:
   \begin{enumerate}
     \item  $0\le\phi(F)<\infty$,
     \item  if $F'\subseteq F$, then $\phi(F')\le\phi(F)$,
     \item  $\phi(\del  F)=\phi(F)$ for all $\del\in \Del$, and
     \item  $\phi(F\cup F')\le \phi(F)+\phi(F')$ if $F\cap F'=\varnothing $.
   \end{enumerate}
   Then for every right-F{\o}lner sequence $\{F_n\}$ the numbers
   $\phi(F_n)/|F_n|$ converge to a finite limit, and this limit is
   independent of the choice of right-F{\o}lner sequence.
\end{proposition}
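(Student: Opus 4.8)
The plan is to reduce the statement to the classical subadditivity argument for $\mathbb Z$-actions by exploiting the F\o lner condition to show that $\phi$ is "almost subadditive" over translates. First I would observe that, by properties (1)--(4), the quantity $\phi$ behaves like a subadditive, translation-invariant set function, so the natural strategy is the Ornstein--Weiss quasi-tiling machinery (or, if one wishes to stay elementary, the slightly weaker covering argument that suffices here). The key reduction is: given two F\o lner sequences $\{F_n\}$ and $\{F_n'\}$, show that
\begin{displaymath}
   \limsup_{n\to\infty}\frac{\phi(F_n)}{|F_n|}\le\liminf_{m\to\infty}\frac{\phi(F_m')}{|F_m'|},
\end{displaymath}
and then by symmetry the limit exists and is independent of the sequence.

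To prove that inequality, I would proceed as follows. Fix $\eps>0$ and a large $F_m'$ with $\phi(F_m')/|F_m'|$ close to the $\liminf$. Using the F\o lner property of $\{F_n\}$ (applied to the finite set $(F_m')^{-1}F_m'$, say), one can find, for $n$ large, a subset $C\subset\Del$ of "centers" such that the translates $\{F_m'c:c\in C\}$ are almost disjoint and cover all but an $\eps$-fraction of $F_n$. Then repeated use of (4) (subadditivity over disjoint unions) and (2) (monotonicity) gives
\begin{displaymath}
   \phi(F_n)\le\sum_{c\in C}\phi(F_m'c)+\phi(R)=|C|\,\phi(F_m')+\phi(R),
\end{displaymath}
where $R$ is the leftover set of size at most $\eps|F_n|$, and where (3) was used to replace $\phi(F_m'c)$ by $\phi(F_m')$. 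Since the translates are almost disjoint, $|C|\le(1+\eps)|F_n|/|F_m'|$, and since $|R|\le\eps|F_n|$ one bounds $\phi(R)$ using (1),(2) by $\eps|F_n|$ times a constant depending only on $\phi$ of singletons (here one invokes (3) again to see $\phi$ of a singleton is a fixed constant, and (4) to break $R$ into singletons). Dividing by $|F_n|$ and letting $n\to\infty$, then $\eps\to0$, then optimizing over $m$, yields the desired inequality. Existence and finiteness of the limit (and equality with an infimum-type expression when $\Del=\ZZ$) follow immediately; independence of the F\o lner sequence is exactly the symmetrized inequality.

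The main obstacle is the combinatorial covering step: producing the almost-disjoint, almost-covering family of translates of a fixed F\o lner set inside a large F\o lner set. For general amenable groups this is genuinely the heart of the Ornstein--Weiss theory, and a fully rigorous treatment requires either quasi-tilings or a greedy packing argument combined with a careful accounting of overlaps. Since the excerpt only needs the result as a black box (it is attributed to Lindenstrauss and Weiss), I would present the reduction above in detail and then cite \cite{LindenstraussWeiss} for the packing lemma, remarking that in the two cases we actually use --- $\Del=\ZZ$ with intervals, and $\Del=\G$ with the explicit boxes $F_n$ appearing in the proof of Lemma \ref{lem:amenability} --- the packing is completely explicit and elementary, so no deep machinery is really invoked in this survey.
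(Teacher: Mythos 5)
Your overall strategy coincides with what the paper itself does: the survey gives only a sketch (quasi-tile a large F{\o}lner set by translates of a fixed one, then apply monotonicity, translation invariance, and subadditivity, controlling the remainder by singletons) and defers the actual packing lemma to Lindenstrauss--Weiss, exactly as you propose. So there is no divergence of method to report.

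There is, however, one step in your write-up that fails as literally written: you tile $F_n$ by \emph{right} translates $F_m'c$ and then invoke (3) to conclude $\phi(F_m'c)=\phi(F_m')$. Property (3) is \emph{left} translation invariance, $\phi(\del F)=\phi(F)$, so it says nothing about $F_m'c$; in a noncommutative group such as $\G$ this is not a cosmetic point. The correct pairing is: right-F{\o}lner sequences go with tilings by \emph{left} translates $cF_m'$ (note $cF_m'\subseteq F_n$ iff $c\in\bigcap_{\del\in F_m'}F_n\del^{-1}$, and the right-F{\o}lner condition is precisely what makes $\bigl|\bigcap_{\del\in F_m'}F_n\del^{-1}\bigr|$ close to $|F_n|$), and then (3) applies verbatim to each tile. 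With that orientation fixed, the rest of your accounting (using (2) to pass from the $\eps$-disjoint cores of the tiles up to full translates, (4) over the disjoint cores and the remainder, and (1),(3),(4) to bound $\phi(R)$ by $|R|\,\phi(\{1_\Del\})$) goes through, and the symmetrized inequality gives both existence of the limit and its independence of the F{\o}lner sequence.
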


Roughly speaking, this fact is proved by showing that if $K$ is a large
finite subset of $\Del$ and $\epsilon$ is small, then any $F$ with
$|F\setdiff FK|/|F|<\epsilon$ can be almost exactly tiled by left
translates of $F_n$s of various sizes. Then subadditivity and
translation-invariance of $\phi$ show that $\phi(F)/|F|$ is bounded
above, within a small error, by $\inf_n \phi(F_n)/|F_n|$.

Fix an open cover $\SU$ of $X$, and put $\phi(F)=\log \Num(\SU^F)$ for
every nonempty finite subset $F$ of $\Del$. Since each $\al^\del$ is a
homeomorphism of $X$, it follows that
\begin{displaymath}
   \phi(\del F)=\log \Num\Bigl(\bigvee_{\theta\in F}(\del\theta)\cdot
   \SU\Bigr) =\log \Num\Bigl(\del\cdot \bigl(\bigvee_{\theta\in
   F}\theta\cdot \SU\bigr)\Bigr)=\log \Num\Bigl(\bigvee_{\theta\in
   F}\theta\cdot\SU\Bigr) =\phi(F).
\end{displaymath}
Conditions (1), (2), and (4) in Proposition
\ref{prop:lindenstrauss-weiss} are trivially satisfied for this
$\phi$. Hence for every open cover $\SU$, the $\limsup$ in
\eqref{eqn:open-cover} is a limit, and this limit does not depend on the
choice of right F{\o}lner sequence $\{F_n\}$.

The open cover definition of topological entropy is due to Adler,
Konheim, and McAndrew \cite{AKM}. Bowen \cite{Bowen} introduced
equivalent definitions that are better suited for many purposes, which
we now describe.

If $F$ is a finite subset of $\Del$ and $\epsilon>0$, a subset $E\subset
X$ is called \textit{$(F,\epsilon)$-spanning} if for every $t \in X$
there is a $u\in E$ such that $d(\del^{-1}\cdot t,\del^{-1}\cdot
u)<\epsilon$ for every $\del\in F$. Dually, a set $E\subset X$ is called
\textit{$(F, \epsilon)$-separated} if for distinct elements $t,u\in E$
there is a $\del\in F$ such that $d(\del^{-1}\cdot t,\del^{-1}\cdot
u)\ge \epsilon$. Let $r_F(\epsilon)$ denote the smallest cardinality of
any $(F,\epsilon)$-spanning set, and $s_F(F,\epsilon)$ be largest
cardinality of any $(F,\epsilon)$-separated set. Put
\begin{displaymath}
   \h_{\text{span}}(\al)=\lim_{\epsilon\to0}\,\limsup_{n\to\infty}
   \frac{1}{|F_n|} \,\log r_{F_n}(\epsilon),\text{\ and\ }
    \h_{\text{sep}}(\al)=\lim_{\epsilon\to0}\,\limsup_{n\to\infty}
   \frac{1}{|F_n|} \,\log s_{F_n}(\epsilon).
\end{displaymath}

If $\SU_{\epsilon}$ denotes the open cover of $X$ by $\epsilon$-balls,
the elementary inequalities
\begin{displaymath}
   \Num(\SU_\epsilon^F)\le r_F(\epsilon)\le s_F(\epsilon)\le \
   \Num(\SU_{\epsilon/2}^F)
\end{displaymath}
show that
$\h_{\text{cov}}(\al)=\h_{\text{span}}(\al)=\h_{\text{sep}}(\al)$, and so
all three are independent of choice of right F{\o}lner sequence.

One more variant of the entropy definition, using volume decrease, is
also useful. Let $B_\epsilon=\{t\in X:d(t,0)<\epsilon\}$, and for finite
$F\subset\Del$ put $B_\epsilon^F=\bigcap_{\del\in F}\del\cdot
B_\epsilon$.
Define
\begin{displaymath}
   \h_{\text{vol}}(\al)=\lim_{\epsilon\to0}\,\limsup_{n\to\infty}
   \,\,      -\frac{1}{|F_n|} \log \mu\bigl(B_\epsilon^{F_n}\bigr).
\end{displaymath}
If $E$ is $(F,\epsilon)$-spanning, then $X=\bigcup_{t\in
E}(t+B_\epsilon^{F})$, so that $1\le |E|\mu(B_\epsilon^F)$, and hence
$\h_{\text{vol}}(\al)\le\h_{\text{span}}(\al)$. If $E$ is
$(F,\epsilon)$-separated, then the sets $\{t+B_{\epsilon/2}^F):t\in E\}$
are disjoint, so that $|E|\mu(B_{\epsilon/2}^F)\le1$, proving that
$\h_{\text{sep}}(\al)\le \h_{\text{vol}}(\al)$.
Thus all these notions of entropy coincide, and we let $\h(\al)$ denote
their common value. We remark that these are also equal to the measure
theoretic entropy of $\al$ with respect to Haar measure, but we will not
be using this fact. Deninger's paper \cite{D} has complete proofs of
these facts, and also that the $\limsup$'s in these definitions can be
replaced by $\liminf$'s without affecting the results.

If $\Phi\in\aut(\Del)$ and $\{F_n\}$ is a right F{\o}lner sequence, then
clearly so is $\{\Phi(F_n)\}$. It follows that $\h(\al_{\Phi
f})=\h(\af)$, i.e., entropy is invariant under a change of variables.

Suppose that $\al$ is an algebraic $\Del$-action on $X$, and that $Y$ is
a closed $\Del$-invariant subgroup of $X$. Let $\al_Y$ denote the
restriction of $\al$ to $Y$, and $\al_{X/Y}$ be the quotient action on
$X/Y$. An important property of entropy is that it adds over the exact
sequence $0\to Y\to X\to X/Y\to0$.

\begin{theorem}[The Addition Formula, {\cite[Cor.\ 6.3]{LiAutomorphisms}}]
   Let $\Del$ be an amenable group, let $\al$ be an algebraic
   $\Del$-action on $X$, and suppose that $Y$ is a closed, $\Del$-invariant subgroup
   of $X$. Then $\h(\al)=\h(\al_Y)+\h(\al_{X/Y})$.
\end{theorem}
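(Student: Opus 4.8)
The statement is the additivity of entropy over the exact sequence $0\to Y\to X\to X/Y\to0$, and I would split it into the two inequalities $\h(\al)\ge\h(\al_Y)+\h(\al_{X/Y})$ and $\h(\al)\le\h(\al_Y)+\h(\al_{X/Y})$. First I would fix a translation-invariant metric $d$ on $X$ (available as remarked in Section~\ref{sec:expansiveness}), use its restriction $d|_Y$ on $Y$, and equip $X/Y$ with the quotient metric $\bar d(\pi(s),\pi(t))=\inf_{y\in Y}d(s,t+y)$; this is again translation-invariant, the quotient homomorphism $\pi\colon X\to X/Y$ is equivariant and $1$-Lipschitz for it, and since entropy is independent of the choice of compatible metric nothing is lost. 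The fibers of $\pi$ are exactly the cosets $s+Y$, and the identity $d(\gamma^{-1}\cdot t,\gamma^{-1}\cdot t')=d\bigl(\gamma^{-1}\cdot(t-t'),0\bigr)$ with $t-t'\in Y$ (valid whenever $t,t'$ lie in a common fiber) is what makes the whole argument run.

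\emph{Lower bound.} This is the elementary half, done by lifting and combining separated sets. Fix a finite $F\subset\Del$ and $\eps>0$, and let $s^X_F(\eps)$ denote the largest cardinality of an $(F,\eps)$-separated subset of $X$, similarly $s^Y_F(\eps)$ and $s^{X/Y}_F(\eps)$. Choose a maximal $(F,\eps)$-separated set $E_2\subset X/Y$ for $\al_{X/Y}$ and a maximal one $E_1\subset Y$ for $\al_Y$, pick any lift $\widetilde E_2\subset X$ of $E_2$ (one point per point), and form $\widetilde E_2+E_1\subset X$. This set has $|E_1|\,|E_2|$ points and is $(F,\eps)$-separated: given distinct $\tilde s+u$ and $\tilde s'+u'$, if $\pi(\tilde s)\ne\pi(\tilde s')$ then some $\gamma\in F$ separates them in $X/Y$, and since $\pi(\gamma^{-1}\cdot(\tilde s+u))=\gamma^{-1}\cdot\pi(\tilde s)$ and $\pi$ is $1$-Lipschitz the same $\gamma$ separates the two points in $X$; while if $\tilde s=\tilde s'$ and $u\ne u'$, some $\gamma\in F$ separates $u,u'$ in $Y$ and translation-invariance gives $d(\gamma^{-1}\cdot(\tilde s+u),\gamma^{-1}\cdot(\tilde s+u'))=d(\gamma^{-1}\cdot u,\gamma^{-1}\cdot u')\ge\eps$. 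Hence $s^X_F(\eps)\ge s^{X/Y}_F(\eps)\,s^Y_F(\eps)$; taking logarithms, dividing by $|F_n|$, and then letting $n\to\infty$ followed by $\eps\to0$, together with the fact that in these entropy definitions the $\limsup$ may be replaced by $\liminf$, yields $\h(\al)\ge\h(\al_{X/Y})+\h(\al_Y)$.

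\emph{Upper bound.} This is the substantive half, a Bowen-type fibered entropy inequality. The structural point is that, by the displayed fiber identity, translation-invariance of $d$ matches the $(F,\eps)$-separated and $(F,\eps)$-spanning counts on \emph{any} fiber $\pi^{-1}(\bar s)$ with the corresponding counts for $\al_Y$ on $Y$; hence the Bowen fiber entropy $\h(\al;\pi^{-1}(\bar s))$ is independent of $\bar s$ and equals $\h(\al_Y)$. I would then invoke the fibered inequality $\h(\al)\le\h(\al_{X/Y})+\sup_{\bar s}\h(\al;\pi^{-1}(\bar s))$ for amenable $\Del$ to conclude. To prove that inequality one spans $X/Y$ by roughly $\exp(|F_n|\,\h(\al_{X/Y}))$ dynamical $\eps$-balls, fixes a spanning set of a single fiber of size roughly $\exp(|F_n|\,\h(\al_Y))$, transports it to the other fibers, and checks that over the dynamical $\eps$-ball around each center the transported set is $(F_n,2\eps)$-spanning, so that the product of the two families is an $(F_n,2\eps)$-spanning set for $X$ of the right cardinality.

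The main obstacle is precisely that transport step: $\pi\colon X\to X/Y$ need not admit a continuous global section, so a fiber's spanning set cannot simply be slid to neighboring fibers by an isometry. This is handled by a compactness and uniform-continuity argument — covering $X/Y$ by finitely many small open sets over which the action can be uniformly controlled, and choosing $\eps$ and $n$ so that a single fiberwise spanning set serves all nearby fibers up to a factor $2$ in the metric and a multiplicative error negligible on the $|F_n|^{-1}\log$ scale — and this is where the real work lies; it is carried out carefully in \cite{LiAutomorphisms}. (An alternative but heavier route is to dualize to $\ZD$-modules and derive additivity from multiplicativity of Fuglede--Kadison determinants over short exact sequences together with the determinant--entropy identification of Section~\ref{sec:entropy-actions}; the direct topological argument above is more self-contained.)
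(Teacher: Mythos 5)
The paper does not prove this theorem at all: it is quoted from \cite[Cor.\ 6.3]{LiAutomorphisms}, and the surrounding discussion only sketches the strategy (regard $\al$ as a skew product over $\al_{X/Y}$ whose fiber maps are affine with isometric translation parts) together with its history. Your outline follows that same fibered route, and your lower-bound half is correct and essentially complete: combining a maximal $(F,\eps)$-separated set of $Y$ with a lifted one from $X/Y$, using translation invariance of $d$ on fibers and the $1$-Lipschitz quotient map, does give $s^X_F(\eps)\ge s^{X/Y}_F(\eps)\,s^Y_F(\eps)$, and the remark that the $\limsup$'s may be replaced by $\liminf$'s disposes of the superadditivity issue in the limit.

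The genuine gap is in the upper bound, and it sits exactly where the paper flags the ``serious difficulty.'' Your plan is to cover $X/Y$ by dynamical $\eps$-balls, transport a single fiber's spanning set to the nearby fibers, and absorb the resulting losses by ``a compactness and uniform-continuity argument'' into ``a multiplicative error negligible on the $|F_n|^{-1}\log$ scale.'' That is precisely the step that does not go through as stated: the assembly of the local spanning sets over a cover of $X/Y$ introduces a universal constant coming from the overlap multiplicity of the cover (and from the fact that a point of $X$ lying dynamically $\eps$-close to a fiber need not be dynamically $\eps$-close to any single point of that fiber), and this constant contributes an additive error to $\frac{1}{|F_n|}\log r_{F_n}(\eps)$ that does \emph{not} tend to zero as $n\to\infty$. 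For $\Del=\ZZ$ or $\ZZ^d$ one kills this constant by a scaling argument (pass from $F_n$ to $F_{kn}$, extract $k$-th roots, let $k\to\infty$), which is how Bowen \cite{Bowen} and \cite[Appendix B]{LSW} proceed; the paper states explicitly that no such scaling is available for noncommutative amenable $\Del$ and that Ollagnier's machinery \cite{Ollagnier} is what replaces it in Li's proof. Since you defer this step to \cite{LiAutomorphisms} anyway, your write-up is an honest outline rather than a proof, but the sentence asserting that compactness and uniform continuity suffice is the one claim that would fail. A small further caveat: your parenthetical alternative via multiplicativity of Fuglede--Kadison determinants is not really available, since in \cite{LiAutomorphisms} the addition formula is an ingredient in proving the entropy--determinant identification (and general closed invariant subgroups need not arise from module quotients to which a determinant is attached), so that route is circular.
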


The Addition Formula has a long history. The basic approach is to take a
Borel cross-section to the quotient map $X\to X/Y$, and regard $\al$ as
a skew product with base action $\al_{X/Y}$ and fiber actions that are
affine maps of $Y$ with the same automorphism part $\al_Y$ but with
different translations. The idea is then to show that the translation
parts of these affine maps, being isometries, do not affect entropy. The
case $\Del=\ZZ$ was proved by Bowen \cite{Bowen}, and the case
$\Del=\ZZ^d$ is handled in \cite[Appendix B]{LSW} using arguments due
originally to Thomas \cite{ThomasAddition}. Fiber entropy for amenable
actions was dealt with in  \cite{WardZhang}. There is a serious
difficulty in generalizing these ideas to noncommutative $\Del$, namely
the lack of a scaling argument used to eliminate a universal constant
due to overlaps of open sets in a cover. However, machinery developed by
Ollagnier
\cite{Ollagnier} handles this issue, and this was used by Li
to give the most general result cited above.

When $\Del=\ZZ^d$ there are explicit formulas for entropy. First
consider the case $\Del=\ZZ$. Without loss of generality, we can assume
that $f(u)\in\ZZ[u^{\pm}]$ has the form $f(u)=c_nu^n+\cdots c_1u+c_0$ with
$c_nc_0\ne0$. Factor $f(u)$ over $\CC$ as
$f(u)=c_n\prod_{j=1}^n(u-\lam_j)$. Then Yuzvinskii
\cite{Yuzvinskii1,Yuzvinskii2}
showed that
\begin{equation}
   \label{eqn:one-variable-mahler}
   \h(\al_f)=\log|c_n|+\sum_{j=1}^n \log^+|\lam_j|=\mahler(f).
\end{equation}
An interpretation of \eqref{eqn:one-variable-mahler} from
\cite{LindWard} shows that term $\sum_{j=1}^n\log^+|\lam_j|$ is due to
geometric expansion, while the term $\log|c_n|$ is due to $p$-adic
expansions for those primes $p$ dividing $c_n$, an adelic viewpoint that
has been useful in other contexts as well.

Mahler measure is  defined for polynomials
$f\in\ZZ[u_1^{\pm},\dots,u_d^{\pm1}]=R_d$ by the formulas
\begin{displaymath}
   \mahler(f)=\int_{\SS^d}\log|f|=\int_0^1\dots\int_0^1
   \log|f(e^{2\pi i s_1},\dots,e^{2\pi i s_d})|\,ds_1\dots ds_d,
\end{displaymath}
and $\Mahler(f)=\exp\bigl(\mahler(f)\bigr)$ \cite{Mahler2}. One of the
main results in \cite{LSW} is that for nonzero
$f\in R_d$ we have that
$\h(\al_f)=\mahler(f)$. With this information, entropy for arbitrary
algebraic $\ZZ^d$-actions can be easily found. Let $\mathfrak{a}$ be an
ideal in $R_d$ that is not principal. A simple argument \cite[Thm.\
4.2]{LSW} shows that $\h(\al_{R_d/\mathfrak{a}})=0$. Any finitely
generated $R_d$-module $M$ has a prime filtration $0=M_0\subset
M_1\subset \dots\subset M_r$ with $M_j/M_{j-1}\cong R_d/\mathfrak{p}_j$,
where the $\mathfrak{p}_j$ are prime ideals in $R_d$. Then the Addition
Formula shows that
$\h(\al_M)=\h(\al_{R_d/\mathfrak{p}_1})+\dots+\h(\al_{R_d/\mathfrak{p}_r})$,
and each summand can be computed from our preceding remarks.

To conclude this discussion of the $\Del=\ZZ^d$ case, we point out that
there is a complete characterization for which principal actions have
zero entropy. Recall the definition of generalized cyclotomic polynomial
from \S\ref{sec:mixing}.

\begin{proposition}[\cite{Boyd,Smyth}]
   \label{prop:zero-entropy}
   Let $f\in\ZZ[u_1^{\pm},\dots,u_d^{\pm1}]$. Then $\h(\al_f)=0$ if
   and only if $f$ is a product of generalized cyclotomic polynomials
   times a monomial or its negative.
\end{proposition}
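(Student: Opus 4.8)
The plan is to combine the identity $\h(\af)=\mahler(f)$ for algebraic $\ZZ^d$-actions recorded above (from \cite{LSW}) with a characterization of the integer Laurent polynomials of Mahler measure zero; this reduces the proposition to showing that a nonzero $f\in\ZZ[u_1^{\pm},\dots,u_d^{\pm}]$ has $\mahler(f)=0$ if and only if $f=\pm u_1^{a_1}\cdots u_d^{a_d}\prod_i\Phi_{k_i}(u_1^{n_{i1}}\cdots u_d^{n_{id}})$. For the ``if'' direction I would use additivity $\mahler(fg)=\mahler(f)+\mahler(g)$: a monomial has modulus $1$ throughout $\SS^d$ and so has $\mahler=0$; and for a generalized cyclotomic factor $\Phi_k(u^{\mathbf n})$ with $\mathbf n\ne 0$ the monomial map $\SS^d\to\SS$, $\mathbf u\mapsto u^{\mathbf n}$, is a surjective continuous homomorphism, hence pushes normalized Haar measure forward to normalized Haar measure, so that $\int_{\SS^d}\log|\Phi_k(u^{\mathbf n})|=\int_\SS\log|\Phi_k|=\mahler(\Phi_k)=0$, the last equality because $\Phi_k$ is monic with all roots on $\SS$ (formula \eqref{eqn:mahler}).

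For the ``only if'' direction I would argue by induction on $d$. If $f$ does not actually involve $u_d$ I would apply the $(d-1)$-variable case; otherwise, after multiplying by a monomial I may write $f=c_m(\mathbf u)u_d^m+\dots+c_0(\mathbf u)$ with $c_0,c_m\in\ZZ[u_1^{\pm},\dots,u_{d-1}^{\pm}]$ nonzero. For the base case $d=1$ this reads $f(u)=c_n\prod_{j=1}^n(u-\lam_j)\in\ZZ[u]$ with $f(0)\ne 0$, and \eqref{eqn:mahler} shows that $0=\mahler(f)=\log|c_n|+\sum_j\log^+|\lam_j|$ forces $|c_n|=1$ and all $|\lam_j|\le1$, while $\prod_j|\lam_j|=|f(0)|\ge1$ then forces all $|\lam_j|=1$; Kronecker's theorem now makes each $\lam_j$ a root of unity, so $f$ is $\pm$ a product of cyclotomic polynomials. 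For $d\ge 2$, factoring $f$ over $u_d$ as $c_m(\mathbf u)\prod_{j=1}^m(u_d-\lam_j(\mathbf u))$, Jensen's formula in the variable $u_d$ followed by integration over $\SS^{d-1}$ gives
\begin{displaymath}
   0=\mahler(f)=\mahler(c_m)+\int_{\SS^{d-1}}\sum_{j=1}^m\log^+|\lam_j(\mathbf w)|\,d\mathbf w .
\end{displaymath}
Both terms are nonnegative (the Mahler measure of a nonzero integer Laurent polynomial is always $\ge0$), so both vanish. Thus $\mahler(c_m)=0$, and hence by induction $c_m$ — and, by the same argument applied to the $u_d$-reciprocal polynomial $u_d^m f(\mathbf u,u_d^{-1})$, also $c_0$ — is $\pm$ a monomial times a product of generalized cyclotomics; moreover $|\lam_j(\mathbf w)|\le 1$ for all $j$ and a.e.\ $\mathbf w\in\SS^{d-1}$. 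Since the reciprocal polynomial has roots $\lam_j(\mathbf u)^{-1}$ and the same Mahler measure, the reverse inequality also holds, so $|\lam_j(\mathbf w)|=1$ for all $j$ and a.e.\ $\mathbf w\in\SS^{d-1}$: almost every one-variable fiber polynomial $f(\mathbf w,\cdot)$ has all its roots on the unit circle.

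The hard part will be the last step: deducing from ``almost every fiber polynomial has all roots on $\SS$'', together with the known generalized-cyclotomic form of $c_0$ and $c_m$, that $f$ itself has the asserted shape. The mechanism I would try to exploit is that ``all fiber roots on $\SS$'' forces $f$ to coincide with its $u_d$-reciprocal up to a unit of $\ZZ[u_1^{\pm},\dots,u_{d-1}^{\pm}]$ (that is, up to $\pm$ a monomial); unique factorization in the Laurent polynomial ring, applied to that identity, then constrains the irreducible factors of $f$, and matching them against the already-known factors of $c_0$ and $c_m$ should produce a factorization into generalized cyclotomics. Making this precise for arbitrary $u_d$-degree $m$ requires delicate bookkeeping of exponent vectors and of which variables are inverted under the successive reciprocal operations — this is essentially the combinatorial core of Boyd's multivariable Kronecker theorem \cite{Boyd}, refined by Smyth \cite{Smyth} — and I expect it to be the main obstacle. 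The degree-one case already displays the pattern: there $\mahler(c_0)=\mahler(c_1)=0$ and $|c_0|=|c_1|$ a.e.\ on $\SS^{d-1}$ force $c_0=\pm(\text{monomial})\,c_1$ by unique factorization, whence $f=c_1\,(u_d\mp u^{\mathbf c})$, and the factor $u_d\mp u^{\mathbf c}$ equals $\pm u_d$ times $\Phi_1$ or $\Phi_2$ of the monomial $u^{\mathbf c}u_d^{-1}$, exhibiting $f$ as $\pm$ a monomial times a product of generalized cyclotomics. By contrast, the dynamical input ($\h=\mahler$) and the analytic input (iterated Jensen together with additivity of $\mahler$) are routine.
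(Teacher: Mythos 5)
Your reduction to the statement ``$\mahler(f)=0$ if and only if $f$ is a unit times a product of generalized cyclotomic polynomials'' via the identity $\h(\af)=\mahler(f)$ is the right framing, the ``if'' direction is fine, and everything in the ``only if'' direction through the extraction of $\mahler(c_0)=\mahler(c_m)=0$ and ``almost every fiber polynomial $f(\mathbf w,\cdot)$ has all its roots on $\SS$'' is correct (as is the $d=1$ base case via Kronecker). But the step you flag as the ``main obstacle'' is not delicate bookkeeping: it is the entire content of the Boyd--Smyth theorem, and the mechanism you propose for it cannot work as stated. The data you plan to feed into the final argument --- $f$ agrees with its reciprocal up to a unit, $c_0$ and $c_m$ are units times products of generalized cyclotomics, unique factorization in the Laurent ring --- is strictly weaker than the fiber-root condition and does not imply the conclusion. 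For example, $f=u_1^2-3u_1u_2+u_2^2$ is irreducible in $\ZZ[u_1^{\pm},u_2^{\pm}]$, satisfies $u_1^2u_2^2\,f(u_1^{-1},u_2^{-1})=f$, and has monomial extreme coefficients $c_2=1$ and $c_0=u_1^2$, yet $\mahler(f)=\log\bigl((3+\sqrt5)/2\bigr)>0$ and $f$ is not a product of generalized cyclotomics. So any completion of your plan must genuinely use that every root of almost every fiber lies on $\SS$ (which this example fails), and converting that statement about algebraic functions on $\SS^{d-1}$ into an algebraic factorization of $f$ is precisely the hard theorem; deferring it to ``the combinatorial core of Boyd'' means the proof is not there.

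For comparison: the paper itself gives no proof of this proposition, only the citations, and it records that Smyth's argument starts from a different place than your fiber analysis --- namely the face entropy inequality (the $\ZZ^d$ analogue of Corollary \ref{cor:face-entropy}), which forces every face polynomial of the Newton polytope to have Mahler measure zero and then permits an induction on the polytope; Boyd's original proof instead specializes $u_j\mapsto u^{N_j}$ and invokes deep results of Schinzel to control the cyclotomic factors arising in the specializations. As it stands, your proposal is a correct reduction plus the easy half, with the essential step left open.
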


This result was originally proved by Boyd \cite{Boyd} using deep results
of Schinzel, but was later given a simpler and more geometric proof by
Smyth \cite{Smyth}.

Turning now to noncommutative $\Del$, we sketch some background material
on related von Neumann algebras. The functional analysis used can be
found, for example, in \cite[Chaps.\ VII, VIII]{Conway}. Let
\begin{displaymath}
   \ltdc=\Bigl\{w=\sum_{\del\in\Del}w_\del \del: w_\del\in\CC
   \text{ and } \|w\|_2^2:=\sum_{\del\in\Del}|w_\del|^2<\infty\Bigr\},
\end{displaymath}
which is a complex Hilbert space with the standard inner product
$\bigl\langle \sum_{\del}w_\del \del,\sum_{\del}v_\del
\del\bigr\rangle \linebreak[0]=\sum_{\del}w_\del\overline{v_\del}$. As in the case of
$\lodc$, there is a left-action of $\Del$ on $\ltdc$ given by
$\theta\cdot\bigl(\sum_{\del}w_\del
\del\bigr)=\sum_{\del}w_\del\,\theta\del$.

The \textit{group von Neumann algebra} $\nd$ of $\Del$ consists of all
those bounded linear operators $T\in\SB\bigl(\ltdc\bigr)$ commuting with the
left $\Del$-action, i.e., $T(\del\cdot w)=\del\cdot T(w)$. There is a
natural inclusion of $\CC\Del$ into $\nd$ given by $f\to\rf$, where as
before $\rf(w)=w\cdot f^*$. In addition, there is a faithful normalized trace
function $\tnd\colon\nd\to\CC$ given by
$\tnd(T)=\<T(1_\Del),1_\Del\>$. This means that $\tnd$ is linear,
$\tnd(1)=1$, $\tnd(TT^*)>0$ for every $T\ne0$, and $\tnd(ST)=\tnd(TS)$.

Using this trace, Fuglede and Kadison \cite{FugledeKadison} defined a
determinant function on $\nd$ as follows. Let $T\in\nd$. Then $TT^*\ge0$,
so the spectral measure $\nu$ of the self-adjoint operator $TT^*$ is
supported on $[0,\infty)$, in fact on $[0,\|TT^*\|]$. Using the
functional calculus for $\SB\bigl(\ltdc\bigr)$, we can form
the operator $\log(TT^*)=\int_{0^+}^\infty \log t \,d\nu(t)$, where the
lower limit $0^+$ indicates that we ignore any point mass at
$0$ that $\nu$ may have. We then define
\begin{displaymath}
   \dnd T=\exp\Bigl[\frac12 \tnd\bigl(\log(TT^*)\bigr)\Bigr].
\end{displaymath}
This Fuglede-Kadison determinant has the following very useful
properties (see \cite[\S3.2]{Luck} for details):
\begin{align*}
     & \text{$\bullet$\quad  $\dnd T^*=\dnd T$,}\\
     & \text{$\bullet$\quad  if $T>0$ in $\nd$, then $\dnd T=\exp(\tr_{\nd} \log T)$,}\\
     & \text{$\bullet$\quad  if $0\le S\le T$ in $\nd$, then $\dnd S\le\dnd T$},\\
     & \text{$\bullet$\quad  $\dnd ST=(\dnd S)(\dnd T)$}.
\end{align*}
We remark that multiplicativity of $\dnd$ is not obvious, essentially being
a consequence of the Campbell-Baker-Hausdorff formula and vanishing of
trace on commutators, although for technical reasons a complex variables
approach is more efficient.

\begin{example}
   Let $\Del=\ZZ^d$, and write elements of $\ZZ^d$ as
   $\bn=(n_1,\dots,n_d)$. The Fourier transform identifies
   $\ell^2(\ZZ^d,\CC)$ with $L^2(\TT^d,\CC)$, with $\bn$ being identified with
   the function $\chi_{\bn}$, where
   $\chi_{\bn}(\mathbf{s})=\exp\bigl[2\pi i(\bn\cdot\mathbf{s})\bigr]$
   for $\mathbf{s}\in\TT^d$. Any bounded linear operator $T$ on
   $\ell^2(\ZZ^d,\CC)$ commuting with the $\ZZ^d$-action must have the
   form of convolution with some element $v\in\ell^2(\ZZ^d,\CC)$. Hence
   the Fourier transform $V$ of $v$ must give a bounded linear operator on
   $L^2(\TT^d)$ via pointwise multiplication, and this forces
   $V\in L^\infty(\TT^d,\CC)$. Conversely, every
   $V\in L^\infty(\TT^d,\CC)$ corresponds to an element of
   $\SN\ZZ^d$. This identifies $\SN\ZZ^d$ with
   $L^\infty(\TT^d,\CC)$. Under this identification,
   \begin{displaymath}
      \tr_{\SN\ZZ^d}(V)=\int_{\TT^d} V(\mathbf{s})\,d\mathbf{s}
      \text{\quad and\quad } \text{det}_{\SN\ZZ^d}(V)=\exp\Bigl[\int_{\TT^d}
      \log|V(\mathbf{s})|\,d\mathbf{s}\Bigr].
   \end{displaymath}
   For $f\in\ZZ\ZZ^d$, we observed that $\rf\in\SN\ZZ^d$, and this
   corresponds to multiplication by $f(e^{-2\pi i s_1},\dots,e^{-2\pi i
   s_d})$. Hence in this case $\text{det}_{\SN\ZZ^d} f=\Mahler(f)$ and
   so $\log \text{det}_{\SN\ZZ^d} f=\mahler(f)=\h(\al_f)$.
\end{example}

Indeed, it was the equality of Mahler measure, entropy, and the
Fuglede-Kadison determinant in L\"uck's book \cite[Example 3.13]{Luck}
that originally inspired Deninger to investigate whether this phenomenon
extended to more general groups. He was able to show in \cite{D} that with
some conditions it did. Further work \cite{DS}, \cite{LiAutomorphisms}
extended the generality, culminating in the comprehensive
results of Li and Thom
\cite{LiThom}.

To describe their work, recall that $\rf$ is a bounded linear operator on
$\ltdc$. More generally, if $F\in\ZD^{k\times l}$, then there is a
bounded linear operator $\rho_F\colon\ltdc^k\linebreak[0]\to\ltdc^l$ given by right
multiplication by $F^*$, where $(F^*)_{i,j}=(F_{i,j})^*$. There is an
extension of $\dnd$ to such $F$ (see \cite[\S2.1]{LiThom}) for details.

\begin{theorem}[{\cite[Thm.\ 1.2]{LiThom}}]
   \label{thm:li-thom}
   Let $\Del$ be a countable discrete amenable group, and let
   $f\in\ZD$. Suppose that $\rf\colon\ltdc\to\ltdc$ is injective. Then
   $\h(\af)=\log \dnd f$. More generally, if $F\in\ZD^{k\times l}$ and
   if $\rho_F\colon\ltdc^k\to\ltdc^l$ is injective, then $\h(\al_F)\le
   \log \dnd F$. If $k=l$, then $\h(\al_F^{})=\h(\al_{F^*})=\log \dnd F$.
\end{theorem}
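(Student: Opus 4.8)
The plan is to establish the two inequalities $\h(\af)\le\log\dnd f$ and $\h(\af)\ge\log\dnd f$ in the principal case, and then to obtain the matrix statements by running the same arguments for the bounded finite--rank operator $\rho_F$, together with the $*$-invariance of $\dnd$ for the symmetry claim (alternatively, for $k=l$, a prime filtration and the Addition Formula reduce matters to principal actions, since injectivity of $\rho_F$ on $\ltdc$ yields a resolution of length one).

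For the \textbf{upper bound} $\h(\af)\le\log\dnd f$, which is essentially due to Deninger \cite{D} and was refined in \cite{DS} and \cite{LiAutomorphisms}, I would use the volume--growth (equivalently, the $(F_n,\epsilon)$-spanning) description of entropy. A point $t\in X_f$ is determined by a lift $\tilde t$ with coordinates in $[0,1)$, and membership in $X_f$ amounts to the constraint $\rf(\tilde t)\in\lidz$. Restricting attention to the coordinates in a F{\o}lner set $F_n$, enlarged by a fixed collar depending only on $\supp f$, the number of $\epsilon$-distinguishable points is governed by the volume decrease of the cube under the finite truncation $P_{F_n}\rf P_{F_n}$; its normalized logarithm, in the limit $n\to\infty$ and then $\epsilon\to0$, equals $\int\log s\,d\nu(s)=\log\dnd f$, where $\nu$ is the distribution of the singular values of $\rf$ on $\ltdc$. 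This last convergence is a F{\o}lner/amenability argument comparing the singular--value distributions of the truncations with that of $\rf$; injectivity of $\rf$ on $\ltdc$ is exactly what guarantees that $\nu$ has no atom at $0$, so that the bound is meaningful.

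For the \textbf{lower bound} $\h(\af)\ge\log\dnd f$, which is the deep part and the main new contribution of \cite{LiThom}, the same truncation picture must be run in reverse: one produces $(F_n,\epsilon)$-separated subsets of $X_f$ of cardinality roughly $\prod_{s_j>\epsilon}s_j$ by pushing a large integer box through the partial inverse of $P_{F_n}\rf P_{F_n}$ assembled from the singular directions above $\epsilon$, and checking that the resulting points can be corrected to genuine elements of $X_f$ (homoclinic points and the specification property of Propositions \ref{prop:fundamental-homoclinic-point} and \ref{prop:specification} are convenient here). The obstacle --- and the reason this is genuinely hard rather than routine --- is that the small singular values of the truncations could a priori spoil the estimate; one needs a \emph{uniform} control on the mass that the truncated spectral measures place near $0$, i.e.\ uniform integrability of $\log$ against them. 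This is a determinant--approximation theorem of L\"uck type in the amenable setting, and it is precisely here that amenability and the hypothesis that $\rf$ be injective on $\ltdc$ enter in an essential, non-soft way. Li and Thom obtain it by identifying $\h(\af)$, $\log\dnd f$, and the $L^2$-torsion of the length--one complex $0\to\ltdc\xrightarrow{\rho_{f^*}}\ltdc$ with one another and transferring regularity between the three; I do not see a way around this step.

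Finally the \textbf{matrix case}. The inequality $\h(\al_F)\le\log\dnd F$ for $F\in\ZD^{k\times l}$ with $\rho_F\colon\ltdc^k\to\ltdc^l$ injective (note that injectivity forces $k\le l$, by comparison of von Neumann dimensions) is the upper--bound computation above applied verbatim to the truncations of the finite--rank operator $\rho_F$. When $k=l$ the lower--bound construction goes through as well, giving equality $\h(\al_F)=\log\dnd F$. The symmetry $\h(\al_F)=\h(\al_{F^*})$ in that case then follows from this equality together with $\dnd F=\dnd F^{*}$, a consequence of the $*$-invariance of the Fuglede--Kadison determinant, once one observes that $\rho_{F^*}$ is injective exactly when $\rho_F$ is (the spectral distributions of $\rho_F^{*}\rho_F$ and $\rho_F\rho_F^{*}$ coincide, and $\tnd$ is faithful). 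Alternatively, one may take a prime filtration of $\ZD^l/\ZD^kF$ and invoke the Addition Formula to reduce the $k=l$ case directly to principal actions.
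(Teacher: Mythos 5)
The paper does not prove this theorem: it is quoted from Li and Thom \cite{LiThom} with a citation and no argument, so there is no internal proof to compare yours against. The only related computation actually carried out in the survey is the periodic-point argument of Sections 7--8, which establishes $\h(\af)=\log\dnd f$ for \emph{expansive} $f\in\ZG$; that case is much easier precisely because expansiveness makes $f$ invertible in $\lodr$ and keeps the truncated determinants uniformly away from the degenerate regime that your sketch has to worry about.

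Judged on its own terms, your proposal is a correct road map rather than a proof. The architecture is right: the upper bound $\h(\af)\le\log\dnd f$ is the soft direction (upper semicontinuity of $\nu\mapsto\int\log s\,d\nu(s)$ under weak convergence of the truncated singular-value distributions suffices, since $\log$ is bounded above on bounded sets), and the whole difficulty is concentrated in the lower bound, where one must show that the small singular values of the F{\o}lner truncations of $\rf$ do not collapse the product $\prod_j s_j$ --- equivalently, uniform integrability of $\log$ near $0$ against the truncated spectral measures, given only that the limiting measure of $|\rf|$ has no atom at $0$. You identify this as the crux and then explicitly decline to supply it; but that step \emph{is} the theorem, so what you have written proves only one of the two inequalities. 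One further caution: the proposed alternative for the $k=l$ case --- take a prime filtration of $\ZD^l/\ZD^kF$ and invoke the Addition Formula to reduce to principal actions --- does not work for noncommutative $\Del$ (already for $\G$): the filtration quotients are of the form $R/\gp$ for two-sided prime ideals $\gp$ that need not be principal, so no reduction to the $1\times 1$ case results. Your remaining remarks on the matrix case (injectivity forces $k\le l$ by von Neumann dimension, injectivity of $\rho_F^{}$ and $\rho_{F^*}$ are equivalent when $k=l$ because the kernel projections of $\rho_F^*\rho_F^{}$ and $\rho_F^{}\rho_F^*$ have equal trace, and $\h(\al_F^{})=\h(\al_{F^*})$ then follows from the main equality plus $\dnd F=\dnd F^*$) are all sound.
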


In particular, $\h(\al_{f^{}})=\h(\al_{f^*})$. This is a highly
nontrivial fact, since there is no direct connection between
$\al_{f^{}}$ and $\al_{f^*}$.

The computation, or even estimation, of the values of Fuglede-Kadison determinants is not
easy. In the next two sections we will explicitly calculate entropy for
certain principal actions of the Heisenberg group.

As pointed out by Deninger \cite{D}, there are examples of lopsided polynomials $f$
for which $\dnd f$ can be computed by a rapidly converging series.

\begin{example}
   \label{exam:traces}
   Let $f(x,y,z)=5-x-x^{-1}-y-y^{-1}\in\ZG$. Write $f=5(1-g)$, where
   $g=\frac15(x+x^{-1}+y+y^{-1})$. It is easy to see \cite[Lemma
   2.7]{BVZ} that the spectrum of~ $g$, considered as an element in
   $\SN\G$, is contained in $[-4/5,4/5]$. Hence we can apply the
   functional calculus to compute $\log f$ via the power series for
   $\log(1-t)$, yielding
   \begin{displaymath}
      \log f = \log 5+\log(1-g)=\log5-\sum_{n=1}^\infty\frac{g^n}{n}.
   \end{displaymath}
   Now $\tr_{\SN\G}(g^n)$ is the value of the constant term of $g^n$. If
   $r_{\G}^{}(n)$ denotes the number of words over
   $S=\{x,x^{-1},y,y^{-1}\}$ of length $n$ whose product is 1, then
   clearly $\tr_{\SN\G}(g^n)=5^{-n}r_{\G}^{}(n)$. In any word over $S$ with
   product 1, the number of $x$ and $x^{-1}$ must be equal and similarly
   with $y$ and $y^{-1}$, so that $r_{\G}^{}(n)=0$ for $n$ odd. The numbers
   $r_{\G}^{}(2n)$ grow rapidly:
   \begin{displaymath}
      r_{\G}^{}(2n)=4^{2n}\Bigl( \frac{1}{2n^2}+O\Bigl(\frac{1}{n^3}\Bigr)\Bigl),
   \end{displaymath}
   (see \cite{Gretete}, although the result stated there is off by a
   factor of 2).
   We thank David Wilson for providing us a short \textit{Mathematica}
   program that computes $r_{\G}^{}(2n)$ up to $r_{\G}^{}(60)$,
   which is a 33 digit number. Hence
   \begin{displaymath}
      \tr_{\SN\G} \log f = \log 5 -
      \sum_{n=1}^{30}\frac{r_{\G}^{}(2n)}{(2n)5^{2n}} -
      \sum_{n=31}^{\infty}\frac{r_{\G}^{}(2n)}{(2n)5^{2n}}.
   \end{displaymath}
   Using the trivial estimate $r_{\G}^{}(2n)\le 4^{2n}$, the last sum has value
   less than $10^{-7}$. The remaining part therefore gives the value for
   $\h(\af)=\tr_{\SN\G} \log f\cong 1.514708$, correct to six decimal
   places.

   We remark that $5g=x+x^{-1}+y+y^{-1}$, considered as an operator in
   $\SN\G$, has been studied intensively, and is closely related to
   Kac's famous Ten Martini Problem. Indeed, the image of $5g$ in the
   rotation algebra factor $\mathscr{A}_{\theta}$ of $\SN\G$ is
   Harper's operator ~$H_\theta$ (cf. \cite{BVZ}). Kac conjectured that for every
   irrational $\theta$ the spectrum of $H_\theta$ is a Cantor set of
   zero Lebesgue measure. This conjecture was recently confirmed by deep
   work of Avila.
\end{example}

\begin{remark}
   The calculations in the previous example can be carried out just as
   well in any group $\Del$ containing two elements $x$ and $y$. The
   only change is that the number $r_{\Del}(n)$ of words over
   $S=\{x,x^{-1},y,y^{-1}\}$
   whose product is 1 will be different, depending on $\Del$. For
   comparison with the example, we work this out for $\Del=\ZZ^2$ with
   commuting generators $x, y$, and for $\Del=\F_2$, the free group with
   generators $x, y$.

   Recalling that $f=5(1-g)$, let us define
   \begin{displaymath}
      L(f,\Del):= \log 5 - \sum_{n=1}^\infty \frac{\tr
      g^n}{n}=\log 5-\sum_{n=1}^\infty \frac{r_{\Del}(n)}{n\cdot 5^n}.
   \end{displaymath}
   The preceding example shows that $L(f,\G)\cong 1.514708$.

   For $\Del=\ZZ^2$, we are computing entropy for $f$ considered as a
   polynomial in the commuting variables $x,y$, and this is given by
   Mahler measure to be
   \begin{displaymath}
      L(f,\ZZ^2)=\int_0^1\int_0^1 \log|5-2\cos(2\pi s)-2\cos(2\pi
      t)|\,ds\,dt \cong 1.507982\,.
   \end{displaymath}
   Observe that any word in $S\subset\G$ whose product is 1 abelianizes
   to one in $\ZZ^2$, and so $r_{\G}^{}(n)\le r_{\ZZ^2}^{}(n)$, which is
   reflected in the inequality $L(f,\G)\ge L(f,\ZZ^2)$.

   The case $\Del=\F_2$ is more interesting. Here any word in the
   generators $x,y$ whose product is 1 must also give a word in any
   group $\Del$ containing $x, y$ with product 1, so that $r_{\F_2}(n)\le
   r_{\Del}(n)$ for all groups $\Del$. Hence $L(f,\F_2)\ge L(f,\Del)$, so
   that $L(f,\F_2)$ gives a universal upper bound.

   To compute $L(f,\F_2)$, start with the generating function for
   $r_{\F_2}(2n)$, which by \cite[I.9]{delaHarpe} is known to be
   \begin{displaymath}
      G(t)=\sum_{n=1}^\infty r_{\F_2}(2n)t^n=\frac{3}{1+2\sqrt{1-12t}}
      =1+4t+28t^2+232t^3+ 2092t^4+\dots \,.
   \end{displaymath}
   Letting
   \begin{displaymath}
      H(t)=\int_0^t \frac{G(u)-1}{u}\,du,
   \end{displaymath}
   then a calculation with \textit{Mathematica} shows that
   \begin{displaymath}
      L(f,\F_2)=\log 5 -\frac{1}2 H(5^{-2})=\log
      \Bigl[\frac1{18}(35+13\sqrt{13})\Bigr]\cong 1.514787
   \end{displaymath}
   Indeed $L(f,\F_2)>L(f,\G)$, but only in the fifth decimal place.
\end{remark}

We remark that Lewis Bowen \cite{LewisBowenEntropy} has extended a
notion of entropy to actions of sofic groups, and in particular free
groups (although these do not have F{\o}lner sequences).
In \cite[Example 1.1]{LewisBowenEntropy} he shows that
in the previous discussion $L(f,\F_2)$
equals the sofic entropy of the corresponding algebraic $\F_2$-action.

It is somewhat surprising that here $L(f,\F_2)$ is the logarithm of an
algebraic number, since in the case $\Del=\ZZ^d$ with $d>1$ this appears
not to generally be the case. For example, when $d=3$ and $g =
1+u_1+u_2+u_3$, Smyth \cite{Smyth2} has computed the logarithmic Mahler
measure to be $\mahler(g)=\h(\al_g)=\log [7\zeta(3)/2\pi^2]$, where $\zeta$ is the
Riemann zeta-function.

\smallskip Next we extend the face entropy inequality described for principal
$\zd$-actions in \cite[Remark 5.5]{LSW} to principal $\ZG$-actions. This
inequality proves that many such actions have strictly positive entropy.

We start with the basic case.

\begin{proposition}
   \label{prop:basic-inequality}
   Let $f(x,y,z)=\sum_{r=0}^D g_r(x,z)y^r\in\ZG$ with
   $g_0(x,z)\ne0$. Then $\h(\af)\ge\mahler(g_0)$.
\end{proposition}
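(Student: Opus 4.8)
The plan is to realize, inside $X_f$, enough points that on a suitable F{\o}lner box they carry one independent copy of an $X_{g_0}$-configuration per power of $y$. Write $\Lam=\langle x,z\rangle$ for the normal abelian subgroup of $\G$, which is isomorphic to $\ZZ^2$ and has $\G/\Lam\cong\ZZ$ generated by the image of $y$, so $\G=\bigsqcup_{b\in\ZZ}\Lam y^b$; decompose $\TT^{\G}=\prod_{b\in\ZZ}\TT^\Lam$ accordingly, writing $t=(t^{(b)})_{b\in\ZZ}$ with $t^{(b)}\in\TT^\Lam$ the ``$b$-th layer'' of $t$. Let $\sigma\in\aut(\Lam)$ be conjugation by $y$, so $\sigma(x)=xz$, $\sigma(z)=z$, and write $\sigma^b$ also for its induced action on $\ZZ\Lam$. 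A short computation with right convolution by $f^*$ rewrites $t\in X_f=\ker\rf$ as the layer recursion
\[
   \sum_{r=0}^{D}t^{(b+r)}\cdot\sigma^b(g_r^*)=0\quad\text{in }\TT^\Lam,\ \text{for every }b\in\ZZ .
\]
Since $g_0\neq0$ and $\ZZ\Lam$ is an integral domain, right convolution by $\sigma^b(g_0^*)$ is a \emph{surjective} endomorphism of $\TT^\Lam$ for each $b$, with kernel the closed subgroup $X_{\sigma^b(g_0)}$; by Lemma~\ref{lem:change-of-variables} this group is topologically conjugate to $X_{g_0}$ through the monomial automorphism $\sigma^b$, so $\h(\al_{\sigma^b(g_0)})=\h(\al_{g_0})=\mahler(g_0)$ by the entropy formula for principal $\ZZ^2$-actions.

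Next I would fix $n\ge1$, put $t^{(b)}=0$ for all $b\ge n$, and solve the recursion downward. For $b=n-1,n-2,\dots$ the $b$-th equation reads $t^{(b)}\cdot\sigma^b(g_0^*)=-\sum_{r=1}^{D}t^{(b+r)}\cdot\sigma^b(g_r^*)$, whose right-hand side is already known; by surjectivity it is solvable, with solution set a coset of $X_{\sigma^b(g_0)}$, and continuing for $b<0$ fills the remaining layers and produces a genuine point of $X_f$ (the equations for $b\ge n$ hold trivially). Take the $\Lam$-F{\o}lner boxes $Q_n=\{x^az^c:0\le a<n,\ 0\le c<n^2\}$ and $F_n=\bigcup_{b=0}^{n-1}Q_n\,y^b=\{x^ay^bz^c:0\le a,b<n,\ 0\le c<n^2\}$, a right F{\o}lner sequence for $\G$ with $|F_n|=n\,|Q_n|$. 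For small $\eps>0$, choose in each layer $b\in\{0,\dots,n-1\}$ a $(Q_n,\eps)$-separated subset $S_b\subset X_{\sigma^b(g_0)}$ of maximal size, and use these as the free parameters in the downward construction. If two parameter tuples first differ in layer $b^{*}$, the corresponding points of $X_f$ agree on all higher layers, so their $b^{*}$-th layers differ by a nonzero difference of elements of $S_{b^{*}}$, which is $\ge\eps$ at some coordinate of $Q_n\,y^{b^{*}}\subset F_n$; hence the points obtained form a $(F_n,\eps)$-separated subset of $X_f$ of cardinality at least $\prod_{b=0}^{n-1}|S_b|$. Dividing by $|F_n|=n\,|Q_n|$, letting $n\to\infty$ and then $\eps\to0$, and using $\lim_{\eps\to0}\lim_{n\to\infty}\tfrac1{|Q_n|}\log|S_b|=\h(\al_{\sigma^b(g_0)})=\mahler(g_0)$, gives $\h(\af)=\h_{\text{sep}}(\af)\ge\mahler(g_0)$.

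The hard part will be making the per-layer count uniform in $b$: one needs $\tfrac1{|Q_n|}\log|S_b|\ge\mahler(g_0)-\eta(\eps)$ with $\eta(\eps)\to0$ at a rate independent of $b$ for $0\le b<n$. This is exactly why the strongly anisotropic box $Q_n$ (width $n$ in $x$, height $n^2$ in $z$) is used: the shear $\sigma^b$ conjugating $X_{g_0}$ to $X_{\sigma^b(g_0)}$ carries $Q_n$ to a parallelogram squeezed, for $b<n$, between two boxes of size comparable to $|Q_n|$, so the quantitative F{\o}lner estimates behind Proposition~\ref{prop:lindenstrauss-weiss} still deliver the entropy of $\al_{g_0}$ uniformly. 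Some routine bookkeeping is also needed to confirm that the downward recursion genuinely lands in $X_f$ and that the separation survives the coordinatewise reduction of the metric on $X_f$. An alternative and much shorter route avoids the combinatorics altogether: by Theorem~\ref{thm:li-thom}, $\h(\af)=\log\text{det}_{\SN\G}f$ (here $\rf$ is injective on $\ell^2(\G)$, as is known for every nonzero $f\in\ZG$), and since $f=g_0+g_1y+\dots+g_Dy^D$ lies in the analytic subalgebra of the crossed product $\SN\G\cong\SN\Lam\rtimes_\sigma\ZZ$ and has $y^0$-coefficient $g_0$, the noncommutative Jensen inequality for finite maximal subdiagonal algebras yields $\text{det}_{\SN\G}f\ge\text{det}_{\SN\Lam}g_0=\Mahler(g_0)$, whence $\h(\af)\ge\mahler(g_0)$.
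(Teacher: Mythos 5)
Your main construction is, at its core, the same as the paper's: decompose $\TT^{\G}$ into $y$-layers, observe that membership in $X_f$ is a layer recursion whose leading term is right convolution by a sheared copy of $g_0$, use surjectivity of that convolution on $\TT^{\ZZ^2}$ to solve downward with one free coset of (a sheared copy of) $X_{g_0}$ per layer, and separate the resulting points of $X_f$ at the first layer where the free choices differ. The one genuine difference is where you put the shear, and it sits exactly at the spot you flag as ``the hard part.'' You keep the window $Q_n$ fixed in every layer (so that $F_n$ is an honest box in $\G$) and therefore need a $(Q_n,\eps)$-separated set of near-maximal exponential size in each of the $n$ distinct systems $X_{\sigma^b(g_0)}$, \emph{uniformly} in $0\le b<n$. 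That uniform count is true --- $\sigma^{-b}(Q_n)$ is an exact fundamental domain for a finite-index sublattice of $\ZZ^2$ and the family $\{\sigma^{-b}(Q_n):0\le b<n\}$ is uniformly F{\o}lner, so a tiling/subadditivity argument in the spirit of Proposition \ref{prop:lindenstrauss-weiss} delivers it --- but it is an extra argument you have not actually supplied, so as written there is a gap there. The paper sidesteps the issue entirely by shearing the window along with the separated set: it fixes a single $(Q,\eps)$-separated set $\{u_1,\dots,u_N\}\subset X_{g_0}$ with $N\ge e^{(\mahler(g_0)-\del)|Q|}$, uses $\Phi^k(u_{j_k})$ as the free choice in layer $-k$ (this is automatically $(\Phi^k(Q),\eps)$-separated with the \emph{same} $N$, since $\Phi^k$ merely permutes coordinates), and takes as F{\o}lner set the union $\overline{Q}=\bigcup_{k}\Phi^k(Q)\times\{-k\}$ of sheared boxes; the only remaining check is that $\overline{Q}$ is sufficiently right-invariant, which holds when $Q$ is long in the $x$-direction relative to the $z$-direction and the number of layers is small relative to both. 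Adopting that arrangement closes your argument with no further work. Your alternative route --- Theorem \ref{thm:li-thom} plus the Arveson--Labuschagne noncommutative Jensen inequality for the analytic subdiagonal subalgebra of $\SN\G\cong\SN\Lam\rtimes_\sigma\ZZ$, giving $\dnd f\ge\mathrm{det}_{\SN\Lam}\,g_0=\Mahler(g_0)$ --- is genuinely different from anything in the paper and is valid (injectivity of $\rf$ on $\ell^2(\G,\CC)$ does hold for every $0\ne f\in\ZG$); it buys brevity and dispenses with all the F{\o}lner combinatorics, at the price of invoking two substantial external theorems where the paper's argument is elementary and self-contained.
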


\begin{proof}
   By definition the map $\rho_{g_0(x,z)}\colon
   \TT^{\ZZ^2}\to\TT^{\ZZ^2}$ has kernel $X_{g_0}$, and is surjective
   since multiplication by $g_0(x,z)$ is injective on
   $\ZZ[x^{\pm},z^{\pm}]$. Define $\Phi\colon\TT^{\ZZ^2}\to\TT^{\ZZ^2}$
   by $(\Phi u)_{i,j}=u_{i+j,j}$. Then for every $k\in\ZZ$ we have that
   $u\in\ker \rho_{g_0(x,z)}$ iff $\Phi^k(u)\in\ker
   \rho_{g_0(xz^{-k},z)}$.

   The algebraic $\ZZ^2$-action $\al_{g_0}$ on $X_{g_0}$ has entropy
   $\mahler(g_0)$. Fix $\eps>0$ and $\del>0$. Then for sufficiently
   large rectangles $Q\subset\ZZ^2$ there is a $(Q,\eps)$-separated set
   $\{u_1,\dots,u_N\}\subset X_{g_0}$ with $N\ge
   e^{(\mahler(g_0)-\del)|Q|}$. Note that since $d_{\TT}$ is translation
   invariant, for every $u\in\TT^{\ZZ^2}$ the translated set
   $\{u_1+u,\dots,u_N+u\}$ is also $(Q,\eps)$-separated.

   Let $\ttt{n}{}=\sum_{i,j} \ttt{n}{i,j}x^iz^j\in\TT^{\ZZ^2}$ be
   arbitrary, and put $t=\sum_{n=-\infty}^\infty
   \ttt{n}{}y^n\in\TT^\G$. The condition for $t$ to be in $X_f$ is that
   $\rho_f(t)=0$, which in terms of the $\ttt{n}{}$ becomes
   \begin{equation}
      \label{eqn:convolution}
      \sum_{r=0}^D \rho_{g_r(xz^k,z)}(\ttt{k+r}{})=0 \text{\quad for all $k\in\ZZ$.}
   \end{equation}

   Let $L\ge1$. For each $(j_0,j_1,\dots,j_{L-1})\in\{1,\dots,N\}^L$ we
   will construct a sequence $\{\ttt{0}{j_0},\ttt{-1}{j_0,j_1}, \dots,
   \ttt{-L+1}{j_0,j_1,\dots,j_{L-1}}\}$ in $\TT^{\ZZ^2}$ that will be used
   to create an $\eps$-separated set in $X_f$.

   Put $\ttt{n}{}=0$ for all $n\ge1$, so that \eqref{eqn:convolution} is
   trivially satisfied for $k\ge1$. Define $\ttt{0}{j_0}=u^{}_{j_0}$
   for $1\le j_0\le N$. Then \eqref{eqn:convolution} is satisfied at
   $k=0$ since $\rho_{g_0(x,z)}(u_{j_0})=0$.

   Since $\rho_{g_0(xz^{-1},z)}$ is surjective on $\TT^{\ZZ^2}$, for
   each $j_0$ there is a $\ttt{-1}{j_0}\in\tzt$ such that
   $$\rho_{g_0(xz^{-1},z)}(\ttt{-1}{j_0})=-\rho_{g_1(xz^{-1},z)}(\ttt{0}{j_0}).$$ Let
   $\ttt{-1}{j_0,j_1}=\ttt{-1}{j_0}+ \Phi(u^{}_{j_1})$ for $1\le j_1\le N$. Since
   $\rho_{g_0(xz^{-1},z)}\bigl(\Phi(u_{j_1})\bigr)=0$, it follows that
   \eqref{eqn:convolution} is satisfied at $k=-1$ for all choices of
   $j_0$ and $j_1$.

   Similarly, for each $\{\ttt{0}{j_0},\ttt{-1}{j_0,j_1}\}$ there is a
   $\ttt{-2}{j_0,j_1}\in\tzt$ with
   \begin{displaymath}
      \rho_{g_0(xz^{-2},z)}\bigl(\ttt{-2}{j_0,j_1}\bigr) =
      - \rho_{g_1(xz^{-2},z)}\bigl(\ttt{-1}{j_0,j_1}\bigr)
      - \rho_{g_2(xz^{-2},z)}\bigl(\ttt{0}{j_0}\bigr).
   \end{displaymath}
   Put $\ttt{-2}{j_0,j_1,j_2}=\ttt{-2}{j_0,j_1}+\Phi^2(u^{}_{j_2})$ for
   $1\le j_2\le N$. Then \eqref{eqn:convolution} holds for $k=-2$ and
   all choices of $j_0,j_1,j_2$.

   Continuing in this way, for every $L$-tuple
   $(j_0,j_1,\dots,j_{L-1})\in\{1,\dots,N\}^L$ we have constructed
   $\{\ttt{0}{j_0},\ttt{1}{j_0,j_1},\dots,\ttt{-L+1}{j_0,\dots,j_{L-1}}\}$
   so that \eqref{eqn:convolution} is satisfied for $k\ge -L+1$. Each
   choice can be further extended to find $\ttt{n}{}$ for $n\le-L$ for
   which the resulting point is in $X_f$.

   Identify $\ZZ^3$ with $\G$ via $(i,j,k)\leftrightarrow x^iz^jy^k$,
   and consider
   $\overline{Q}=\bigcup_{k=0}^{L-1}\Phi^k(Q)\times\{-k\}\subset\G$. We
   claim that the $N^L$ points in $X_f$ constructed above are
   $(\overline{Q},\eps)$-separated. For at the first index $k$ for
   which $j_k\ne j_k'$, the points $\Phi(u_{j_k})$ and
   $\Phi^k(u_{j_k'})$ differ by at least $\eps$ at some coordinate of
   $\Phi^k(Q)$.

   Finally, if we choose $Q$ be be very long in the $x$-direction
   compared with the $z$-direction, and make $L$ small compared with
   both, we can make $\overline{Q}$ as right-invariant as we please. Hence
   there is a F{\o}lner sequence $\{\overline{Q}_m\}$ in $\G$ with
   \begin{displaymath}
      s(\overline{Q}_m,\eps)\ge
      e^{\bigl(\mahler(g_0)-\del\bigr)|\overline{Q}_m|},
   \end{displaymath}
   and thus $\h(\af)\ge\mahler(g_0)$.
\end{proof}

Recall from Section \ref{sec:algebraic-actions} that the \textit{Newton
polygon} $\Newt(f)$ of $f=\sum_{k,l}f_{kl}(z)x^ky^l\in\ZG$ is the convex
hull in $\RR^2$ of those points $(k,l)$ for which $f_{kl}(z)\ne0$. A
\textit{face} of $\Newt(f)$ is the intersection of $\Newt(f)$ with a
supporting hyperplane, which is either a point or a line segment. For
each face $F$ of $\Newt(f)$ let $f_F(x,y,z)=\sum_{(k,l)\in F\cap\ZZ^2}
f_{k,l}(z)x^ky^l$. For every face $F$ of $\Newt(f)$ there is a change of
variables followed by multiplication by a monomial transforming $f$ so
that $F$ now lies on the $x$-axis with the rest of $\Newt(f)$ in the
upper half-plane. Since entropy is invariant under such transformations,
we can apply Proposition \ref{prop:basic-inequality} to obtain the
following face entropy inequality.

\begin{corollary}
   \label{cor:face-entropy}
   If $f\in\ZG$ and $F$ is a face of $\Newt(f)$, then
   $\h(\af)\ge\h(\al_{f_F})$.
\end{corollary}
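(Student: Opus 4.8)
The plan is to move the face $F$ into standard position by a change of variables and a monomial multiplication, apply Proposition \ref{prop:basic-inequality}, and thereby reduce the statement to the single fact that $\h(\al_{g_0})=\mahler(g_0)$ whenever $g_0\in\ZZ[x^\pm,z^\pm]\subset\ZG$.

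First I would choose an automorphism $\Phi\in\aut(\G)$ whose associated matrix in $GL(2,\ZZ)$ (Lemma \ref{lem:automorphism-heisenberg}) sends the outer normal of the supporting line defining $F$ to $-(0,1)$. Since the $x$- and $y$-exponents transform linearly under $\Phi$ (the $z$-relations only produce extra central factors), $\Newt(\Phi f)$ is the image of $\Newt(f)$ under this matrix and the face polynomial of $F$ is carried to that of its image, i.e.\ $(\Phi f)_{\Phi(F)}=\Phi(f_F)$. Multiplying $\Phi f$ on the left by an appropriate monomial unit $\pm x^ay^bz^c$ replaces $\ZG(\Phi f)$ by an isomorphic left ideal --- hence does not change the action --- and translates the Newton polygon by $(a,b)$; after this we may assume $F$ lies on the $x$-axis with the rest of $\supp(f)$ strictly above it. (If $F=\Newt(f)$ then $f_F=f$ and there is nothing to prove, so $F$ is a proper face and this normalization is available.) By Lemma \ref{lem:change-of-variables} applied to $\Phi$ and to the translation, and using that translates of a F{\o}lner sequence are F{\o}lner, neither $\h(\af)$ nor $\h(\al_{f_F})$ has changed, so it is enough to treat the normalized case. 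Now $f=\sum_{r=0}^{D}g_r(x,z)y^r$ with $g_0(x,z)=f_F\in\ZZ[x^\pm,z^\pm]$ nonzero, and Proposition \ref{prop:basic-inequality} gives $\h(\af)\ge\mahler(g_0)$.

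It remains to show $\h(\al_{g_0})=\mahler(g_0)$. I would get this from Theorem \ref{thm:li-thom}. The operator $\rho_{g_0}$ on $\ell^2(\G,\CC)$ respects the orthogonal decomposition $\ell^2(\G,\CC)=\bigoplus_{l\in\ZZ}\ell^2(\Lam y^l,\CC)$ indexed by the cosets of $\Lam=\<x,z\>\cong\ZZ^2$ (since $g_0^*\in\ZZ[x^\pm,z^\pm]$ as well), and on the $l$-th summand it is unitarily equivalent to $\rho_{g_0}\in\SN\Lam$, the conjugation $y^l(\,\cdot\,)y^{-l}$ only shearing $x\mapsto xz^l$, which is an automorphism of $\ZZ^2$ and so preserves Mahler measure. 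In particular each block is injective, so $\rho_{g_0}$ is injective on $\ell^2(\G,\CC)$ and $\h(\al_{g_0})=\log\mathrm{det}_{\SN\G}(g_0)$. Because the normalized trace $\tr_{\SN\G}$ is realized by $1_\G\in\ell^2(\Lam y^0,\CC)$, it sees only the $l=0$ block, whence $\log\mathrm{det}_{\SN\G}(g_0)=\log\mathrm{det}_{\SN\ZZ^2}(g_0)=\mahler(g_0)$ by the identification $\SN\ZZ^2\cong L^\infty(\TT^2,\CC)$ and Jensen's formula. Combining, $\h(\af)\ge\mahler(g_0)=\h(\al_{g_0})=\h(\al_{f_F})$.

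The step I expect to be the real obstacle is the equality $\h(\al_{g_0})=\mahler(g_0)$ itself. Proposition \ref{prop:basic-inequality} (with $D=0$) already gives the lower bound $\h(\al_{g_0})\ge\mahler(g_0)$, but the matching upper bound is not elementary; it seems to require Theorem \ref{thm:li-thom} together with the block computation above. The delicate point is that enlarging the acting group from the abelian $\<x,z\>$ to the nonabelian $\G$ could a priori inflate the entropy, and what saves the equality is precisely that the noncommutativity enters the Fuglede--Kadison determinant only through the Mahler-measure-preserving shear $x\mapsto xz^l$.
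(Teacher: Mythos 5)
Your proof is correct and follows essentially the same route as the paper: normalize the face onto the $x$-axis by an automorphism of $\G$ (Lemma \ref{lem:automorphism-heisenberg}) together with a monomial unit, note that entropy is unchanged, and invoke Proposition \ref{prop:basic-inequality}. The one point where you go beyond the text is in justifying $\h(\al_{f_F})=\mahler(f_F)$ --- the paper dismisses this as ``easy to compute'' --- and your computation via Theorem \ref{thm:li-thom}, decomposing $\ell^2(\G,\CC)$ over the cosets of $\<x,z\>$ and observing that the shear $x\mapsto xz^l$ preserves Mahler measure, is a correct way to supply that step.
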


Face entropies are essentially logarithmic Mahler measures of
polynomials in commuting variables, and so easy to compute. Observe that
if $\h(\af)=0$, then the Corollary shows that $h(\al_{f_F})=0$ for every
face $F$ of $\Newt(f)$, and that Proposition \ref{prop:zero-entropy} gives a
complete characterization of what $f_F$ can be. Indeed, Smyth used the
face entropy inequality as the starting point for his proof of
Proposition \ref{prop:zero-entropy}. However, the algebraic complexity
of $\ZG$ prevents a direct extension of his methods, leaving open a very
interesting question.

\begin{problem}
   Characterize those $f\in\ZG$ for which $\h(\af)=0$.
\end{problem}

Recall that the Pinsker $\sigma$-algebra of a measure-preserving action
$\al$ is the largest $\sigma$-algebra for which the entropy of $\al$ on
this $\sigma$-algebra is zero. An action has \textit{completely positive
entropy} if its Pinsker $\sigma$-algebra is trivial. An old argument of
Rohlin shows that the Pinsker $\sigma$-algebra of an algebraic action
$\al$ on $X$ is invariant under translation by every periodic
point. Hence if the periodic points are dense, then the Pinsker
$\sigma$-algebra is invariant under all translations, and so arises from
the quotient map $X\to X/Y$, where $Y$ is a compact $\al$-invariant
subgroup. Thus the restriction of $\al$ to its Pinsker $\sigma$-algebra
is again an algebraic action (see \cite[Prop.\ 6.2]{LSW}), providing one
reason for the importance of the problem above. For algebraic
$\zd$-actions there is an explicit criterion for completely positive
entropy in terms of associated prime ideals (see \cite[Thm.\ 6.5]{LSW}),
but even in the case of Heisenberg actions no similar criterion is
known.

\begin{problem}
   Characterize the algebraic $\G$-actions with completely positive entropy.
\end{problem}

For algebraic $\zd$-actions, completely positive entropy is sufficient
to imply that they are isomorphic to Bernoulli shifts
\cite{RudolphSchmidt}.
Is the same true for algebraic $\G$-actions?

\begin{problem}
   If an algebraic $\G$-actions has completely positive entropy, is it
   necessarily measurably isomorphic to a Bernoulli $\G$-action?
\end{problem}

\section{Periodic Points and Entropy}\label{sec:periodic-points}

Let $\Del$ be a countable discrete group, and $\al$ be an algebraic
$\Del$-action on $X$. A point $t\in X$ is \textit{periodic} for $\al$ if
its $\Del$-orbit is finite. The stabilizer $\{\del\in\Del:\del\cdot
t=t\}$ of such a point $t$ has finite index in $\Del$, and we will need a
generous supply of such subgroups. Call $\Del$ \textit{residually
finite} if, for every finite subset $K$ of $\Del$, there is a
finite-index subgroup $\Lam$ of $\Del$ such that
$\Lam\cap(K\setminus\{1\})= \emptyset$. Every finite-index subgroup
$\Lam$ of $\Del$ contains a further finite-index subgroup $\Lam'$ of
$\Lam$ that is normal in $\Del$, so that residual finiteness can be
defined using finite-index normal subgroups. If $\{\Lam_n\}$ is a
sequence of finite-index subgroups of $\Del$, we will say
$\Lam_n\to\infty$ if, for every finite set $K\subset\Del$, there is an
$n_K$ such that $\Lam_n\cap(K\setminus\{1\})=\emptyset$ for all $n\ge
n_K$.

For a finite-index subgroup $\Lam$ of $\Del$, let
\begin{displaymath}
   \FixL(\al):=\{t\in X:\lam\cdot t=t \text{ for all $\lam\in\Lam$}\}.
\end{displaymath}
If $\Lam$ is normal in $\Del$, then for $\lam\in\Lam$ and $\del\in\Del$
we have that $\del\lam\del^{-1}=\lam'\in\Lam$. Hence in this case
$\FixL(\al)$ is $\Del$-invariant, since if $t\in\FixL(\al)$ then
$\lam\cdot(\del\cdot t)=(\lam\del)\cdot t=(\del\lam')\cdot t
=\del\cdot(\lam'\cdot t)=\del\cdot t$.

We will focus on expansive principal actions. Let $\Del$ be a countable
residually finite discrete group, and let $f\in\ZD$ be expansive. Recall
the notations and results from Proposition \ref{prop:cover}. For a
finite-index subgroups $\Lam$ of $\Del$, let a superscript of $\Lam$ on
a space denote the set of those elements in the space fixed by $\Lam$,
so for example
\begin{displaymath}
   \lidr^\Lam=\{ w\in\lidr :\lam\cdot t=t \text{ for all
   $\lam\in\Lam$} \}.
\end{displaymath}

\begin{proposition}[{\cite[Prop.\ 5.2]{DS}}]
   \label{prop:fix}
   Let $\Del$ be a countable discrete group, and $f\in\ZD$ be
   expansive. Using the notations of Proposition \ref{prop:cover}, for
   every finite-index subgroup $\Lam$ of $\Del$ we have that
   \begin{displaymath}
      \FixL(\af)=\pi\bigl(\lidz^\Lam\bigr)\cong\lidz^\Lam/\rf\bigl(
      \lidz^\Lam \bigr).
   \end{displaymath}
\end{proposition}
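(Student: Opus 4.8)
The plan is to deduce everything from Proposition \ref{prop:cover} together with expansiveness of $f$ (Theorem \ref{thm:expansive}), exploiting that all three maps in play---$\pi$, $\rf$, and the inverse $u\mapsto u\cdot\wtri$ of $\rf$---are given by \emph{right} convolution and hence commute with the \emph{left} $\Del$-action, in particular with the action of $\Lam$.

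First I would show $\pi\bigl(\lidz^\Lam\bigr)=\FixL(\af)$. The inclusion $\subseteq$ is immediate: $\pi$ intertwines the left $\Del$-actions by Proposition \ref{prop:cover}(3), so it carries $\Lam$-fixed points to $\Lam$-fixed points. For the reverse inclusion, take $t\in\FixL(\af)\subseteq X_f$ and let $\ttil\in\lidr$ be the canonical lift with $\ttil_\del\in[0,1)$ for every $\del\in\Del$. For each $\lam\in\Lam$ the element $\lam\cdot\ttil$ again has all coordinates in $[0,1)$ and reduces mod~$1$ to $\lam\cdot t=t$, so by uniqueness of the $[0,1)$-valued lift we get $\lam\cdot\ttil=\ttil$; thus $\ttil\in\lidr^\Lam$. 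Then $u:=\rf(\ttil)=\ttil\cdot f^*\in\lidz$ (exactly as in the proof of Proposition \ref{prop:cover}), it is $\Lam$-fixed because $\rf$ commutes with the left action, and $\pi(u)=\beta(\ttil\cdot f^*\cdot\wtri)=\beta(\ttil)=t$. Hence $u\in\lidz^\Lam$ with $\pi(u)=t$.

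Next, the first isomorphism theorem gives $\pi\bigl(\lidz^\Lam\bigr)\cong\lidz^\Lam/\bigl(\lidz^\Lam\cap\ker\pi\bigr)$, and by Proposition \ref{prop:cover}(2) one has $\ker\pi=\rf\bigl(\lidz\bigr)$, so it remains to identify $\lidz^\Lam\cap\rf\bigl(\lidz\bigr)$ with $\rf\bigl(\lidz^\Lam\bigr)$. The inclusion $\supseteq$ is clear since $\rf$ preserves $\Lam$-fixed vectors. For $\subseteq$, suppose $v\in\lidz^\Lam$ and $v=\rf(w)$ for some $w\in\lidz$. Since $f$ is expansive, $\rf$ is injective on $\lidr$ by Theorem \ref{thm:expansive}, so $w$ is the unique element of $\lidr$ with $\rf(w)=v$, namely $w=v\cdot\wtri$; as right convolution by $\wtri$ commutes with $\Lam$, the vector $w$ is $\Lam$-fixed, hence $w\in\lidz^\Lam$. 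Therefore $v\in\rf\bigl(\lidz^\Lam\bigr)$, and combining the two parts we obtain $\FixL(\af)=\pi\bigl(\lidz^\Lam\bigr)\cong\lidz^\Lam/\rf\bigl(\lidz^\Lam\bigr)$.

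There is no real obstacle here; the two points that make the argument work are (i) uniqueness of the $[0,1)$-valued lift, which upgrades $\Lam$-invariance of $t$ to $\Lam$-invariance of $\ttil$, and (ii) injectivity of $\rf$ on $\lidr$, equivalent to expansiveness, which transports $\Lam$-invariance back along $\rf$. The only care needed is the bookkeeping that every map involved is right convolution, hence commutes with the left $\Lam$-action.
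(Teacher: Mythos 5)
Your proof is correct and follows essentially the same route as the paper, which merely sketches the argument by noting that the canonical lift of a $\Lam$-fixed point is itself $\Lam$-fixed and that every step of the proof of Proposition \ref{prop:cover} then stays inside the $\Lam$-fixed subspaces. Your added details --- the uniqueness of the $[0,1)$-valued lift and the use of injectivity of $\rf$ on $\lidr$ to identify $\lidz^\Lam\cap\rf(\lidz)$ with $\rf(\lidz^\Lam)$ --- are exactly the bookkeeping the paper leaves implicit.
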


\begin{proof}
   Any point $t\in\FixL(\af)$ can be lifted to a point
   $\ttil\in\lidr^\Lam$, and then the proof of Proposition
   \ref{prop:cover} yields $\Lam$-invariant points at every step.
\end{proof}

Note that $\lidz^\Lam$ is a free abelian group of rank $[\Del:\Lam]$, the
index of $\Lam$ in $\Del$, and that $\rf$ is an injective endomorphism
on this group by expansiveness.

\begin{corollary}
   \label{cor:fixed-point-count}
   Under the hypotheses in Proposition \ref{prop:fix},
   \begin{equation}
      \label{eqn:fixed-point-count}
      |\FixL(\af)|=|\det(\rf|_{\lidr^\Lam})|.
   \end{equation}
\end{corollary}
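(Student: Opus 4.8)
The plan is to combine the isomorphism $\FixL(\af)\cong\lidz^\Lam/\rf\bigl(\lidz^\Lam\bigr)$ furnished by Proposition~\ref{prop:fix} with the elementary fact that an injective endomorphism of a finitely generated free abelian group has image of index equal to the absolute value of its determinant.

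First I would record the structure of $\lidr^\Lam$ and $\lidz^\Lam$. An element $w\in\lidr$ satisfies $\lam\cdot w=w$ for all $\lam\in\Lam$ precisely when $w$ is constant on each of the $[\Del:\Lam]$ right cosets $\Lam\del$; since there are only finitely many such cosets, every such $w$ is automatically bounded. Hence, fixing coset representatives, $\lidr^\Lam$ is canonically identified with $\RR^{d}$ and $\lidz^\Lam$ with the lattice $\ZZ^{d}$ inside it, where $d=[\Del:\Lam]$. Next I would check that $\rf$ preserves these sets: since the left $\Del$-action commutes with right convolution by elements of $\ZD$, we have $\lam\cdot\rf(w)=\lam\cdot(w\cdot f^*)=(\lam\cdot w)\cdot f^*$, so $\Lam$-invariance of $w$ forces $\Lam$-invariance of $\rf(w)$. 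Thus $\rf$ restricts to a real-linear endomorphism of $\lidr^\Lam$ that carries the lattice $\lidz^\Lam$ into itself. By Theorem~\ref{thm:expansive}, $\rf$ is injective on all of $\lidr$, so $\rf|_{\lidr^\Lam}$ is an injective endomorphism of a finite-dimensional space, hence invertible, and $\det\bigl(\rf|_{\lidr^\Lam}\bigr)\ne0$. Choosing a $\ZZ$-basis of $\lidz^\Lam$ (equivalently an $\RR$-basis of $\lidr^\Lam$ consisting of lattice vectors), the matrix $A$ of $\rf|_{\lidr^\Lam}$ has integer entries and $\det A=\det\bigl(\rf|_{\lidr^\Lam}\bigr)$.

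It then remains to invoke the lattice-index computation: for an integer matrix $A$ with $\det A\ne0$, the Smith normal form $A=UDV$ with $U,V\in\mathrm{GL}_d(\ZZ)$ and $D=\operatorname{diag}(d_1,\dots,d_d)$ gives $\lidz^\Lam/\rf\bigl(\lidz^\Lam\bigr)\cong\ZZ^d/D\ZZ^d\cong\bigoplus_i\ZZ/d_i\ZZ$, which is finite of order $\prod_i|d_i|=|\det D|=|\det A|$. Combining this with Proposition~\ref{prop:fix} yields $|\FixL(\af)|=\bigl|\det\bigl(\rf|_{\lidr^\Lam}\bigr)\bigr|$, as claimed.

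This argument is essentially routine and I do not expect a serious obstacle. The only points needing care are the bookkeeping that $\rf$ genuinely restricts to the stated operators on $\lidr^\Lam$ and $\lidz^\Lam$ (using that the left $\Del$-action commutes with right convolution, together with expansiveness for injectivity via Theorem~\ref{thm:expansive}), and the observation that the determinant of this real operator coincides with the determinant of its integer matrix in a basis of the lattice $\lidz^\Lam$. Everything else is the standard structure theory of finitely generated abelian groups.
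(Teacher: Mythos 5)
Your argument is correct and is essentially the same as the paper's: both reduce the claim via Proposition \ref{prop:fix} to the fact that an injective integer matrix $A$ satisfies $|\ZZ^n/A\ZZ^n|=|\det A|$, proved by Smith normal form. The extra bookkeeping you supply (identifying $\lidr^\Lam$ with $\RR^{[\Del:\Lam]}$ via constancy on right cosets, and checking that the left action commutes with right convolution so that $\rf$ preserves $\lidr^\Lam$ and $\lidz^\Lam$) is exactly what the paper leaves implicit.
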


\begin{proof}
   Observe that $\rf$ is an injective linear map on the real vector space
   $\lidr^\Lam$ of dimension $n=[\Del:\Lam]$ and maps the lattice
   $\lidz^\Lam$ to itself. If $A$ is any $n\times n$ matrix with integer
   entries and nonzero determinant, then it is easy to see, for
   example from the Smith normal form, that $|\ZZ^n/A\ZZ^n|=|\det(A)|$.
\end{proof}

\begin{remark}
   Since we will need to complexify some spaces in order to use complex
   eigenvalues, let us say a word about conventions regarding
   determinants. If $A$ is an $n\times n$ real matrix, we could regard
   $A$ as an $n\times n$ complex matrix acting on $\CC^n$, or as a
   $(2n)\times(2n)$ real matrix acting on $\RR^n\oplus i \RR^n$, and
   these have different determinants. We will always use the first
   interpretation.
\end{remark}

With Corollary \ref{cor:fixed-point-count} we begin to see the
connections among periodic points, entropy, and Fuglede-Kadison
determinants. For expansive actions, periodic points are separated, and
so  $|\Lam|^{-1}\log |\FixL(\af)|$ should approximate, or at least
provide a lower bound for, the entropy
$\h(\af)$. But by \eqref{eqn:fixed-point-count}, this is also a
finite-dimensional approximation to the logarithm $\log \det_{\nd} f$ of
the Fuglede-Kadison determinant of $\rf$. The main technical issue is
then to show both of these approximations converge to the desired
limits. For expansive $\af$ is is relatively easy, but for general $\af$
the are numerous difficulties to overcome.

We can now deduce two important properties of expansive principal
actions.

\begin{proposition}
   \label{prop:dense-and-positive-entropy}
   Let $\Del$ be a residually finite countable discrete amenable group,
   and let $f\in\ZD$ be expansive. Then
   \begin{enumerate}
     \item the $\af$-periodic points are dense in $X_f$, and
     \item if $|X_f|>1$ \textup{(}i.e., if $f$ is not invertible in $\ZD$\textup{)}, then $\h(\af)>0$.
   \end{enumerate}
\end{proposition}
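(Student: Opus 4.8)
The plan is to prove the two assertions separately, in both cases working with the symbolic cover $\pi\colon\lidz\to X_f$ of Proposition~\ref{prop:cover} (and its $\Lam$-periodic version, Proposition~\ref{prop:fix}); in particular this route deliberately avoids the deep approximation results of L\"uck and of Li--Thom (Theorem~\ref{thm:li-thom}) identifying $[\Del:\Lam]^{-1}\log|\FixL(\af)|$ with $\log\dnd f$. For (1), fix $t\in X_f$ and a basic neighbourhood of it determined by a finite $K\subset\Del$ and $\epsilon>0$, namely $\{s\in X_f: d_{\TT}(s_\del,t_\del)<\epsilon \text{ for all }\del\in K\}$. Lift $t$ to $\ttil\in\lidr$ with coordinates in $[0,1)$, so that $u:=\rf(\ttil)=\ttil\cdot f^*\in\lidz$ satisfies $\|u\|_\infty\le\|f\|_1$ and $t=\pi(u)=\beta(u\cdot\wtri)$ with $\wtri:=(f^*)^{-1}\in\lodr$. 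Truncate $u$ to its restriction $u|_B$ to a large finite set $B$, and then \emph{periodize}: using residual finiteness, choose a finite-index subgroup $\Lam$ so sparse that the translates $\lam B$ $(\lam\in\Lam)$ are pairwise disjoint and every nontrivial element of $\Lam$ is far from $K$ in the word metric, and set $v:=\sum_{\lam\in\Lam}\lam\cdot u|_B\in\lidz^\Lam$. By Proposition~\ref{prop:fix}, $s:=\pi(v)\in\pi(\lidz^\Lam)=\FixL(\af)$, hence $s$ is periodic. Finally $s-t=\beta\bigl((v-u)\cdot\wtri\bigr)$, and since $\wtri$ is summable, both the truncation error $(u-u|_B)\cdot\wtri$ and the ``distant-copies'' error $\bigl(\sum_{\lam\neq 1}\lam\cdot u|_B\bigr)\cdot\wtri$ can be made $<\epsilon/2$ in every coordinate indexed by $K$, by taking $B$ large and $\Lam$ sparse; hence $d_{\TT}(s_\del,t_\del)<\epsilon$ for all $\del\in K$, and $s$ lies in the chosen neighbourhood.

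For (2), the cleanest approach is to bypass counting periodic points and exploit the specification property (Proposition~\ref{prop:specification}). Since $|X_f|>1$, pick a nonzero $t^*\in X_f$ and $\gamma_0\in\Del$ with $c:=d_{\TT}(t^*_{\gamma_0},0)>0$ (one may take $t^*=\ttri$, which is nonzero precisely because $|X_f|>1$ forces $\wtri\notin\ZD$, and $\gamma_0=1$). Put $\epsilon=c/4$, let $K=K_\epsilon$ be the finite set supplied by Proposition~\ref{prop:specification}, and set $m:=|K^{-1}K|$. Given any F{\o}lner sequence $\{Q_n\}$ in $\Del$, a greedy packing argument produces $P_n\subseteq Q_n$ with $|P_n|\ge|Q_n|/m$ and with the sets $K\del$ $(\del\in P_n)$ pairwise disjoint. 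For each subset $S\subseteq P_n$, apply the evident finite-block version of Proposition~\ref{prop:specification} with singleton blocks $F_\del=\{\del\}$ and with target $\del\gamma_0^{-1}\cdot t^*$ at $\del\in S$ and target $0$ at $\del\in P_n\setminus S$; since $(\del\gamma_0^{-1}\cdot t^*)_\del=t^*_{\gamma_0}$, this yields $t_S\in X_f$ with $d_{\TT}\bigl((t_S)_\del,t^*_{\gamma_0}\bigr)<\epsilon$ for $\del\in S$ and $d_{\TT}\bigl((t_S)_\del,0\bigr)<\epsilon$ for $\del\in P_n\setminus S$. If $S\neq S'$ differ at some $\del_1$, say $\del_1\in S\setminus S'$, then $d_{\TT}\bigl((t_S)_{\del_1},(t_{S'})_{\del_1}\bigr)\ge c-2\epsilon=c/2$, so $\{t_S:S\subseteq P_n\}$ is a $(Q_n,c/2)$-separated subset of $X_f$ of cardinality $2^{|P_n|}\ge 2^{|Q_n|/m}$. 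Hence $\h(\af)=\h_{\text{sep}}(\af)\ge(\log 2)/m>0$. (Alternatively: since $|X_f|>1$ gives $\ttri\neq 0$, one may instead quote the theorem of Chung and Li mentioned earlier, that $\hc_{\af}(X_f)\neq\{0\}$ implies $\h(\af)>0$.)

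The bookkeeping in (1)---keeping both error tails below $\epsilon$ while $\Lam$ is pushed to infinity---is the only fussy point there, but it is entirely mechanical given $\wtri\in\lodr$. The genuine content is in (2): the passage from the purely qualitative statement ``$X_f$ has more than one point'' to an \emph{exponentially} large supply of $(Q_n,\cdot)$-separated points along a F{\o}lner sequence. Specification of expansive principal actions is exactly the device that turns a single nonzero point into independent behaviour at many well-separated locations, and the positive lower density of a $K$-separated net inside a F{\o}lner set upgrades that independence to exponential growth; without specification one would be forced back onto Corollary~\ref{cor:fixed-point-count} and the hard determinant-approximation theorems. Expansiveness of $f$ is used throughout: it is the hypothesis of Propositions~\ref{prop:cover}, \ref{prop:fix} and \ref{prop:specification}, and, via Theorem~\ref{thm:expansive}, it is what makes $\wtri=(f^*)^{-1}$ lie in $\lodr$ in the first place.
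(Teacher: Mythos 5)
Your argument for (1) is, step for step, the paper's own: lift $t$, truncate to a finite window using summability of $\wtri=(f^*)^{-1}$, periodize over a sparse finite-index subgroup supplied by residual finiteness, and use Proposition~\ref{prop:fix} to certify that the periodized point lies in $\FixL(\af)$; the error bookkeeping is identical. For (2) you take a mildly different route. The paper chooses a finite-index subgroup $\Lam$ whose translates of a window are disjoint and exhibits the $2^{|F\cap\Lam|}$ points $\sum_{\lam\in F\cap\Lam}b_\lam(\lam\cdot\ttri)$, $b_\lam\in\{0,1\}$, as a separated family, getting the lower bound $(\log 2)/[\Del:\Lam]$; you instead run specification over a $K_\epsilon$-disjoint packing of a F{\o}lner set, getting $(\log 2)/|K_\epsilon^{-1}K_\epsilon|$. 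The two arguments are cousins --- specification is itself proved by splicing truncated translates of $\wtri$ --- but yours has the small advantage of not invoking residual finiteness in part (2), at the modest cost of needing the many-block version of Proposition~\ref{prop:specification}, which is stated only for two blocks; as you say, its proof extends verbatim, but you should record that. One cosmetic slip: the parenthetical choice $\gamma_0=1_\Del$ is not justified, since $\ttri\ne0$ only guarantees that \emph{some} coordinate of $\wtri$ is non-integral, not the one at the identity; your argument never actually needs $\gamma_0=1_\Del$, so this is harmless.
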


\begin{proof}[Sketch of proof]
   For (1), let $t\in X_f$ and find $u \in\lidz$ with $\pi(u)
   =\beta(u\cdot\wtri)=t$. By
   Prop.\ \ref{prop:cover}(4), there is a finite subset $K$ of
   $\Del$ such that if $u_K\in\lidz$  denotes the restriction of $u$ to
   $K$ and 0 elsewhere, then $\pi(u_K)$ is close to $t$. By enlarging
   $K$ if necessary, we can assume that $\sum_{\del\notin
   K}|\wtri_{\del}|$ is small. Let $\Lam$ be a finite-index subgroup of
   $\Del$ such that $\lam K\cap\lam' K=\emptyset$ for distinct
   $\lam,\lam'\in\Lam$. Let $t_0=\sum_{\lam\in\Lam} \lam\cdot
   \pi(u_K)$. Then $t_0\in\FixL(\af)$ and $t_0$ is close to $t$.

   For (2), again choose a finite $K$ so that $\sum_{\del\notin
   K}|\wtri_{\del}|$ is small, where we may assume that $\wtri_1=1$.
   Find $\Lam$ with $\lam K\cap\lam'
   K=\emptyset$ for distinct
   $\lam,\lam'\in\Lam$. If $F$ is any sufficiently invariant F{\o}lner
   set, then $|F\cap\Lam|$ is about $|F|/[\Del:\Lam]$. Then for every
   choice of $b_\lam=0 \text{ or }1$ for $\lam\in F\cap\Lam$, the points
   $\sum_{\lam\in F\cap\Lam} b_\lam(\lam\cdot \wtri)$ are
   $(F,1/2)$-separated. Hence if $\{F_n\}$ is any F{\o}lner sequence,
   then for every $\epsilon<1/2$ we have that
   \begin{displaymath}
      \limsup_{n\to\infty} \frac1{|F_n|}\log\, s(F_n,\epsilon)
      \ge \limsup_{n\to\infty}\frac{1}{|F_n|} |F_n\cap\Lam|\,\log2
      \ge \frac{\log 2}{[\Del:\Lam]}>0. \qedhere
   \end{displaymath}
\end{proof}

 For $\Del=\ZZ^d$, it turns out that the periodic points for $\al_M$ are
 always dense in $X_M$ for every finitely generated $\ZZ\ZZ^d$-module $M$
 \cite[Cor.\ 11.3]{SchmidtBook}. The simple example of multiplication by
 $3/2$ on $\QQ$ dualizes to an automorphism of a compact group with no
 nonzero periodic points, since $(3/2)^n-1$ is invertible in $\QQ$ (see
\cite[Example 5.6(1)]{SchmidtBook}). We do not know the answer to the
following.

\begin{problem}
   Let $\Del$ be a countable discrete residually finite group, and $M$
   be a finitely generated $\ZD$-module. Must the $\al_M$-periodic
   points always be dense in~ $X_M$?
\end{problem}

We now focus on using periodic points to calculate entropy. It is
instructive to see how these calculations work in a simple example.

\begin{example}
   \label{exam:periodic-golden-mean}
   Let $\Del=\ZZ$, and let $f(u)=u^2-u-1\in\ZZ\Del=\ZZ[u^{\pm}]$. Let
   $\Lam_n=n\ZZ$ and $F_n=\{0,1,\dots,n-1\}$. Then $\{F_n\}$ is a
   F{\o}lner sequence in $\ZZ$ that is also a fundamental domain for
   $\Lam_n$. Denote by $\Om_n$ the set of all $n$-th roots of unity in
   $\CC$. For $\zeta\in\Om_n$ let
   $v_\zeta=\sum_{k\in\ZZ} \zeta^k u^k\in\ell^\infty(\ZZ,\CC)^{\Lam_n}$.
   Then $\rho_u(v_{\zeta})=\zeta v_\zeta$, so the $v_\zeta$ form an
   eigenbasis for the shift on $\ell^\infty(\ZZ,\CC)^{\Lam_n}$. Hence
   $\rho_f(v_\zeta)=f(\zeta)v_\zeta$ for each $\zeta\in\Om_n$.

   We can consider the elements of $\ell^\infty(\ZZ,\CC)^{\Lam_n}$ as
   elements in the $n$-dimensional complex vector space
   $\ell^\infty(\ZZ/n\ZZ,\CC)$. The matrix of $\rf$ with respect to the
   basis $\{1,u,\dots,u^{n-1}\}$ is the circulant matrix
   \begin{equation}
      \label{eqn:circulant}
      C_n(f)=
      \begin{bmatrix*}[r]
         -1 & -1 & 1       &0 &\ldots&0&0 \\
         0  & -1 & -1      &1 &\ldots&0& 0\\
            &    & &  \ddots   &&&\\
         1  & 0  &         &\ldots&& -1 &-1 \\
         -1 & 1  &         &\ldots&&  0 & -1
      \end{bmatrix*}.
   \end{equation}
   We can compute the determinant of $C_n(f)$ using the eigenbasis
   $\{v_\zeta:\zeta\in\Om_n\}$. Factor $f(u)=(u-\tau)(u-\sigma)$, where
   $\tau=(1+\sqrt{5})/2$ and $\sigma=-1/\tau$. Then
   \begin{align*}
         |\Fix_{\Lam_n}(\af)|&=|\det \rf|_{\ell^\infty(\ZZ/n\ZZ,\CC)}|
         =\prod_{\zeta\in\Om_n}|f(\zeta)| \\
         &= \prod_{\zeta\in\Om_n}|\tau-\zeta|\cdot|\sigma-\zeta|=|\tau^n-1|\cdot|\sigma^n-1|.
   \end{align*}
   Then
   \begin{displaymath}
      \lim_{n\to\infty} \frac1n \log|\Fix_{\Lam_n}(\af)|=\log \tau=\mahler(f).
   \end{displaymath}
   Since $\af$ is expansive, periodic points are separated, and so $\log
   \tau$ is certainly a lower bound for $\h(\af)$. But it is also easy
   in this case to see that it is an upper bound, using for example
   approximations from homoclinic points as in the proof of Proposition
   \ref{prop:dense-and-positive-entropy}.

   Note that if $f\in\ZZ[u^{\pm}]$ had a root $\xi\in\SS$, then the
   factor $|\xi^n-1|$ in the calculation of determinant would occasionally
   be very small, which could cause the limit not to exist. This is one
   manifestation of the difficulties with nonexpansive actions.
\end{example}

We turn to the Heisenberg case $\Del=\G$. For $q,r,s>0$ put
$\Lamq=\<x^{rq},y^{sq},z^q\>$, which is a normal subgroup of $\G$ of
index $rsq^3$. Let $f\in\ZG$ be expansive. Recall that
$\kappa_0=\kappa_0(f)=1/(3\|f\|_1)$ is an expansive constant for $\af$. In
particular, if $\Lam$ is a finite-index subgroup of $\G$, and if $t\ne
u\in \FixL(\af)$, then for any fundamental domain $Q$ of $\Lam$ there is
a $\gamma\in Q$ such that $d_{\TT}(t_\gamma,u_\gamma)\ge\kappa_0$.

A bit of notation about the limits we will be taking is convenient. If
$\psi(\Lamq)$ is a quantity that depends on $\Lamq$, we write
$\lim_{q\to\infty} \psi(\Lamq)=c$ to mean that for every $\epsilon>0$
there is a $q_0$ such that $|\psi(\Lamq)-c|<\epsilon$ for every $q\ge
q_0$ and all sufficiently large $r$ and $s$.

\begin{theorem}[{\cite[Thm.\ 5.7]{DS}}]
   \label{thm:periodic-point-limit}
   Let $f\in\ZG$ be expansive, and define $\Lamq$ as above. Then
   \begin{displaymath}
      \lim_{q\to\infty} \frac{1}{[\G:\Lamq]} \log|\Fix_{\Lamq}(\af)|=
      \h(\af).
   \end{displaymath}
\end{theorem}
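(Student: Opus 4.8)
The plan is to turn the count of periodic points into a finite-dimensional determinant, to recognize that determinant as a finite approximation of the Fuglede--Kadison determinant $\dnd f$, and then to invoke Theorems~\ref{thm:expansive} and~\ref{thm:li-thom}.

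Since $f$ is expansive, Theorem~\ref{thm:expansive} gives $w:=f^{-1}\in\logr$, and then $f^*$ is invertible in $\logr$ with inverse $w^*$. Hence $\rf$ restricts to an invertible real linear operator on the finite-dimensional space $\ligr^{\Lamq}$ of $\Lamq$-periodic functions (left $\Lamq$-invariance commutes with right convolution), and Corollary~\ref{cor:fixed-point-count} gives $\log|\Fix_{\Lamq}(\af)|=\log|\det(\rf|_{\ligr^{\Lamq}})|$. Complexify $\ligr^{\Lamq}$ and identify it with the group algebra $\CC[\G/\Lamq]$ (using that $\Lamq$ is normal) with its normalized trace $\tau_{\Lamq}=[\G:\Lamq]^{-1}\tr$; under this identification $\rf$ is right convolution by the image of $f^*$, and $\rf^{-1}$ is right convolution by the image of $w^*$. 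Since right convolution by an $\ell^1$ element has operator norm at most its $\ell^1$ norm, the positive operator $\rf\rf^*$ and its inverse have operator norms at most $\|f\|_1^2$ and $\|w\|_1^2$, so all eigenvalues of $\rf\rf^*$ lie in the fixed interval $[a,b]:=[\|w\|_1^{-2},\|f\|_1^2]\subset(0,\infty)$, independently of $q,r,s$. Writing $\mu_{\Lamq}$ for the normalized eigenvalue distribution of $\rf\rf^*$ on $\CC[\G/\Lamq]$, a probability measure on $[a,b]$, we obtain
\begin{displaymath}
   \frac{1}{[\G:\Lamq]}\log|\Fix_{\Lamq}(\af)|
   =\tfrac12\,\tau_{\Lamq}\bigl(\log(\rf\rf^*)\bigr)
   =\tfrac12\int_a^b\log t\;d\mu_{\Lamq}(t).
\end{displaymath}

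The core of the argument is to show $\mu_{\Lamq}\to\mu$ weakly, where $\mu$ is the spectral measure of $\rf\rf^*$ acting on $\ell^2(\G,\CC)$, which is likewise supported in $[a,b]$. Since $\log t$ is continuous and bounded there, this yields
\begin{displaymath}
   \lim_{q\to\infty}\frac{1}{[\G:\Lamq]}\log|\Fix_{\Lamq}(\af)|
   =\tfrac12\,\tnd\bigl(\log(\rf\rf^*)\bigr)=\log\dnd f,
\end{displaymath}
and Theorem~\ref{thm:li-thom} (applicable because $\rf$ is invertible, hence injective, on $\ell^2(\G,\CC)$) identifies $\log\dnd f$ with $\h(\af)$. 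Because all the measures involved live on the fixed compact interval $[a,b]$, weak convergence is equivalent to convergence of all moments, i.e.\ to
\begin{displaymath}
   \tau_{\Lamq}\bigl((\rf\rf^*)^k\bigr)=\sum_{\gamma\in\Lamq}\bigl((ff^*)^k\bigr)_\gamma
   \;\longrightarrow\; \bigl((ff^*)^k\bigr)_{1_\G}=\tnd\bigl((\rf\rf^*)^k\bigr)
   \qquad(k\ge0).
\end{displaymath}
The two sides agree exactly once $\Lamq$ meets the finite set $\supp\bigl((ff^*)^k\bigr)$ only in $\{1_\G\}$; since $\Lamq=\langle x^{rq},y^{sq},z^q\rangle$, this holds once $q$ exceeds the spread of $z$-exponents occurring in $(ff^*)^k$ and $rq,sq$ exceed the spreads of its $x$- and $y$-exponents. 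For each fixed $k$ this is achieved in the iterated limit of the statement (first $q\to\infty$, then $r,s\to\infty$), and approximating $\log t$ uniformly by polynomials on $[a,b]$ promotes moment convergence to the required convergence of $\int\log t\,d\mu_{\Lamq}$.

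I expect the weak-convergence step to be the main obstacle: one must check that the precise order of limits in the theorem is exactly what makes ``$\Lamq\cap\supp((ff^*)^k)=\{1_\G\}$ eventually'' valid for every $k$, and that expansiveness of $f$ is genuinely used to keep the uniform spectral gap $a>0$ (without it some $\mu_{\Lamq}$ could place mass near $0$, the analogue of the one-variable non-expansive phenomenon noted after Example~\ref{exam:periodic-golden-mean}). A more elementary alternative avoids spectral measures: the bound $\limsup\le\h(\af)$ follows because expansiveness makes $\Fix_{\Lamq}(\af)$ a $(Q,\kappa_0)$-separated subset of $X_f$ for a fundamental domain $Q$ of $\Lamq$, while $\liminf\ge\h(\af)$ follows by periodizing truncations of $\beta(u\cdot w^*)$ as $u$ runs over a large $(Q,\delta)$-separated set in $X_f$, the truncation error being controlled by the exponential decay of $w^*=(f^*)^{-1}$ from Proposition~\ref{prop:exponential-decay}; but carrying this out in $\G$ requires arranging sufficiently right-invariant fundamental domains, which is delicate precisely because left multiplication by $y$ distorts the central direction.
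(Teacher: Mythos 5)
Your main argument is correct, but it takes a genuinely different route from the paper. The paper's proof is directly dynamical: it repairs the naive fundamental domain $\{x^ky^lz^m: 0\le k<rq,\ 0\le l<sq,\ 0\le m<q\}$ (which is badly non-F{\o}lner because right multiplication by $x$ shears in the $z$-direction) by cutting it into slabs $F_{rq,L,q}$ that are thin in the $y$-direction and translating them, obtaining a fundamental domain $\Qq$ for $\Lamq$ that is simultaneously F{\o}lner; the upper bound then comes from the fact that expansiveness makes $\Fix_{\Lamq}(\af)$ a $(\Qq,\eps)$-separated set, and the lower bound from lifting a $(\Pq,\del)$-separated set, truncating, and periodizing --- exactly the ``elementary alternative'' you sketch in your last paragraph, including the point you correctly flag as the real difficulty. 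Your primary route instead proves a L\"uck-type approximation statement: via Corollary \ref{cor:fixed-point-count} you write $[\G:\Lamq]^{-1}\log|\Fix_{\Lamq}(\af)|$ as $\tfrac12\tau_{\Lamq}(\log \rf\rf^*)$ on $\CC[\G/\Lamq]$, use expansiveness to confine all spectra to a fixed interval $[a,b]\subset(0,\infty)$ (this is where invertibility of $f$ in $\logr$ enters, and your moment computation $\sum_{\gamma\in\Lamq}((ff^*)^k)_\gamma\to((ff^*)^k)_{1_\G}$ is right --- in fact $q\to\infty$ alone already forces $\Lamq\cap\supp((ff^*)^k)=\{1_\G\}$ since $rq,sq\ge q$), and conclude that the normalized log-determinants converge to $\log\dnd f$; Theorem \ref{thm:li-thom} then converts this to $\h(\af)$. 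What each approach buys: yours avoids the fundamental-domain surgery entirely and isolates the periodic-point count as a purely spectral approximation statement, but it outsources the connection to entropy to the much deeper Li--Thom theorem, whereas the paper's proof relates periodic points to entropy directly and is self-contained modulo the Ornstein--Weiss tiling machinery. One caution about provenance rather than correctness: in \cite{DS} the identity $\h(\af)=\log\dnd f$ for expansive $f$ is itself derived using this periodic-point theorem, so if you cited that source instead of \cite{LiThom} your argument would be circular; citing \cite{LiThom}, whose proof is independent, keeps the logic sound.
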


\begin{proof}
   In light of Example \ref{exam:periodic-golden-mean}, it is tempting
   to use a fundamental domain for $\Lamq$ of the form
   $Q=\{x^ky^lz^m:0\le k<rq,0\le l<sq,0\le m<q\}$, but such a $Q$ is far
   from being right-F{\o}lner, since right multiplication by $x$ will
   drastically shear $Q$ in the $z$-direction.

   The method used in \cite[Thm.\ 5.7]{DS} is to decompose $Q$ into
   pieces, each of which is thin in the $y$-direction, and translate
   these pieces to different locations in $\G$. The union of these
   translates will still be a fundamental domain, but now it will also
   be F{\o}lner, and so can be used for entropy calculations. This
   method depends in general on a result of Weiss \cite{Weiss}, and
   ultimately goes back to the $\epsilon$-quasi-tiling machinery of
   Ornstein and Weiss \cite{OrnsteinWeiss}. In our case, we can give a
   simple description of this decomposition.

   Choose integers $a(q)$ such that $a(q)\to\infty$ but $a(q)/q\to0$ as
   $q\to\infty$. Consider the set
   \begin{displaymath}
      F_{rq,L,q}=\{x^ky^lz^m:0\le k<rq,L\le l< L+a(q),0\le m <q\}.
   \end{displaymath}
   It is easy to verify that since $a(q)$ is small compared with $q$,
   right multiplication by a fixed $\gamma\in\G$ creates only small
   distortions, and so for every $\gamma\in\G$ we have that
   \begin{equation}
      \label{eqn:folner}
      \lim_{q\to\infty}\frac{|F_{rq,L,q}\setdiff F_{rq,L,q}\gamma|}
      {|F_{rq,L,q}|} =0.
   \end{equation}
   Define
   \begin{displaymath}
      \Qq=\bigcup_{j=0}^{[rq/a(q)]-1}x^{rqj}F_{rq,a(q)j,q}\,\,,
   \end{displaymath}
   where if $a(q)$ does not evenly divide $rq$, make the obvious
   modification in the last set.
   Then $\Qq$ is also a fundamental domain for $\Lamq$, but now it is
   also a F{\o}lner sequence as $q\to\infty$ by \eqref{eqn:folner}. By
   the separation property of periodic points, for all
   $\epsilon<\kappa_0$ we have that $|\Fix_{\Lamq}(\af)|\le
   s(\Qq,\epsilon)$. Since $\{\Qq\}$ is F{\o}lner,
   \begin{displaymath}
      \limsup_{q\to\infty}\frac1{|\Qq|}\log|\Fix_{\Qq}(\al)|\le\h(\af).
   \end{displaymath}

   For the reverse inequality, let $\del>0$ and let
   $\epsilon<\del/3$. Choose a finite set $E\subset\G$ such that
   $\sum_{\gamma\notin E}|\wtri_{\gamma}|<\epsilon/\|f\|_1$. The sets
   $\Pq=\bigcap_{\gamma\in E}\Qq\gamma$ also form a F{\o}lner sequence,
   and $|\Pq|/|\Qq|\to1$ as $q\to\infty$.

   Fix $q$, $r$, and $s$ for the moment, and choose a
   $(\Pq,\del)$-separated set $S$ of maximal cardinality. For every
   $t\in S$, let $\ttil\in\ligr$ be its lift, with
   $\|\ttil\|_\infty\le1$ and $\beta(\ttil)=t$. Write $v(t)\in
   \ligz^{\Lamq}$ for the unique point with
   $v(t)_\gamma=(\rf(\ttil))_\gamma$ for all $\gamma\in\Qq$. Our choice
   of $E$ implies that the points in $\{\pi\bigl(v(t)\bigr):t\in S\}
   \subset \Fix_{\Lamq}(\af)$  are $(\Pq,\del/3)$-separated, hence
   distinct. Hence $|S|\le|\Fix_{\Lamq}(\af)|$. Since $\{\Pq\}$ is
   F{\o}lner and $|\Pq|/|\Qq|\to1$ as $q\to\infty$, we see that
   \begin{displaymath}
      \h(\af)=\liminf_{q\to\infty} \frac1{|\Pq|}\log s(\Pq,\del)
      \le\liminf_{q\to\infty}\frac1{|\Qq|}\log|\Fix_{\Lamq}(\af)|,
   \end{displaymath}
   completing the proof.
\end{proof}

We apply this result, combined with Corollary
\ref{cor:fixed-point-count}, to compute entropy for expansive principal
$\G$-actions $\af$. Using the above notations, let
$V_{rq,sq,q}=\ligc^{\Lamq}$, so that
\begin{equation}
   \label{eqn:det-rho}
   |\Fix_{\Lamq}(\af)|=|\det(\rf|_{V_{rq,sq,q}})|\,.
\end{equation}
We will compute this determinant by decomposing $V_{rq,sq,q}$ into
$\rf$-invariant subspaces, each having dimension 1 or $q$. To do this,
for each $(\xi,\eta,\zeta)\in\SS^3$ let
\begin{displaymath}
   \vxyz=\sum_{k,l,m=-\infty}^\infty \xi^k\eta^l\zeta^m\,x^ky^lz^m\in\ligc.
\end{displaymath}
Observe that $\rho_y(\vxyz)=\vxyz\cdot y^{-1}=\eta\,\vxyz$, and
similarly $\rho_z(\vxyz)=\zeta\,\vxyz$, so that $\vxyz$ is a common
eigenvector for $\rho_y$ and $\rho_z$. However,
\begin{align*}
   \rho_x(\vxyz)&=\sum_{k,l,m} \xi^k\eta^l\zeta^m\,x^ky^lz^m\cdot x^{-1}
   =\sum_{k,l,m}\xi^k\eta^l\zeta^m\,x^{k-1}y^lz^{m-l}\\
   &=\sum_{k,l,m}\xi^{k+1}(\eta\zeta)^l\zeta^m\,x^ky^lz^m=\xi\,v_{\xi,\eta\zeta,\zeta}\,.
\end{align*}
Let $\Om_q=\{\zeta\in\SS:\zeta^q=1\}$, and
$\Om_q'=\Om_q\setminus\{1\}$. For arbitrary $(\xi,\eta)\in\SS^2$ let
\begin{displaymath}
   W_\zeta(\xi,\eta)=
   \begin{cases}
      \bigoplus_{j=0}^{q-1} \CC v_{\xi,\eta\zeta^j,\zeta} &\text{if
      $\zeta\in\Om_q'$,}\\
      \CC v_{\xi,\eta,1} &\text{if $\zeta=1$}.
   \end{cases}
\end{displaymath}
By the above, for every $(\xi,\eta)\in\SS^2$ and $\zeta\in\Om_q$, the
subspace $W_\zeta(\xi,\eta)$ is invariant under the right action of
$\G$.

Now let $f\in\ZG$, and assume that $f$ is expansive. Adjusting $f$ by a
power of $x$ if necessary, we may assume that $f$ has the form
$f(x,y,z)=\sum_{j=0}^D x^jg_j(y,z)$, where each
$g_j(y,z)\in\ZZ[y^{\pm},z^{\pm}]$ with $g_0(y,z)\ne 0$ and
$g_D(y,z)\ne0$. The action of $\rf$ on $\vxyz$ is then given by
\begin{displaymath}
   \rf(\vxyz)=\sum_{j=0}^D\rho_{x^jg_j(y,z)}(\vxyz)=\sum_{j=0}^D
   \xi^jg_j(\eta,\zeta)v_{\xi,\eta\zeta^j,\zeta}\,.
\end{displaymath}
If $\zeta=1$, then $\rf(v_{\xi,\eta,1)}=f(\xi,\eta,1)v_{\xi,\eta,1}$, so
is given by the $1\times1$ matrix $A_{1,f}(\xi,\eta)=[f(\xi,\eta,1)]$. If
$\zeta\in\Om_q'$, then the matrix of $\rf$ on $W_{\zeta}(\xi,\eta)$ takes
the following $q\times q$ circulant-like form, where for notational
convenience we assume that $q>D$:
\begin{center}
\resizebox{\linewidth}{!}{$
   A_{\zeta,f}(\xi,\eta)=
   \begin{bmatrix}
      g_0(\eta,\zeta) & g_1(\eta,\zeta)\xi & \cdots &
      g_D(\eta,\zeta)\xi^D &  \cdots &0 \\
      0 & g_0(\eta\zeta,\zeta) & g_1(\eta\zeta,\zeta)\xi &  g_2(\eta\zeta,\zeta)\xi^2
       & \cdots &0 \\
      0 & 0 & g_0(\eta\zeta^2,\zeta) & g_1(\eta\zeta^2,\zeta)\xi &
      \cdots &0 \\
      \vdots & \vdots & \ddots & \ddots & \ddots  & \vdots\\
      g_1(\eta\zeta^{q-1},\zeta)\xi & g_2(\eta\zeta^{q-1},\zeta)\xi^2 &
       \cdots&0& \cdots & g_0(\eta\zeta^{q-1},\zeta)
    \end{bmatrix}$.}
\end{center}
By our expansiveness
assumption $\rf$ is injective, and each $W_\zeta(\xi,\eta)$ is
$\rf$-invariant, hence $\det A_{\zeta,f}(\xi,\eta)\ne0$ for all
$(\xi,\eta)\in\SS^2$ and all $\zeta\in\Om_q$.

Now suppose that $r,s\ge0$. For convenience we will assume that both $r$
and $s$ are primes distinct from $q$. Then
$\{\vxyz:(\xi,\eta,\zeta)\in\Om_{rq}\times\Om_{sq}\times\Om_q\}$ is a
basis for $V_{rq,sq,q}$. Note that if $\zeta\in\Om_q'$, then
$W_{\zeta}(\xi,\eta)=W_{\zeta}(\xi,\eta\zeta^j)$ for $0\le j<q$. Since
$\Om_{sq}\cong\Om_s\times\Om_q$ by relative primeness of $s$ and $q$, we
can parameterize the spaces $W_{\zeta}(\xi,\eta)$ by $\eta\in\Om_s$.
This gives the $\rf$-invariant decomposition
\begin{multline}
   \label{eqn:big-decomp}
   V_{rq,sq,q}=\bigoplus\bigl\{
   W_1(\xi,\eta):(\xi,\eta)\in\Om_{rq}\times\Om_{sq}\bigr\}\\
   \oplus \,\, \bigoplus\bigl\{ W_{\zeta}(\xi,\eta): (\xi,\eta,\zeta)\in
   \Om_{rq}\times\Om_{s}\times \Om_q'\bigr\}.
\end{multline}
We now evaluate the limit of
\begin{equation}
   \label{eqn:limit}
   \frac1{rsq^3}
   \log|\det(\rf|_{V_{rq,sq,q}})|
\end{equation}
as $r,s\to\infty$ using the decomposition \eqref{eqn:big-decomp}.

On each of the $1$-dimensional spaces $W_1(\xi,\eta)$ in
\eqref{eqn:big-decomp} $\rf$ acts as multiplication by
$f(\xi,\eta,1)$. Hence the contribution  to
\eqref{eqn:limit} of the first large summand in
\eqref{eqn:big-decomp}  is
\begin{equation}
   \label{eqn:boundary}
   \frac1{rsq^3}\sum_{(\xi,\eta)\in\Om_{rq}\times\Om_{sq}}\log|f(\xi,\eta,1)|.
\end{equation}
By expansiveness, $f(\xi,\eta,1)$ never vanishes for
$(\xi,\eta)\in\SS^2$, so that $\log|f(\xi,\eta,1)|$ is continuous on
$\SS^2$. By convergence of the Riemann sums to the integral,
as $r,s\to\infty$ we have
that
\begin{displaymath}
   \frac{1}{rsq^2}
   \sum_{(\xi,\eta)\in\Om_{rq}\times\Om_{sq}}\log|f(\xi,\eta,1)|
   \to\iint_{\SS^2}\log|f(\xi,\eta,1)|\,d\xi d\eta .
\end{displaymath}
The additional factor of $q$ in the denominator of \eqref{eqn:boundary}
shows that it converges to 0 as $r,s\to\infty$.

For the spaces $W_{\zeta}(\xi,\eta)$ with $\zeta\in\Om_q'$, the
expansiveness assumption shows that $\det A_{\zeta,f}(\xi,\eta)$ never
vanishes for $(\xi,\eta)\in\SS^2$, so again $\log|\det
A_{\zeta,f}(\xi,\eta)|$ is continuous on $\SS^2$. Hence for
$\zeta\in\Om_q'$, as $r,s\to\infty$ we have that
\begin{displaymath}
   \frac{1}{|\Om_{rq}\times\Om_{sq}|}\sum_{(\xi,\eta)\in\Om_{rq}\times\Om_s}
   \log|\det A_{\zeta,f}(\xi,\eta)|\to
   \iint_{\SS^2}\log|\det A_{\zeta,f}(\xi,\eta)|\,d\xi d\eta .
\end{displaymath}
Adding these up over $\zeta\in\Om_q'$, and observing that
$|\Om_s|=|\Om_{qs}|/q$, we have shown the following.

\begin{theorem}
   \label{thm:expansive-entropy}
   Let $f(x,y,z)=\sum_{j=0}^D x^jg_j(y,z)\in\ZG$ be expansive. Then
   \begin{equation}
      \label{eqn:riemann-sum-entropy}
      \h(\af)=\lim_{\substack{q\to\infty\\ q\mathrm{\  prime}}}\frac1{q^2}
      \sum_{\zeta\in\Om_q} \iint_{\SS^2}\log|\det
      A_{\zeta,f}(\xi,\eta)|\,d\xi d\eta,
   \end{equation}
   where the matrices $A_{\zeta,f}(\xi,\eta)$ are as given above.
\end{theorem}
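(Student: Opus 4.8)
The plan is to feed the explicit finite-dimensional determinant decomposition constructed just above into the periodic-point formula of Theorem~\ref{thm:periodic-point-limit} and then to recognize the resulting finite sums as Riemann sums. First I would record that after multiplying $f$ by a suitable power of $x$ — which does not change the left ideal $\ZG f$, hence leaves $\af$, and so $\h(\af)$, untouched — we may assume $f(x,y,z)=\sum_{j=0}^{D}x^{j}g_{j}(y,z)$ with $g_{0},g_{D}\ne0$, as in the construction of the matrices $A_{\zeta,f}(\xi,\eta)$. Fix a prime $q$ and primes $r,s$ distinct from $q$, so that all pairwise greatest common divisors are trivial and the decomposition \eqref{eqn:big-decomp} applies. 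Since $\rf$ preserves each summand of \eqref{eqn:big-decomp}, in a basis adapted to that decomposition $\rf|_{V_{rq,sq,q}}$ is block diagonal: a scalar block $f(\xi,\eta,1)$ on each one-dimensional $W_{1}(\xi,\eta)$ with $(\xi,\eta)\in\Om_{rq}\times\Om_{sq}$, and a $q\times q$ block $A_{\zeta,f}(\xi,\eta)$ on each $W_{\zeta}(\xi,\eta)$ with $(\xi,\eta,\zeta)\in\Om_{rq}\times\Om_{s}\times\Om_{q}'$. Multiplicativity of the determinant together with \eqref{eqn:det-rho} and $[\G:\Lamq]=rsq^{3}$ then gives
\begin{multline*}
   \frac{1}{rsq^{3}}\log\bigl|\Fix_{\Lamq}(\af)\bigr|
   =\frac{1}{q}\cdot\frac{1}{rsq^{2}}\sum_{(\xi,\eta)\in\Om_{rq}\times\Om_{sq}}\log|f(\xi,\eta,1)|\\
   +\frac{1}{q^{2}}\sum_{\zeta\in\Om_{q}'}\frac{1}{rsq}\sum_{(\xi,\eta)\in\Om_{rq}\times\Om_{s}}\log\bigl|\det A_{\zeta,f}(\xi,\eta)\bigr|.
\end{multline*}

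Next I would fix $q$ and let $r,s\to\infty$ through primes. As already noted in the discussion preceding the theorem, expansiveness of $f$ (via Theorem~\ref{thm:expansive} and the $\rf$-invariance of each $W_{\zeta}(\xi,\eta)$) forces $f(\xi,\eta,1)=\det A_{1,f}(\xi,\eta)$ and every $\det A_{\zeta,f}(\xi,\eta)$ to be nonvanishing on the compact group $\SS^{2}$; hence all integrands above are continuous there, and since the grids $\Om_{rq}\times\Om_{sq}$ and $\Om_{rq}\times\Om_{s}$ equidistribute in $\SS^{2}$, the two normalized sums converge to the corresponding double integrals. Absorbing the $\zeta=1$ contribution into the sum over $\Om_{q}$, this yields
\begin{multline*}
   \lim_{r,s\to\infty}\frac{1}{rsq^{3}}\log\bigl|\Fix_{\Lamq}(\af)\bigr|
   =\Bigl(\frac{1}{q}-\frac{1}{q^{2}}\Bigr)\iint_{\SS^{2}}\log|f(\xi,\eta,1)|\,d\xi\,d\eta\\
   +\frac{1}{q^{2}}\sum_{\zeta\in\Om_{q}}\iint_{\SS^{2}}\log\bigl|\det A_{\zeta,f}(\xi,\eta)\bigr|\,d\xi\,d\eta .
\end{multline*}
By Theorem~\ref{thm:periodic-point-limit}, read in its iterated sense — first $r,s\to\infty$ for fixed $q$, then $q\to\infty$ — the left-hand side tends to $\h(\af)$; since the first term on the right is $O(1/q)$, letting $q\to\infty$ through primes eliminates it and produces exactly \eqref{eqn:riemann-sum-entropy}.

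The step I expect to require the most care is not any single estimate but the bookkeeping of normalizations and the order of limits: one must check that the factor-of-$q$ mismatch between the ``boundary'' block $\bigoplus W_{1}(\xi,\eta)$ (of dimension $rsq^{2}$) and the full space $V_{rq,sq,q}$ (of dimension $rsq^{3}$) contributes only the harmless $O(1/q)$ correction above, and that the double-limit convention of Theorem~\ref{thm:periodic-point-limit} genuinely licenses passing $r,s\to\infty$ before $q\to\infty$. Everything else — multiplicativity of $\det$ over the block decomposition, nonvanishing of the blocks from expansiveness, and equidistribution of roots of unity together with continuity of the integrands — is routine given the machinery already assembled, the genuine analytic content having been expended in Theorem~\ref{thm:periodic-point-limit} and Corollary~\ref{cor:fixed-point-count}.
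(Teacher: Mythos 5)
Your proposal is correct and follows the same route as the paper: combine Theorem~\ref{thm:periodic-point-limit} with Corollary~\ref{cor:fixed-point-count}, compute $\det(\rf|_{V_{rq,sq,q}})$ blockwise via the decomposition \eqref{eqn:big-decomp}, use expansiveness to get nonvanishing (hence continuity of the log-determinants on $\SS^2$) so the normalized sums over roots of unity converge to the integrals as $r,s\to\infty$, and note that the $\zeta=1$ contribution is $O(1/q)$. Your explicit bookkeeping of the $\bigl(\tfrac1q-\tfrac1{q^2}\bigr)$ discrepancy for the boundary term is in fact slightly more careful than the paper's remark that \eqref{eqn:boundary} "converges to 0."
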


At first glance the denominator $q^2$ in \eqref{eqn:riemann-sum-entropy}
appears puzzling. The explanation is that one $q$ comes from averaging
over the $q$-th roots of unity, while the other $q$ comes from the size
of the matrices $A_{\zeta,f}(\xi,\eta)$. From the point of view of von
Neumann algebras, we should really be using the ``normalized
determinant'' $|\det A_{\zeta,f}(\xi,\eta)|^{1/q}$ corresponding to the
normalized trace on $\CC^n$, and then the second $q$ would not appear.

For expansive polynomials in $\ZG$ that are linear in $x$ the entropy
formula in the preceding theorem can be simplified considerably.

\begin{theorem}
   \label{thm:linear-entropy}
   Let $f(x,y,z)=g(y,z)+xh(y,z)\in\ZG$, where $g(y,z)$ and $h(y,z)$ are
   Laurent polynomials in $\ZZ[y^{\pm},z^{\pm}]$. If $\af$ is expansive,
   then
   \begin{equation}
      \label{eqn:max-entropy}
      \h(\af)=\int_{\SS}\max\bigl\{\bigl(\mahler(g(\cdot,\zeta)\bigr),
      \mahler\bigl(h(\cdot,\zeta)\bigr)\bigr\}\,d\zeta,
   \end{equation}
   where
   $\mahler\bigl(g(\cdot,\zeta)\bigr)=\int_{\SS}\log|g(\eta,\zeta)|\,d\eta$
   and similarly for $h$.
\end{theorem}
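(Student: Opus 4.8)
The plan is to apply Theorem~\ref{thm:expansive-entropy} in the case $D=1$, where the matrices $A_{\zeta,f}(\xi,\eta)$ degenerate to an almost bidiagonal form and the two inner integrals in \eqref{eqn:riemann-sum-entropy} can be evaluated in closed form. Here $g_0=g$ and $g_1=h$, and for $\zeta\in\Om_q'$, which is a primitive $q$-th root of unity since $q$ is prime, the matrix $A_{\zeta,f}(\xi,\eta)$ has diagonal entries $g(\eta\zeta^j,\zeta)$ for $0\le j<q$, cyclic superdiagonal entries $h(\eta\zeta^j,\zeta)\xi$, and zeros elsewhere. Only the identity permutation and the single $q$-cycle contribute to its determinant, so
\begin{equation*}
   \det A_{\zeta,f}(\xi,\eta)=\prod_{j=0}^{q-1}g(\eta\zeta^j,\zeta)\;+\;(-1)^{q-1}\xi^{q}\prod_{j=0}^{q-1}h(\eta\zeta^j,\zeta).
\end{equation*}
Put $P=P(\eta,\zeta)=\prod_j g(\eta\zeta^j,\zeta)$ and $Q=Q(\eta,\zeta)=\prod_j h(\eta\zeta^j,\zeta)$. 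Since $\xi\mapsto\xi^{q}$ is measure-preserving for normalized Haar measure on $\SS$, Jensen's formula $\int_{\SS}\log|\omega-c|\,d\omega=\log^{+}|c|$ gives $\int_{\SS}\log|\det A_{\zeta,f}(\xi,\eta)|\,d\xi=\max\{\log|P|,\log|Q|\}$ (the case $P=Q=0$ being excluded because expansiveness forces $\det A_{\zeta,f}\neq0$ on $\SS^{2}$). As $\log|P|=\sum_{j=0}^{q-1}\log|g(\eta\zeta^j,\zeta)|$ and similarly for $Q$, this yields
\begin{equation*}
   \iint_{\SS^{2}}\log|\det A_{\zeta,f}|\,d\xi\,d\eta=\int_{\SS}\max\Bigl\{{\textstyle\sum_{j}}\log|g(\eta\zeta^j,\zeta)|,\;{\textstyle\sum_{j}}\log|h(\eta\zeta^j,\zeta)|\Bigr\}\,d\eta .
\end{equation*}

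Next I would show that, after dividing by $q^{2}$ and summing over $\zeta\in\Om_q$, this converges to $\int_{\SS}\max\{\mahler(g(\cdot,\zeta)),\mahler(h(\cdot,\zeta))\}\,d\zeta$. Write $\max\{a,b\}=\tfrac12(a+b)+\tfrac12|a-b|$. The symmetric part is exact: by Fubini $\int_{\SS}\sum_{j}\log|g(\eta\zeta^j,\zeta)|\,d\eta=q\,\mahler(g(\cdot,\zeta))$, with no error term, and likewise for $h$. For the part involving $|a-b|$ I would use the identity $\prod_{j=0}^{q-1}(\eta\zeta^j-\lambda)=\bigl(\prod_j\zeta^j\bigr)(\eta^{q}-\lambda^{q})$, valid because $\zeta$ is a primitive $q$-th root of unity; factoring $g(y,\zeta)=A(\zeta)\prod_{k}(y-\lambda_k(\zeta))$ then gives $\tfrac1q\sum_{j}\log|g(\eta\zeta^j,\zeta)|=\log|A(\zeta)|+\sum_{k}\tfrac1q\log|\eta^{q}-\lambda_k(\zeta)^{q}|$, to be compared with $\mahler(g(\cdot,\zeta))=\log|A(\zeta)|+\sum_{k}\log^{+}|\lambda_k(\zeta)|$. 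The substitution $\omega=\eta^{q}$ shows $\int_{\SS}\bigl|\tfrac1q\log|\eta^{q}-\lambda^{q}|-\log^{+}|\lambda|\bigr|\,d\eta\le C_{0}/q$ for the universal constant $C_{0}=\max_{|c|\le1}\int_{\SS}\bigl|\log|\omega-c|\bigr|\,d\omega$, \emph{uniformly in $\lambda\in\CC$}. As $g$ and $h$ contribute boundedly many roots, this gives $\bigl\|\tfrac1q\sum_j\log|g(\eta\zeta^j,\zeta)|-\mahler(g(\cdot,\zeta))\bigr\|_{L^{1}(d\eta)}\le C_{1}/q$, and similarly for $h$, uniformly in $\zeta$. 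Feeding this through the decomposition of $\max$ and the fact that $t\mapsto|t|$ is $1$-Lipschitz, I obtain $\bigl|\iint_{\SS^{2}}\log|\det A_{\zeta,f}|\,d\xi\,d\eta-q\,M(\zeta)\bigr|\le C_{2}$ uniformly over $\zeta\in\Om_q'$, where $M(\zeta)=\max\{\mahler(g(\cdot,\zeta)),\mahler(h(\cdot,\zeta))\}$ and $C_{2}$ depends only on $g$ and $h$.

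Plugging this into \eqref{eqn:riemann-sum-entropy}: the term $\zeta=1$ contributes $\iint_{\SS^{2}}\log|f(\xi,\eta,1)|\,d\xi\,d\eta$ divided by $q^{2}$, which vanishes as $q\to\infty$; the remaining terms give $\tfrac1q\sum_{\zeta\in\Om_q'}M(\zeta)+O(1/q)$, and since $M$ is continuous on $\SS$ this Riemann sum tends to $\int_{\SS}M(\zeta)\,d\zeta$, which is \eqref{eqn:max-entropy}. I expect the main obstacle to be exactly the uniformity in $\zeta$ just used: unlike in Theorem~\ref{thm:linear-expansive}, expansiveness of $f=g+xh$ does \emph{not} force $g$ or $h$ to be nonvanishing on $\SS^{2}$, so $\log|g(\cdot,\zeta)|$ and $\log|h(\cdot,\zeta)|$ genuinely acquire logarithmic singularities, and the leading coefficient of $g(\cdot,\zeta)$ (hence its degree) may drop for finitely many $\zeta$. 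The point that rescues the argument is that the bound $C_{0}/q$ is insensitive to whether a root $\lambda_k(\zeta)$ lies inside, on, or outside $\SS$, or even escapes to infinity. One must also note that expansiveness rules out $g(\cdot,\zeta)$ and $h(\cdot,\zeta)$ vanishing identically at a common $\zeta\in\SS$ — otherwise $f$ would have a noninvertible central polynomial factor in $\logr$, contradicting Theorem~\ref{thm:expansive}(3) — which keeps $M$ finite and continuous and disposes of the finitely many $\zeta\in\Om_q'$ at which one of $P,Q$ vanishes identically.
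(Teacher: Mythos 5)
Your argument is correct, and its skeleton necessarily coincides with the paper's: apply Theorem~\ref{thm:expansive-entropy}, compute $\det A_{\zeta,f}(\xi,\eta)$ as the two-term sum \eqref{eqn:linear-det}, integrate out $\xi$ by Jensen's formula to get $\max\{\log|P|,\log|Q|\}$, and then show the resulting averages over $\Om_q$ converge to the right-hand side of \eqref{eqn:max-entropy}. Where you genuinely diverge is in the last, analytic step. The paper's Lemma~\ref{lem:max-entropy} never estimates the Riemann sums $R_q(\log|g(\cdot,\zeta)|)(\eta)$ pointwise from below: it combines the \emph{one-sided} bound $R_q(\log|\phi|)\le\mahler(\phi)+D\log2/q$ with the exact identity $\int_{\SS}R_q(\log|\phi|)(\eta)\,d\eta=\mahler(\phi)$ to squeeze $\int_{\SS}\max\{R_q(\log|g|),R_q(\log|h|)\}\,d\eta$ between $\max\{\mahler(g),\mahler(h)\}$ and the same quantity plus $\epsilon$. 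You instead prove a genuine two-sided uniform $L^1(d\eta)$ estimate, $\bigl\|\tfrac1q\sum_j\log|g(\eta\zeta^j,\zeta)|-\mahler(g(\cdot,\zeta))\bigr\|_{L^1}\le C/q$, via the factorization $\prod_j(\eta\zeta^j-\lambda)=c\,(\eta^q-\lambda^q)$ and the uniform bound on $\int_{\SS}\bigl|\log|\omega-c|\bigr|\,d\omega$ over $|c|\le1$, and then feed it through $\max\{a,b\}=\tfrac12(a+b)+\tfrac12|a-b|$. Your estimate is more quantitative than what the paper uses (it controls the deviation in $L^1$, not just the integral of the max, uniformly in $\zeta$ and insensitive to roots crossing $\SS$ or degree drops), at the cost of being slightly longer; the paper's version is slicker because the exactness of the integral lets it get away with an upper bound alone. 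Your handling of the $\zeta=1$ block and of the excluded degenerate case $g(\cdot,\zeta)\equiv h(\cdot,\zeta)\equiv0$ matches the paper's.

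One point you assert without justification: the continuity of $\zeta\mapsto\max\{\mahler(g(\cdot,\zeta)),\mahler(h(\cdot,\zeta))\}$, which you need to pass from the averages over $\Om_q$ to the integral in \eqref{eqn:max-entropy}. This is not automatic, because the leading coefficient of $g(\cdot,\zeta)$ (hence its degree) can drop at finitely many $\zeta$, and continuity of Mahler measure under degenerating leading coefficients is exactly the subtle point addressed by Boyd's theorem (Theorem~\ref{thm:boyd-continuity}), which the paper invokes at this stage together with the non-simultaneous-vanishing of $g(\cdot,\zeta)$ and $h(\cdot,\zeta)$. Since you have already established the non-simultaneous-vanishing, citing Boyd closes this gap; it is the only missing ingredient.
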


\begin{corollary}
   Let $f(x,y,z)=g(y,z)+xh(y,z)$, and assume that neither $g$ nor $h$
   vanish anywhere on $\SS^2$. If $\af$ is expansive, then
   $\h(\af)=\max\{\mahler(g),\mahler(h)\}$.
\end{corollary}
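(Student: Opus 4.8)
The plan is to read the corollary off the entropy formula of Theorem~\ref{thm:linear-entropy}, the only real input being that expansiveness of $\af$ forces the functions $\zeta\mapsto\mahler\bigl(g(\cdot,\zeta)\bigr)$ and $\zeta\mapsto\mahler\bigl(h(\cdot,\zeta)\bigr)$ to be comparable throughout $\SS$. Write $a(\zeta)=\mahler\bigl(g(\cdot,\zeta)\bigr)$ and $b(\zeta)=\mahler\bigl(h(\cdot,\zeta)\bigr)$. First I would note that, since neither $g$ nor $h$ vanishes on $\SS^2$, the functions $(\eta,\zeta)\mapsto\log|g(\eta,\zeta)|$ and $(\eta,\zeta)\mapsto\log|h(\eta,\zeta)|$ are continuous on the compact set $\SS^2$, so $a$ and $b$ are continuous on $\SS$ and, by Fubini, $\int_{\SS}a\,d\zeta=\mahler(g)$ and $\int_{\SS}b\,d\zeta=\mahler(h)$. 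Feeding these into Theorem~\ref{thm:linear-entropy} gives
\begin{equation*}
  \h(\af)=\int_{\SS}\max\{a(\zeta),b(\zeta)\}\,d\zeta\ \ge\ \max\Bigl\{\int_{\SS}a,\ \int_{\SS}b\Bigr\}=\max\{\mahler(g),\mahler(h)\},
\end{equation*}
so only the reverse inequality is left, and it follows at once once we know that $\max\{a,b\}$ is identically $a$ or identically $b$.

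The core of the argument is therefore to show that $a-b$ never vanishes on $\SS$: being continuous on the connected set $\SS$, it then has constant sign, so $\max\{a,b\}\equiv a$ or $\max\{a,b\}\equiv b$ and the displayed inequality becomes an equality. To get the non-vanishing I would transfer to the setting of Theorem~\ref{thm:linear-expansive} via the automorphism $\Phi\in\aut(\G)$ with $\Phi(x)=y$, $\Phi(y)=x$, $\Phi(z)=z^{-1}$ (Lemma~\ref{lem:automorphism-heisenberg}, $b=c=1$, $a=d=r=s=0$). A short computation shows that $\Phi f$ is linear in $y$ with both coefficient polynomials nonvanishing on $\SS^2$, and that its associated cocycle $\phi_\zeta$ satisfies $\int_{\SS}\phi_\zeta(\xi)\,d\xi=a(\zeta^{-1})-b(\zeta^{-1})$, while $\sum_{j=0}^{n-1}\phi_\zeta(\xi\zeta^{j})$ reduces, for $\zeta$ an $n$-th root of unity, to $\sum_{j=0}^{n-1}\bigl(\log|g(\xi\zeta^{j},\zeta^{-1})|-\log|h(\xi\zeta^{j},\zeta^{-1})|\bigr)$ (the skewings $x\mapsto xz$, $z\mapsto z^{-1}$ built into $\Phi$ being absorbed by rotation-invariance of Haar measure). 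By Lemma~\ref{lem:change-of-variables} and Remark~\ref{rem:automorphism-effects}, $\al_{\Phi f}$ is again expansive, so conditions (1) and (2) of Theorem~\ref{thm:linear-expansive} hold for $\Phi f$. If $a(\zeta_0)=b(\zeta_0)$ for an irrational $\zeta_0$, then $\int_{\SS}\phi_{\zeta_0^{-1}}(\xi)\,d\xi=0$, contradicting (1); if $\zeta_0$ is a root of unity of order $n$, the continuous function $\xi\mapsto\sum_{j=0}^{n-1}\phi_{\zeta_0^{-1}}(\xi\zeta_0^{-j})$ has integral $n\bigl(a(\zeta_0)-b(\zeta_0)\bigr)=0$ over $\SS$, hence vanishes somewhere, contradicting (2). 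This is exactly the bookkeeping sketched in the Remark following Theorem~\ref{thm:linear-expansive}, applied here to $\Phi f$.

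The only genuinely delicate point is this last step — carrying the hypotheses of Theorem~\ref{thm:linear-expansive} correctly across $\Phi$, in particular checking that $z\mapsto z^{-1}$ merely permutes the roots of unity and the irrational points of $\SS$ and so leaves conditions (1)--(2) undisturbed, and handling the root-of-unity case by the intermediate value theorem as above. Everything else is a direct application of Theorem~\ref{thm:linear-entropy}, together with the elementary observation that the integral of the pointwise maximum of two continuous functions on a connected domain equals the maximum of their integrals precisely when one of the functions dominates the other throughout.
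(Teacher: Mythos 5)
Your proposal is correct and follows essentially the same route as the paper: transfer to the setting of Theorem~\ref{thm:linear-expansive} via the automorphism $x\mapsto y$, $y\mapsto x$, $z\mapsto z^{-1}$, use conditions (1) and (2) there (together with continuity of $\zeta\mapsto\mahler(g(\cdot,\zeta))$ and $\zeta\mapsto\mahler(h(\cdot,\zeta))$ and connectedness of $\SS$) to conclude that one of these functions dominates the other everywhere, and then integrate the formula of Theorem~\ref{thm:linear-entropy}. You merely spell out the bookkeeping that the paper compresses into one line and its accompanying remark, including the intermediate-value argument at roots of unity and the harmless $\zeta\mapsto\zeta^{-1}$ relabeling.
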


\begin{proof}[Proof of the Corollary]
   After the change of variables $x\mapsto y$, $y\mapsto x$, and
   $z\mapsto z^{-1}$, we can apply Theorem \ref{thm:linear-expansive} to
   conclude that either $\mahler\bigl(g(\cdot,\eta)\bigr)>
   \mahler\bigl(h( \cdot,\eta)\bigr)$ for all $\eta\in\SS$, or
    $\mahler\bigl(g(\cdot,\eta)\bigr)<
   \mahler\bigl(h( \cdot,\eta)\bigr)$ for all $\eta\in\SS$. The result
   then follows from \eqref{eqn:max-entropy} by integrating over
   $\eta\in\SS$.
\end{proof}

The intuition behind \eqref{eqn:max-entropy} is simple to explain. The
matrices $A_{\zeta,f}(\xi,\eta)$ for $\zeta\in\Om_q'$ take the form
\begin{equation}
   \label{eqn:linear-matrix}
   A_{\zeta,f}(\xi,\eta)=
   \begin{bmatrix}
      g(\eta,\zeta) & h(\eta,\zeta)\xi & 0 & \cdots & 0 \\
      0  & g(\eta\zeta,\zeta) & h(\eta\zeta,\zeta)\xi & \cdots & 0 \\
      \vdots & \vdots & \ddots & \ddots & \vdots \\
      h(\eta\zeta^{q-1},\zeta)\xi & 0 & \cdots & & g(\eta\zeta^{q-1},\zeta)
   \end{bmatrix}.
\end{equation}
Then
\begin{equation}
   \label{eqn:linear-det}
   \det A_{\zeta,f}(\xi,\eta)=\prod_{j=0}^{q-1}g(\eta\zeta^j,\zeta)+(-1)^{q-1}\xi^q
   \prod_{j=0}^{q-1}h(\eta\zeta^j,\zeta).
\end{equation}
For fixed $\zeta$ the first product behaves like
$\Mahler\bigl(g(\cdot,\zeta)\bigr)^q$, and similarly for $h$. Whichever
is larger will dominate, suggesting the formula \eqref{eqn:max-entropy}.

To make this intuition precise, we require several lemmas.

\begin{lemma}
   For every $\xi\in\CC$, $\zeta\in\SS$, and $n\ge1$,
   \begin{displaymath}
      \frac1n\log|\xi^n-\zeta^n|\le\log^+|\xi|+\frac{\log 2}{n}.
   \end{displaymath}
\end{lemma}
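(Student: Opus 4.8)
The plan is to reduce the claim to the crudest possible triangle-inequality estimate, using only that $|\zeta|=1$. First I would note that since $\zeta\in\SS$ we have $|\zeta^n|=1$, so $|\xi^n-\zeta^n|\le|\xi^n|+|\zeta^n|=|\xi|^n+1$. (If it happens that $\xi^n=\zeta^n$, then the left-hand side of the asserted inequality is $-\infty$ under the usual convention $\log 0=-\infty$, and there is nothing to prove; so we may assume $\xi^n\ne\zeta^n$.)

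Next I would bound $|\xi|^n+1$ by $2\bigl(\max\{1,|\xi|\}\bigr)^n$. This is immediate: both $1$ and $|\xi|^n$ are at most $\bigl(\max\{1,|\xi|\}\bigr)^n$, so their sum is at most twice that quantity. Taking logarithms and dividing by $n$ then gives
\[
\frac1n\log|\xi^n-\zeta^n|\le\frac1n\log\Bigl(2\bigl(\max\{1,|\xi|\}\bigr)^n\Bigr)=\frac{\log 2}{n}+\log\max\{1,|\xi|\}=\frac{\log 2}{n}+\log^+|\xi|,
\]
which is exactly the inequality claimed. Equivalently, one can split into the two cases $|\xi|\le 1$ (where $|\xi|^n+1\le 2$ and $\log^+|\xi|=0$) and $|\xi|>1$ (where $|\xi|^n+1\le 2|\xi|^n$), but the uniform estimate above avoids even that case distinction.

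There is essentially no obstacle in this lemma; the only subtlety worth recording is the degenerate case $\xi^n=\zeta^n$, handled by the convention on $\log 0$, and the fact that the bound is deliberately far from sharp—its only purpose is to control the per-coordinate factors $|\xi^q-\zeta^q|$ appearing in $\det A_{\zeta,f}(\xi,\eta)$ in \eqref{eqn:linear-det} with an error term of size $O(1/q)$, which vanishes in the limit defining $\h(\af)$.
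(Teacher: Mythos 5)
Your proof is correct and is essentially the same as the paper's: both rest on the triangle inequality $|\xi^n-\zeta^n|\le|\xi|^n+1$ followed by the bound $|\xi|^n+1\le 2\bigl(\max\{1,|\xi|\}\bigr)^n$ (the paper phrases this as a case split on $|\xi|\le1$ versus $|\xi|>1$, which you note is equivalent). The uniform formulation is a tidy cosmetic improvement but not a different argument.
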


\begin{proof}
   If $|\xi|\le1$, then $|\xi^n-\zeta^n|\le2$. If $|\xi|>1$, then
   \begin{align*}
      \frac1n \log|\xi^n-\zeta^n| &\le \frac1n \log(|\xi|^n+1) \le
      \frac1n \Bigl\{\log |\xi|^n + \log\Bigl(
      \frac{|\xi|^n+1}{|\xi|^n}\Bigr) \\
      &= \log|\xi|+\frac1n\log\Bigl(1+\frac1{|\xi|^n}\Bigr)
      \le\log|\xi|+ \frac{\log 2}{n}. \qedhere
   \end{align*}
\end{proof}

If $0\ne \phi(u)\in\CC[u]$, then \eqref{eqn:mahler} shows that
$\mahler(\phi)>-\infty$. We will use the convention
$\mahler(0)=-\infty$. With the usual conventions about
arithmetic and inequalities involving
$-\infty$, the results that follow will make sense and are true
even if some of the
polynomials are ~0.

\begin{lemma}
   Suppose that $\phi(u)\in\CC[u]$ has degree $\le D$. Then for every
   $\zeta\in\SS$ and $n\ge1$,
   \begin{equation}
      \label{eqn:riemann-sum-inequality}
      R_n\bigl(\log|\phi|\bigr)(\zeta) := \frac1n \sum_{j=0}^{n-1}
      \log|\phi(e^{2\pi 2 j/n}\zeta)|\le\mahler(\phi)+\frac{D\log 2}{n}.
   \end{equation}
\end{lemma}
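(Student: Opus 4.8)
The plan is to evaluate the Riemann sum exactly by a product formula over $n$-th roots of unity, and then estimate it factor by factor using the lemma just proved. First I would factor $\phi$ over $\CC$ as $\phi(u)=c\prod_{k=1}^{d}(u-\lambda_k)$, where $c$ is the leading coefficient and $d=\deg\phi\le D$; the degenerate cases are immediate, since $\phi\equiv0$ gives $R_n=-\infty=\mahler(\phi)$ and a nonzero constant $\phi=c$ gives $R_n=\log|c|=\mahler(\phi)$ under the $-\infty$ conventions, so I may assume $d\ge1$. For a fixed $\zeta\in\SS$ the points $\zeta e^{2\pi i j/n}$, $0\le j<n$, are exactly the $n$-th roots of $\zeta^n$, so that $\prod_{j=0}^{n-1}(w-\zeta e^{2\pi i j/n})=w^n-\zeta^n$ for every $w\in\CC$. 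Evaluating the factorization of $\phi$ at these $n$ points and multiplying then yields
\begin{displaymath}
   \Bigl|\prod_{j=0}^{n-1}\phi(\zeta e^{2\pi i j/n})\Bigr|
   =|c|^{n}\prod_{k=1}^{d}|\lambda_k^{n}-\zeta^{n}|,
\end{displaymath}
hence, taking logarithms and dividing by $n$,
\begin{displaymath}
   R_n\bigl(\log|\phi|\bigr)(\zeta)=\log|c|+\frac1n\sum_{k=1}^{d}\log|\lambda_k^{n}-\zeta^{n}|.
\end{displaymath}

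Next I would apply the previous lemma to each summand, $\tfrac1n\log|\lambda_k^{n}-\zeta^{n}|\le\log^+|\lambda_k|+\tfrac{\log 2}{n}$, to get
\begin{displaymath}
   R_n\bigl(\log|\phi|\bigr)(\zeta)\le\log|c|+\sum_{k=1}^{d}\log^+|\lambda_k|+\frac{d\log 2}{n}.
\end{displaymath}
Finally I would recognize the first two terms as $\mahler(\phi)$: this is precisely the alternative expression \eqref{eqn:mahler} for the logarithmic Mahler measure coming from Jensen's formula, valid for any nonzero $\phi\in\CC[u]$ (a root $\lambda_k=0$ is harmless, since then $\log^+|\lambda_k|=0$ and $|\lambda_k^{n}-\zeta^{n}|=1$). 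Bounding $d\le D$ then gives the claimed inequality $R_n(\log|\phi|)(\zeta)\le\mahler(\phi)+\frac{D\log 2}{n}$.

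I do not anticipate a genuine obstacle here: the argument is just the composition of the root-of-unity product identity, the preceding lemma applied linearly factor by factor, and the Jensen-formula form of Mahler measure. The only matters requiring a little care are bookkeeping ones — the behaviour of the inequality when $\phi$ vanishes identically, or when some $\zeta e^{2\pi i j/n}$ is a zero of $\phi$ lying on $\SS$ (so the left side is $-\infty$), and the zero-root case — all of which are absorbed by the arithmetic conventions for $-\infty$ adopted just before the lemma.
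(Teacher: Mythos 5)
Your proof is correct and follows essentially the same route as the paper's: factor $\phi$ over $\CC$, collapse the product over $n$-th roots of unity to $|\lambda_k^n-\zeta^n|$, apply the preceding lemma factor by factor, and identify $\log|c|+\sum_k\log^+|\lambda_k|$ with $\mahler(\phi)$ via Jensen's formula \eqref{eqn:mahler}. The only difference is your explicit handling of the degenerate cases, which the paper absorbs into the stated conventions for $-\infty$.
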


\begin{proof}
   Fix $n\ge1$ and set $\om=e^{2\pi i/n}$, so that
   $R_n(\log|\phi|)(\zeta)
   =\frac1n \sum_{j=0}^{n-1}\log|\phi(\om^j\zeta)|$.   Write
   $\phi(u)=c_du^d +\dots+c_0$, where $d\le D$ and $c_d\ne0$. Factor
   $\phi(u)=c_d\prod_{k=1}^d(u-\xi_k)$. Then using the previous lemma,
   \begin{align*}
      R_n\bigl(\log|\phi|\bigr)(\zeta) &= \frac1n \sum_{j=0}^{n-1}
      \log\Bigl|c_d\prod_{k=1}^d(\om^j\zeta-\xi_k)\Bigr|
      = \log|c_d|+\frac1n \sum_{k=1}^d \sum_{j=0}^{n-1}
      \log|\om^j\zeta-\xi_k| \\
      &=\log|c_d|+\frac1n \sum_{k=1}^d
      \log\Bigl|\prod_{j=0}^{n-1}(\om^j\zeta-\xi_k)\Bigr|
      =\log|c_d|+\frac1n \sum_{k=1}^d \log|\zeta^n-\xi_k^n| \\
      &\le|c_d|+\sum_{k=1}^d\log^+|\xi_k|+\frac{d \log 2}{n}\le
      \mahler(\phi) +\frac{D\log 2}{n}. \qedhere
   \end{align*}
\end{proof}

\begin{lemma}
   \label{lem:max-entropy}
   Let $\phi(u),\psi(u)\in\CC[u]$ each have degree $\le D$. Then for
   every $n>(D\log2)/\epsilon$,
   \begin{align*}
      \max\bigl\{\mahler(\phi),\mahler(\psi)\bigr\}
      &\le \int_{\SS} \max\{ R_n\bigl(\log|\phi|)(\zeta),
      R_n(\log|\psi|)(\zeta)\bigr\}\,d\zeta \\
      &\le  \max\bigl\{\mahler(\phi),\mahler(\psi)\bigr\} +\epsilon .
   \end{align*}
\end{lemma}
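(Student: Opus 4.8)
The plan is to combine two facts about the Riemann-sum operator $R_n$: the pointwise upper bound from the preceding lemma, namely $R_n\bigl(\log|\phi|\bigr)(\zeta)\le\mahler(\phi)+\frac{D\log 2}{n}$ (see \eqref{eqn:riemann-sum-inequality}), together with the exact averaging identity $\int_{\SS}R_n\bigl(\log|\phi|\bigr)(\zeta)\,d\zeta=\mahler(\phi)$. This identity is immediate from rotation-invariance of Haar measure on $\SS$: writing $\om=e^{2\pi i/n}$ and interchanging the finite sum with the integral,
\begin{displaymath}
   \int_{\SS}R_n\bigl(\log|\phi|\bigr)(\zeta)\,d\zeta
   =\frac1n\sum_{j=0}^{n-1}\int_{\SS}\log|\phi(\om^j\zeta)|\,d\zeta
   =\frac1n\sum_{j=0}^{n-1}\mahler(\phi)=\mahler(\phi),
\end{displaymath}
and similarly for $\psi$. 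The interchange is legitimate because, for $\phi\ne0$, the function $\log|\phi|$ has only logarithmic singularities on $\SS$ and is integrable there (this is the content of Jensen's formula, cf.\ \eqref{eqn:mahler}); when $\phi\equiv0$ the identity reads $-\infty=-\infty$, consistent with the stated conventions.

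For the lower bound in the lemma, I would observe that $\max\{R_n(\log|\phi|)(\zeta),R_n(\log|\psi|)(\zeta)\}\ge R_n(\log|\phi|)(\zeta)$ for every $\zeta\in\SS$, so integrating and using the averaging identity gives $\int_{\SS}\max\{\dots\}\,d\zeta\ge\mahler(\phi)$; by symmetry the same integral is also $\ge\mahler(\psi)$, hence it is $\ge\max\{\mahler(\phi),\mahler(\psi)\}$.

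For the upper bound, I would apply the preceding lemma to both polynomials: for all $\zeta\in\SS$ one has $R_n(\log|\phi|)(\zeta)\le\mahler(\phi)+\frac{D\log 2}{n}$ and $R_n(\log|\psi|)(\zeta)\le\mahler(\psi)+\frac{D\log 2}{n}$. Taking the maximum of the two left-hand sides and bounding it by the maximum of the two right-hand sides yields $\max\{R_n(\log|\phi|)(\zeta),R_n(\log|\psi|)(\zeta)\}\le\max\{\mahler(\phi),\mahler(\psi)\}+\frac{D\log 2}{n}$ pointwise in $\zeta$. Integrating over $\SS$, which has total mass $1$, preserves the inequality, and the hypothesis $n>(D\log 2)/\epsilon$ forces $\frac{D\log 2}{n}<\epsilon$, giving the asserted upper bound.

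There is essentially no serious obstacle here. The only steps needing a word of care are the Fubini interchange above and the degenerate cases $\phi\equiv0$ or $\psi\equiv0$, both handled routinely — the former by integrability of $\log|\phi|$ on $\SS$, the latter by the conventions on arithmetic with $-\infty$ already in force. Accordingly I would state the two displayed pointwise inequalities and conclude in a few lines.
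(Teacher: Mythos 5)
Your proof is correct and follows essentially the same route as the paper: the upper bound by integrating the pointwise estimate from the preceding lemma, and the lower bound from the exact identity $\int_{\SS}R_n\bigl(\log|\phi|\bigr)(\zeta)\,d\zeta=\mahler(\phi)$ together with monotonicity of the integral. The only difference is that you spell out the rotation-invariance/Fubini justification of that identity and the degenerate $\phi\equiv0$ case, which the paper leaves implicit.
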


\begin{proof}
   If $n>(D\log2)/\epsilon$, then the previous lemma implies that for
   every $\zeta\in\SS$ we have that
   $R_n(\log|\phi|)(\zeta)\le\mahler(\phi)+\epsilon$ and
   $R_n(\log|\psi|)(\zeta)\le \mahler(\psi)+\epsilon$. Hence
   \begin{displaymath}
       \max\{ R_n\bigl(\log|\phi|\bigr)(\zeta),
      R_n\bigl(\log|\psi|\bigr)(\zeta) \}\le\max\{
      \mahler(\phi),\mahler(\psi)\}+ \epsilon,
   \end{displaymath}
   and the second inequality follows by integrating over $\zeta\in\SS$.

   For the first inequality, observe that
   \begin{displaymath}
      \mahler(\phi)= \int_{\SS}
      R_n\bigl(\log|\phi|\bigr)(\zeta)\,d\zeta
      \le \int_{\SS} \max\{ R_n\bigl(\log|\phi|)(\zeta),
      R_n(\log|\psi|)(\zeta)\bigr\}\,d\zeta,
   \end{displaymath}
   and similarly for $\phi$.
\end{proof}

We need one more property of Mahler measure, proved by David Boyd
\cite{BoydContinuity}.  Recall that
$\Mahler(\phi)=\exp\bigl(\mahler(\phi)\bigr)$, and by convention
$\exp(-\infty)=0$.

\begin{theorem}[\cite{BoydContinuity}]
   \label{thm:boyd-continuity}
   The map $\CC^{D+1}\to[0,\infty)$ given by $$(c_0,c_1,\dots,c_D)\mapsto
   \Mahler(c_0+c_1u+\dots+c_Du^D)$$ is continuous.
\end{theorem}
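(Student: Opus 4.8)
The plan is to prove that the logarithmic Mahler measure, viewed as a map $\mahler\colon\CC^{D+1}\to[-\infty,\infty)$ (identifying a polynomial of degree $\le D$ with its coefficient vector, and setting $\mahler=-\infty$ on the zero polynomial), is continuous, and then to obtain the statement by composing with $\exp$, using $\exp(-\infty)=0$ as in the conventions above. Since $\CC^{D+1}$ is metrizable it suffices to check sequential continuity: I fix $\mathbf{c}^{(n)}\to\mathbf{c}^0$, write $\phi_n(u)=\sum_j c_j^{(n)}u^j$ and $\phi_0(u)=\sum_j c_j^0u^j$, and aim to show $\mahler(\phi_n)\to\mahler(\phi_0)$. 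The degenerate case $\phi_0\equiv0$ is handled first and directly: on $|z|=1$ one has $|\phi_n(z)|\le\|\mathbf{c}^{(n)}\|_1\to0$, so Jensen's formula gives $\mahler(\phi_n)\le\log\|\mathbf{c}^{(n)}\|_1\to-\infty=\mahler(\phi_0)$.

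For the main case $\phi_0\not\equiv0$, say $\deg\phi_0=d\le D$ with leading coefficient $c_d^0\ne0$ and roots $\eta_1,\dots,\eta_d$ listed with multiplicity, I would argue along subsequences: it is enough that every subsequence of $(\phi_n)$ have a further subsequence along which $\mahler(\phi_n)\to\mahler(\phi_0)$, and on such a subsequence I may assume $\deg\phi_n=e$ is constant, with $d\le e\le D$ (here $e\ge d$ because $c_d^{(n)}\to c_d^0\ne0$). Write $\phi_n=c_e^{(n)}\prod_{j=1}^e(u-\xi_j^{(n)})$. The structural heart of the argument is a \emph{root-escape dichotomy}: exactly $d$ of the $\xi_j^{(n)}$ stay bounded and converge, as a multiset, to $\{\eta_1,\dots,\eta_d\}$, while the remaining $e-d$ of them tend to $\infty$. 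I would prove this by a Rouch\'e argument: for small $\rho>0$ the discs $D(\eta_i,\rho)$ are disjoint with $\phi_0\ne0$ on their boundaries, and since $\phi_n\to\phi_0$ uniformly on compacta, for large $n$ the polynomial $\phi_n$ has exactly $\sum_i(\text{mult of }\eta_i)=d$ zeros in $\bigcup_iD(\eta_i,\rho)$ and its other $e-d$ zeros lie outside; any bounded "outside" zero would, along a sub-subsequence, converge to a zero of $\phi_0$ and hence eventually enter some $D(\eta_i,\rho)$, a contradiction, so the outside zeros escape to $\infty$, and letting $\rho\to0$ identifies the limiting multiset of the bounded zeros.

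I then combine this with the root expression for the Mahler measure from \eqref{eqn:mahler}, namely $\mahler(\phi_n)=\log|c_e^{(n)}|+\sum_{j=1}^e\log^+|\xi_j^{(n)}|$. The bounded roots contribute $\sum_{j\ \mathrm{bdd}}\log^+|\xi_j^{(n)}|\to\sum_{i=1}^d\log^+|\eta_i|$ by continuity of $\log^+$ on $[0,\infty)$. For the escaping roots, whose moduli exceed $1$ for $n$ large, their contribution together with the leading term is $\log\bigl(|c_e^{(n)}|\prod_{j\ \mathrm{esc}}|\xi_j^{(n)}|\bigr)$, and here I would use Vieta: the coefficient of $u^d$ in $\phi_n$ equals $\pm\,c_e^{(n)}\,e_{e-d}(\xi_1^{(n)},\dots,\xi_e^{(n)})$, where the elementary symmetric polynomial $e_{e-d}$ is asymptotically the single monomial $\prod_{j\ \mathrm{esc}}\xi_j^{(n)}$ (every other monomial omits at least one escaping factor, whose modulus tends to $\infty$), so $|c_e^{(n)}|\prod_{j\ \mathrm{esc}}|\xi_j^{(n)}|=|c_d^{(n)}|/(1+o(1))\to|c_d^0|$. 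This yields $\mahler(\phi_n)\to\log|c_d^0|+\sum_{i=1}^d\log^+|\eta_i|=\mahler(\phi_0)$, once more by \eqref{eqn:mahler}, completing the argument.

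I expect the main obstacle to be precisely the degree-drop phenomenon in the last step: when the top coefficients $c_e^{(n)}$ degenerate to $0$, roots run off to infinity so that both $\log|c_e^{(n)}|\to-\infty$ and $\sum_j\log^+|\xi_j^{(n)}|\to+\infty$, and one must track the exact cancellation between them; the Vieta/elementary-symmetric bookkeeping above is the device that makes this precise. Everything else — the $\phi_0\equiv0$ case, continuity of $\log^+$, and the Rouch\'e zero-count — is routine.
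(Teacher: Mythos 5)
Your argument is correct, but note that the paper itself offers no proof of this statement: it is quoted from Boyd's paper \cite{BoydContinuity}, and the accompanying remark explains that Boyd proceeds via Graeffe's root-squaring method, which sidesteps explicit root-tracking and, moreover, applies verbatim to polynomials in several variables of bounded degree. Your route is the direct one-variable alternative: reduce to subsequences along which $\deg\phi_n=e$ is constant with $d\le e\le D$, use Rouch\'e (continuity of roots) to split the zeros of $\phi_n$ into $d$ zeros converging as a multiset to the zeros of $\phi_0$ and $e-d$ zeros escaping to infinity, and then invoke the Vieta identity $c_d^{(n)}=\pm\, c_e^{(n)}e_{e-d}\bigl(\xi_1^{(n)},\dots,\xi_e^{(n)}\bigr)$, in which the single monomial consisting of all escaping roots dominates, to show that the two divergences $\log|c_e^{(n)}|\to-\infty$ and $\sum_{\mathrm{esc}}\log|\xi_j^{(n)}|\to+\infty$ cancel to exactly $\log|c_d^0|$. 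That cancellation is precisely the crux --- it is what the paper means by continuity being ``more subtle'' when $c_D\to0$ --- and your elementary-symmetric-function bookkeeping handles it correctly; the degenerate case $\phi_0\equiv0$ via the bound $\mahler(\phi_n)\le\log\sum_j|c_j^{(n)}|$ (which needs only the definition of $\mahler$ as an integral over $\SS$, not Jensen) is also fine, as is passing from $\mahler$ to $\Mahler=\exp\circ\,\mahler$ with $\exp(-\infty)=0$. What you give up relative to Boyd is the several-variable statement, but only the one-variable case is used in this survey (continuity of $\zeta\mapsto\mahler\bigl(g(\cdot,\zeta)\bigr)$ in the proof of Theorem \ref{thm:linear-entropy}), so your argument is a legitimate self-contained substitute for the citation.
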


Continuity is clear when the coefficients remain bounded away from 0
since the roots are continuous functions of the coefficients, but for
example if $c_D\to0$ then continuity is more subtle. Boyd's proof, which
also applies the polynomials in several variables with bounded degree,
uses Graeffe's root-squaring method, managing to sidestep various
delicate issues, leading to a remarkably simple proof.

\begin{proof}[Proof of Theorem \ref{thm:linear-entropy}]
   If $\zeta\in \Om_q'$, then we have seen in \eqref{eqn:linear-det} that
   \begin{displaymath}
       \det A_{\zeta,f}(\xi,\eta)=\prod_{j=0}^{q-1}g(\eta\zeta^j,\zeta)+(-1)^{q-1}\xi^q
   \prod_{j=0}^{q-1}h(\eta\zeta^j,\zeta).
   \end{displaymath}
   By \eqref{eqn:mahler}, for any complex numbers $a$ and $b$,
   \begin{displaymath}
      \int_{\SS}\log|a+\xi^q b|\,d\xi=\max\{\log|a|,\log|b|\}.
   \end{displaymath}
   Hence
   \begin{displaymath}
      \int_{\SS}\log\bigl|\det A_{\zeta,f}(\xi,\eta)\bigr|\,d\xi
      =\max\Bigl\{
      \log\Bigl|\prod_{j=0}^{q-1}g(\eta\zeta^j,\zeta)\Bigr|,
      \log\Bigl|\prod_{j=0}^{q-1}h(\eta\zeta^j,\zeta)\Bigr|\Bigr\}.
   \end{displaymath}
   It follows that
   \begin{displaymath}
      \frac1q \iint_{\SS^2}\log\bigl|\det
      A_{\zeta,f}(\xi,\eta)\bigr|\,d\xi d\eta
      =\int_{\SS}\max\bigl\{ R_q\bigl(\log|g(\cdot,\zeta)|\bigr)(\eta),
       R_q\bigl(\log|h(\cdot,\zeta)|\bigr)(\eta)\bigr\}\,d\eta.
    \end{displaymath}
    Let $\epsilon>0$. Writing $g_\zeta(y)=g(y,\zeta)$ and
    $\h_\zeta(y)=h(y,\zeta)$, note that these are Laurent polynomials of
    uniformly bounded degree $\le D$ in $\CC[y^{\pm}]$, where the degree of
    a Laurent polynomial is the length of its Newton polygon. Applying
    \ref{lem:max-entropy} to $g_\zeta$ and $h_\zeta$, we obtain that for
    $q>(D\log2)/\epsilon$,
    \begin{align*}
       \max\bigl\{\mahler(g(\cdot,\zeta)),\mahler(h(\cdot,\zeta))\bigr\}
       &\le \frac1q \iint_{\SS^2}\log|\det A_{\zeta,f}(\xi,\eta)|\,d\xi
       d\eta \\
       &<\max\bigl\{
       \mahler(g(\cdot,\zeta)),\mahler(h(\cdot,\zeta))\bigr\} +\epsilon.
    \end{align*}

    We must now consider the possibility that for some $\zeta\in\SS$
    both $g(y,\zeta)$ and $h(y,\zeta)$ vanish as polynomials in
    $\CC[y^{\pm}]$, and claim this never happens provided that $\af$ is
    expansive. Write $g(y,z)=\sum_j g_j(z)y^j$ and $h(y,z)=\sum_k
    h_k(z)y^k$, where $g_j(z),h_k(z)\in\ZZ[z^{\pm}]$. If $\zeta_0\in\SS$ were
    a common zero for all the $g_j(z)$ and $h_k(z)$, then its minimal
    polynomial over $\QQ$ would divide every $g_j(z)$ and $h_k(z)$. But
    then the nonzero point
    $w=\sum_{i,j,k}\zeta_0^k\,x^iy^jz^k\in\ligc$ would have $\rf(w)=0$,
    contradicting expansiveness.

    Thus the function $\zeta\mapsto
    \max\bigl\{\mahler(g(\cdot,\zeta)),\mahler(h(\cdot,\zeta))\bigr\}$
    is continuous on $\SS$. Hence the sums
    \begin{displaymath}
       \frac1{q^2} \sum_{\zeta\in\Om_q} \iint_{\SS^2}\log\bigl|\det
        A_{\zeta,f}(\xi,\eta)\bigr|\,d\xi d\eta
     \end{displaymath}
     simultaneously approximate $\h(\af)$ by Theorem
     \ref{thm:expansive-entropy}, and are also Riemann sums
     over $\Om_q$
     of the
     continuous function  $\max\bigl\{\mahler(g(\cdot,\zeta)),
     \mahler(h(\cdot,\zeta))\bigr\}$, and so converge to the integral in
     \eqref{eqn:max-entropy}, concluding the proof.
\end{proof}

Our proof of  Theorem \ref{thm:linear-entropy} made use of the
expansiveness hypothesis on $\af$ in order to use periodic points to
approximate entropy. But surely the entropy formula
\eqref{eqn:max-entropy} is valid more generally.

\begin{problem}
   Is the entropy formula \eqref{eqn:max-entropy} for linear polynomials
   in $\ZG$ valid for arbitrary $g(y,z)$ and $h(y,z)$ in $\ZZ[y^{\pm},z^{\pm}]$?
\end{problem}

It will follow from results in the next section that
\eqref{eqn:max-entropy} is valid if either $g\equiv 1$ or $h\equiv1$.

Let us turn to the quadratic case
$f(x,y,z)=g_0(y,z)+xg_1(y,z)+x^2g_2(y,z)$. We start with a simple result
about determinants.

\begin{lemma}
   Let $a_j$, $b_j$, and $c_j$ $(0\le j\le q-1)$ be arbitrary complex
   numbers. Then
   \begin{displaymath}
      \det
      \begin{bmatrix}
         a_0  & b_0  & c_0  &  0  & \ldots &  0  &  0  \\
         0    & a_1  & b_1  & c_1 & \ldots & 0   &  0  \\
              &      &      &     & \ddots &     &     \\
         c_{q-2} & 0  &  0   & 0   & \ldots & a_{q-2} & b_{q-2} \\
         b_{q-1} & c_{q-1} & 0 & 0 & \ldots & 0 & a_{q-1}
      \end{bmatrix}
      =\prod_{j=0}^{q-1} a_j - \tr\prod_{j=0}^{q-1}
      \begin{bmatrix}
         -b_j & c_j\\-a_j&0
      \end{bmatrix} +\prod_{j=0}^{q-1} c_j.
   \end{displaymath}
   If $c_j a_{j+1}=-b_jb_{j+1}$ for every $j$, where subscripts are
   taken mod $q$, then the value of this determinant simplifies to
   \begin{displaymath}
      \prod_{j=0}^{q-1} a_j\, -(-1)^q(\tau^q+\sigma^q)\prod_{j=0}^{q-1}
       b_j+\prod_{j=0}^{q-1} c_j,
    \end{displaymath}
    where $\tau=(1+\sqrt{5})/2$ and $\sigma=-1/\tau$.
\end{lemma}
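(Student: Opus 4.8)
The plan is to prove both formulas from the permutation expansion $\det A=\sum_{\pi}\operatorname{sgn}(\pi)\prod_{j}A_{j,\pi(j)}$ of the given $q\times q$ matrix $A$, and then to match the ``nontrivial'' permutations term by term against the expansion of $\tr(M_0M_1\cdots M_{q-1})$, where $M_j$ is the $2\times2$ matrix in the statement (I read $\prod_{j=0}^{q-1}M_j$ as the product in increasing index order). I would first reduce to $q\ge 3$, the cases $q\le 2$ being checked directly. Since the only nonzero entries of $A$ are $A_{j,j}=a_j$, $A_{j,j+1}=b_j$, $A_{j,j+2}=c_j$ (indices mod $q$), every contributing $\pi$ is encoded by the displacement word $d_j:=(\pi(j)-j)\bmod q\in\{0,1,2\}$, and this encoding is injective precisely because $q\ge3$. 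As $\pi$ is a bijection, $\sum_j d_j$ is a multiple of $q$ in $[0,2q]$, hence $0$, $q$, or $2q$. The value $0$ gives $\pi=\mathrm{id}$ and the term $\prod_j a_j$; the value $2q$ gives the shift $\pi(j)=j+2$, whose sign is $(-1)^{q-\gcd(2,q)}=+1$ for every $q$, contributing $+\prod_j c_j$. So it remains to evaluate the ``middle'' permutations with $\sum_j d_j=q$.

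Next I would show that the middle permutations are in bijection with the independent sets $K$ of the cycle graph $C_q$ on vertex set $\ZZ/q\ZZ$ with edges $\{k,k+1\}$. Given such a $\pi$, put $K=\{k:d_k=0\}$; a counting argument gives $|\{k:d_k=2\}|=|K|=:m$, and a short finiteness argument (if $d_k=d_{k+1}=2$, tracing ``which index maps onto $k$'' backward around the cycle forces $d_j=2$ for all $j$, impossible since $\sum d_j=q<2q$) shows $\{k:d_k=2\}$ is independent; bijectivity of $\pi$ then forces $\{k:d_k=2\}=K-1$ and $d_j=1$ elsewhere. Conversely each independent $K$ yields such a $\pi$ (the three pieces $K$, $K-1$, and the complement map bijectively onto $K$, $K+1$, and the complement of $K\cup(K+1)$). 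For the resulting $\pi$ the fixed-point set is $K$ while the remaining $q-m$ points form a single cycle (one passes once around $\ZZ/q\ZZ$ skipping $K$), so $\operatorname{sgn}(\pi)=(-1)^{q-m-1}$ and the weight is $\prod_{k\in K}a_k\,\prod_{k\in K}c_{k-1}\,\prod_{k\notin K\cup(K-1)}b_k$.

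On the other side I would expand $\tr(M_0\cdots M_{q-1})=\sum_{i\in\{1,2\}^q}\prod_{k}(M_k)_{i_k,i_{k+1}}$ over cyclic index words; the vanishing entry $(M_k)_{2,2}=0$ leaves exactly the words with no two cyclically consecutive $2$'s, which are indexed by the independent sets $K$ (place a $2$ at each $k\in K$). In such a word a position contributes $-b_k$ where $i_k=i_{k+1}=1$, $c_k$ where $(i_k,i_{k+1})=(1,2)$, and $-a_k$ where $(i_k,i_{k+1})=(2,1)$, giving the value $(-1)^{q-m}\prod_{k\in K}a_k\prod_{k\in K}c_{k-1}\prod_{k\notin K\cup(K-1)}b_k$, which is exactly $-\operatorname{sgn}(\pi)\cdot(\text{weight of }\pi)$ for the matching $\pi$ because $(-1)^{q-m-1}=-(-1)^{q-m}$. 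Summing over $K$ identifies the middle contribution to $\det A$ with $-\tr(M_0\cdots M_{q-1})$, which is the first formula.

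Finally, for the second formula I would substitute the hypothesis $c_{k-1}a_k=-b_{k-1}b_k$ into the term attached to $K$: since $K$, $K-1$, and $\ZZ/q\ZZ\setminus(K\cup(K-1))$ partition $\ZZ/q\ZZ$, the term collapses to $(-1)^{q-m}(-1)^m\prod_{k}b_k=(-1)^q\prod_k b_k$, independent of $K$. Hence $\tr(M_0\cdots M_{q-1})=(-1)^q\bigl(\prod_k b_k\bigr)\cdot\#\{\text{independent sets of }C_q\}$, and the number of independent sets of $C_q$ is the Lucas number $L_q=\tau^q+\sigma^q$ (a one-line induction from $L_q=L_{q-1}+L_{q-2}$, $L_1=1$, $L_2=3$, together with $\tau,\sigma$ being the roots of $t^2-t-1$); plugging this into the first formula gives $\prod_j a_j-(-1)^q(\tau^q+\sigma^q)\prod_j b_j+\prod_j c_j$. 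The step I expect to require the most care is the bijection between the middle permutations and the independent sets of $C_q$, together with the sign bookkeeping $\operatorname{sgn}(\pi)=(-1)^{q-m-1}$; the rest is routine computation.
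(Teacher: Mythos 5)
Your proof is correct and follows essentially the same route as the paper: expand the determinant over permutations, observe that the contributing permutations are encoded by displacement words in $\{0,1,2\}^q$, peel off the identity and the all-$2$ shift, and identify the remaining terms with closed paths in the golden mean shift (your independent sets of $C_q$), whose count $\tau^q+\sigma^q$ and block-substitution $20\to 11$ give the second formula. Your write-up simply supplies in full the bijection, sign bookkeeping, and trace-expansion matching that the paper's sketch leaves as ``easy to check.''
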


\begin{proof}
   Taking subscripts mod $q$, a permutation $\pi$ of $\{0,1,\dots,q-1\}$
   contributes a nonzero summand in the expansion of the determinant if
   and only if it has the form $\pi(j)=j+\epsilon_j$, where
   $\epsilon_j=0$, 1, or 2. The sequences $\{\epsilon_j\}\in\{0,1,2\}^q$
   corresponding to permutations are precisely the closed paths of
   length $q$ in the labeled shift of finite type depicted below.

   \centerline{\includegraphics[scale=.23]{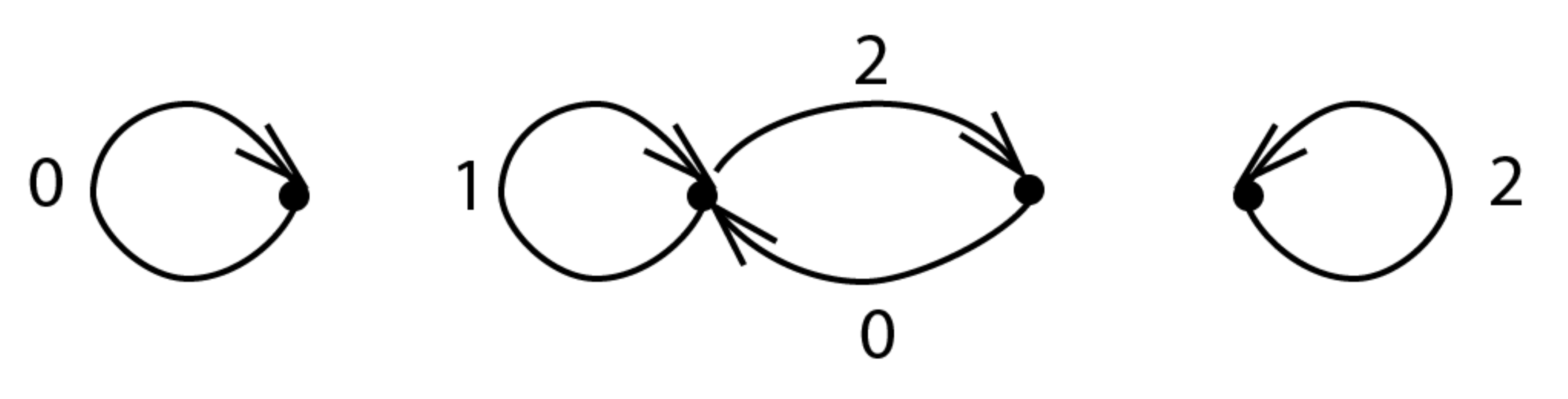}}
   The paths $00\ldots 0$ and $22\ldots2$ give the terms $a_0a_1\cdots
   a_{q-1}$ and $c_0c_1\cdots c_{q-1}$, respectively, while it is easy to
   check that the golden mean shift of finite type produces middle term
   of the result.

   If $c_ja_{j+1}=-b_jb_{j+1}$ for all $j$, then each occurrence of a
   block 20 in a closed path of length $q$ can be replaced by the block
   11, changing the factor $c_ja_{j+1}$ to $b_jb_{j+1}$ together with an
   appropriate sign change. The result of these substitutions is that
   every closed path of length $q$ in the golden mean shift gives the
   same contribution $-(-1)^qb_0b_1\cdots b_{q-1}$ to the expansion, and
   there are $\tr\left[
   \begin{smallmatrix}
      1&1\\1&0
   \end{smallmatrix}\right]^q=\tau^q+\sigma^q$ of them.
\end{proof}

We use this to give the quadratic analogues of \eqref{eqn:linear-matrix}
and \eqref{eqn:linear-det}.

\begin{corollary}
   Let $f(x,y,z)=g_0(y,z)+xg_1(y,z)+x^2g_2(y,z)\in\ZG$. Suppose that
   $\zeta\in\Om_q'$, and for $(\xi,\eta)\in\SS^2$ define
   $A_{\zeta,f}(\xi,\eta)$ as above. Then
   \begin{equation}\label{eqn:quadratic-det}
      \begin{split}
     \det A_{\zeta,f}(\xi,\eta)=\prod_{j=0}^{q-1}g_0(\eta\zeta^j,\zeta)
     &+(-1)^{q-1}\tr\prod_{j=0}^{q-1} \begin{bmatrix} -g_1(\eta\zeta^j,\zeta)\xi &
        g_2(\eta\zeta^j,\zeta)\xi^2\\ -g_0(\eta\zeta^j,\zeta)&0\end{bmatrix}\\
     &+\xi^{2q}\prod_{j=0}^{q-1}g_2(\eta\zeta^j,\zeta).
      \end{split}
   \end{equation}
   If $g_1(y,z)g_1(yz,z)=-g_2(y,z)g_0(yz,z)$,
   then
   \begin{equation}
      \label{eqn:simple-det}
      \begin{split}
      \det A_{\zeta,f}(\xi,\eta)=\prod_{j=0}^{q-1}g_0(\eta\zeta^j,\zeta)
      &+(-1)^{q-1}(\tau^q+\sigma^q)\xi^q
      \prod_{j=0}^{q-1}g_1(\eta\zeta^j,\zeta)\\
      &+\xi^{2q}\prod_{j=0}^{q-1}g_2(\eta\zeta^j,\zeta).
      \end{split}
   \end{equation}
\end{corollary}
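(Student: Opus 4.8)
The plan is to observe that, for $\zeta\in\Om_q'$ and $q>2$, the matrix $A_{\zeta,f}(\xi,\eta)$ displayed just before Theorem~\ref{thm:expansive-entropy} is, in the quadratic case $f=g_0+xg_1+x^2g_2$, exactly the cyclic band matrix appearing in the preceding Lemma under the substitution
\[
   a_j=g_0(\eta\zeta^j,\zeta),\qquad b_j=g_1(\eta\zeta^j,\zeta)\,\xi,\qquad c_j=g_2(\eta\zeta^j,\zeta)\,\xi^2
\]
for $0\le j\le q-1$. Indeed row $j$ of $A_{\zeta,f}(\xi,\eta)$ carries $g_0(\eta\zeta^j,\zeta)$ in column $j$, $g_1(\eta\zeta^j,\zeta)\xi$ in column $j+1\pmod q$, and $g_2(\eta\zeta^j,\zeta)\xi^2$ in column $j+2\pmod q$, which is precisely the $[a_j\mid b_j\mid c_j]$ pattern with cyclic wrap-around (this is where the assumption $q>D=2$ is used, so that the three nonzero bands in a row do not overlap). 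First I would invoke the Lemma directly: it gives $\det A_{\zeta,f}(\xi,\eta)=\prod_j a_j-\tr\prod_j\left[\begin{smallmatrix}-b_j&c_j\\-a_j&0\end{smallmatrix}\right]+\prod_j c_j$. Then I would substitute back and collect the monomial factors in $\xi$: $\prod_j a_j=\prod_j g_0(\eta\zeta^j,\zeta)$, $\prod_j c_j=\xi^{2q}\prod_j g_2(\eta\zeta^j,\zeta)$, and the trace term is the trace of the $q$-fold product of the matrices $\left[\begin{smallmatrix}-g_1(\eta\zeta^j,\zeta)\xi&g_2(\eta\zeta^j,\zeta)\xi^2\\-g_0(\eta\zeta^j,\zeta)&0\end{smallmatrix}\right]$, which is \eqref{eqn:quadratic-det} after the routine rearrangement of signs and the powers of $\xi$.

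For \eqref{eqn:simple-det} the key point is that the hypothesis $g_1(y,z)g_1(yz,z)=-g_2(y,z)g_0(yz,z)$, read as an identity in $\ZZ[y^{\pm},z^{\pm}]$, is exactly what makes the Lemma's side condition $c_ja_{j+1}=-b_jb_{j+1}$ hold after substitution. Setting $y=\eta\zeta^j$ and $z=\zeta$ turns the hypothesis into
\[
   g_1(\eta\zeta^j,\zeta)\,g_1(\eta\zeta^{j+1},\zeta)=-g_2(\eta\zeta^j,\zeta)\,g_0(\eta\zeta^{j+1},\zeta),
\]
i.e. $b_jb_{j+1}=-c_ja_{j+1}$, for $j=0,\dots,q-1$. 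The case $j=q-1$ is the only one needing a moment's thought: it is where $\zeta^q=1$ enters, since it identifies $\eta\zeta^{q-1}\!\cdot\zeta=\eta\zeta^{q}$ with $\eta$, so that the condition really does close up cyclically in $j$. With this verified, the Lemma's second conclusion replaces the trace term by $-(-1)^q(\tau^q+\sigma^q)\prod_j b_j$, and since each $b_j$ carries one factor $\xi$ this is $(-1)^{q-1}(\tau^q+\sigma^q)\xi^q\prod_j g_1(\eta\zeta^j,\zeta)$, giving \eqref{eqn:simple-det}.

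I do not expect a genuine obstacle here: all of the mathematical substance is carried by the preceding determinant Lemma, and the Corollary is a matter of matching notation and bookkeeping the monomial factors $\xi,\xi^2,\xi^q,\xi^{2q}$ produced by the individual rows and by the $q$-fold matrix product. The only subtlety worth flagging is the cyclic closure of the identity $b_jb_{j+1}=-c_ja_{j+1}$ at $j=q-1$, which is precisely the place where the hypothesis $\zeta\in\Om_q'$ (hence $\zeta^q=1$) is indispensable, and which also explains why the simplified formula \eqref{eqn:simple-det} is stated only for $\zeta$ ranging over the $q$-th roots of unity.
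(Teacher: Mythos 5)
Your derivation is exactly the intended one: the Corollary is meant to be an immediate instance of the preceding determinant Lemma under the substitution $a_j=g_0(\eta\zeta^j,\zeta)$, $b_j=g_1(\eta\zeta^j,\zeta)\xi$, $c_j=g_2(\eta\zeta^j,\zeta)\xi^2$, and your verification of the cyclic closure of the side condition $b_jb_{j+1}=-c_ja_{j+1}$ at $j=q-1$ (using $\zeta^q=1$) is the one point that genuinely needs saying; your treatment of \eqref{eqn:simple-det} is correct, since $-(-1)^q=(-1)^{q-1}$ and $\prod_j b_j=\xi^q\prod_j g_1(\eta\zeta^j,\zeta)$.

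One caveat: the phrase ``routine rearrangement of signs'' hides a real discrepancy in the first formula. The Lemma delivers the middle term as $-\tr\prod_j\bigl[\begin{smallmatrix}-b_j&c_j\\-a_j&0\end{smallmatrix}\bigr]$, with a plain minus sign, whereas \eqref{eqn:quadratic-det} as printed carries the prefactor $(-1)^{q-1}$ in front of the trace of the product of the \emph{same} matrices; these agree only for even $q$. A direct check at $q=3$ (or tracking the sign $(-1)^{q-1}$ of the $q$-cycle $j\mapsto j+1$ against the term $\prod_j(-b_j)=(-1)^q\prod_j b_j$ inside the trace) confirms that the Lemma's sign is the correct one, so the printed \eqref{eqn:quadratic-det} appears to have a sign typo for odd $q$ rather than your substitution being wrong. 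You should state explicitly that the substitution yields $-\tr\prod(\cdots)$ and note the discrepancy, rather than asserting that sign bookkeeping reproduces the displayed formula as written.
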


Motivated by Theorem \ref{thm:expansive-entropy} and the rigorous
results from the expansive linear case, we can now formulate a
reasonable conjecture about entropy in the quadratic case.

As in the linear case, for fixed irrational $\zeta$, the growth rate of
the first and third terms in \eqref{eqn:quadratic-det} are given my
$\mahler\bigl(g_0(\cdot,\zeta)\bigr)$ and
$\mahler\bigl(g_2(\cdot,\zeta)\bigr)$, respectively. The growth rate for
the second term should be the same for almost every choice of
$(\xi,\eta)\in\SS^2$, and if so denote this value by $b_f(\zeta)$. For
example, in case $g_1(y,z)g_1(yz,z)=-g_2(y,z)g_0(yz,z)$, from
\eqref{eqn:simple-det} we see that
$b_f(\zeta)=\log\tau+\mahler\bigl(g_1(\cdot,\zeta)\bigr)$.

\begin{problem}
   Let $f(x,y,z)=g_0(y,z)+xg_1(y,z)+x^2g_2(y,z)\in\ZG$. Is
   \begin{displaymath}
      \h(\af)=\int_{\SS}\max\bigl\{\mahler\bigl(g_0(\cdot,\zeta)\bigr),
      b_f(\zeta),\mahler\bigl(g_2(\cdot,\zeta)\bigr)
      \bigr\}\,d\zeta\,?
   \end{displaymath}
   In particular, if $g_1(y,z)g_1(yz,z)=-g_2(y,z)g_0(yz,z)$, is
   \begin{equation}
      \label{eqn:quadratic-entropy}
      \h(\af)=\int_{\SS}\max\bigl\{\mahler\bigl(g_0(\cdot,\zeta)\bigr),
      \log\tau+\mahler\bigl(g_1(\cdot,\zeta)\bigr),\mahler\bigl(g_2(\cdot,\zeta)\bigr)
      \bigr\}\,d\zeta\,?
   \end{equation}
\end{problem}

\begin{example}
   Fix $g(y,z)\in\ZG$, and let $g_0(y,z)=-g(yz^{-1},z)g(y,z)$,
   $g_1(y,z)=g(y,z)$, and $g_2(y,z)=1$. These satisfy the conditions for
   \eqref{eqn:quadratic-entropy}, and this formula becomes
   \begin{equation}
      \label{eqn:simple-max}
      \h(\af)=\int_{\SS}\max\bigl\{2\mahler\bigl(g(\cdot,\zeta)\bigr),
      \log\tau+\mahler\bigl(g(\cdot,\zeta)\bigr),0
      \bigr\}\,d\zeta .
   \end{equation}
   For example, letting $g(y,z)=(z-1)y+z^2-1$, for each of the three
   functions in \eqref{eqn:simple-max} there is a range of $\zeta$ for
   which it is the maximum.
\end{example}

A similar analysis can be carried out for higher degree polynomials, but
the evaluation of the relevant determinants now involves a finite family
of more complicated (and interesting) shifts of finite type.

\section{Lyapunov Exponents and Entropy}\label{sec:geometric-entropy}

The methods of the previous section to compute entropy have some serious
limitations because of the expansiveness assumptions. There is a more
geometric approach to entropy, using the theory of Lyapunov exponents,
which Deninger used in \cite{DeningerLyapunov} to calculate
Fuglede-Kadison determinants, or equivalently by Theorem \ref{thm:li-thom},
entropy, in much greater generality.

To motivate this approach, first recall the linear example
$f(x,y,z)=y-g(x,z)\in\ZG$. For $\vxz$ defined in \eqref{eqn:vxz} and
$w=\sum_{-\infty}^\infty c_n\vxz\,y^n$ with $\rf(w)=0$, we have from
\eqref{eqn:recurrence} that
\begin{equation}
   \label{eqn:product-recurrence}
   c_n=\Bigl[\prod_{j=0}^{n-1}g(\xi\zeta^j,\zeta)\Bigr] c_0.
\end{equation}
For irrational $\zeta$, the products in \eqref{eqn:product-recurrence}
have growth rate
\begin{displaymath}
   \frac1n \log\Bigl|\prod_{j=0}^{n-1}g(\xi\zeta^j,\zeta)\Big|=
   \frac1n \sum_{j=0}^{n-1}\log|g(\xi\zeta^j,\zeta)|\to\mahler(g(\cdot,\zeta)),
\end{displaymath}
and the limit is the same for almost every $\xi$ by ergodicity of
irrational rotations. For toral automorphisms, entropy is the sum of the
growth rates on various eigenspaces that are positive. By analogy, we
would expect entropy here to be the integral of the positive growth
rates, i.e., that
\begin{equation}
   \label{eqn:monic-linear}
   \h(\af)=\int_{\SS}\max\{\mahler(g(\cdot,\zeta)),0\}\, d\zeta.
\end{equation}
This is a special case of \eqref{eqn:max-entropy}, but with no
assumptions on $g$.

Indeed, since the eigenspaces here are 1-dimensional, the techniques
used in \cite{LSW} can be adapted to prove the validity of
\eqref{eqn:monic-linear} for every
$0\neq g\in\ZZ[x^{\pm},z^{\pm}]$. However, since this will be subsumed
under Deninger's results, there is no need to provide an independent
proof here.

To state the main result in \cite{DeningerLyapunov}, we need to give a little
background. For each irrational $\zeta\in\SS$ there is the rotation
algebra $\mathcal{A}_\zeta$, which is the von Neumann algebra version of
the twisted $l^1$ algebras used in Allan's local principle (see
\cite{AndersonPaschke} for details). There are also natural maps
$\pi_{\zeta}\colon\nG\to\mathcal{A}_\zeta$. As explained in
\cite[\S5]{D}, there is a faithful normalized trace function
$\tr_{\zeta}$ on each $\mathcal{A}_\zeta$ such that
$\tr_{\nG}(a)=\int_{\SS}\tr_{\zeta}(a)\,d\zeta$ for every
$a\in\nG$. This implies that for determinants we have
\begin{equation}
   \label{eqn:FK-determinants}
   \log \mathrm{det}_{\nG}(a)=\int_{\SS}\log \det\bigl( \pi_{\zeta}(a)\bigr)\, d\zeta.
\end{equation}
Hence we need a way of evaluating the integrands for $a=\rf$.

Suppose that $f\in\ZG$ is monic in $y$ and of degree $D$, and so has the form
\begin{displaymath}
   f(x,y,z)=y^D - g_{D-1}(x,z)y^{D-1}-\dots -g_0(x,z),
\end{displaymath}
where the $g_j(x,z)\in\ZZ[x^{\pm},z^{\pm}]$ and $g_0\ne 0$. Calculations
similar to Example \ref{exam:y^2-xy-1} show that if
$\rf(\sum_{-\infty}^\infty c_n\vxz y^n)=0$, then, for every ~$n$,
\begin{displaymath}
   c_{n+D}=g_{D-1}(\xi\zeta^{n},\zeta)c_{n+D-1}+g_{D-2}(\xi\zeta^{n},\zeta)c_{n+D-2}
   + \dots +g_0(\xi\zeta^{n},\zeta)c_n .
\end{displaymath}
Put
\begin{equation}
   \label{eqn:companion-matrix}
   A(\xi,\zeta)=
   \begin{bmatrix}
      0 & 1 & 0 & \cdots & 0 \\
      0 & 0 & 1 & \cdots & 0 \\
      \vdots & \vdots & \vdots & \ddots & \vdots \\
      g_0(\xi,\zeta) & g_1(\xi,\zeta)&
      g_2(\xi,\zeta)
      &\cdots & g_{D-1}(\xi,\zeta)
   \end{bmatrix},
\end{equation} and let
\begin{displaymath}
   A_n(\xi,\zeta)=A(\xi\zeta^{n-1},\zeta)A(\xi\zeta^{n-2},\zeta)\dots A(\xi,\zeta)
\end{displaymath}
be the corresponding cocycle. Then the
recurrence relation for the $\{c_n\}$ can be written as
\begin{displaymath}
   A_n(\xi,\zeta)
   \begin{bmatrix}
      c_0\\ \vdots \\ c_{D-1}
   \end{bmatrix}
   =
   \begin{bmatrix}
      c_{n}\\ \vdots \\ c_{n+D-1}
   \end{bmatrix}.
\end{displaymath}

The $A_n(\xi,\zeta)$ are $D\times D$ matrices, and we need to include all
directions with positive growth rate. There is a deep and important
theorem governing this.

\begin{theorem}[Oseledec Multiplicative Ergodic Theorem]
   \label{thm:met}
   Let $T$ be an invertible ergodic measure-preserving transformation on
   a measure space $(\Om,\nu)$, and $B\colon \Om\to \CC^{D\times D}$ be
   a measurable map from $\Om$ to the $D\times D$ complex matrices such that
   $\int_{\Om}\log^+\|B(\om)\|\,d\nu(\om)<\infty$. Then there is a
   $T$-invariant measurable set $\Om_0\subset\Om$ with $\nu(\Om_0)=1$, an $M\le D$,
   real numbers $\chi_1< \chi_2 <\dots<\chi_M$ and multiplicities
   $r_1,\dots, r_M\ge1$ such that $r_1+\dots+r_M=D$, and measurable maps
   $V_j$ from $\Om_0$ to the space of $r_j$-dimensional subspaces of
   $\CC^D$, such that for all $\om\in\Om_0$ we have that
   \begin{enumerate}
     \item $\CC^D=V_1(\om)\oplus\dots\oplus V_M(\om)$,
     \item $B(\om)V_j(\om)=V_j(T\om)$, and
      \item $\displaystyle \frac1n\log\frac {\|B(T^{n-1}\om)B(T^{n-2}\om)\dots
       B(\om)v\|}{\|v\|} \to \chi_j$ uniformly for  $0\ne v\in V_j(\om)$.
   \end{enumerate}
\end{theorem}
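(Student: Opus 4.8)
Theorem~\ref{thm:met} is the classical Oseledec Multiplicative Ergodic Theorem, so what I would present is a guided sketch of the standard proof rather than anything new; full details occupy a substantial part of any treatment of the subject (Oseledec's original paper, or the accounts of Raghunathan, Ruelle, or Walters). The engine throughout is Kingman's subadditive ergodic theorem, applied both to $B$ and to all of its exterior powers. Write $B_n(\om)=B(T^{n-1}\om)\cdots B(\om)$. To obtain the top exponent, set $a_n(\om)=\log\|B_n(\om)\|$; then $a_{m+n}(\om)\le a_n(T^m\om)+a_m(\om)$, and $a_1^+\in L^1(\nu)$ by the integrability hypothesis, so Kingman's theorem produces a $T$-invariant limit $\tfrac1n a_n\to\chi_M$ $\nu$-a.e., which is constant by ergodicity of $T$.

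Next I would recover the whole Lyapunov spectrum using exterior powers. For each $1\le k\le D$ the induced cocycle $\om\mapsto\Lambda^kB(\om)$ on $\Lambda^k\CC^D$ again satisfies Kingman's hypotheses, and $\|\Lambda^kB_n(\om)\|$ equals the product of the $k$ largest singular values of $B_n(\om)$; write $s_k=\lim\tfrac1n\log\|\Lambda^kB_n(\om)\|$ and $s_0=0$. Since singular values are ordered, $k\mapsto s_k$ is concave, and the Lyapunov exponents counted with multiplicity are precisely the increments $s_k-s_{k-1}$; grouping equal increments yields the distinct values $\chi_1<\cdots<\chi_M$ and multiplicities $r_1,\dots,r_M$ with $\sum r_j=D$. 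This already establishes, on a $T$-invariant conull set, everything in the conclusion except the equivariant splitting and the uniform convergence in~(3).

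The remaining and genuinely delicate point is the Oseledec splitting itself. Here I would follow Raghunathan's argument: take the polar decomposition $B_n(\om)=U_n(\om)P_n(\om)$ with $P_n(\om)=(B_n(\om)^*B_n(\om))^{1/2}\ge0$, and prove that $P_n(\om)^{1/n}$ converges $\nu$-a.e. to a positive-definite matrix $\Lambda(\om)$ whose distinct eigenvalues are $e^{\chi_M}>\cdots>e^{\chi_1}$; its eigenspace decomposition gives a decreasing filtration of $\CC^D$ by forward growth rates. Proving this convergence is the main obstacle: one must show that the singular subspaces of $B_n(\om)$ stabilize, which amounts to a geometric decay estimate for the angle between the flags of $B_n(\om)$ and of $B_{n+1}(\om)$, using the strict gaps $\chi_{j+1}-\chi_j>0$ together with a Borel--Cantelli argument driven by the integrability hypothesis. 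Since $T$ is invertible and $B(\om)$ is invertible for $\nu$-a.e. $\om$ (automatic in the companion-matrix application, where $\det A(\xi,\zeta)=\pm\,g_0(\xi,\zeta)\ne0$ a.e.), one runs the same construction for the inverse cocycle $\om\mapsto B(T^{-1}\om)^{-1}$ to obtain a second, ``backward'' filtration; the two filtrations are in general position, so intersecting their graded pieces yields the genuine direct sum $\CC^D=V_1(\om)\oplus\cdots\oplus V_M(\om)$, together with $B(\om)V_j(\om)=V_j(T\om)$ and the uniform limit $\tfrac1n\log(\|B_n(\om)v\|/\|v\|)\to\chi_j$ on $V_j(\om)\setminus\{0\}$. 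Finally, each $V_j(\om)$ is assembled from eigenprojections of the measurable matrices $P_n(\om)^{1/n}$ by taking limits and finite intersections, hence is measurable in $\om$, and one obtains a genuinely $T$-invariant conull set $\Om_0$ by intersecting the a.e.-defined set over all integer translates under $T$.
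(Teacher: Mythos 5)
The paper does not prove this statement at all: Theorem~\ref{thm:met} is quoted as a classical black-box result (Oseledec's multiplicative ergodic theorem, with the original paper cited in the bibliography) and is used only to define the Lyapunov exponents entering Deninger's entropy formula. So there is no proof in the paper to compare yours against. Your sketch is the standard route --- Kingman's subadditive ergodic theorem for the top exponent, exterior powers to extract the full spectrum from the increments $s_k-s_{k-1}$, and Raghunathan's polar-decomposition argument ($P_n(\om)^{1/n}\to\Lambda(\om)$) combined with the backward cocycle to upgrade the forward filtration to the equivariant splitting --- and it is correct as an outline; the delicate step you correctly isolate is the stabilization of the singular flags, which is where all the real work lies. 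One point worth making explicit: as stated, the theorem does not hypothesize that $B(\om)$ is invertible, yet the two-sided splitting with $B(\om)V_j(\om)=V_j(T\om)$ and finite exponents $\chi_j$ genuinely requires $B(\om)\in GL(D,\CC)$ a.e.\ (and, for finiteness of the lowest exponent, integrability of $\log^+\|B(\om)^{-1}\|$); without that one only gets a measurable filtration and possibly $\chi_1=-\infty$. You flag this and note it is automatic for the companion matrices $A(\xi,\zeta)$ since $\det A(\xi,\zeta)=\pm g_0(\xi,\zeta)\ne0$ a.e., which is exactly the right observation for the application in Section~8.
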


To apply this result to our situation, fix an irrational
$\zeta\in\SS$. Let $T_{\zeta}\colon\SS\to\SS$ be given by
$T_{\zeta}(\xi)=\xi\zeta$. Let $B(\xi)=A(\xi,\zeta)$. Since the
entries of $B(\xi)$ are continuous in $\xi$, clearly
$\int_{\SS}\log^+\|B(\xi)\|<\infty$. Hence there are Lyapunov exponents
$\chi_j(\zeta)$ and multiplicities $r_j(\zeta)$. With these in hand,
we can now state Deninger's main result from \cite{DeningerLyapunov} as
it applies to the Heisenberg case.

\begin{theorem}
   \label{thm:deninger}
   Let $f(x,y,z)=y^D-g_{D-1}(x,z)y^{D-1}-\dots-g_0(x,z)\in\ZG$, where
   the $g_j(x,z)\in\ZZ[x^{\pm},z^{\pm}]$ and $g_0\ne0$. For every
   irrational $\zeta$ denote the Lyapunov exponents for $A(\xi,\zeta)$
   and $T_{\zeta}$ as above by $\chi_j(\zeta)$ with multiplicities
   $r_j(\zeta)$. Then
   \begin{displaymath}
      \log \mathrm{det}_\zeta(\pi_{\zeta}(\rf))=\sum_{j}r_j(\zeta)\chi_j(\zeta)^+,
   \end{displaymath}
   and hence by Theorem \ref{thm:li-thom} and \eqref{eqn:FK-determinants},
   \begin{equation}
      \label{eqn:entropy-via-lyapunov}
      \h(\af)=\log \mathrm{det}_{\nG} \rf =\int_{\SS}\sum_j r_j(\zeta)\chi_j(\zeta)^+.
   \end{equation}
\end{theorem}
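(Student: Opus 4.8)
The plan is to first establish the fiberwise identity $\log\mathrm{det}_\zeta\bigl(\pi_\zeta(\rf)\bigr)=\sum_j r_j(\zeta)\chi_j(\zeta)^+$ for almost every irrational $\zeta\in\SS$, then integrate over $\zeta$ using \eqref{eqn:FK-determinants}, and finally invoke Theorem~\ref{thm:li-thom} to replace $\log\mathrm{det}_{\nG}\rf$ by $\h(\af)$. The hypotheses of Theorem~\ref{thm:li-thom} hold here: $\rf\colon\ltdc\to\ltdc$ is injective for every nonzero $f\in\ZG$, since $\nG$ has no zero divisors ($\G$ being torsion-free amenable); disintegrating over the center $\SS$ of $\nG$ then forces $\pi_\zeta(\rf)$ to be injective in the rotation algebra $\mathcal{A}_\zeta$ for almost every $\zeta$, so every determinant below is defined.

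The key reformulation is to realize $\pi_\zeta(\rf)$ inside a crossed product. For irrational $\zeta$, $\mathcal{A}_\zeta\cong L^\infty(\SS)\rtimes_{T_\zeta}\ZZ$, where $U$ is the multiplication operator $\xi\mapsto\xi$, $V$ implements the rotation $T_\zeta$, and $\tr_\zeta$ comes from Haar measure on $\SS$. Since $f$ is monic in $y$, $\pi_\zeta(\rf)=V^D-\sum_{j=0}^{D-1}g_j(U,\zeta)V^j$ is a monic one-variable polynomial operator in $V$ with coefficients in the abelian subalgebra $L^\infty(\SS)$. Linearizing this polynomial by its companion matrix --- which is exactly the passage from the scalar recurrence for the $c_n$ to the matrix recurrence $\vec c_{n+1}=A(\cdot,\zeta)\vec c_n$ built from \eqref{eqn:companion-matrix} --- and using that the Fuglede--Kadison determinant is multiplicative and unaffected by the unitary $V$, one obtains
\[
  \mathrm{det}_\zeta\bigl(\pi_\zeta(\rf)\bigr)=\mathrm{det}_{M_D(\mathcal{A}_\zeta)}\bigl(V\cdot I_D-A(U,\zeta)\bigr),
\]
where $M_D(\mathcal{A}_\zeta)$ carries the non-normalized trace on the matrix factor (which is why $\sum_j r_j=D$ directions are summed below). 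This reduces everything to computing the FK determinant of a ``$V$ minus a cocycle'' element.

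The heart of the argument is to show $\log\mathrm{det}_{M_D(\mathcal{A}_\zeta)}\bigl(V\cdot I_D-A(U,\zeta)\bigr)=\sum_j r_j(\zeta)\chi_j(\zeta)^+$. I would do this by finite-dimensional approximation: truncating $\ZZ$ to $\ZZ/n\ZZ$ (equivalently, replacing $\zeta$ by an $n$-th root of unity in the manner of Section~\ref{sec:periodic-points}) turns $V\cdot I_D-A(U,\zeta)$ into a genuine $nD\times nD$ matrix whose determinant is, up to a unit, expressible through the cocycle products $A_n(\xi,\zeta)$. Provided the normalized determinants of these truncations converge to $\mathrm{det}_\zeta$ (this is the subtle point, see below), it then suffices to show that their logarithms converge to $\sum_j r_j(\zeta)\chi_j(\zeta)^+$. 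Here the Oseledec theorem (Theorem~\ref{thm:met}), applied to $B=A(\cdot,\zeta)$ over $T_\zeta$, enters: on each Oseledec subspace $A_n$ grows at rate $\chi_j(\zeta)$, and a $D\times D$ determinant records only the \emph{expanding} directions, producing the positive parts $\chi_j^+$; Kingman's subadditive ergodic theorem makes the top rate uniform in $\xi$, and the MET gives the remaining rates on the invariant subbundles. Once the fiber identity holds, integrating over $\zeta\in\SS$, applying \eqref{eqn:FK-determinants}, and using Theorem~\ref{thm:li-thom} yields \eqref{eqn:entropy-via-lyapunov}.

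The main obstacle is precisely this limit. The assignment $\zeta\mapsto\log\mathrm{det}_\zeta$ need not be continuous --- the nonexpansive examples of Section~\ref{sec:expansiveness} are exactly jumps of this kind at rational $\zeta$ --- and the FK determinant is in general only upper semicontinuous, so the matching lower bound is not a soft consequence of approximation. One must establish genuine integrability of $\log^-\bigl|\det A_n(\xi,\zeta)\bigr|$ (which uses $g_0\ne0$, so that each $A(\xi,\zeta)$ is invertible and the functions involved are logarithms of nonvanishing Laurent polynomials) and then control the contribution of the zero- and negative-exponent Oseledec subspaces, showing that in the limit they contribute nothing to the determinant. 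This delicate interplay between the multiplicative ergodic theorem and semicontinuity of determinants is the substance of Deninger's argument in \cite{DeningerLyapunov}.
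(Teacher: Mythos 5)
You should first note that the survey itself states Theorem \ref{thm:deninger} as a citation of \cite{DeningerLyapunov} and offers no proof, so your proposal can only be measured against Deninger's external argument. The scaffolding you erect is correct and consistent with the survey's preparation: $\rf$ is injective on $\ell^2(\G,\CC)$ because $\G$ is torsion-free nilpotent, so Theorem \ref{thm:li-thom} applies; the disintegration \eqref{eqn:FK-determinants} over the center reduces everything to the fibers $\mathcal{A}_\zeta$; and the companion-matrix linearization identifying $\mathrm{det}_\zeta(\pi_\zeta(\rf))$ with the Fuglede--Kadison determinant of $V\cdot I_D-A(U,\zeta)$ in $M_D(\mathcal{A}_\zeta)$ (with the unnormalized matrix trace, built from \eqref{eqn:companion-matrix}) is indeed the first step of Deninger's proof.

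The gap is that the entire content of the theorem --- the fiber identity $\log\mathrm{det}_\zeta\bigl(V\cdot I_D-A(U,\zeta)\bigr)=\sum_j r_j(\zeta)\chi_j(\zeta)^+$ --- is asserted but not proved. The route you propose for it, truncating $\ZZ$ to $\ZZ/n\ZZ$ and passing to the limit of normalized determinants of $nD\times nD$ matrices, is exactly the method that the survey justifies only under expansiveness (Section \ref{sec:periodic-points} and Theorem \ref{thm:expansive-entropy}): in general the finite approximations control $\mathrm{det}_\zeta$ only from one side, and the matching lower bound is precisely what fails to be automatic, as you yourself concede in your final paragraph. Deninger's argument does not approximate the base rotation by finite orbits at all. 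It works intrinsically in the crossed product $M_D\bigl(L^\infty(\SS)\rtimes_{T_\zeta}\ZZ\bigr)$: the Oseledec splitting of Theorem \ref{thm:met} puts the cocycle $A(U,\zeta)$ into block form over the invariant subbundles $V_j$, multiplicativity of the Fuglede--Kadison determinant reduces to a single block with one exponent $\chi$, and there one factors $V-B$ as $(1-BV^{-1})V$ or $-B(1-B^{-1}V)$ according to the sign of $\chi$, using that $\det(1-S)=1$ when the cocycle generated by $S$ is exponentially contracting and that $\log\det(B)$ is the integral of $\log|\det B|$, i.e.\ the sum of all exponents of the block. This is where the hypotheses $g_0\ne0$ and the integrability of $\log\|A^{\pm1}\|$ are actually consumed. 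As written, your proposal correctly reduces the theorem to its hardest step and then defers that step to the reference; it is a map of where a proof must go rather than a proof.
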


\begin{example}
   Let $f(x,y,z)=y^2-(2x-1)y+1$. For each $(\xi,\zeta)\in\SS^2$ there is
   a nonzero vector $v(\xi,\zeta)\in\CC^2$ and a multiplier
   $\kappa(\xi,\zeta)\in\CC$ with $|\kappa(\xi,\zeta)|\ge1$ such that
   \begin{displaymath}
      \begin{bmatrix}
         0 & 1\\ -1 & 2\xi-1
      \end{bmatrix}
      v(\xi,\zeta)=\kappa(\xi,\zeta)v(\xi\zeta,\zeta).
   \end{displaymath}
   Since the determinants of these matrices all have absolute value 1,
   there is exactly one nonnegative Lyapunov exponent, of multiplicity
   one. For each irrational $\zeta\in\SS$ its value is given by
   $\chi(\zeta)=\int_{\SS}\log|\kappa(\xi,\zeta)|\,d\xi$, and hence
   \begin{displaymath}
      \h(\af)=\int_{\SS}\chi(\zeta)\,d\zeta = \iint_{\SS^2}
      \log|\kappa(\xi,\zeta)|\,d\xi d\zeta.
   \end{displaymath}
   A numerical calculation of the graph of $\log|\kappa(\xi,\zeta)|$ is
   shown in Figure  \ref{fig:surface}, and indicates the complexity of these phenomena
   even in the quadratic case.
\end{example}

   \begin{figure}[!h]
     \centering
     \includegraphics[scale=.21]{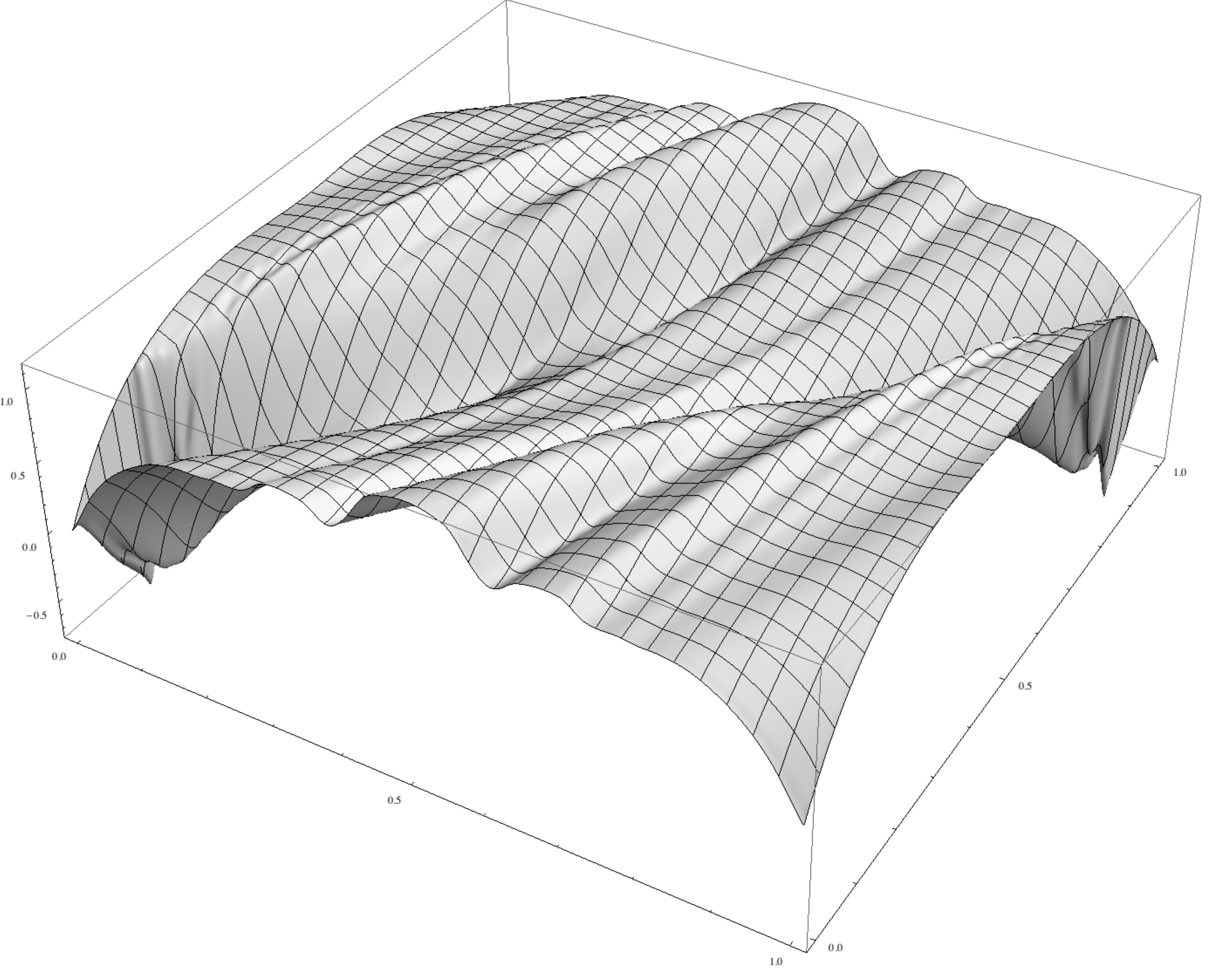}
     \caption{Graph of $\log|\kappa(\xi,\zeta)|$.  \label{fig:surface}}
  \end{figure}

Although Lyapunov exponents are generally difficult to compute, there is
a method to obtain rigorous lower bounds on the largest Lyapunov exponent known as
``Herman's subharmonic trick.'' Its use in our context was suggested to
us by Michael Bjorklund.

\begin{proposition}
   Let $f(x,y,z)=y^D-g_{D-1}(x,z)y^{D-1}-\dots-g_0(x,z)\in\ZG$, where
   the $g_j(x,z)\in\ZZ[x,z^{\pm}]$, so that only nonnegative powers of
   $x$ are allowed, and $g_0(x,z)\ne0$. For every
   irrational $\zeta\in\SS$ let $\chi_\infty(\zeta)$ denote the largest
   Lyapunov exponent resulting from Theorem \ref{thm:deninger}. Then
   $\chi_\infty(\zeta)\ge \log \spr A(0,\zeta)$, where $A(\xi,\zeta)$ is
   the matrix given in \eqref{eqn:companion-matrix}, and $\spr$ denotes the
   spectral radius of a complex matrix. In particular,
   \begin{equation}
      \label{eqn:integrated-herman}
      \h(\af)\ge \int_{\SS}\log^+ \spr A(0,\zeta)\,d\zeta.
   \end{equation}
\end{proposition}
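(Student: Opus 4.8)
The plan is to prove the pointwise estimate $\chi_\infty(\zeta)\ge\log\spr A(0,\zeta)$ for every irrational $\zeta\in\SS$ by \emph{Herman's subharmonicity trick}, and then integrate it against Deninger's formula \eqref{eqn:entropy-via-lyapunov}. The crucial structural point, and the reason the hypothesis "$g_j\in\ZZ[x,z^{\pm}]$" is made, is that only nonnegative powers of $x$ occur: for each fixed $\zeta\in\SS$ the companion matrix $A(\xi,\zeta)$ of \eqref{eqn:companion-matrix} is polynomial, hence entire, in the variable $\xi$, and therefore so is each product $A_n(\xi,\zeta)=A(\xi\zeta^{n-1},\zeta)\cdots A(\xi,\zeta)$. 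Moreover, since $\xi\zeta^j=0$ for all $j$ when $\xi=0$, evaluating at the centre of the disk gives the identity $A_n(0,\zeta)=A(0,\zeta)^n$.

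Next I would invoke the standard fact that for a matrix-valued function $B$ holomorphic on a neighbourhood of the closed unit disk $\overline{\mathbb D}=\{\xi\in\CC:|\xi|\le1\}$, the function $\xi\mapsto\log\|B(\xi)\|$ is subharmonic: for unit vectors $u,v$ the scalar $\xi\mapsto\langle B(\xi)u,v\rangle$ is holomorphic, so $\xi\mapsto\log|\langle B(\xi)u,v\rangle|$ is subharmonic, and $\log\|B(\xi)\|$ is the supremum of these over $u,v$ (with its upper-semicontinuous regularization, harmless here as $\log\|B(\cdot)\|$ is continuous off a finite set). Applying the sub-mean-value inequality to $B=A_n(\cdot,\zeta)$ gives
\[
   \log\bigl\|A(0,\zeta)^n\bigr\|=\log\bigl\|A_n(0,\zeta)\bigr\|
   \le\int_{\SS}\log\|A_n(\xi,\zeta)\|\,d\xi .
\]

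Now divide by $n$ and let $n\to\infty$. On the left, Gelfand's spectral-radius formula yields $\tfrac1n\log\|A(0,\zeta)^n\|\to\log\spr A(0,\zeta)$ (with the convention $\log 0=-\infty$, in which case there is nothing to prove). On the right, the Furstenberg--Kesten theorem --- equivalently Kingman's subadditive ergodic theorem, applied to the subadditive, integrable sequence $\xi\mapsto\log\|A_n(\xi,\zeta)\|$ along the orbit of the irrational rotation $\xi\mapsto\xi\zeta$ --- shows that $\tfrac1n\int_{\SS}\log\|A_n(\xi,\zeta)\|\,d\xi$ decreases to the top Lyapunov exponent $\chi_\infty(\zeta)$ produced by Theorem \ref{thm:deninger}. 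Hence $\log\spr A(0,\zeta)\le\chi_\infty(\zeta)$. Feeding this into \eqref{eqn:entropy-via-lyapunov} and writing $\chi_\infty(\zeta)=\chi_M(\zeta)$ with multiplicity $r_M(\zeta)\ge1$, we get $\sum_j r_j(\zeta)\chi_j(\zeta)^+\ge r_M(\zeta)\chi_\infty(\zeta)^+\ge\chi_\infty(\zeta)^+\ge\log^+\spr A(0,\zeta)$, the last step because $t\mapsto t^+$ is nondecreasing. The characteristic polynomial of $A(0,\zeta)$ has coefficients continuous in $\zeta$, so $\zeta\mapsto\log^+\spr A(0,\zeta)$ is measurable and bounded, and integrating over $\zeta\in\SS$ produces \eqref{eqn:integrated-herman}, while $\h(\af)=\log\mathrm{det}_{\nG}\rf$ is exactly the left side of \eqref{eqn:entropy-via-lyapunov}.

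The main obstacle, and the part needing the most care, is the passage to the limit on the right-hand side: one must confirm that the $\chi_\infty(\zeta)$ furnished by the Oseledec theorem (Theorem \ref{thm:met}) coincides with $\lim_n\tfrac1n\int_{\SS}\log\|A_n(\xi,\zeta)\|\,d\xi$. This is standard --- a generic vector has a nonzero component in the top Oseledec subspace, so the integrated and almost-everywhere growth rates of $\|A_n\|$ agree --- but should be cited rather than re-derived. One also wants a clean statement that $\log\|A_n(\cdot,\zeta)\|\in L^1(\SS)$: the bound above is automatic since the entries of $A(\cdot,\zeta)$ are bounded on $\SS$, and $\log\|A_n(\xi,\zeta)\|$ fails to be finite only where $A_n(\xi,\zeta)=0$, which (using $|\det A_n(\xi,\zeta)|=\prod_{j}|g_0(\xi\zeta^j,\zeta)|$ together with the companion structure) happens on at most a finite, hence null, set of $\xi$, so it affects neither the sub-mean-value inequality nor the limit. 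Everything else is routine.
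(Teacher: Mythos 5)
Your proof is correct and follows essentially the same route as the paper: exploit the hypothesis that only nonnegative powers of $x$ occur to make $A_n(\cdot,\zeta)$ polynomial in $\xi$ with $A_n(0,\zeta)=A(0,\zeta)^n$, apply subharmonicity of $\log\|{\cdot}\|$ to get the sub-mean-value inequality at $\xi=0$, and pass to the limit using Gelfand's formula on the left and the identification of $\lim_n\tfrac1n\int_{\SS}\log\|A_n(\xi,\zeta)\|\,d\xi$ with $\chi_\infty(\zeta)$ on the right. The only cosmetic difference is that you invoke Kingman/Furstenberg--Kesten for that last identification where the paper uses the a.e.\ Oseledec limit together with a reverse-Fatou argument justified by the uniform upper bound on $\tfrac1n\log\|A_n(\xi,\zeta)\|$; both are valid.
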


\begin{proof}
   Fix an irrational $\zeta\in\SS$. Put $B(x)=A(x,\zeta)$, and let
   $T(x)=\zeta x$. For a complex matrix $C=[c_{ij}]$ define
   $\|C\|=\max_{i,j}\{|c_{ij}|\}$. Theorem \ref{thm:met} shows that for almost
   every $\xi\in\SS$ we have that
   \begin{displaymath}
      \chi_\infty(\zeta)=\lim_{n\to\infty} \frac{1}{n}
      \log\|B(T^{n-1}\xi) B(T^{n-2}\xi)\dots B(\xi)\|.
   \end{displaymath}
   Expand
   \begin{displaymath}
      B(T^{n-1}x)B(T^{n-2}x)\dots B(x)= \bigl[ b_{ij}^{(n)}(x)\bigr],
   \end{displaymath}
   where the $b_{ij}^{(n)}(x)$ are polynomials in $x$ with complex
   coefficients. Now each $\log |b_{ij}^{(n)}(\xi)|$ is subharmonic for
   $\xi\in\CC$, and hence $\max_{i,j} \{\log|b_{ij}^{(n)}(\xi)|\}$ is also
   subharmonic for $\xi\in\CC$. Thus
   \begin{displaymath}
      \max_{i,j} \{\log |b_{ij}^{(n)}(0)|\} \le\int_{\SS}\max \{\log
      |b_{ij}^{(n)}(\xi)|\} \,d\xi.
   \end{displaymath}
   Furthermore,
   \begin{displaymath}
      \frac1{n} \max_{i,j} \{\log |b_{ij}^{(n)}(0)|\}=\frac{1}{n}
      \log \|B(0)^n\|\to \log \spr B(0)=\log \spr A(0,\zeta)
   \end{displaymath}
   as $n\to\infty$. The entries in $A(\xi,\zeta)$ are uniformly bounded
   above, and hence $\frac{1}{n}\log|b_{ij}^{(n)}(\xi)|$ is uniformly
   bounded for all $n\ge1$ and $\xi\in\SS$. Thus
   \begin{align*}
     \chi_\infty(\zeta)&=\int_{\SS}\lim_{n\to\infty}\frac{1}{n}\log\|
                         B(T^{n-1}\xi)B(T^{n-2}\xi)\dots B(\xi)\|\,d\xi \\
     &\ge\limsup_{n\to\infty} \int_{\SS}\frac{1}{n}\log\|
                         B(T^{n-1}\xi)B(T^{n-2}\xi)\dots B(\xi)\|\,d\xi\\
     &\ge \limsup_{n\to\infty}\frac{1}{n}\log\|B(0)^n\|=\log \spr
       A(0,\zeta).
       \qedhere
   \end{align*}
\end{proof}

Observe that since only nonnegative powers of $x$ are allowed, this
result is reminiscent of the face entropy inequality in Corollary
\ref{cor:face-entropy}. It is stronger since it gives a lower bound for
every irrational $\zeta$, but the integrated form
\eqref{eqn:integrated-herman} is weaker since it uses only the top
Lyapunov exponent to give a lower bound for the entropy of the face
corresponding to $x=0$.

\begin{example}
   \label{exam:big-example}
   We finish by returning to Example \ref{exam:y^2-xy-1}. Let
   $f(x,y,z)=y^2-xy-1\in\ZG$. Using the change of variables $x\mapsto y$,
   $y\mapsto x$ and $z\mapsto z^{-1}$, $f$ becomes monic and linear in $y$,
   hence by the previous theorem we can compute that $\h(\af)=0$. On the
   other hand,
   treating $f$ as monic and quadratic in $y$, we see that the Lyapunov
   exponents must all be $\le 0$. But the determinant has absolute value 1,
   and so in fact the Lyapunov exponents must vanish almost everywhere. In
   other words,
   \begin{equation}
      \label{eqn:big-example}
      \lim_{n\to\infty}\frac1n \log\Biggl\| \prod_{j=n-1}^0
      \begin{bmatrix}
         0 & 1\\1 & \xi\zeta^j
      \end{bmatrix} \Biggr\| = 0 \text{\quad for almost every $(\xi,\zeta)\in\SS^2$}.
   \end{equation}
   By taking transposes to reverse the order of the product, we obtain
   \eqref{eqn:random-product} from the Introduction.
   Although this appears to be a simple result, we have not been able to
   obtain this as a consequence of any known results in random matrix theory.
\end{example}

\end{document}